\newcommand*{\rom}[1]{\expandafter\@slowromancap\romannumeral #1@}
\newtheorem{theorem}{Theorem}[section]
\newtheorem{corollary}[theorem]{Corollary}
\newtheorem{lemma}[theorem]{Lemma}
\newtheorem{proposition}[theorem]{Proposition}
\theoremstyle{definition}
\newtheorem{definition}[theorem]{Definition}
\newtheorem{remark}[theorem]{Remark}
\newtheorem{algorithm}[theorem]{Algorithm}
\newtheorem{example}[theorem]{Example}
\DeclareMathOperator{\Dbcoh}{\mathsf{D^{b}_{coh}}}
\DeclareMathOperator{\Perf}{\mathsf{Perf}}
\DeclareMathOperator{\FM}{\mathsf{FM}}
\DeclareMathOperator{\rk}{\mathrm{rk}}
\renewcommand{\ker}{\mathrm{ker}}
\newcommand{\conj}{\mathsf{conj}}
\newcommand{\can}{\mathsf{can}}
\newcommand{\Tr}{\mathsf{Tr}}
\newcommand{\tr}{\mathsf{tr}}
\DeclareMathOperator{\res}{\mathsf{res}}
\DeclareMathOperator{\ev}{\mathsf{ev}}
\DeclareMathOperator{\Sol}{\mathsf{Sol}}
\DeclareMathOperator{\Coh}{\mathsf{Coh}}
\DeclareMathOperator{\Hot+}{\mathsf{Hot^{+,\,b}_{coh}}}
\DeclareMathOperator{\Hotb}{\mathsf{Hot^{b}_{coh}}}
\DeclareMathOperator{\Comb}{\mathsf{Com^{b}_{coh}}}
\DeclareMathOperator{\VB}{\mathsf{VB}}
\DeclareMathOperator{\Tri}{\mathsf{Tri}}
\DeclareMathOperator{\Pic}{\mathsf{Pic}}
\DeclareMathOperator{\Hom}{\mathsf{Hom}}
\DeclareMathOperator{\Ext}{\mathsf{Ext}}
\DeclareMathOperator{\Lin}{\mathsf{Lin}}
\DeclareMathOperator{\GL}{\mathsf{GL}}
\DeclareMathOperator{\Aut}{\mathsf{Aut}}
\DeclareMathOperator{\End}{\mathsf{End}}
\DeclareMathOperator{\Mat}{\mathsf{Mat}}
\DeclareMathOperator{\Spec}{\mathsf{Spec}}
\DeclareMathOperator{\slie}{\mathfrak{sl}}
\DeclareMathOperator{\pgl}{\mathfrak{pgl}}
\DeclareMathOperator{\lieg}{\mathfrak{g}}
\DeclareMathOperator{\lien}{\mathfrak{n}}
\DeclareMathOperator{\liep}{\mathfrak{p}}
\DeclareMathOperator{\liew}{\mathfrak{w}}
\DeclareMathOperator{\liel}{\mathfrak{l}}
\DeclareMathOperator{\lieb}{\mathfrak{b}}
\DeclareMathOperator{\lief}{\mathfrak{f}}
\DeclareMathOperator{\liev}{\mathfrak{v}}
\DeclareMathOperator{\lieu}{\mathfrak{u}}
\DeclareMathOperator{\lies}{\mathfrak{s}}
\newcommand{\bul}{\scriptstyle{\bullet}}
\newcommand{\kk}{\mathbbm{k}}
\newcommand{\CC}{\mathbb{C}}
\newcommand{\ZZ}{\mathbb{Z}}
\newcommand{\catD}{\mathsf{D}}
\newcommand{\catA}{\mathsf{A}}
\newcommand{\catB}{\mathsf{B}}
\newcommand{\catI}{\mathsf{E}}
\newcommand{\FF}{\mathbb{F}}
\newcommand{\GG}{\mathbb{G}}
\renewcommand{\SS}{\mathbb{S}}
\newcommand{\PP}{\mathbb{P}}
\newcommand{\Ad}{\mathsf{Ad}}
\newcommand{\kA}{\mathcal{A}}
\newcommand{\kE}{\mathcal{E}}
\newcommand{\kF}{\mathcal{F}}
\newcommand{\kG}{\mathcal{G}}
\newcommand{\kI}{\mathcal{I}}
\newcommand{\kO}{\mathcal{O}}
\newcommand{\kL}{\mathcal{L}}
\newcommand{\kP}{\mathcal{P}}
\newcommand{\kQ}{\mathcal{Q}}
\newcommand{\kR}{\mathcal{R}}
\newcommand{\kM}{\mathcal{M}}
\newcommand{\kV}{\mathcal{V}}
\newcommand{\kT}{\mathcal{T}}
\newcommand{\kX}{\mathcal{X}}
\newcommand{\kY}{\mathcal{Y}}
\newcommand{\mm}{\mathsf{m}}
\newcommand{\cc}{\mathsf{c}}
\newcommand{\lar}{\longrightarrow}
\DeclareMathOperator{\liee}{\mathfrak{a}}
\begin{document}

\title[Vector bundles and Yang--Baxter equation]{Vector bundles on
plane cubic curves  and  the classical  Yang--Baxter equation}

\author{Igor Burban}
\address{
Mathematisches Institut,
Universit\"at Bonn,
Endenicher Allee 60, 53115 Bonn,
Germany
}
\email{burban@math.uni-bonn.de}

\author{Thilo Henrich}
\address{
Mathematisches Institut,
Universit\"at Bonn,
Endenicher Allee 60, 53115 Bonn,
Germany
}
\email{henrich@math.uni-bonn.de}

\begin{abstract}
In this article, we develop a geometric method to construct solutions of the classical Yang-Baxter equation, attaching to the Weierstrass family of plane cubic curves and a pair of coprime positive integers, a family of classical r-matrices. It turns out  that all elliptic r-matrices arise in this way from smooth cubic curves. For the cuspidal cubic curve, we prove that the obtained solutions are rational and compute them explicitly. We also describe them in terms of Stolin's classification and prove that they are degenerations of the corresponding elliptic solutions.
\end{abstract}

\maketitle
\section{Introduction}

\noindent
Let $\mathfrak{g}$ be the Lie algebra $\mathfrak{sl}_n(\mathbb{C})$ and
$U = U(\mathfrak{g})$ be its universal enveloping algebra.
The classical Yang--Baxter equation (CYBE)  is
\begin{equation}\label{eq:CYBE}
[r^{12}(x_1, x_2), r^{13}(x_1, x_3)] + [r^{13}(x_1,x_3), r^{23}(x_2,x_3)] +
[r^{12}(x_1, x_2),
r^{23}(x_2, x_3)] = 0,
\end{equation}
where $r: (\CC^2, 0) \lar \mathfrak{g} \otimes \mathfrak{g}$ is the germ of a meromorphic function.
The upper indices in this equation   indicate various embeddings of
$\mathfrak{g}\otimes \mathfrak{g}$ into $U \otimes U \otimes U$. For example,
the function $r^{13}$ is defined as
$$
r^{13}: \mathbb{C}^2 \stackrel{r}\lar \mathfrak{g}\otimes \mathfrak{g}
\stackrel{\rho_{13}}\lar U \otimes U  \otimes U,
$$
where $\rho_{13}(x\otimes y) = x \otimes {1} \otimes y$. Two other
maps $r^{12}$ and $r^{23}$ have a similar meaning.

A solution of (\ref{eq:CYBE}) (also called \emph{$r$--matrix} in the physical literature) is  \emph{unitary}
if $r(x_1, x_2) = - \rho\bigl(r(x_2, x_1)\bigr)$, where
$\rho$ is the automorphism of $\mathfrak{g} \otimes \mathfrak{g}$
permuting both factors. A solution of (\ref{eq:CYBE}) is \emph{non-degenerate} if
its  image under the isomorphism
$$\mathfrak{g} \otimes \mathfrak{g} \lar \End(\mathfrak{g}), \quad a \otimes b \mapsto \bigl(c \mapsto
\tr(ac) \cdot b\bigr)$$ is an invertible operator for some (and   hence, for a generic) value of the spectral parameters $(x_1, x_2)$.
On the set of  solutions of (\ref{eq:CYBE})  there exists  a natural action of
the group of holomorphic function germs
$\phi: (\mathbb{C},0) \lar {\mathrm{Aut}}(\mathfrak{g})$ given by the rule
\begin{equation}\label{E:equiv}
r(x_1, x_2) \mapsto \tilde{r}(x_1, x_2) :=
\bigl(\phi(x_1) \otimes \phi(x_2)\bigr) r(x_1,x_2).
\end{equation}
It is easy to see that $\tilde{r}(x_1, x_2)$ is again a solution of (\ref{eq:CYBE}). Moreover,
$\tilde{r}(x_1, x_2)$ is unitary (respectively non-degenerate) provided $r(x_1, x_2)$ is
unitary (respectively non-degenerate). The solutions $r(x_1, x_2)$ and $\tilde{r}(x_1, x_2)$ related
by the formula (\ref{E:equiv}) for some $\phi$ are called
\emph{gauge equivalent}.

According to  Belavin and Drinfeld \cite{BelavinDrinfeld2},  any non-degenerate unitary
solution of the equation (\ref{eq:CYBE})  is gauge-equivalent to  a solution
 $r(x_1, x_2) = r(x_2-x_1)$  depending just on the difference (or  the quotient) of spectral
parameters. This means that  (\ref{eq:CYBE}) is essentially equivalent to the
equation
\begin{equation}\label{E:CYBE1}
\bigl[r^{12}(x), r^{13}(x+y)\bigr] + \bigl[r^{13}(x+y), r^{23}(y)\bigr] +
\bigl[r^{12}(x), r^{23}(y)\bigr] = 0.
\end{equation}
By a   result of Belavin and Drinfeld \cite{BelavinDrinfeld}, a non-degenerate
solution of (\ref{E:CYBE1}) is automatically unitary, has a simple pole at $0$ with the residue equal
to a multiple of the Casimir element, and
is either \emph{elliptic}  or \emph{trigonometric}, or \emph{rational}.  In \cite{BelavinDrinfeld},
 Belavin and Drinfeld also
gave  a complete classification of all elliptic and trigonometric solutions of (\ref{E:CYBE1}).
A  classification of rational solutions of (\ref{E:CYBE1}) was  achieved by Stolin
in \cite{Stolin, Stolin3}.

In this paper we study a connection  between the theory of vector bundles on curves
 of genus one and solutions of the classical Yang--Baxter equation (\ref{eq:CYBE}).
Let $E = V(w v^2 - 4 u^3 - g_2 uw^2 - g_3 w^3) \subset \PP^2$ be a Weiestra\ss{}  curve
over $\CC$,
$o \in E$ some fixed smooth point  and
$0 < d < n$ two coprime integers. Consider the sheaf of Lie algebras $\kA := \Ad(\kP)$, where $\kP$ is a simple vector bundle
$\kP$ of rank $n$ and degree $d$ on $E$ (note that up to an automorphism  $\kA$ does not depend on a particular
 choice of $\kP$). For any pair of distinct smooth points
$x, y$ of $E$,  consider the linear map $\kA \big|_{x} \longrightarrow \kA\big|_{y}$
defined  as the composition:
\begin{equation}\label{E:outlineofMain}
\kA\big|_{x} \stackrel{\res_x^{-1}}\lar H^0\bigl(\kA(x)\bigr) \stackrel{\ev_y}\lar \kA\big|_{y},
\end{equation}
where $\res_x$ is the \emph{residue map} and $\ev_y$ is the \emph{evaluation map}. Choosing
a trivialization  $\kA(U) \stackrel{\xi}\lar \slie_n\bigl(\kO(U)\bigr)$ of the sheaf of Lie algebras
$\kA$
for some small   neighborhood $U$ of $o$, we get the tensor
$r^{\xi}_{(E, (n, d))}(x, y) \in \lieg \otimes \lieg$.
The  first main result
of this  paper is the following.

\medskip
\noindent
\textbf{Theorem A}.
 In the above notations we have:
\begin{itemize}
\item The tensor-valued function  $r^\xi_{(E, (n, d))}: U \times U \lar \lieg \otimes \lieg$ is
meromorphic. Moreover, it is a non-degenerate unitary solution of the classical Yang--Baxter equation
(\ref{eq:CYBE}).
\item The function  $r^{\xi}_{(E, (n, d))}$ is  \emph{analytic} with respect to the parameters $g_2$ and $g_3$.
\item A different choice of trivialization $\kA(U) \stackrel{\zeta}\lar \slie_n\bigl(\kO(U)\bigr)$ gives
a gauge equivalent solution $r^\zeta_{(E, (n, d))}$.
\end{itemize}

Our next aim  is   to  describe all solutions of (\ref{E:CYBE1}) corresponding to elliptic curves.
Let $\varepsilon = \exp\bigl(\frac{2 \pi i d}{n}\bigr)$ and
$I := \bigl\{(p, q) \in \ZZ^2 \big| 0 \le p \le n-1, 0 \le q \le n-1, (p, q) \ne (0, 0)\bigr\}$.
Consider the following matrices
\begin{equation}\label{E:matricesXY}
X = \left(
\begin{array}{cccc}
1 & 0 & \dots & 0 \\
0 & \varepsilon & \dots & 0 \\
\vdots & \vdots & \ddots & \vdots \\
0 & 0 & \dots & \varepsilon^{n-1}
\end{array}
\right)
\quad
\mbox{and} \quad
Y = \left(
\begin{array}{cccc}
0 & 1 & \dots & 0 \\
\vdots & \vdots & \ddots & \vdots \\
0 & 0 & \dots & 1 \\
1 & 0 & \dots & 0
\end{array}
\right).
\end{equation}
For any $(k, l) \in I$ denote
$
Z_{k, l} = Y^k X^{-l} \quad \mbox{and} \quad
Z_{k, l}^\vee  = \frac{1}{n} X^{l} Y^{-k}.
$

\medskip
\noindent
\textbf{Theorem B}. Let $\tau \in \CC$ be such that $\mathrm{Im}(\tau) > 0$
 and
 $E = \CC/\langle 1, \tau\rangle$ be the corresponding complex torus. Let $0 < d < n$ be two
 coprime integers.
Then we have:
\begin{equation}\label{E:SolutionElliptic}
r_{(E, (n, d))}(x, y) = \sum\limits_{(k, l) \in I}
\exp\Bigl(-\frac{2 \pi i d}{n} k z\Bigr)
\sigma\Bigl(\frac{d}{n}\bigl(l - k \tau\bigr),  z\Bigr)
Z_{k, l}^\vee \otimes Z_{k, l},
\end{equation}
where $z = y - x$ and 
\begin{equation}\label{E:KroneckerEllFunct}
\sigma(a,  z) =
 2 \pi i \sum\limits_{n \in \ZZ} \frac{\exp(- 2 \pi i n z)}{1 - \exp\bigl(-2 \pi i (a - 2 \pi i n \tau)\bigr)}
\end{equation} is the Kronecker  elliptic function. Hence,
$r_{(E, (n, d))}$ is the elliptic $r$-matrix of Belavin \cite{Belavin}, see also
\cite[Proposition 5.1]{BelavinDrinfeld}.

\medskip
Our next goal is to describe the solutions of (\ref{eq:CYBE}) corresponding to the data
$(E, (n, d))$ for  the cuspidal  cubic curve
$E = V(w v^2 - u^3)$.  Using the classification of simple vector bundles on $E$ due to
 Bodnarchuk and Drozd \cite{BodnarchukDrozd} as well as methods  developed
 by Burban and Kreu\ss{}ler
 \cite{BK4}, we  derive an explicit recipe to compute
 the tensor $r^\xi_{(E, (n,d))}(x, y)$ from Theorem A.
 It turns out that the obtained  solutions of (\ref{eq:CYBE}) are always rational.
By  Stolin's classification \cite{Stolin, Stolin3}, the  rational
solutions of (\ref{eq:CYBE}) are parameterized by certain  Lie
algebraic objects, which we shall  call \emph{Stolin triples}. Such a
triple $(\mathfrak{l},k, \omega)$ consists of
\begin{itemize}
\item a Lie subalgebra $\mathfrak{l}\subseteq\mathfrak{g}$,
\item an integer $k$ such that $0 \leq k \leq n$,
\item a skew symmetric bilinear form $\omega:\mathfrak{l}\times\mathfrak{l}\rightarrow\mathbb{C}$
which is a 2-cocycle, i.e.
\[
\omega\bigl(\left[a,b\right],c\bigr)+\omega\bigl(\left[b,c\right],a\bigr)+
\omega\bigl(\left[c,a\right],b\bigr)=0
\]
for all $a, b, c \in \mathfrak{l}$,
\end{itemize}
such that for the $k$-th parabolic Lie subalgebra
 $\mathfrak{p}_{k}$ of $\mathfrak{g}$ (with $\liep_0 = \liep_n = \lieg$)
the following two conditions are fulfilled:

\begin{itemize}
\item $\mathfrak{l}+\mathfrak{p}_{k}=\mathfrak{g}$,
\item $\omega$ is non-degenerate on $\left(\mathfrak{l}\cap\mathfrak{p}_{k}\right)\times\left(\mathfrak{l}\cap\mathfrak{p}_{k}\right)$.
\end{itemize}

Let $0 < d < n$ be two
coprime integers, $e = n - d$.
We construct a certain matrix $J \in \Mat_{n \times n}(\CC)$ whose  entries
are equal to $0$ or $1$  (and their positions are uniquely determined by $n$ and $d$)
such that the pairing
 $$\omega_{J}: \mathfrak{p}_e \times \mathfrak{p}_e \lar \CC, \quad
 (a, b) \mapsto  \tr\bigl(J^{t}\cdot\left[a,b\right]\bigr)
 $$
 is  non-degenerate.
The following result was  conjectured by Stolin.

\medskip
\noindent
\textbf{Theorem C}.  Let $E$ be the cuspidal cubic curve
 and $0 < d < n$ a pair of coprime integers. Then the solution $r_{(E, (n, d))}$ of the classical Yang--Baxter equation (\ref{eq:CYBE}),
 described in  Theorem A,  is gauge equivalent to the solution
 $r_{(\mathfrak{g}, e, \omega_J)}$ attached
 to the Stolin triple $\left(\mathfrak{g}, e, \omega_J\right)$.

 \medskip
 \noindent
  Moreover, we derive an algorithm  to compute the solution $r_{(E, (n, d))}$ explicitly. In particular, for $d = 1$ this leads to  the following closed formula (see Example \ref{E:explicitSolut}):
 {\scriptsize{
\[
r_{(E, (n, 1))} \sim r_{(\lieg, n-1, \omega)} =
\frac{c}{y-x} +
\]
\[
 + x \left[e_{{1}, {2}}\otimes\check{h}_{1}-\sum_{j=3}^{n}e_{1,j}\otimes\left(\sum_{k=1}^{n-j+1}
e_{j+k-1,k+1}\right)\right]
- y \left[\check{h}_{1}\otimes e_{1,2}-
\sum_{j=3}^{n}\left(\sum_{k=1}^{n-j+1}e_{j+k-1,k+1}\right)\otimes e_{1,j}\right]
\]
\[
+\sum_{j=2}^{n-1}e_{1,j}\otimes\left(\sum_{k=1}^{n-j}e_{j+k,k+1}\right)+
\sum_{i=2}^{n-1}e_{i,i+1}\otimes\check{h}_{i}
-\sum_{j=2}^{n-1}\left(\sum_{k=1}^{n-j}e_{j+k,k+1}\right)\otimes e_{1,j}-\sum_{i=2}^{n-1}\check{h}_{i}\otimes e_{i,i+1} \]
\[
+\sum_{i=2}^{n-2}\left(\sum_{k=2}^{n-i}\left(\sum_{l=1}^{n-i-k+1}e_{i+k+l-1,l+i}\right)\otimes e_{i,i+k}\right)
-\sum_{i=2}^{n-2}\left(\sum_{k=2}^{n-i}e_{i,i+k}\otimes\left(\sum_{l=1}^{n-i-k+1}
e_{i+k+l-1,l+i}\right)\right),
\]
}
}

\medskip
\noindent
where $c$ is the Casimir element in $\lieg \otimes \lieg$ with respect to the trace form,
$e_{i, j}$ are the matrix units for
$1 \le i, j \le n$,
and $\check{h}_l$ is the dual of $h_l = e_{l, l} - e_{l+1, l+1}$,
 $1 \le l \le n-1$. Theorem A implies, that up to  a certain (not explicitly known) gauge transformation
 and a change of variables, this rational solution is a degeneration of the Belavin's elliptic $r$--matrix (\ref{E:SolutionElliptic}) for $\varepsilon = \exp\bigl(\frac{2 \pi i}{n}\bigr)$. It seems that it is rather difficult to prove this result using just direct analytic methods.

\medskip
\noindent
 Finally, we  show that the solutions
 $r_{(E, (n, d))}$  and $r_{(E, (n, e))}$  are gauge equivalent.

\medskip
\noindent
\textbf{Notations and terminology}. In this article we shall use  the following notations.

\medskip
\noindent
$-$ $\kk$ denotes an algebraically closed field
of characteristic zero.

\medskip
\noindent
$-$ Given an algebraic variety $X$, $\Coh(X)$ respectively
$\VB(X)$ denotes the category of coherent sheaves respectively vector bundles on $X$. We denote
$\kO$ the structure sheaf of $X$.
Of course, the theory of Yang--Baxter equations is mainly interesting in  the case $\kk =
\mathbb{C}$.
In that  case, one  can (and probably should)  work in  the \emph{complex analytic category}. However, all relevant results and proofs of this article  remain valid in that case, too.

\medskip
\noindent
$-$ We denote by $\Dbcoh(X)$ the triangulated category of bounded complexes of $\kO$--modules
with coherent cohomology, whereas $\Perf(X)$ stands
 for the triangulated category of perfect complexes,  i.e.~
the full subcategory of $\Dbcoh(X)$ admitting a bounded locally free resolution.

\medskip
\noindent
$-$
We always write
$\Hom$, $\End$ and $\Ext$ when working with global morphisms and extensions between
coherent sheaves whereas $\Lin$ is used when we work with
vector spaces. If not explicitly otherwise stated, $\Ext$ always stands for $\Ext^1$.

\medskip
\noindent
$-$
For a vector bundle  $\kF$ on $X$ and $x  \in X$,  we denote by
$\kF\big|_x$ the fiber of $\kF$ over $x$, whereas $\kk_x$ denotes the skyscraper sheaf of length
one supported at $x$.

\medskip
\noindent
$-$
A \emph{Weierstra\ss{} curve} is a plane projective cubic curve given in homogeneous coordinates
by an equation $wv^2 = 4 u^3 +  g_2 uw^2 + g_3 w^3$, where $g_1, g_2 \in \kk$.
Such a curve is always irreducible. It is singular  if and only
if $\Delta(g_2, g_3) = g_2^3 + 27 g_3^2  =  0$. Unless $g_{2}=g_{3}=0$, the
  singularity is a \emph{node}, whereas in the case
  $g_{2}=g_{3}=0$ the singularity is a \emph{cusp}.

\medskip
\noindent
$-$
 A  \emph{Calabi--Yau curve}  $E$ is a reduced projective Gorenstein curve with trivial dualizing sheaf. Note that the complete list of such curves is actually known, see for
    example \cite[Section 3]{Ishii}: $E$ is either
 \begin{itemize}
    \item an elliptic curve,
    \item a Kodaira cycle of $n \ge 1 $ projective lines (for $n = 1$ it is a  nodal Weierstra\ss{} curve), also called Kodaira fiber of type $\mathrm{I}_n$,
    \item a cuspidal plane cubic curve (Kodaira fiber II), a tachnode cubic curve (Kodaira fiber III) or a generic configuration of $n$ concurrent lines in $\mathbb{P}^{n-1}$ for $n \ge 3$.
        \end{itemize}
 The irreducible Calabi--Yau curves are precisely the Weierstra\ss{} curves. For a Calabi--Yau curve
 $E$ we denote by $\breve{E}$ the regular part of $E$.

\medskip
\noindent
$-$
  We denote by $\Omega$    the sheaf of regular differential one--forms on a Calabi--Yau curve $E$, which we always view as a dualizing sheaf. Taking a non-zero section $w \in H^0(\Omega)$, we get
an isomorphism of $\kO$--modules $\kO \stackrel{w}\longrightarrow \Omega$.

\medskip
\noindent
$-$ Next, $\kP$ will always denote a simple  vector bundle  on a Calabi--Yau curve
$E$, i.e.~ a locally free coherent sheaf satisfying
$\End(\kP) = \kk$. Note that we automatically
have:  $\Ext(\kP, \kP) \cong \kk$.

\medskip
\noindent
$-$
Finally, for $n \ge 2$ we denote $\liee = \mathfrak{gl}_n(\kk)$ and $\lieg =
\slie_n(\kk)$. For $1 \le k \le n-1$ we denote by $\liep_k$ the $k$-th parabolic subalgebra
of $\lieg$.

\medskip
\noindent
\textbf{Plan of the paper and overview of methods and results}.

\medskip
\noindent
The main message of this
 article  is  the following:  to any triple $(E, (n, d))$, where
\begin{itemize}
\item $E$ is a Weierstra\ss{} curve,
\item  $0 < d < n$ is a pair of coprime integers,
\end{itemize}
one  can \emph{canonically} attach a solution $r_{(E, (n, d))}$ of the
classical Yang--Baxter equation (\ref{eq:CYBE})
for the Lie algebra $\lieg = \slie_n(\CC)$, see Section \ref{S:GenusOneandCYBE}.  The construction goes as follows.

\medskip
\noindent
Let $\kP$ be a simple vector bundle of rank $n$ and degree $d$ on $E$ and
$\kA = \mathsf{Ad}(\kP)$ be the sheaf of traceless endomorphisms of $\kP$. Obviously, $\kA$ is a sheaf
of Lie algebras on $E$ satisfying $H^0(\kA) = 0 = H^1(\kA)$. In can be shown that  $\kA$ does not depend
on the particular choice of $\kP$ and up to an isomorphism  determined by  $n$ and $d$, see
Proposition \ref{P:AdonCY}.

\medskip
\noindent
Let $x, y$ be a pair of smooth points of $E$.
Since the triangulated category $\Perf(E)$ has a (non-canonical) structure of an $A_\infty$--category,
we have the following linear map
$$
\mm_3: \Hom(\kP, \kk_x) \otimes \Ext(\kk_x, \kP) \otimes \Hom(\kP, \kk_y) \lar
\Hom(\kP, \kk_x).
$$
Using Serre duality, we get from here the  induced linear map
$$
\overline{\mm}_{x, y}: \slie\bigl(\Hom(\kP, \kk_x)\bigr) \lar \mathfrak{pgl}\bigl(\Hom(\kP, \kk_y)\bigr)
$$
and  the corresponding tensor
$\mm_{x, y} \in
\mathfrak{pgl}\bigl(\Hom(\kP, \kk_x)\bigr) \otimes \mathfrak{pgl}\bigl(\Hom(\kP, \kk_y)\bigr)$.
It turns out that this element $m_{x, y}$ is a \emph{triangulated invariant} of $\Perf(E)$, i.e.~it
does not depend on a  (non-canonical) choice of an $A_\infty$--structure on the category $\Perf(E)$.

\noindent
Let $E$ be  an elliptic curve. According to Polishchuk, \cite[Theorem 2]{Polishchuk1},
the tensor  $\mm_{x, y}$ is unitary and satisfies the classical Yang--Baxter equation
\begin{equation}\label{E:AinftyCYBE}
\bigl[\mm^{12}_{x_1, x_2}, \mm^{13}_{x_1, x_3}\bigr] + \bigl[\mm^{12}_{x_1, x_2}, \mm^{23}_{x_2, x_3}\bigr]
+ \bigl[\mm^{12}_{x_1, x_2}, \mm^{13}_{x_1, x_3}\bigr] = 0.
\end{equation}
Relation (\ref{E:AinftyCYBE}) follows from  the following two ingredients.
\begin{itemize}
\item  The $A_\infty$--constraint
$$
\mm_3 \circ \bigl(\mm_3 \otimes \mathbbm{1} \otimes \mathbbm{1}  +  \mathbbm{1} \otimes
\mm_3 \otimes \mathbbm{1}  + \mathbbm{1} \otimes \mathbbm{1} \otimes
\mm_3\bigr) + \dots = 0
$$
on
 the triple product $\mm_3$.
   \item Existence of a cyclic $A_\infty$--structure with respect of  the canonical
Serre--pairing on the triangulated category  $\Perf(E)$.
\end{itemize}
The unitarity of $\mm_{x, y}$ follows from existence of a cyclic $A_\infty$ structure as well.
To generalize the relation (\ref{E:AinftyCYBE}) on singular Weierstra\ss{}  curves as
well as  on the
relative situation of genus one fibrations, we need the following result (Theorem \ref{T:main1}): the diagram
\begin{equation}\label{E:diagcomp}
\begin{array}{c}
\xymatrix{
\kA\big|_x \ar[rr]^-{Y_1}  & & \slie\bigl(\Hom(\kP, \kk_x)\bigr) \ar[dd]^{\overline{\mm}_{x, y}} \\
H^0\bigl(\kA(x)\bigr) \ar[u]^{\res_x} \ar[d]_{\ev_y} & &  \\
\kA\big|_y \ar[rr]^-{Y_2} & & \mathfrak{pgl}\bigl(\Hom(\kP, \kk_y)\bigr)
}
\end{array}
\end{equation}
is commutative,
where
$Y_1$ and $Y_2$ are certain canonical anti-isomorphisms of Lie algebras. A version of this important
fact has been stated in \cite[Theorem 4(b)]{Polishchuk1}.

Using the commutative diagram (\ref{E:diagcomp}), we prove Theorem A. As a  consequence, we obtain  the continuity property of the solution
$r_{(E, (n, d))}$ with respect to the Weierstra\ss{} parameters $g_2$ and $g_3$ of the curve $E$. This
actually  leads to certain unexpected analytic consequences about classical $r$--matrices, see
Corollary \ref{C:AnalyticConseq}.

The above  construction can be rephrased in the following  way. Let $E$ be an arbitrary
 Weierstra\ss{} cubic curve. Then there exists a canonical meromorphic section
 $$
 r \in \Gamma\bigl(\breve{E} \times \breve{E}, p_1^{*}\kA \otimes p_2^{*}\kA\bigr),
 $$
 where $p_1, p_2: \breve{E} \times \breve{E} \rightarrow E$ are canonical projections,
 satisfying the equation
 $$
 \bigl[r^{12}, r^{13}\bigr] + \bigl[r^{13}, r^{23}\bigr] + \bigl[r^{12}, r^{23}\bigr] = 0,
 $$
 see Theorem \ref{T:main2}.
It seems that in  the case of elliptic curves,
similar ideas have been  suggested already in 1983 by Cherednik  \cite{Cherednik}.  For an elliptic curve $E$ with a marked point $o \in E$, the Lie algebra
$\mathfrak{sel}_{(E, (n, d))}: = \Gamma\bigl(E\setminus \{o\}, \kA\bigr)$ was studied  by  Ginzburg, Kapranov and Vasserot \cite{GinzburgKapranovVasserot}, who constructed its realization using ``correspondences'' in the spirit of the geometric representation theory.

Talking about the proposed method of constructing of solutions of the classical Yang--Baxter equation,
one may pose following natural question: to what extent is this method \emph{constructive}?
It turns out, that one can end up
with explicit solutions in the case of all types of the Weierstra\ss{} curves. See also
\cite{BK4}, where the similar technique in the case of solutions of the associative Yang--Baxter equation
has been developed.

We first show that for an elliptic curve $E$, the corresponding solution $r_{(E, (n, d))}$ is the 
\emph{elliptic} $r$--matrix of Belavin  given by the formula (\ref{E:SolutionElliptic}), see Theorem \ref{T:main3}.  This result can be also deduced
from \cite[Formula (2.5)]{Polishchuk1}. However, Polishchuk's proof, providing on one side a spectacular and impressive application of methods  of   mirror symmetry, is on the other hand   rather  undirect, as it requires the strong $A_\infty$--version of the homological mirror symmetry  for elliptic curves, explicit formulae for  higher products
 in the Fukaya category of a torus and finally  leads to a more complicated expression than
(\ref{E:SolutionElliptic}).

Next,  we focus on solutions of (\ref{eq:CYBE}) originating
from the cuspidal cubic curve $E = V(uv^2 - w^3)$.  The motivation to deal with this problem
comes from the fact that all
obtained solutions turn out to be \emph{rational}, which is the most complicated  class of solutions
from the point of view
of the Belavin--Drinfeld classification \cite{BelavinDrinfeld}.  Our approach is based
on the general methods of study of vector bundles on the singular curves of genus one developed
in \cite{DrozdGreuel,Thesis, Survey} and especially on Bodnarchuk--Drozd classification \cite{BodnarchukDrozd}
of  simple vector bundles on $E$. The above abstract way to construct solutions
of (\ref{eq:CYBE}) can be reduced  to a very explicit recipe  (see Algorithm \ref{Algorithm geometric}),
summarized
as follows.
\begin{itemize}
\item To any pair of positive coprime integers $d, e$ such that $e + d = n$ we  attach a certain matrix
$J = J_{(e, \, d)} \in \Mat_{n \times n}(\CC)$, whose entries are either $0$ or $1$.
\item For any  $x \in \CC$, the
matrix  $J$ defines a certain linear subspace $\Sol\bigl((e, d), x)\bigr)$ in the
Lie algebra of currents $\lieg[z]$. For any $x \in \CC$, we denote  the evaluation map by  $\phi_x: \lieg[z]
\rightarrow \lieg$.
\item Let   $\overline{\res}_x := \phi_x$ and
$\overline{\ev}_y := \frac{\displaystyle 1}{\displaystyle y-x} \phi_y$. It turns out that $\overline{\res}_x$ and $\overline{\ev}_y$  yield
isomorphisms between $\Sol\bigl((e, d), x)\bigr)$ and $\lieg$. Moreover, these maps
 are just the  coordinate versions of the sheaf-theoretic morphisms
$\res_x: H^0\bigl(\kA(x)\bigr) \rightarrow \kA\big|_x$ and $\ev_y: H^0\bigl(\kA(x)\bigr) \rightarrow \kA\big|_y$ appearing in  the diagram (\ref{E:diagcomp}).
\end{itemize}

The constructed matrix $J$ turns out to be useful in a completely different situation. Let  $\liep = \mathfrak{p}_e$ denote
the $e$-th parabolic subalgebra of $\lieg$. This Lie algebra  is known to be \emph{Frobenius}, see for example \cite{Elashvili1982} and \cite{Stolin}. We prove (see Theorem \ref{T:FrobAlg}) that the pairing
 $$\omega_{J}: \mathfrak{p} \times \mathfrak{p} \lar \CC, \quad
 (a, b) \mapsto  \tr\bigl(J^{t}\cdot\left[a,b\right]\bigr)
 $$
is  non-degenerate, making the Frobenius structure on $\mathfrak{p}$ explicit. This result will  later be used to get explicit  formulae for the  solutions $r_{(E, (n, d))}$.

The study of rational solutions of  the classical Yang--Baxter equation (\ref{eq:CYBE}) was a subject
of Stolin's investigation \cite{Stolin,Stolin3,Stolin2}. The first basic fact of his theory states that
the gauge equivalence classes of rational solutions of (\ref{eq:CYBE}) with values
in $\lieg$, which   satisfy a certain additional Ansatz on the residue,  stand in  bijection
with the conjugacy classes of  certain Lagrangian Lie subalgebras $\liew \subset \lieg((z^{-1}))$
called \emph{orders}. The  second basic result of Stolin's theory  states that Lagrangian orders are parameterized (although not in a unique way) by  certain triples $(\liel, k, \omega)$ mentioned in the Introduction.

The problem of description of all Stolin triples $(\liel, k, \omega)$ is known to be \emph{representation-wild}, as it contains as a subproblem \cite{BelavinDrinfeld, Stolin} the wild problem
of classification of all abelian Lie subalgebras of $\lieg$ \cite{Drozd}. Thus, it is natural to ask what
Stolin triples $(\liel, k, \omega)$ correspond to the
``geometric'' rational solutions $r_{(E, (n, d))}$, since the latter ones have discrete
combinatorics and obviously form a ``distinguished''
class of rational solutions.  This problem is completely solved in  Theorem C, what is the third  main result of this  article.

\medskip
\noindent
\emph{Acknowledgement}. This  work  was supported  by the DFG project Bu--1866/2--1. We are grateful    to   Alexander Stolin for introducing us in his theory rational solutions of the classical Yang--Baxter equation and sharing his ideas.

\section{Some algebraic and geometric preliminaries}
In this section we collect some known basic facts from linear algebra, homological algebra, and the
 theory of vector bundles on Calabi--Yau curves, which are crucial
for further applications.

\subsection{Preliminaries from linear algebra}
For a finite dimensional vector space $V$ over $\kk$ we denote by $\slie(V)$
the Lie subalgebra of $\End(V)$ consisting of endomorphisms with zero trace and
$\pgl(V) := \End(V)/\langle \mathbbm{1}_V\rangle$. Since the proofs of all statements from this subsection
are completely elementary, we left  them  to the reader as an exercise.

\begin{lemma}\label{L:LA1}
The non-degenerate bilinear pairing $\tr: \End(V) \times \End(V) \lar \kk$, $(f, g) \mapsto
\tr(fg)$ induces  another non-degenerate pairing $\tr: \slie(V) \times \pgl(V) \lar \kk$,
$(f, \bar{g}) \mapsto \tr(fg)$. In particular, for any finite dimensional vector space $U$ we get
 a canonical isomorphism
of vector spaces
\begin{equation*}
\pgl(U) \otimes \pgl(V) \lar \Lin\bigl(\slie(U), \pgl(V)\bigr).
\end{equation*}
\end{lemma}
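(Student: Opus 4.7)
The plan is to proceed in three elementary steps; nothing in this statement is deep, and indeed the authors flag it as an exercise.

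First, I would check that the formula $(f, \bar g) \mapsto \tr(fg)$ is well defined on $\slie(V) \times \pgl(V)$. Given $f \in \slie(V)$ and $g \in \End(V)$, replacing $g$ by $g + \lambda \mathbbm{1}_V$ changes $\tr(fg)$ by $\lambda \tr(f) = 0$, so the value depends only on the class $\bar g \in \pgl(V) = \End(V)/\langle \mathbbm{1}_V\rangle$.

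Second, I would verify non-degeneracy by a standard orthogonal-complement argument. Under the trace pairing on $\End(V) \times \End(V)$, the annihilator of the line $\langle \mathbbm{1}_V\rangle$ is exactly $\slie(V)$, because $\tr(\mathbbm{1}_V \cdot f) = \tr(f)$ vanishes iff $f \in \slie(V)$. Since $\End(V)$ is finite dimensional and the trace pairing is non-degenerate, quotienting the right factor by $\langle \mathbbm{1}_V\rangle$ and restricting the left factor to its annihilator $\slie(V)$ produces a non-degenerate induced pairing on $\slie(V) \times \pgl(V)$.

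Third, applying this to $U$ in place of $V$ yields a canonical isomorphism $\pgl(U) \xrightarrow{\sim} \slie(U)^{*}$, sending $\bar a$ to the functional $f \mapsto \tr(af)$. Combining with the standard canonical isomorphism $\slie(U)^{*} \otimes \pgl(V) \cong \Lin\bigl(\slie(U), \pgl(V)\bigr)$ (valid because $\slie(U)$ is finite dimensional) produces the asserted canonical isomorphism
$$
\pgl(U) \otimes \pgl(V) \xrightarrow{\sim} \Lin\bigl(\slie(U), \pgl(V)\bigr), \qquad \bar a \otimes \bar b \;\longmapsto\; \bigl(f \mapsto \tr(af)\,\bar b\bigr).
$$
There is no substantive obstacle; the only thing worth double-checking is the well-definedness of the map on representatives (handled in step one) and the fact that the map is indeed an isomorphism, which follows from composing two canonical isomorphisms of finite-dimensional vector spaces.
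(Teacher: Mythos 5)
Your proof is correct and is exactly the standard argument the authors have in mind; the paper itself leaves this lemma (and the rest of the subsection) to the reader as an elementary exercise, so there is no written proof to diverge from. The three steps — well-definedness on the quotient, non-degeneracy via the identification of $\slie(V)$ as the trace-annihilator of $\langle \mathbbm{1}_V\rangle$, and the composite $\pgl(U)\otimes\pgl(V)\cong\slie(U)^{*}\otimes\pgl(V)\cong\Lin\bigl(\slie(U),\pgl(V)\bigr)$ — are all complete and need no repair.
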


\begin{lemma}\label{L:LA2}
The \emph{Yoneda map} $Y: \End(V) \lar \End(V^*)$, assigning to an endomorphism $f$ its
 adjoint $f^*$, induces an \textsl{anti-isomorphisms of Lie algebras}
\begin{itemize}
\item $Y_1: \slie(V) \lar \slie(V^*)$ and
\item $Y_2: \slie(V) \lar \pgl(V^*)$, $f \mapsto \bar{f^*}$, where $\bar{f^*}$ is the
equivalence class
of $f^*$.
\item The following diagram
$$
\xymatrix{
\slie(V) \times \slie(V) \ar[rr]^{Y_1 \times Y_2} \ar[rd]_{\tr} & & \slie(V^*) \times \pgl(V^{*}) \ar[ld]^{\tr} \\
& \kk &
}
$$
is commutative.
\end{itemize}
\end{lemma}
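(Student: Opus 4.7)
The proof plan is to reduce all three claims to two elementary identities for the adjoint operation on $\End(V)$: namely $(fg)^{*} = g^{*}f^{*}$ and $\tr(f^{*}) = \tr(f)$. Everything else is bookkeeping, and since $V$ is finite dimensional the map $Y\colon \End(V)\to\End(V^{*})$ is automatically a linear isomorphism, so the remaining content is to check that the relevant subspaces, quotients, and brackets are respected.

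For the first bullet I would observe that the reversal identity gives
\[
[f,g]^{*} \;=\; (fg-gf)^{*} \;=\; g^{*}f^{*} - f^{*}g^{*} \;=\; -[f^{*},g^{*}],
\]
so $Y$ is an anti-homomorphism of associative algebras viewed as Lie algebras. The trace identity $\tr(f^{*})=\tr(f)$ shows that $Y$ sends $\slie(V)$ into $\slie(V^{*})$, and since both spaces have the same (finite) dimension the restriction $Y_{1}$ is an anti-isomorphism.

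For the second bullet I would factor $Y_{2}$ as $\slie(V)\xrightarrow{Y_{1}}\slie(V^{*})\hookrightarrow\End(V^{*})\twoheadrightarrow\pgl(V^{*})$. The composition of the last two arrows is a Lie algebra homomorphism because $\pgl(V^{*})$ is defined as a Lie algebra quotient of $\End(V^{*})$; it is injective because $\slie(V^{*})\cap\langle \mathbbm{1}_{V^{*}}\rangle = 0$ (using $\mathrm{char}(\kk)=0$ so $\tr(\mathbbm{1}_{V^{*}})=\dim V\ne 0$); and a dimension count then upgrades it to an isomorphism. Composing with the anti-isomorphism $Y_{1}$ yields the asserted anti-isomorphism $Y_{2}$, and the formula $Y_{2}(f)=\overline{f^{*}}$ is clear from the construction.

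For the third bullet the pairing in Lemma \ref{L:LA1} is well defined on $\slie(V^{*})\times\pgl(V^{*})$, so I just need
\[
\tr\bigl(Y_{1}(f)\,Y_{2}(g)\bigr) \;=\; \tr(f^{*}g^{*}) \;=\; \tr\bigl((gf)^{*}\bigr) \;=\; \tr(gf) \;=\; \tr(fg),
\]
where the third equality is $\tr(h^{*})=\tr(h)$ and the fourth is cyclicity of the trace. This matches the pairing $\tr(f,g)=\tr(fg)$ on the left side of the diagram, so the diagram commutes. There is no real obstacle here; the one point worth being careful about is that $Y_{2}(g)$ lives in a quotient, but this causes no ambiguity precisely because Lemma \ref{L:LA1} guarantees that the pairing $\tr\colon \slie(V^{*})\times\pgl(V^{*})\to\kk$ is independent of the chosen representative.
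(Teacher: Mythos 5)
Your proof is correct; the paper explicitly leaves all statements of this subsection as an elementary exercise, and your argument (the identities $(fg)^{*}=g^{*}f^{*}$ and $\tr(f^{*})=\tr(f)$, plus dimension counts and the well-definedness of the pairing on the quotient from Lemma \ref{L:LA1}) is exactly the intended one. You also correctly isolate where $\mathrm{char}(\kk)=0$ (or rather invertibility of $\dim V$) is needed, namely in showing $\slie(V^{*})\cap\langle\mathbbm{1}_{V^{*}}\rangle=0$ for the second bullet, which matches the paper's remark following the lemma.
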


\noindent
Note that the fist part of the statement is valid for any field $\kk$, whereas the second one is only true
if $\mathsf{dim}_{\kk}(V)$ is invertible in $\kk$.

\begin{lemma}\label{L:LA3}
Let $H \subseteq V$ be a linear subspace.
Then we have the canonical linear map $r_H: \End(V) \lar \Lin(H, V/H)$ sending
an endomorphism $f$ to the composition  $H \lar V \stackrel{f}\lar V \lar V/H$. Moreover,
the following results are true.
\begin{itemize}
\item We have: $r_H(\mathbbm{1}_V) = 0$. In particular, there is an induced canonical map
$\bar{r}_H: \pgl(V) \lar \Lin(H, V/H)$.
\item Let $f \in \End(V)$ be such that for any one-dimensional subspace $H \subseteq V$ we have:
$r_H(f) = 0$. Then $\bar{f} = 0$ in $\pgl(V)$.
\item Let $U$ be a finite dimensional vector space and $g_1, g_2: U \lar \pgl(V)$  be two linear maps
such that for any one-dimensional subspace $H \subseteq V$ we have:
$\bar{r}_H \circ g_1 = \bar{r}_H \circ g_2$. Then $g_1 = g_2$.
\end{itemize}
\end{lemma}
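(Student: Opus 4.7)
The three assertions are essentially standard facts; the approach is to reduce each one to the observation that an endomorphism of $V$ preserving every line is a scalar operator.

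For the first item, the claim $r_H(\mathbbm{1}_V) = 0$ is tautological: the composition $H \hookrightarrow V \xrightarrow{\mathbbm{1}_V} V \twoheadrightarrow V/H$ sends any $h \in H$ to $h + H = 0$, so $\mathbbm{1}_V \in \ker(r_H)$. Since $r_H$ is linear, the whole line $\langle \mathbbm{1}_V\rangle$ is contained in the kernel, and therefore $r_H$ descends to a well-defined linear map $\bar{r}_H : \pgl(V) \to \Lin(H, V/H)$.

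For the second item, the key step is to translate the vanishing condition into a statement about eigenvectors. For a one-dimensional subspace $H = \langle v\rangle$ with $v \neq 0$, the map $r_H(f)$ vanishes if and only if $f(v) \in H = \langle v\rangle$, i.e.\ $v$ is an eigenvector of $f$. Hence the hypothesis is equivalent to saying that every nonzero vector of $V$ is an eigenvector of $f$. I would then argue in the standard way: pick a basis $v_1, \dots, v_n$ of $V$ with $f(v_i) = \lambda_i v_i$; applying the eigenvector condition to $v_i + v_j$ forces $\lambda_i = \lambda_j$ for all $i, j$, so $f = \lambda \cdot \mathbbm{1}_V$ for some $\lambda \in \kk$, whence $\bar f = 0$ in $\pgl(V)$.

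For the third item, set $g := g_1 - g_2 \colon U \to \pgl(V)$. The assumption gives $\bar{r}_H \circ g = 0$ for every one-dimensional $H \subseteq V$. For each $u \in U$, choose a representative $f_u \in \End(V)$ of $g(u) \in \pgl(V)$; then $r_H(f_u) = \bar{r}_H(g(u)) = 0$ for every one-dimensional $H$, and the second item applied to $f_u$ yields $g(u) = \bar{f_u} = 0$. Since this holds for all $u$, we conclude $g_1 = g_2$. I expect no serious obstacle: the only slightly delicate step is the basis-swap argument in part two, but this is a textbook computation in linear algebra and works uniformly because $\kk$ has characteristic zero (in particular, $\dim_\kk V$ is invertible, which is the only place where the parenthetical remark in the preceding lemma would matter).
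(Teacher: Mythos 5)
Your proof is correct and is precisely the elementary argument the paper intends: the authors explicitly leave all statements of this subsection to the reader as an exercise, and the reduction to ``every line is $f$-invariant implies $f$ is scalar'' via the basis-swap computation is the standard route. One minor remark: your closing comment about characteristic zero is not actually needed here --- the basis-swap argument works over an arbitrary field, and the invertibility of $\dim_{\kk}(V)$ is only relevant to the second part of Lemma~\ref{L:LA2} (the splitting of $\End(V)$ into $\slie(V)$ and scalars), not to this lemma.
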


\subsection{Triple Massey products}
In this article, we use the notion of triple Massey products in the following special situation.

\begin{definition}\label{D:MasseyProd}
Let $\mathsf{D}$ be a $\kk$--linear triangulated category, $\kP$, $\kX$ and $\kY$ some objects
of $\catD$ satisfying the following conditions:
\begin{equation}\label{E:InputForMassey}
\End(\kP) = \kk \quad \mathrm{and} \quad
     \Hom(\kX, \kY) = 0 = \Ext(\kX, \kY).
\end{equation}
     Consider the linear subspace
\begin{equation}\label{E:linearspaceK}
K := \mathsf{Ker}\bigl(\Hom(\kP, \kX) \otimes \Ext(\kX, \kP) \stackrel{\circ}\lar
\Ext(\kP, \kP)\bigr).
\end{equation}
and a linear subspace
$H \subseteq \Hom(\kP, \kY)$.
The triple Massey product is the  map
\begin{equation}\label{E:tripleMassey}
{M}_H: K \lar \Lin\bigl(H, \Hom(\kP, \kY)/H\bigr)
\end{equation}
defined as follows. Let $t = \sum_{i= 1}^p f_i \otimes \omega_i \in K$ and
$h \in H$. Consider the following commutative diagram in the triangulated category $\catD$:
\begin{equation*}
\begin{array}{c}
\xymatrix{
& & \kP \ar[ddl]_-{\tilde{f}} \ar[dd]^{f = \left(\begin{smallmatrix} f_1 \\ \vdots \\ f_p\end{smallmatrix}\right)} & & \\
& & & & \\
\kP \ar[r]^{\imath} \ar[d]_h & \kA \ar[r]^-{p} \ar[dl]^-{\tilde{h}} & \kX \oplus \dots \oplus \kX \ar[rr]^-{(\omega_1, \dots, \omega_p)}
&  & \kP[1]. \\
\kk_y & & & & \\
}
\end{array}
\end{equation*}
The horizontal sequence is a distinguished triangle in $\catD$ determined
by the morphism $(\omega_1, \dots, \omega_p)$.
Since $\sum_{i=1}^p \omega_i f_i = 0$ in $\Ext(\kP, \kP)$, there exists a morphism
$\tilde{f}: \kP \lar \kA$ such that $p \tilde{f} = f$. Note that such a morphism is only defined
 up to a translation $\tilde{f} \mapsto \tilde{f} + \lambda \imath$ for some
$\lambda \in \kk$. Since $\Hom(\kX, \kY) = 0 = \Ext(\kX, \kY)$, there exists a unique
morphism $\tilde{h}: \kA \lar \kY$ such that $\tilde{h} \imath = h$.
We set: $
\bigl(M_H(t)\bigr)(h) := \overline{\tilde{h} \tilde{f}}.
$
\qed
\end{definition}

\noindent
The following result is   well-known, see for instance \cite[Exercise IV.2.3]{GelfandManin}.
\begin{proposition}
The map $M_H$ is well-defined, i.e.~it is independent
of a  presentation of $t \in K$ as a sum of simple tensors and  a choice of the horizontal distinguished triangle. Moreover, $M_H$  is $\kk$--linear.
\end{proposition}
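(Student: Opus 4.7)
The plan is to verify sequentially that the class $\overline{\tilde h \tilde f} \in \Hom(\kP, \kY)/H$ is independent of the four potential choices entering the construction — the lift $\tilde f$, the extension $\tilde h$, the cone $\kA$, and the presentation of $t$ as a sum of simple tensors — and then to deduce $\kk$-linearity from the rigidified formula.

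I would first handle uniqueness of $\tilde h$. Rotating the given triangle and applying $\Hom(-, \kY)$ produces a long exact sequence in which the terms $\Hom(\kX^{\oplus p}, \kY)$ and $\Ext(\kX^{\oplus p}, \kY)$ vanish by the standing assumption on $\kX, \kY$, so precomposition with $\imath$ yields an isomorphism $\Hom(\kA, \kY) \stackrel{\sim}{\to} \Hom(\kP, \kY)$ and $\tilde h$ is the unique preimage of $h$. For $\tilde f$, applying $\Hom(\kP, -)$ to the same triangle and using $\End(\kP) = \kk$ shows the lift is determined only up to $\tilde f + \lambda \imath$ with $\lambda \in \kk$; but $\tilde h(\tilde f + \lambda \imath) = \tilde h \tilde f + \lambda h$ and $\lambda h \in H$, so the residue class is untouched. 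Two cones of the morphism $(\omega_1, \dots, \omega_p)$ are connected by an isomorphism of triangles $\varphi : \kA \to \kA'$ with $\varphi \imath = \imath'$ and $p' \varphi = p$; replacing $(\tilde f, \tilde h)$ by $(\varphi \tilde f, \tilde h \varphi^{-1})$ gives a legal pair for $\kA'$ with the same composition $\tilde h \tilde f$.

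The core difficulty is independence from the presentation $t = \sum_{i=1}^p f_i \otimes \omega_i$. My plan is to argue by stabilization: appending a formal pair $(g, 0)$ with $g \in \Hom(\kP, \kX)$ arbitrary does not change $t$ in the tensor product, and the enlarged cone decomposes as $\kA \oplus \kX$; because $\Hom(\kX, \kY) = 0$ the unique extension of $\tilde h$ kills the additional $\kX$-summand, so the recomputed value of $\tilde h' \tilde f'$ equals $\tilde h \tilde f$. The dual stabilization by $(0, \eta)$ with $\eta \in \Ext(\kX, \kP)$ is handled analogously, invoking $\Ext(\kX, \kY) = 0$. Any two presentations of a single $t$ can be transformed into each other by a finite sequence of such stabilizations together with elementary manipulations (scaling, merging and splitting summands), and the Massey product is preserved at each step. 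Once presentation-independence is established, $\kk$-linearity of $M_H$ is automatic, since a sum of presentations is a presentation of the sum and scalars pass through either tensor slot.

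The main obstacle is precisely the stabilization step, since it is where the non-canonical choices of cones and lifts in $\catD$ have to be reconciled with the purely algebraic tensor relations on $\Hom(\kP, \kX) \otimes \Ext(\kX, \kP)$; the two vanishing hypotheses $\Hom(\kX, \kY) = 0 = \Ext(\kX, \kY)$ are exactly what makes this reconciliation possible.
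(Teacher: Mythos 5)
The paper offers no proof of this proposition --- it is quoted from \cite{GelfandManin} (Exercise IV.2.3) --- so your argument can only be compared with the standard textbook one, which is essentially what you reconstruct. The parts you carry out in full are correct: the vanishing $\Hom(\kX,\kY)=0=\Ext(\kX,\kY)$ makes $-\circ\imath\colon \Hom(\kA,\kY)\to\Hom(\kP,\kY)$ bijective, so $\tilde{h}$ is unique; $\End(\kP)=\kk$ confines the lift to $\tilde{f}+\lambda\imath$, which changes $\tilde{h}\tilde{f}$ only by $\lambda h\in H$; and an isomorphism of two candidate triangles transports the pair $(\tilde{f},\tilde{h})$ without changing the composite. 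Note also that every intermediate presentation you pass through still satisfies $\sum\omega_i f_i=0$, since that composite depends only on the tensor $t\in K$ and not on its presentation.

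The one place where you assert more than you verify is the heart of the matter: presentation-independence. You demonstrate the mechanism only on the two stabilization moves, but appending $g\otimes 0$ or $0\otimes\eta$ does not generate the defining relations of the tensor product; the essential moves are merging/splitting of summands and the transfer of scalars between the two slots. These are handled by exactly the device you already use for the $(0,\eta)$-stabilization: any matrix $A\colon\kX^{\oplus p}\to\kX^{\oplus q}$ of scalars with $\omega'\circ A=\omega$ and $A\circ f=f'$ gives a commutative square over $\id_{\kP[1]}$, hence by (TR3) a morphism $\psi\colon\kA\to\kA'$ of the two distinguished triangles with $\psi\imath=\imath'$ and $p'\psi=Ap$; then $\psi\tilde{f}$ lifts $f'$, uniqueness of the extension forces $\tilde{h}'\psi=\tilde{h}$, and the two computed values agree. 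Taking $A$ to be a codiagonal, a diagonal, or a scalar realizes merging in either slot and rescaling, and a normal-form argument (expand all $f_i$ in a fixed basis and merge) connects any two presentations of $t$ by such moves. The same caveat applies to $\kk$-linearity, which is not quite ``automatic'': additivity requires comparing the cone of the concatenated datum $(\omega,\omega')$ with $\kA\oplus\kA'$ via the codiagonal $\kP\oplus\kP\to\kP$, again by (TR3) and uniqueness of $\tilde{h}$. With these routine completions --- all within reach of the tools you already deploy --- your proof is correct.
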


\subsection{$A_\infty$--structures and triple Massey products}
Let $\catB$ be a $\kk$--linear Gro\-then\-dieck abe\-lian category,
$\catA$ be its full subcategory of Noetherian objects
and $\catI$ the full subcategory of injective objects. For simplicity, we assume
$\catA$ to be $\Ext$--finite.
The derived category $D^+(\catB)$ is equivalent to the homotopy
category $\Hot+(\catI)$. This identifies the triangulated category $\catD = D^b_{\catA}(\catB)$
of complexes with cohomology from $\catA$ with the corresponding full subcategory of
 $\Hot+(\catI)$.
 Since
$\Hotb(\catI)$ is the  homotopy category of the dg--category
 $\Comb(\catI)$, by the homological perturbation
lemma of Kadeishvili \cite{Kadeishvili}, the triangulated category $\catD$ inherits a structure of an $A_\infty$--category. This means that for any $n \ge 2$,
 $i_1, i_2, \dots, i_n \in \ZZ$ and objects $\kF_0, \kF_1, \dots, \kF_n$ of $\catD$,   we have linear maps
 $$
 \mathsf{m}_n: \Ext^{i_1}(\kF_0, \kF_1) \otimes
 \Ext^{i_2}(\kF_1, \kF_2) \otimes \dots \otimes \Ext^{i_n}(\kF_{n-1}, \kF_n) \lar
 \Ext^{i_1 + \dots + i_n + (2-n)}(\kF_0, \kF_n)
 $$
 satisfying the identities
 \begin{equation}\label{E:structureAinfty}
 \sum\limits_{\substack{r, s, t \ge 0\\ r+s+t = n}}
 (-1)^{r+st} \mm_{r+1+t}\bigl(
 \underbrace{\mathbbm{1} \dots \otimes \mathbbm{1}}_{r \; times} \otimes \mm_s \otimes
 \underbrace{\mathbbm{1} \dots \otimes \mathbbm{1}}_{s \; times}\bigr) = 0,
 \end{equation}
 where $\mm_2$ is  the composition  of morphisms  in $\catD$. The higher operations
 $\bigl\{\mm_n\}_{n \ge 3}$ are unique up to an $A_\infty$--automorphism of
 $\catD$. On the other hand, they  are  \emph{not} determined by the triangulated structure of $\catD$, although
 they turn out to be compatible with the Massey products. Throughout this subsection,
 we fix  some $A_\infty$--structure $\bigl\{\mm_n\}_{n \ge 3}$ on $\catD$.

\medskip
\noindent
Assume we have object $\kP$, $\kX$ and $\kY$ of $\catD$ satisfying the conditions of Definition
\ref{D:MasseyProd}.
Consider the linear map
$$
\mm = \mm_3^{\infty} \;:\;
\Hom(\kP, \kX) \otimes \Ext(\kX, \kP) \otimes \Hom(\kP, \kY) \lar \Hom(\kP, \kY).
$$
It induces  another linear map
$
K \lar \End\bigl(\Hom(\kP, \kk_y)\bigr)
$
assigning to  an element $t \in K$ the functional $g \mapsto \mm(t \otimes g)$.
Composing this map with
 the canonical projection $\End\bigl(\Hom(\kP, \kY)\bigr)
\longrightarrow \mathfrak{pgl}\bigl(\Hom(\kP, \kY)\bigr)$,
we obtain the linear map
\begin{equation}\label{E:MasseyMap}
\mm^\kP_{\kX, \kY}: \; K \lar \mathfrak{pgl}\bigl(\Hom(\kP, \kY)\bigr).
\end{equation}
\begin{lemma}
The  map  $\mm^\kP_{\kX, \kY}$ does not depend on the choice of an
$A_\infty$--structure on $\catD$.
\end{lemma}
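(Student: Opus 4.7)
The plan is to exploit the standard uniqueness result for $A_\infty$--enhancements: any two $A_\infty$--structures $\{\mm_n\}_{n\geq 2}$ and $\{\mm'_n\}_{n\geq 2}$ on $\catD$ with the same composition $\mm_2$ are related by an $A_\infty$--isomorphism $\{f_n\}_{n \geq 1}$ with $f_1 = \mathbbm{1}$. Writing out the arity-three component of the corresponding $A_\infty$--morphism equation yields, for composable morphisms $a, b, c$, an identity of the schematic form
\begin{equation*}
\mm'_3(a,b,c) - \mm_3(a,b,c) = \pm\mm_2\bigl(f_2(a,b), c\bigr) \pm \mm_2\bigl(a, f_2(b,c)\bigr) \pm f_2\bigl(\mm_2(a,b), c\bigr) \pm f_2\bigl(a, \mm_2(b,c)\bigr).
\end{equation*}

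I would then specialize this identity to $a = f_i$, $b = \omega_i$, $c = g$ for a fixed decomposition $t = \sum_i f_i \otimes \omega_i \in K$ and an arbitrary $g \in \Hom(\kP, \kY)$, and sum over $i$. The four terms on the right collapse as follows. Any term containing the factor $\mm_2(\omega_i, g) \in \Ext(\kX, \kY)$ or $f_2(\omega_i, g) \in \Hom(\kX, \kY)$ vanishes by hypothesis \eqref{E:InputForMassey}. The sum $\sum_i f_2\bigl(\mm_2(f_i, \omega_i), g\bigr)$ equals $f_2\bigl(\sum_i \omega_i \circ f_i,\, g\bigr) = f_2(0, g) = 0$ by bilinearity of $f_2$ and the defining relation $\sum_i \omega_i \circ f_i = 0$ of $K$ from \eqref{E:linearspaceK}. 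Finally, $f_2(f_i, \omega_i) \in \Hom(\kP, \kP) = \kk$ by \eqref{E:InputForMassey}, so $f_2(f_i, \omega_i) = \lambda_i \mathbbm{1}_\kP$ for scalars $\lambda_i \in \kk$, and the remaining term evaluates to $\bigl(\sum_i \lambda_i\bigr)\, g$.

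Putting this together, $(\mm'_3 - \mm_3)(t \otimes g) = \lambda(t)\, g$ for a scalar $\lambda(t) \in \kk$ depending only on $t$. Hence the endomorphism $g \mapsto (\mm'_3 - \mm_3)(t \otimes g)$ of $\Hom(\kP, \kY)$ is a scalar multiple of $\mathbbm{1}_{\Hom(\kP, \kY)}$, and projects to zero in $\pgl\bigl(\Hom(\kP, \kY)\bigr)$; this proves the independence claim. The main obstacle I anticipate is pinning down the correct signs in the arity-three $A_\infty$--morphism relation and verifying cleanly that the two given enhancements are genuinely related by an $A_\infty$--isomorphism with $f_1 = \mathbbm{1}$ (rather than by a more general quasi-isomorphism whose linear part would then have to be inverted). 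However, the argument is structurally robust: the only contribution capable of surviving the vanishings is a scalar operator on $\Hom(\kP, \kY)$, so sign conventions cannot affect the conclusion in $\pgl$.
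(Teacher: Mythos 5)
Your argument is correct, but it follows a genuinely different route from the paper. The paper does not touch the $A_\infty$--morphism equation at all: it invokes the compatibility of the triple $A_\infty$--product with the \emph{triangulated} Massey product, i.e.\ the commutativity of the diagram (\ref{E:compatMasseyAinfty}) $\bar{r}_H \circ \mm^\kP_{\kX,\kY} = M_H$ for every one--dimensional subspace $H \subseteq \Hom(\kP,\kY)$ (citing Lu--Palmieri--Wu--Zhang for this), and then concludes by the last part of Lemma \ref{L:LA3}, since $M_H$ is defined purely from the triangulated structure and two maps into $\pgl\bigl(\Hom(\kP,\kY)\bigr)$ agreeing after all $\bar{r}_H$ must coincide. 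You instead compare two enhancements directly via an $A_\infty$--isomorphism $\{f_n\}$ with $f_1 = \mathbbm{1}$ and check that every correction term dies: $f_2(\omega_i, g)$ and $\mm_2(\omega_i,g)$ land in $\Hom(\kX,\kY)$ resp.\ $\Ext(\kX,\kY)$, which vanish by (\ref{E:InputForMassey}); $\sum_i f_2(\omega_i \circ f_i, g) = f_2(0,g)$ by the definition (\ref{E:linearspaceK}) of $K$; and $f_2(f_i,\omega_i) \in \End(\kP) = \kk$ leaves only a scalar operator, which is exactly what passing to $\pgl$ is designed to kill. The two proofs use the same three hypotheses at the same pressure points, but yours is more self--contained (no external Massey--product comparison needed), at the price of the one genuine debt you already flag: you must know that any two Kadeishvili transfers are related by an $A_\infty$--isomorphism with $f_1 = \mathbbm{1}$ (not merely some quasi--isomorphism), and that both structures are minimal so that no $f_3 \circ \mm_1$ terms appear in the arity--three relation. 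Both facts are standard for minimal models of the same dg--category, so the gap is citable rather than substantive; the paper's route buys the extra dividend that $\mm^\kP_{\kX,\kY}$ is not just independent of the enhancement but is computable from the triangulated structure alone, which is what Theorem \ref{T:main1} actually exploits later.
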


\begin{proof}
Of coarse, we may without loss of generality assume that $\Hom(\kP, \kY) \ne 0$.
First note  that for any choice of an $A_\infty$--structure on $\catD$
and any one-dimensional linear subspace $H \subseteq \Hom(\kP, \kY)$,  the following
diagram
\begin{equation}\label{E:compatMasseyAinfty}
\begin{array}{c}
\xymatrix
{
K \ar[rd]_-{M_H} \ar[rr]^-{{\mm}^\kP_{\kX, \kY}} & & \pgl\bigl(\Hom(\kP, \kY\bigr) \ar[ld]^-{\bar{r}_H} \\
 & \Lin\bigl(H, \Hom(\kP, \kY)/H\bigr) &
}
\end{array}
\end{equation}
is commutative.
Here,   $M_H$ is the triple Massey product (\ref{E:tripleMassey})
 and $\bar{r}_H$ is the canonical linear
map from Lemma \ref{L:LA3}. This compatibility between the triangulated Massey products and
 higher $A_\infty$--products is well-known, see for example
 \cite{LuPalmieri} a proof of a much more general statement.
 Let $\bigl\{\underline{\mm}_n\bigr\}_{n \ge 3}$ be another $A_\infty$--structure on
$\catD$. From  the last part of Lemma \ref{L:LA3} it follows that  ${\mm}^\kP_{\kX, \kY} =
{\underline{\mm}}^\kP_{\kX, \kY}$.
This proves the claim.
\end{proof}

\subsection{On the sheaf of Lie algebras $\mathsf{Ad}(\kF)$}

Let $X$ be an algebraic variety over $\kk$ and $\kF$ a  vector bundle on $X$.

\begin{definition} The locally free sheaf  $\Ad(\kF)$ of the traceless endomorphisms of $\kF$
is defined by via the following short exact sequence
\begin{equation}\label{E:shortexactAd}
0 \lar \Ad(\kF) \lar {\mathcal End}(\kF) \stackrel{\Tr_\kF}{\lar} \kO \lar 0,
\end{equation}
where $\Tr_\kF: {\mathcal End}(\kF) \lar \kO$ is the canonical trace map.
\end{definition}

\noindent
In the proposition below we collect some basic facts on the vector bundle $\Ad(\kF)$.

\begin{proposition}\label{P:basiconAd} In the above notation the following statements are true.
\begin{itemize}
\item The vector bundle $\Ad(\kF)$ is  a sheaf of Lie algebras on $X$.
\item Next, we have: $H^0\bigl(\Ad(\kF)\bigr) = 0$.
\item For any $\kL \in \Pic(X)$ we have the  natural isomorphism of sheaves of Lie algebras
$\Ad(\kF) \lar \Ad(\kF \otimes \kL)$ which is induced by the natural isomorphism of sheaves of algebras
$ {\mathcal End}(\kF) \lar {\mathcal End}(\kF \otimes \kL)$.
\item We have a symmetric  bilinear pairing
$
\Ad(\kF) \times \Ad(\kF) \lar \kO
$
given on the level of local sections by the rule $(f, g) \mapsto \tr(fg)$. This pairing  induces
an isomorphism of $\kO$--modules $\Ad(\kF) \lar \Ad(\kF)^\vee$.
\end{itemize}
\end{proposition}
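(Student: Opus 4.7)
The plan is to handle the four assertions essentially independently: each one reduces, after a local trivialization, to a standard fact about matrix algebras, and only item (ii) genuinely uses global information. The main subtle point I expect is in (ii), where one has to bring in an implicit hypothesis on $\kF$ that is not fully spelled out in the statement.

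For the first claim, I would observe that ${\mathcal End}(\kF)$ is locally isomorphic to $\Mat_n(\kO)$ and therefore carries a natural structure of sheaf of associative $\kO$--algebras, hence of sheaves of Lie algebras under the commutator bracket $[f,g] = fg - gf$. The trace $\Tr_\kF$ is $\kO$--linear and kills commutators, so it is a Lie algebra homomorphism to the abelian sheaf $\kO$, and its kernel $\Ad(\kF)$ is therefore a Lie ideal; in particular $\Ad(\kF)$ is itself a sheaf of Lie algebras.

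For the vanishing of $H^0$, the statement as written needs the (implicit) hypothesis of the paper that $\kF$ is a simple vector bundle on a complete connected variety (consistent with the notational convention that adjoint bundles are always formed from the simple bundles $\kP$ of rank $n$ and degree $d$). Under this hypothesis I would take global sections of (\ref{E:shortexactAd}) and use that $\End(\kF) = \kk \cdot \mathbbm{1}_\kF$ together with $\Tr_\kF(\mathbbm{1}_\kF) = \rk(\kF) \in \kk^{\times}$ to conclude that the map $\End(\kF) \to H^0(\kO) \cong \kk$ is an isomorphism; the long exact sequence then forces $H^0\bigl(\Ad(\kF)\bigr) = 0$.

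For (iii), I would use the tautological identification $\kL \otimes \kL^\vee \cong \kO$ to produce a natural isomorphism of sheaves of associative $\kO$--algebras
\[
{\mathcal End}(\kF \otimes \kL) \; \cong \; \kF \otimes \kF^\vee \otimes \kL \otimes \kL^\vee \; \cong \; {\mathcal End}(\kF),
\]
which manifestly intertwines the traces; restricting to kernels yields the required natural isomorphism, and compatibility with the commutator bracket is automatic since the map is an algebra isomorphism. For (iv), symmetry of $(f,g) \mapsto \tr(fg)$ is the identity $\tr(fg) = \tr(gf)$, and to verify that the induced morphism $\Ad(\kF) \to \Ad(\kF)^\vee$ is an isomorphism I would argue stalkwise: on a trivializing open the pairing becomes the classical trace form on $\slie_n(\kO_x)$, whose non-degeneracy in characteristic zero is a standard fact. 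The only real obstacle is locating and making precise the simplicity hypothesis required in (ii); once that is in place, the remaining items are formalities.
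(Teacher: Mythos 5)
Your proof is correct, but note that the paper itself gives no argument for this proposition at all: it is stated as a collection of ``basic facts'' and left unproved, so there is no authorial proof to compare against. Your four arguments are exactly the standard ones that fill this gap. The one substantive observation you make --- and it is worth making --- is that the second item is false as literally stated (for $\kF$ an arbitrary vector bundle on an arbitrary variety $X$; e.g.\ $\kF = \kO^{\oplus 2}$ on $\PP^1$ gives $H^0\bigl(\Ad(\kF)\bigr) \cong \kk^3$), and that one must import the paper's standing convention that $\Ad$ is only ever applied to a \emph{simple} bundle $\kP$ on a complete connected curve, so that $\End(\kF) = \kk = H^0(\kO)$ and the map $\End(\kF) \to H^0(\kO)$, sending $\mathbbm{1}_\kF$ to $\rk(\kF) \ne 0$ in characteristic zero, is an isomorphism. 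This is consistent with how the vanishing is actually re-derived and used later in Proposition \ref{P:AdonCY}. Your treatment of the remaining items (commutators lie in the kernel of the trace; ${\mathcal End}(\kF \otimes \kL) \cong \kF \otimes \kF^\vee \otimes \kL \otimes \kL^\vee \cong {\mathcal End}(\kF)$ intertwining traces; local non-degeneracy of the trace form on $\slie_n$ over a ring containing $\kk$, since the Gram matrix is a constant invertible matrix) is complete and correct.
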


\subsection{Serre duality pairing on a Calabi--Yau curve}
Let $E$ be a Calabi--Yau curve and  $w \in H^0(\Omega)$ a no-where vanishing
regular differential form.
 For any pair of objects $\kF, \kG \in \Perf(E)$ we have the bilinear form
 \begin{equation}\label{E:SerrePairing}
 \langle -\;,\;-\rangle = \langle -\;,\;-\rangle_{\kF,\,  \kG}^w \; : \;   \Hom(\kF, \kG) \times \Ext(\kG, \kF) \lar \kk
 \end{equation}
 defined as the composition
 $$
 \Hom(\kF, \kG) \times \Ext(\kG, \kF) \stackrel{\circ}\lar \Ext(\kF, \kF)
 \stackrel{\Tr_{\kF}}\lar H^1(\kO) \stackrel{w}\lar
 H^1(\Omega) \stackrel{t}\lar \kk,
 $$
 where $\circ$ denotes the composition operation, $\Tr_\kF$ is the trace map and $t$ is the canonical morphism described in  \cite[Subsection 4.3]{BK4}.
The following result is well-known, see for example \cite[Corollary 3.3]{BK4} for a proof.

\begin{theorem}
For any $\kF, \kG \in \Perf(E)$ the pairing  $\langle -\,,\,-\rangle_{\kF, \; \kG}^w$ is non-degenerate. In particular, we have an isomorphism of vector spaces
\begin{equation}\label{E:SerreMap}
\SS = \SS_{\kF, \; \kG}: \quad \Ext(\kG, \kF) \lar \Hom(\kF, \kG)^*,
\end{equation}
which is functorial in both arguments.
\end{theorem}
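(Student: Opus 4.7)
The plan is to reduce the statement to classical Serre duality for a Gorenstein curve with trivial dualizing sheaf, and then identify the pairing defined via composition, trace, and the integration map $t$ with the standard duality pairing.

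First I would carry out a dévissage to reduce to the case of vector bundles. Since $E$ is a curve, every perfect complex is quasi-isomorphic to a bounded complex of vector bundles. Using the bilinearity of $\langle -\,,\,-\rangle$ and the long exact sequences associated to distinguished triangles in $\Perf(E)$, one shows that if the pairing is non-degenerate for the vertices of a distinguished triangle then it is non-degenerate for the remaining one, by a five-lemma argument after identifying the Serre-duality map $\SS$ with a morphism of long exact sequences (this requires the functoriality statement, which one proves simultaneously by tracking the naturality of $\circ$, $\Tr$, $w$, and $t$, each being natural by construction). Thus it suffices to treat the case $\kF, \kG \in \VB(E)$.

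For vector bundles $\kF, \kG$ on $E$, setting $\kH := \mathcal{H}om(\kF, \kG)$, one has canonical identifications
\[
\Hom(\kF, \kG) = H^0(\kH), \qquad \Ext(\kG, \kF) = H^1(\kH^\vee),
\]
where I use that $\mathcal{H}om(\kG, \kF) \cong \kH^\vee$ via the trace pairing from Proposition \ref{P:basiconAd} extended to arbitrary bundles. Under these identifications, the composition map $\Hom(\kF,\kG) \otimes \Ext(\kG,\kF) \to \Ext(\kF,\kF)$ followed by $\Tr_\kF$ becomes the cup product composed with the contraction $\kH \otimes \kH^\vee \to \kO$, i.e.\ the canonical pairing $H^0(\kH) \otimes H^1(\kH^\vee) \to H^1(\kO)$. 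Multiplication by the nowhere-vanishing section $w$ identifies $H^1(\kO)$ with $H^1(\Omega)$, and the map $t: H^1(\Omega) \to \kk$ is the Grothendieck trace. Hence $\langle -\,,\,-\rangle_{\kF,\,\kG}^w$ is exactly the classical Serre duality pairing for $\kH^\vee$ with values in the dualizing sheaf $\Omega \cong \kO$. Non-degeneracy is then the content of Serre duality on the Gorenstein curve $E$ (see \cite[Subsection 4.3]{BK4} for the construction of $t$ satisfying exactly this property).

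Functoriality in both arguments follows in a uniform way once one observes that each of the four constituents — composition, the trace morphism $\Tr_\kF$, tensoring by the fixed section $w$, and the intrinsic map $t$ — is functorial. Concretely, for a morphism $\kF \to \kF'$ in $\Perf(E)$ one verifies the commutativity of the relevant squares involving pullback/pushforward on $\Hom$ and $\Ext$ by chasing through the definition of $\circ$ and the naturality of $\Tr$; the same argument handles the second argument.

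The main obstacle is the identification in the second step: one must verify that the abstractly defined composition-and-trace pairing really coincides with the Serre duality pairing in the form $H^0(\kH) \otimes H^1(\kH^\vee) \to \kk$. This is easy in the smooth case via \v{C}ech or Dolbeault representatives, but on a singular Gorenstein curve it requires the more delicate description of the trace map $t$ through local residues at the singular points, which is precisely the content of the construction recalled in \cite[Subsection 4.3]{BK4}. Once this identification is granted, non-degeneracy is immediate from classical Serre duality applied to $\kH^\vee$.
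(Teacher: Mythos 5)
The paper does not actually prove this theorem: it is quoted as known, with a pointer to \cite[Corollary 3.3]{BK4}, where the trace map $t$ is constructed explicitly and the non-degeneracy is established. Your argument is therefore not parallel to anything in the text, but it is a legitimate reconstruction of the standard proof and is essentially correct. Its two pillars are sound: (i) every object of $\Perf(E)$ is globally quasi-isomorphic to a bounded complex of vector bundles, and the stupid-truncation triangles reduce everything to $\kF,\kG\in\VB(E)$ by a five-lemma argument; (ii) for vector bundles the pairing is identified with the cup product $H^0\bigl({\mathcal Hom}(\kF,\kG)\bigr)\otimes H^1\bigl({\mathcal Hom}(\kG,\kF)\bigr)\to H^1(\kO)\cong H^1(\Omega)\xrightarrow{t}\kk$, whose non-degeneracy is Serre--Grothendieck duality on the projective Gorenstein curve $E$ with $\Omega\cong\kO$. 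Two points deserve more care than you give them. First, in the d\'evissage the five lemma forces you to prove non-degeneracy of all the pairings $\Ext^i(\kF,\kG)\times\Ext^{1-i}(\kG,\kF)\to\kk$ simultaneously, since the long exact sequences mix degrees; this is harmless because the degree-$i$ statement for $(\kF,\kG)$ is the degree-$0$ statement for $(\kF[-i],\kG)$, but it must be said, and the commutativity of the squares involving the connecting morphisms uses the cyclicity $\Tr_{\kF'}(u\circ b)=\Tr_{\kF}(b\circ u)$ of the categorical trace. Second, the identification of the Yoneda composition with the cup product, and of trace-of-composition with the contraction ${\mathcal Hom}(\kF,\kG)\otimes{\mathcal Hom}(\kG,\kF)\to\kO$, is exactly what the explicit (\v{C}ech/residue) description of $t$ in \cite[Subsection 4.3]{BK4} is designed to supply; you correctly isolate this as the crux, and since the pairing in the statement is defined through that very map $t$, deferring to its construction there is acceptable rather than circular.
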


\noindent
Let $\kP$ be a simple vector bundle on $E$ and $x, y \in \breve{E}$ a pair of points from
the same irreducible components. Note that we are in the situation of Definition \ref{D:MasseyProd}
for $\catD = \Perf(E)$, $\kX = \kk_x$ and $\kY = \kk_y$.
Note the following
easy fact.

\begin{lemma}\label{L:SLandSerre}
Let $K$ be as in (\ref{E:linearspaceK}).  Then the linear isomorphism
$$
\overline{\SS}: \Hom(\kP, \kk_x) \otimes \Ext(\kk_x, \kP) \xrightarrow{\mathbbm{1} \,\otimes\;  \SS}
\Hom(\kP, \kk_x) \otimes \Hom(\kP, \kk_x)^* \stackrel{\ev}\lar \End\bigl(\Hom(\kP, \kk_x)\bigr)
$$
identifies the vector space $K$  with $\mathfrak{sl}\bigl(\Hom(\kP, \kk_x)\bigr)$.
\end{lemma}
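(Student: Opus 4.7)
The plan is to unwind both sides of the putative identification and show that under $\overline{\SS}$ the composition map $\Hom(\kP,\kk_x)\otimes\Ext(\kk_x,\kP)\xrightarrow{\circ}\Ext(\kP,\kP)$ corresponds, modulo a canonical isomorphism $\Ext(\kP,\kP)\cong\kk$, to the ordinary trace on $\End(V)$, where $V:=\Hom(\kP,\kk_x)$. Since $\slie(V)=\ker(\tr)$, this will immediately yield $\overline{\SS}(K)=\slie(V)$.

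First I would unfold $\overline{\SS}$: a simple tensor $f\otimes\omega$ is sent to the rank-one operator $g\mapsto\langle g,\omega\rangle^w_{\kP,\kk_x}\cdot f$ on $V$, so for a general $t=\sum_i f_i\otimes\omega_i$,
\[
\tr\bigl(\overline{\SS}(t)\bigr)=\sum_i\langle f_i,\omega_i\rangle^w_{\kP,\kk_x}.
\]
By the very definition (\ref{E:SerrePairing}) of the Serre pairing, $\langle f_i,\omega_i\rangle^w=t\circ w\circ\Tr_\kP(\omega_i\circ f_i)$, hence
\[
\tr\bigl(\overline{\SS}(t)\bigr)=(t\circ w\circ\Tr_\kP)\Bigl(\sum_i\omega_i\circ f_i\Bigr)=(t\circ w\circ\Tr_\kP)\bigl(\circ(t)\bigr).
\]

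The only substantive step is then to verify that the composite
\[
\Ext(\kP,\kP)\xrightarrow{\Tr_\kP}H^1(\kO)\xrightarrow{w}H^1(\Omega)\xrightarrow{t}\kk
\]
is an isomorphism, for once this is done, $\circ(t)=0\iff\tr(\overline{\SS}(t))=0\iff\overline{\SS}(t)\in\slie(V)$, and restricting the isomorphism $\overline{\SS}$ to $K$ yields the claim. To see the composite is an isomorphism, take the long exact cohomology sequence of $0\to\Ad(\kP)\to{\mathcal End}(\kP)\xrightarrow{\Tr_\kP}\kO\to 0$ and use that $H^0(\Ad(\kP))=0$ (Proposition~\ref{P:basiconAd}), that $\End(\kP)=\kk$ by simplicity, and that the induced unit map $\kk=H^0({\mathcal End}(\kP))\to H^0(\kO)=\kk$ is multiplication by $\rk(\kP)\neq 0$; this forces the connecting map to vanish and the global trace $\Tr_\kP:\Ext(\kP,\kP)\to H^1(\kO)$ to be surjective. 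Both source and target are one-dimensional --- for $\Ext(\kP,\kP)$, combine Serre duality (on the Calabi--Yau curve $E$, where $\Omega\cong\kO$) with $\End(\kP)=\kk$; for $H^1(\kO)$, use triviality of $\Omega$ and $h^0(\Omega)=1$ --- so surjectivity implies bijectivity, and post-composition with the isomorphisms induced by $w$ and $t$ preserves this.

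The main (and essentially only) obstacle is this last step: one must check that no scalar obstruction can annihilate $\tr\circ\overline{\SS}$ identically, and this reduces to $\rk(\kP)\ne 0$ in $\kk$, which is guaranteed by the blanket assumption $\mathrm{char}\,\kk=0$. Everything else is formal unwinding of the definitions of Serre pairing, evaluation map, and the Yoneda/trace identifications from Lemmas \ref{L:LA1}--\ref{L:LA2}.
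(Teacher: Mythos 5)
Your proof is correct. The paper states this lemma as an ``easy fact'' with no proof, and your argument is exactly the natural one: under $\overline{\SS}$ the composition map $\circ$ becomes $\tr$ up to the scalar isomorphism $t\circ w\circ\Tr_\kP:\Ext(\kP,\kP)\to\kk$, so $K=\ker(\circ)$ goes to $\ker(\tr)=\slie\bigl(\Hom(\kP,\kk_x)\bigr)$. You also correctly isolate and dispatch the only point requiring care, namely that this scalar is nonzero; this is in fact already contained in the paper's Proposition~\ref{P:AdonCY}, whose proof shows via the same long exact sequence that $\Ext(\kP,\kP)\to H^1(\kO)$ is an isomorphism (using $H^1(\kA)=0$), which is marginally more direct than your surjectivity-plus-dimension-count but equivalent.
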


\subsection{Simple vector bundles on Calabi--Yau curves} In this subsection, we collect some
basic results  on the classification of vector bundles on (possibly reducible)
Calabi--Yau curves.
\begin{definition}
Let $\bigl\{E^{(1)}, \dots, E^{(p)}\bigr\}$ be the set of the irreducible components
of a Calabi--Yau curve $E$. For a vector bundle $\kF$ on $E$ we denote by
$$
\underline{\deg}(\kF) = (d_1, \dots, d_p) \in \ZZ^p
$$
its  \emph{multi-degree}, where $d_i = \deg\bigl(\kF\big|_{E^{(i)}}\bigr)$ for $1 \le i \le p$.

\vspace{2mm}
\noindent
For  $\mathbbm{d} \in \ZZ^p$ we denote
$
\Pic^{\mathbbm{d}}(E) := \bigl\{\kL \in \Pic(E) \,\big| \, \underline{\deg}(\kL) = \mathbbm{d}\bigr\}.
$
In particular, for $\mathbbm{o} = (0, \dots, 0)$ we set:
$
J(E) = \Pic^{\mathbbm{o}}(E).
$
Then $J(E)$ is an algebraic group called \emph{Jacobian} of $E$.
\end{definition}

\begin{proposition} For  $\kk = \CC$  we have the following isomorphisms of Lie groups:
\begin{equation*}
J(E)  \cong
\begin{cases}
  \CC/\Lambda       & \text{if } \; $E$ \; \text{ is elliptic,}\\
  \CC^* & \text{if } \; $E$ \; \text{ is a Kodaira cycle,}\\
  \CC               &  \;  \text{in the remaining cases}.
\end{cases}
\end{equation*}
Moreover, for any multi-degree $\mathbbm{d}$ we have a (non-canonical) isomorphism
of algebraic varieties $J(E) \lar \Pic^{\mathbbm{d}}(E)$.
\end{proposition}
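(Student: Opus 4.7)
The plan is to compute $J(E)$ case by case using the standard sheaf-cohomological presentation of the Picard group, and then to deduce the torsor statement by tensoring with an explicit line bundle of the prescribed multi-degree.

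For an elliptic curve $E$ I would apply the holomorphic exponential sequence $0 \to \ZZ \to \kO_E \to \kO_E^* \to 0$ and take cohomology. The connecting homomorphism $H^1(\kO_E^*) \to H^2(E, \ZZ) \cong \ZZ$ is precisely the degree map, so $J(E) \cong H^1(E, \kO_E)/H^1(E, \ZZ) \cong \CC/\Lambda$ for a rank-two lattice $\Lambda$.

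For the singular cases, let $\nu : \tilde{E} \to E$ be the normalization; since every irreducible component is rational, $\tilde{E}$ is a disjoint union of $p$ copies of $\PP^1$, one per irreducible component of $E$. The short exact sequence
\[
0 \lar \kO_E^* \lar \nu_* \kO_{\tilde{E}}^* \lar \kQ \lar 0,
\]
with $\kQ$ a skyscraper sheaf at the singular locus having stalk $\tilde{\kO}_x^*/\kO_{E,x}^*$ at each singular point $x$, combined with $\Pic(\tilde{E}) \cong \ZZ^p$, gives on cohomology an exact sequence
\[
(\CC^*)^p \stackrel{\alpha}{\lar} H^0(\kQ) \lar \Pic(E) \lar \ZZ^p \lar 0,
\]
and hence $J(E) = \ker\bigl(\Pic(E) \to \ZZ^p\bigr) \cong \coker(\alpha)$. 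The remaining task is a local analysis: filtering by powers of the conductor ideal identifies $\tilde{\kO}_x^*/\kO_{E,x}^*$ (as an analytic group) with $(\CC^*)^{b_x - 1} \times \CC^{a_x}$, where $b_x$ is the number of local analytic branches at $x$ and $a_x = \delta_x - (b_x - 1)$; the multiplicative factor records the ratios of leading coefficients on distinct branches, while the additive factor arises from $(1 + \tilde{\mathfrak{m}}_x)/(1 + \mathfrak{m}_x)$.

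Plugging this in case by case: for a Kodaira cycle of $n$ lines there are $n$ nodes with $(b,a) = (2,0)$, so $\alpha: (\CC^*)^n \to (\CC^*)^n$ has kernel and cokernel both equal to $\CC^*$, yielding $J(E) \cong \CC^*$; for the cuspidal $(b,a) = (1,1)$, tacnodal $(b,a) = (2,1)$, and generic $n$-concurrent-lines $(b,a) = (n,1)$ cases, the image of $\alpha$ exhausts the multiplicative summand of $H^0(\kQ)$ and leaves exactly the one-dimensional additive part in the cokernel, giving $J(E) \cong \CC$. For the torsor statement I would fix smooth points $p_1, \dots, p_p$, one on each irreducible component of $E$, and set $\kL_0 = \kO_E\bigl(\sum n_i p_i\bigr)$ for a given $\mathbbm{d} = (n_1, \dots, n_p)$; tensoring by $\kL_0$ is then the required algebraic isomorphism $J(E) \to \Pic^{\mathbbm{d}}(E)$. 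The main obstacle is the local computation of $\tilde{\kO}_x^*/\kO_{E,x}^*$ at the non-nodal singularities, in particular the precise identification of the multiplicative and additive parts via the conductor filtration and the verification that the image of $\alpha$ lies entirely in the multiplicative summand.
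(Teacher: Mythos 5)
Your argument is correct and is precisely the standard normalization-sequence computation that the paper itself defers to: it offers no proof of its own, citing only Hartshorne, Exercise II.6.9, and \cite[Theorem 16]{Survey}, both of which run exactly along your lines ($0 \to \kO_E^* \to \nu_*\kO_{\tilde{E}}^* \to \kQ \to 0$, identification of the stalks of $\kQ$ as $(\CC^*)^{b_x-1}\times \CC^{a_x}$ via the splitting of $\tilde{\kO}_x^*$ into leading coefficients and $1+\tilde{\mathfrak{m}}_x$, and twisting by $\kO_E\bigl(\sum n_i p_i\bigr)$ for the torsor statement). The local data you list ($\delta_x = n$ with $b_x = n$, $a_x = 1$ for $n$ concurrent lines, etc.) are consistent with $p_a(E)=1$, and the observation that constants on the branches have trivial unipotent part settles the remaining point you flag, so nothing essential is missing.
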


\noindent
A proof of this result follows from  \cite[Exercise II.6.9]{Hartshorne} or
\cite[Theorem 16]{Survey}.

\medskip
\noindent
Next, recall the description of  simple vector bundles on Calabi--Yau curves.

\begin{theorem}\label{T:simplebundles}
 Let $E$ be a reduced \textsl{plane cubic curve} with $p$ irreducible components
and $\kP$ be a simple vector bundle on $E$.
 Then the following statements are true.
\begin{itemize}
\item Let $n = \rk(\kP)$ be the rank of $\kP$ and $d = d_1(\kP) + \dots + d_p(\kP) = \chi(\kP)$
its degree. Then $n$ and $d$ are mutually prime.
\item If $E$ is irreducible then $\kP$ is stable.
\item Let $n \in \mathbb{N}$ and $\mathbbm{d} = (d_1, \dots, d_p) \in \ZZ^p$ be such
that $\gcd(n, d_1 + \dots + d_p) = 1$. Denote by  $M_E(n, \mathbbm{d})$ the set of simple
vector bundles on $E$ of rank $n$ and multi-degree $\mathbbm{d}$. Then the map
$
\det: M_E(n, \mathbbm{d}) \lar \Pic^{\mathbbm{d}}(E)
$
is a bijection. Moreover, for any $\kP \not\cong \kP' \in M_E(n, \mathbbm{d})$ we have:
$
\Hom(\kP, \kP') = 0 = \Ext(\kP, \kP').
$
\item The group $J(E)$ acts transitively on $M_E(n, \mathbbm{d})$. Moreover, given
$\kP \in M_E(n, \mathbbm{d})$ and $\kL \in J(E)$, we have:
$
\kP \cong \kP \otimes \kL \; \Longleftrightarrow \; \kL^{\otimes n} \cong \kO.
$
\end{itemize}
\end{theorem}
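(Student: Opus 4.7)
The plan is to combine Serre duality on the Calabi--Yau curve (where $\omega_E \cong \kO$) with the classification of indecomposable vector bundles: Atiyah's theorem for smooth elliptic $E$, and its extensions to singular irreducible and reducible plane cubics due to Drozd--Greuel, Bodnarchuk--Drozd, and Burban--Kreu\ss{}ler. These classifications will be cited as a black box; the core of the proof consists in assembling the four claims from the torsor structure of $M_E(n, \mathbbm{d})$ together with standard Harder--Narasimhan arguments.

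I would first prove part 2, that a simple bundle $\kP$ on an irreducible $E$ is stable. If $\kP$ were not semistable, let $\kP_1 \subsetneq \kP$ be its maximal destabilizing subsheaf, so every Harder--Narasimhan factor of $\kP/\kP_1$ has slope strictly less than $\mu(\kP_1)$. By the standard vanishing $\Hom(\kA, \kB) = 0$ for semistable $\kA, \kB$ with $\mu(\kA) > \mu(\kB)$, d\'evissage yields $\Hom(\kP_1, \kP/\kP_1) = 0$; Serre duality then gives $\Ext(\kP/\kP_1, \kP_1) \cong \Hom(\kP_1, \kP/\kP_1)^\vee = 0$, so the sequence $0 \to \kP_1 \to \kP \to \kP/\kP_1 \to 0$ splits and $\End(\kP)$ contains two projectors, contradicting $\End(\kP) = \kk$. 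If $\kP$ is semistable but not stable, its Jordan--H\"older filtration has stable factors of common slope; non-isomorphic stables of equal slope pair trivially in $\Hom$, hence in $\Ext$ via Serre duality, so only one isomorphism class $\kS$ of JH factor survives, and composing the socle inclusion with a projection back onto it yields a non-scalar endomorphism. Part 1 then follows from the cited structure theorems: every indecomposable with $\gcd(n,d) = g > 1$ is a non-trivial iterated self-extension of a stable sheaf, whose endomorphism algebra contains $\kk[t]/(t^g)$, contradicting simplicity when $g > 1$.

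For parts 3 and 4 I would package the argument via torsor structures. The assignment $\kP \mapsto \kP \otimes \kL$ preserves the moduli $M_E(n, \mathbbm{d})$, and the identity $\det(\kP \otimes \kL) = \det(\kP) \otimes \kL^{\otimes n}$ intertwines this $J(E)$-action with the multiplication-by-$n$ isogeny $[n] : J(E) \to J(E) \cong \Pic^{\mathbbm{d}}(E)$ (after a choice of base point). The stabilizer $\{\kL \in J(E) : \kP \otimes \kL \cong \kP\}$ is contained in $J(E)[n]$ by the determinant identity, and the reverse inclusion is precisely the content of the cited classification theorems. Since $J(E)$ is divisible in each of the cases listed in the preceding proposition ($\CC/\Lambda$, $\CC^{*}$, $\CC$), the map $[n]$ is surjective, so $\det$ is surjective; it is injective because each fiber of $\det$ is a single $J(E)[n]$-orbit, i.e.\ a single isomorphism class. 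The $\Hom$ and $\Ext$ vanishing between non-isomorphic $\kP, \kP' \in M_E(n, \mathbbm{d})$ then follows from stability (any nonzero morphism between stable bundles of equal slope is an isomorphism) together with Serre duality. Part 4 is immediate: transitivity of the $J(E)$-action on $M_E(n, \mathbbm{d})$ follows from the surjectivity of $\det$, and the stabilizer is exactly $J(E)[n]$ by the previous computation.

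The principal obstacle is the injectivity of $\det$ in the singular and reducible plane-cubic cases: showing that two simple bundles with the same rank, multi-degree, and determinant are isomorphic requires the full force of the matrix-problem machinery of Drozd--Greuel and Bodnarchuk--Drozd, and is genuinely delicate in the reducible cases, where one loses the luxury of a single numerical stability condition. In the present paper I would treat this as black-box input; the remaining four formal assertions then assemble cleanly from that input via Serre duality and elementary torsor arguments, as sketched above.
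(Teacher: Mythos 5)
Your proposal is correct in outline, but it is worth noting that the paper itself offers no argument at all for this theorem: the ``Comment on the proof'' simply attributes each case to the literature (Atiyah for elliptic curves, Burban for the nodal cubic, Bodnarchuk--Drozd for the cuspidal cubic, Bodnarchuk--Drozd--Greuel for the reducible Kodaira fibers). You instead reconstruct the soft half of the statement --- simple $\Rightarrow$ stable on irreducible $E$ via the Harder--Narasimhan filtration and the Calabi--Yau Serre duality $\Ext(\kF,\kG)\cong\Hom(\kG,\kF)^{*}$, and the reduction of parts 3--4 to a single black-box input (injectivity of $\det$) via the identity $\det(\kP\otimes\kL)=\det(\kP)\otimes\kL^{\otimes n}$ and divisibility of $J(E)$ --- which is a genuinely more informative route and isolates correctly where the matrix-problem machinery is indispensable. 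Two caveats. First, the middle of your torsor argument is circular as written: you derive injectivity of $\det$ from the fibers being single $J(E)[n]$-orbits, while transitivity (hence the orbit description of the fibers) itself requires injectivity of $\det$; your closing paragraph repairs this by declaring injectivity the black box, but the logical order should be stated that way from the start, and likewise the inclusion $J(E)[n]\subseteq\Stab(\kP)$ is a consequence of that black box, not of the determinant identity. Second, your arguments for part 1, for part 2, and for the $\Hom/\Ext$-vanishing all rest on slope stability and therefore only apply verbatim when $E$ is irreducible; for the reducible plane cubics covered by the statement these claims, too, must be imported from the cited classification, not merely the injectivity of $\det$. With those provisos your assembly is sound, and relative to the paper's own citation-only treatment it is, if anything, more self-contained.
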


\noindent
\emph{Comment on  the proof}. In the case of elliptic curves all these  statements are  due to Atiyah \cite{Atiyah}.
The case of a nodal Weierstra\ss{}
 curve has been treated by the first-named author in \cite{Burban1},
the corresponding result for a  cuspidal cubic curve is due to Bodnarchuk and Drozd \cite{BodnarchukDrozd}. The remaining cases (Kodaira fibers of type  $\mathrm{I}_2$, $\mathrm{I}_3$, III and IV) are due to
Bodnarchuk, Drozd and Greuel \cite{BodDroGre}. Their method actually allows to prove this theorem  for  arbitrary Kodaira cycles of projective lines. In that case, one can also deduce this result from
another description of simple vector bundles obtained in \cite[Theorem 5.3]{OldSurvey}. On the other hand,
this result is still open for $n$ concurrent lines in $\mathbb{P}^{n-1}$ if $n \ge 4$.

\begin{proposition}\label{P:AdonCY}
Let $E$ be a reduced plane cubic curve and $\kP$ be a simple vector bundle on $E$ of  rank $n$
and multi-degree $\mathbbm{d}$. Then the following results are true.
\begin{itemize}
\item The sheaf of Lie algebras $\kA = \kA_{n, \mathbbm{d}}:= \Ad(\kP)$ does not depend
on the choice of $\kP \in M_E(n, \mathbbm{d})$.
\item We have: $H^0(\kA) = 0 =  H^1(\kA)$. Moreover, this result remains true for   an
arbitrary Calabi--Yau curve.
\item For $\kL \in J(E) \setminus \{\kO\}$ we have: $H^0(\kA \otimes \kL) \ne 0$ if and only if
$\kL^{\otimes n} \cong \kO$. Moreover, in this case we have:
$H^0(\kA \otimes \kL) \cong \kk \cong H^1(\kA \otimes \kL)$.
\end{itemize}
\end{proposition}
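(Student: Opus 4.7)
For the first assertion, my plan is to combine the transitivity part of Theorem \ref{T:simplebundles} with the twisting invariance from Proposition \ref{P:basiconAd}: any two objects $\kP, \kP' \in M_E(n, \mathbbm{d})$ differ by a twist $\kP' \cong \kP \otimes \kL$ with $\kL \in J(E)$, and the canonical algebra isomorphism $\mathcal{E}nd(\kP) \to \mathcal{E}nd(\kP \otimes \kL)$ restricts to a canonical Lie algebra isomorphism $\Ad(\kP) \to \Ad(\kP \otimes \kL)$.

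The core of the argument is the vanishing statements, which I would obtain from the long exact sequence associated to the defining sequence (\ref{E:shortexactAd}) tensored with a line bundle $\kL$:
\begin{equation*}
0 \lar \kA \otimes \kL \lar \mathcal{E}nd(\kP) \otimes \kL \stackrel{\Tr}\lar \kL \lar 0.
\end{equation*}
Since $\kP$ is simple and $E$ is Calabi--Yau (so $\Omega \cong \kO$), Serre duality gives $\Ext(\kP, \kP) \cong \Hom(\kP, \kP)^* \cong \kk$, hence $H^i(\mathcal{E}nd(\kP)) \cong \kk$ for $i = 0, 1$. For $\kL = \kO$, the induced trace map $H^0(\mathcal{E}nd(\kP)) \to H^0(\kO)$ sends $\mathbbm{1}_\kP$ to $n$, so it is an isomorphism in characteristic zero; this yields $H^0(\kA) = 0$ immediately. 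For the vanishing of $H^1(\kA)$ I would use the self-duality $\kA \cong \kA^\vee$ from Proposition \ref{P:basiconAd} together with Serre duality on the Calabi--Yau curve $E$:
\begin{equation*}
H^1(\kA) \cong H^0(\kA^\vee \otimes \Omega)^* \cong H^0(\kA)^* = 0.
\end{equation*}
This argument uses only simplicity of $\kP$ and the Calabi--Yau property, not the classification, so it extends to arbitrary Calabi--Yau curves as claimed.

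For the third part, I would split into two cases according to whether $\kL^{\otimes n} \cong \kO$. In either case, $\kL \in J(E) \setminus \{\kO\}$ is a nontrivial degree-zero line bundle, so $H^0(\kL) = 0$ and (by Serre duality, using $\Omega \cong \kO$) $H^1(\kL) \cong H^0(\kL^{-1})^* = 0$. Thus the long exact sequence degenerates into isomorphisms $H^i(\kA \otimes \kL) \cong H^i(\mathcal{E}nd(\kP) \otimes \kL) = \Ext^i(\kP, \kP \otimes \kL)$ for $i = 0, 1$. By the last two bullets of Theorem \ref{T:simplebundles}, if $\kL^{\otimes n} \not\cong \kO$ then $\kP \otimes \kL$ is a simple bundle not isomorphic to $\kP$ (but of the same rank and multi-degree), so both $\Hom$ and $\Ext$ spaces vanish; if $\kL^{\otimes n} \cong \kO$ then $\kP \otimes \kL \cong \kP$, hence both $\Ext^0$ and $\Ext^1$ are one-dimensional, giving $H^0(\kA \otimes \kL) \cong \kk \cong H^1(\kA \otimes \kL)$.

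The main subtlety I expect is making sure the trace map $H^0(\mathcal{E}nd(\kP)) \to H^0(\kO)$ really is multiplication by $n$ (and hence that the connecting homomorphism vanishes on $H^0$), so that the isomorphisms in the second and third parts come cleanly from the long exact sequence; this is where characteristic zero and the coprimality condition $\gcd(n, d_1 + \dots + d_p) = 1$ enter implicitly through $\End(\kP) = \kk$. All remaining claims are then a straightforward bookkeeping with the long exact sequence.
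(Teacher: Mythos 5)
Your proposal is correct and follows essentially the same route as the paper: the first part via transitivity of the $J(E)$-action plus $\Ad(\kP)\cong\Ad(\kP\otimes\kL)$, and the cohomology statements via the long exact sequence of the defining sequence of $\Ad(\kP)$ (twisted by $\kL$) combined with Theorem \ref{T:simplebundles}. The only minor deviations are that you derive $H^1(\kA)=0$ from the self-duality $\kA\cong\kA^\vee$ and Serre duality rather than reading it off the six-term exact sequence, and you obtain $H^1(\kA\otimes\kL)\cong\kk$ from the vanishing $H^1(\kL)=0$ rather than from Riemann--Roch applied to the degree-zero bundle $\kA\otimes\kL$; both variants are valid.
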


\begin{proof} The first part follows from the transitivity of the action of $J(E)$ on
$M_E(n, \mathbbm{d})$ (see Theorem \ref{T:simplebundles}) and the fact that
$\Ad(\kP) \cong \Ad(\kP \otimes \kL)$ for any line bundle $\kL$ (see Proposition
\ref{P:basiconAd}). The second statement follows from the long exact sequence
$$
0 \rightarrow H^0(\kA) \lar \End(\kP) \xrightarrow{H^0(\Tr_\kP)} H^0(\kO) \lar
H^1(\kA) \lar \Ext(\kP, \kP) \lar H^1(\kO) \rightarrow 0,
$$
the isomorphisms  $\End(\kP) \cong  \kk \cong \Ext(\kP, \kP)$,
 $H^0(\kO) \cong \kk \cong H^1(\kO)$ and the fact that $H^0(\Tr_\kP)(\mathbbm{1}_\kP) =
\rk(\kP)$.

\vspace{1mm}
\noindent
To show  the last statement, note that we have the exact sequence
$$
0 \lar H^0(\kA \otimes \kL) \lar \Hom(\kP, \kP \otimes \kL) \lar H^0(\kL)
$$
and $H^0(\kL) = 0$. By Theorem \ref{T:simplebundles} we know that $\Hom(\kP, \kP \otimes \kL) = 0$
unless $\kL^{\otimes n} \cong \kO$. In the latter case,
$H^0(\kA \otimes \kL) \cong \End(\kP) \cong \kk$. Since $\kA \otimes \kL$ is a vector bundle of degree zero, by the Riemann-Roch formula we obtain that  $H^1(\kA \otimes \kL) \cong \kk$.
\end{proof}

\section{Triple products on Calabi--Yau curves and the classical Yang--Baxter equation}

In this section we shall explain an interplay between the theory of vector bundles on
Calabi--Yau curves, triple Massey products, $A_\infty$--structures and the classical Yang--Baxter equation. Let $E$ be a Calabi--Yau curve, $x, y \in \breve{E}$ a pair of points from  the same irreducible
component of $E$ and $\kP$ a simple vector bundle on $E$.
By (\ref{E:MasseyMap}) and Lemma \ref{L:SLandSerre}, we have the canonical linear map
\begin{equation}\label{E:mapm_xy}
\overline{\mm}_{x, \, y}:= \mm^\kP_{\kk_x, \kk_y}:  \; \; \mathfrak{sl}\bigl(\Hom(\kP, \kk_x)\bigr) \lar \mathfrak{pgl}\bigl(\Hom(\kP, \kk_y)\bigr).
\end{equation}
By Lemma \ref{L:LA1}, this map corresponds to a certain  (canonical)  tensor
\begin{equation}
\mm_{x,y} \in \mathfrak{pgl}\bigl(\Hom(\kP, \kk_x)\bigr) \otimes \mathfrak{pgl}\bigl(\Hom(\kP, \kk_y)\bigr).
\end{equation}

\subsection{The case of an elliptic curve}
The following result is due to Polishchuk, see
\cite[Theorem 2]{Polishchuk1}.

\begin{theorem}
Let $E$ be an elliptic curve, $\kP$ be a simple vector bundle on $E$ and $x_1, x_2, x_3 \in E$
be pairwise distinct. Then we have the following equality
\begin{equation}\label{E:fromAinftyToCYBE}
\bigl[\mm^{12}_{x_1, x_2}, \mm^{13}_{x_1, x_3}\bigr] + \bigl[\mm^{12}_{x_1, x_2}, \mm^{23}_{x_2, x_3}\bigr]
+ \bigl[\mm^{12}_{x_1, x_2}, \mm^{13}_{x_1, x_3}\bigr] = 0,
\end{equation}
where both sides are
viewed as elements of $\mathfrak{g}_1 \otimes \mathfrak{g}_2 \otimes \mathfrak{g}_3$. Here,
$\mathfrak{g}_i = \mathfrak{pgl}\bigl(\Hom(\kP, \kk_{x_i})\bigr)$ for  $i = 1, 2, 3$.
 Moreover, the tensor $\mm_{x_1, x_2}$ is unitary:
\begin{equation}\label{E:unitary}
\mm_{x_2, x_1} = - \tau\bigl(\mm_{x_1, x_2}\bigr)
\end{equation}
where $\tau : \mathfrak{g}_1 \otimes \mathfrak{g}_2 \lar \mathfrak{g}_2 \otimes \mathfrak{g}_1$ is the map permuting both factors.
\end{theorem}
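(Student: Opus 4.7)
The plan is to derive both assertions from the $A_\infty$-structure on the triangulated category $\Perf(E)$, exploiting for the CYBE the quadratic $A_\infty$-relation (\ref{E:structureAinfty}) at $n=5$, and for the unitarity the \emph{cyclic} symmetry of this $A_\infty$-structure with respect to the Serre pairing $\langle -,- \rangle^w$. As the paper itself notes, such a cyclic minimal model of $\Perf(E)$ exists whenever $E$ is a Calabi--Yau curve.

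For the CYBE identity (\ref{E:fromAinftyToCYBE}), I would feed the $n=5$ case of (\ref{E:structureAinfty}) with tensors of the form $f_1 \otimes \omega_1 \otimes f_2 \otimes \omega_2 \otimes f_3$, where $f_i \in \Hom(\kP, \kk_{x_i})$, $\omega_i \in \Ext(\kk_{x_i}, \kP)$, and each pair $f_i \otimes \omega_i$ lies in the kernel $K^{(i)}$ of (\ref{E:linearspaceK}) with $\kX = \kk_{x_i}$. Each resulting term contains an inner $\mm_s$ with $s \in \{2,3,4\}$. Every $\mm_2$-insertion is forced to vanish: either $\mm_2(\omega_i, f_{i+1}) \in \Hom(\kk_{x_i}, \kk_{x_{i+1}}) = 0$ since $x_i \neq x_{i+1}$, or $\mm_2(f_i, \omega_i) = 0$ by the choice $f_i \otimes \omega_i \in K^{(i)}$. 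Of the two $\mm_4$-insertions, one has inner image in $\Hom^0(\kk_{x_1}, \kk_{x_3}) = 0$, and the other produces an element $\lambda \cdot \mathbbm{1}_\kP \in \Hom^0(\kP, \kP)$ whose subsequent $\mm_2$-composition with $f_3$ equals $\lambda f_3$; viewed as a linear map $\Hom(\kP, \kk_{x_3}) \to \Hom(\kP, \kk_{x_3})$ this is a scalar multiple of the identity, hence vanishes in $\pgl(\Hom(\kP, \kk_{x_3}))$. The three surviving $\mm_3 \circ \mm_3$ contributions then translate, via the Serre identification $\Ext(\kk_x, \kP) \cong \Hom(\kP, \kk_x)^*$ from Lemma \ref{L:SLandSerre} and the tensor--operator correspondence of Lemma \ref{L:LA1}, into precisely the three double brackets in (\ref{E:fromAinftyToCYBE}).

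For the unitarity (\ref{E:unitary}), I would apply the cyclic symmetry
$$\langle \mm_3(a_1, a_2, a_3),\, a_4 \rangle \;=\; (-1)^{\varepsilon}\, \langle a_1,\, \mm_3(a_2, a_3, a_4) \rangle$$
to $a_1 = f_1 \in \Hom(\kP, \kk_{x_1})$, $a_2 = \omega_1 \in \Ext(\kk_{x_1}, \kP)$, $a_3 = f_2 \in \Hom(\kP, \kk_{x_2})$, $a_4 = \omega_2 \in \Ext(\kk_{x_2}, \kP)$. The left-hand side records a matrix coefficient of $\overline{\mm}_{x_1, x_2}$ in the bases paired through $f_i \leftrightarrow \omega_i$ by Serre duality; after using Lemma \ref{L:SLandSerre} and the anti-isomorphism of Lemma \ref{L:LA2}, the right-hand side becomes, up to the sign, the transposed matrix coefficient of $\overline{\mm}_{x_2, x_1}$. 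Reassembling these matrix coefficients via Lemma \ref{L:LA1} yields the desired identity $\mm_{x_2, x_1} = -\tau(\mm_{x_1, x_2})$ in $\pgl \otimes \pgl$.

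The main obstacle will be Koszul sign bookkeeping in both parts, and in particular the verification that the $\mm_4$-contributions in the $n=5$ computation genuinely descend to zero in the $\pgl$-quotient rather than producing an anomalous correction to the CYBE. Careful tracking of the various dualisations — Serre, Yoneda, and trace — will also be required, so that each identification remains compatible with the Lie-algebraic structures and with the signs introduced by transposition.
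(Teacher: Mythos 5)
Your proposal follows essentially the same route as the paper, which itself only sketches this result (it is Polishchuk's theorem): cyclicity of a minimal $A_\infty$--model of $\Dbcoh(E)$ with respect to the Serre pairing gives the unitarity, and the arity-five $A_\infty$--relation applied to $f_1\otimes\omega_1\otimes f_2\otimes\omega_2\otimes f_3$, with the $\mm_2$-- and $\mm_4$--insertions killed for the reasons you give, yields the CYBE. The only point you elide is that the middle composite $\mm_3\bigl(f_1,\mm_3(\omega_1,f_2,\omega_2),f_3\bigr)$ becomes a double bracket of the tensors $\mm_{x_i,x_j}$ only after a further application of the cyclic symmetry (equivalently, of the unitarity), which is why the paper states that the CYBE follows from the unitarity \emph{together with} the $A_\infty$--constraint rather than from the latter alone.
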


\medskip
\noindent
\emph{Idea of the proof}. The equality (\ref{E:unitary}) follows from existence of an
$A_\infty$--structure on $\Dbcoh(E)$ which is \emph{cyclic} with respect to the pairing
(\ref{E:SerrePairing}). In particular, this means that for any objects
$\kF_1, \kF_2, \kG_1, \kG_2$ in $\Dbcoh(E)$ and morphisms
$a_1 \in \Hom(\kF_1, \kG_1), a_2 \in \Hom(\kF_2, \kG_2), \omega_1 \in
\Ext(\kG_1, \kF_2)$ and $\omega_2 \in \Ext(\kF_2, \kG_1)$ we have:
\begin{equation}\label{E:cyclic}
\bigl\langle \mm(a_1 \otimes \omega_1 \otimes a_2), \omega_2\bigr\rangle =
- \bigl\langle a_1, \mm(\omega_1 \otimes a_2 \otimes \omega_2)\bigr\rangle =
- \bigl\langle \mm(a_2 \otimes \omega_2 \otimes a_1), \omega_1\bigr\rangle,
\end{equation}
where $\mm = \mm_3^{\infty}$ is the triple product this $A_\infty$--structure.
A proof of the existence of such an $A_\infty$--structure has been outlined by Polishchuk in
\cite[Theorem 1.1]{PolishchukMP}, see also \cite[Theorem 10.2.2]{KoSo} for a different approach using non-commutative
symplectic geometry.
 The identity (\ref{E:cyclic}) applied to
$\kF_1 = \kF_2 = \kP$ and $\kG_i = \kk_{x_i}$ leads to  the equality (\ref{E:unitary}).
The fact that $\mm_{x_1, x_2}$ satisfies the classical Yang--Baxter equation (\ref{E:fromAinftyToCYBE})
follows from (\ref{E:unitary}) and the equality
$$
\mm \circ (\mm \otimes \mathbbm{1} \otimes \mathbbm{1} + \mathbbm{1} \otimes m \otimes \mathbbm{1}
+ \mathbbm{1} \otimes \mathbbm{1} \otimes \mm) + \mathrm{other \; terms} = 0
$$
(which is one of the equalities (\ref{E:structureAinfty}))
viewed as a linear map
$$
\Hom(\kP, \kk_{x_1}) \otimes \Ext(\kk_{x_1}, \kP) \otimes
\Hom(\kP, \kk_{x_2}) \otimes \Ext(\kk_{x_2}, \kP) \otimes \Hom(\kP, \kk_{x_3}) \rightarrow
\Hom(\kP, \kk_{x_3}).
$$
\qed
\begin{remark} Up to now, we are not aware of a  complete   proof of existence of
an $A_\infty$--structure on the triangulated category $\Perf(E)$
for a singular Calabi--Yau curve $E$, which is cyclic with respect to the pairing
(\ref{E:SerrePairing}).
Hence, in order to derive the identities (\ref{E:fromAinftyToCYBE}) and (\ref{E:unitary}) for
a singular Weierstra\ss{}  curve $E$,
we use a different approach which is similar in spirit to the work \cite{BK4}.
Following \cite{Polishchuk1},
 we give another description of the tensor $\mm_{x,y}$ and show some kind of its continuity
with respect to   the degeneration of the complex structure on $E$. This approach
also provides a constructive way of computing of the   tensor $\mm_{x,y}$.
\end{remark}

\subsection{Residues and traces} Let $\Omega$ be the sheaf of regular differential one--forms
on a (possibly reducible) Calabi--Yau curve $E$,  $w \in H^{0}(\Omega)$ some no-where vanishing regular
differential form   and
$x, y \in \breve{E}$ a pair points from the same irreducible component of $E$.
First recall
that we have the following canonical short exact sequence
\begin{equation}\label{E:ResSequence}
0 \lar \Omega \lar \Omega(x) \xrightarrow{\underline{\res}_x} \kk_x \lar 0.
\end{equation}
The section $w$ induces the  short exact sequence
\begin{equation}\label{E:inducedRes}
0 \lar \kO \lar \kO(x) \lar \kk_x \lar 0.
\end{equation}
Hence, for any vector bundle $\kF$ we get a short exact sequence of coherent sheaves
\begin{equation}\label{E:resVectBundle}
0 \lar \kF \stackrel{\imath}\lar   \kF(x) \xrightarrow{\underline{\res}_x^\kF}
 \kF\otimes \kk_x \lar 0.
\end{equation}

\noindent
Next, recall the following result relating categorical traces with the  usual trace
 of an endomorphism of a finite dimensional vector space.

\begin{proposition}\label{P:resandtrace} In the above notation, the following results are true.
\begin{itemize}
\item There is an isomorphism of functors $\delta_x: \Hom(\kk_x, \,-\,\otimes \kk_x) \lar
\Ext(\kk_x,\,-\,)$ from the category  of vector bundles on $E$ to the category of vector spaces
over $\kk$,  given by the boundary map induced by the short exact sequence (\ref{E:resVectBundle}).
\item For any vector bundle $\kF$ on the curve $E$ and  a pair of morphisms
$b: \kF \lar \kk_x,  a: \kk_x \lar \kF \otimes \kk_x$,  we have the equality:
\begin{equation}\label{E:tracevstrace}
t^w\bigl(\Tr_\kF(\delta_x(a) \circ b)\bigr) = \tr(a \circ b_x),
\end{equation}
where $\Tr_\kF: \Ext(\kF, \kF) \lar H^1(\kO)$ is the trace map and
$t^w$ is the composition $H^0(\kO) \stackrel{w}\lar H^0(\Omega) \stackrel{t}\lar \kk$  of the isomorphism induced by $w$ and the canonical map $t$ described in \cite[Subsection 4.3]{BK4}.
\end{itemize}
\end{proposition}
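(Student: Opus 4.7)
The plan is to handle the two statements in turn: first to establish that $\delta_x$ is an isomorphism of functors by a long exact sequence and dimension count, then to prove the trace identity by reducing to the rank-one case and performing an explicit Cech/residue computation.

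For the first statement, apply $\Hom(\kk_x,-)$ to the short exact sequence \eqref{E:resVectBundle} to obtain the long exact sequence
$$
0 \to \Hom(\kk_x, \kF) \to \Hom(\kk_x, \kF(x)) \to \Hom(\kk_x, \kF \otimes \kk_x) \xrightarrow{\delta_x} \Ext(\kk_x, \kF) \to \Ext(\kk_x, \kF(x)).
$$
Both $\kF$ and $\kF(x)$ are locally free hence torsion-free, while $\kk_x$ is a torsion sheaf, so the first two terms vanish and $\delta_x$ is injective. Surjectivity follows from a dimension count: $\Hom(\kk_x, \kF \otimes \kk_x) \cong \kF|_x$ has dimension $\rk(\kF)$, and the Serre duality isomorphism \eqref{E:SerreMap} together with $\Hom(\kF, \kk_x) \cong \kF|_x^*$ gives $\Ext(\kk_x, \kF) \cong \Hom(\kF, \kk_x)^*$ of the same dimension. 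Naturality of $\delta_x$ in $\kF$ is automatic from the naturality of the connecting map.

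For the second statement, both sides are bilinear pairings
$$
\Hom(\kF, \kk_x) \times \Hom(\kk_x, \kF \otimes \kk_x) \lar \kk,
$$
so it suffices to show they agree after reducing to the rank-one case. Choose an affine open $U_0 \ni x$ small enough to trivialize $\kF|_{U_0} \cong \kO|_{U_0}^{\oplus n}$; then $b$ decomposes as $(b_i)$ with $b_i: \kO \to \kk_x$ and $a$ as $(a_i)$ with $a_i: \kk_x \to \kk_x$. Since $\delta_x$ and $\Tr_\kF$ are both compatible with direct sums, the diagonal components of $\delta_x(a)\circ b \in \Ext(\kF,\kF)$ are $\delta_x(a_i)\circ b_i$, so $\Tr_\kF(\delta_x(a) \circ b) = \sum_i \Tr_\kO(\delta_x(a_i)\circ b_i)$. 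Similarly $\tr(a \circ b_x) = \sum_i a_i b_i$ after the scalar identification, which reduces the identity to the case $\kF = \kO$ with scalar $a, b \in \kk$.

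For the rank-one case, set $U_1 = E \setminus \{x\}$ and compute $H^1(\kO)$ via the Cech cover $\{U_0, U_1\}$, using a local parameter $t$ at $x$. Write $w = h \cdot dt$ near $x$ with $h(x) \neq 0$, so the map $\kO(x) \to \kk_x$ from \eqref{E:inducedRes} is $f \mapsto \underline{\res}_x(f \cdot w)$. The Yoneda composition $\delta_x(a) \circ b$ is the pullback of the extension $0 \to \kO \to \kO(x) \to \kk_x \to 0$ along the morphism $ab \cdot \mathrm{ev}_x: \kO \to \kk_x$. To represent the resulting class in $H^1(\kO)$, lift $1 \mapsto ab$ to a section of $\kO(x)|_{U_0}$, namely $ab\cdot h(x)^{-1}\cdot t^{-1}$, and to the zero section on $U_1$; the difference $ab\cdot h(x)^{-1} t^{-1}$ lies in $\kO(U_{01})$. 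Applying $t^w$ multiplies by $w = h\cdot dt$ and sums residues, yielding $\mathrm{res}_x(ab\cdot h(x)^{-1}\cdot h/t \cdot dt) = ab\cdot h(x)^{-1}\cdot h(x) = ab$, matching $\tr(a \circ b_x) = ab$ on the right hand side. The main obstacle is the careful bookkeeping of normalizations: the factor $h(x)^{-1}$ in the lift must cancel exactly against the factor $h(x)$ appearing in the residue, and any misidentification of the $w$-twisted residue in \eqref{E:inducedRes} with the standard differential residue would introduce a spurious scalar and break the formula.
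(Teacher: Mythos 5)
Your proof is correct, and it takes a genuinely different route from the paper: the paper disposes of both assertions by citing \cite{BK4} (Lemma 4.18 for the isomorphism $\delta_x$, and Lemma 4.20 together with a $3\times 3$ diagram of extensions for the trace identity), whereas you give a self-contained argument — the long exact sequence plus a dimension count via Serre duality for the first part, and an explicit \v{C}ech/residue computation for the second. The payoff of your approach is that the normalization in (\ref{E:tracevstrace}) is actually verified rather than imported, and you correctly identify the only place where a spurious scalar could enter, namely the cancellation of $h(x)^{-1}$ against $h(x)$ coming from the $w$-twisted residue in (\ref{E:inducedRes}). One step is justified loosely: the reduction to rank one cannot literally proceed by "compatibility with direct sums", since a trivialization of $\kF$ on $U_0$ does not decompose $\kF$ globally and $\Ext(\kF,\kF)$ has no corresponding block structure. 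The correct justification is that $\delta_x(a)\circ b \in \Ext(\kF,\kF) \cong H^1\bigl({\mathcal End}(\kF)\bigr)$ equals $\delta(a\circ b_x)$ for the connecting map $\delta$ of the sequence $0 \to {\mathcal End}(\kF) \to {\mathcal End}(\kF)(x) \to {\mathcal End}(\kF)\otimes\kk_x \to 0$, and the sheaf trace ${\mathcal End}(\kF) \to \kO$ intertwines this $\delta$ with the connecting map of (\ref{E:inducedRes}); hence $\Tr_\kF\bigl(\delta_x(a)\circ b\bigr) = \delta\bigl(\tr(a\circ b_x)\bigr)$, which is exactly the scalar class your rank-one \v{C}ech computation evaluates. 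With that rephrasing the argument is complete; your first part also implicitly uses that $x$ is a smooth point (so that $\kk_x \in \Perf(E)$ and (\ref{E:SerreMap}) applies), which is the paper's standing assumption $x \in \breve{E}$.
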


\noindent
\emph{Comment on the proof}. The first part of the statement is just \cite[Lemma 4.18]{BK4}. The content
of the second part is explained by the following commutative diagram:
\begin{equation*}
\xymatrix{
0 \ar[r] & \kF \ar[r] \ar[d] & \kQ \ar[r] \ar[d] & \kF \ar[r] \ar[d]^b & 0 \\
0 \ar[r] & \kF \ar[r] \ar[d] & \kR \ar[r] \ar[d] & \kk_x \ar[r] \ar[d]^a & 0 \\
0 \ar[r] & \kF \ar[r]^-\imath  & \kF(x) \ar[r]^-{\underline{\res}_x^\kF}  & \kF\otimes \kk_x \ar[r]  & 0.
}
\end{equation*}
The lowest horizontal sequence of this diagram is
 (\ref{E:resVectBundle}). The middle sequence corresponds to the element
$\delta_x(a) \in \Ext(\kk_x, \kF)$ and the top one corresponds to
$\delta_x(a) \circ b \in \Ext(\kF, \kF)$. The endomorphism $a \circ b_x \in \End(\kF\big|_x)$ is the
induced map in the fiber of $\kF$ over $x$. The equality (\ref{E:tracevstrace}) follows
from \cite[Lemma 4.20]{BK4}. \qed

\begin{proposition}\label{P:traces}
The following diagram of vector spaces is commutative.
$$
\xymatrix{
\Hom(\kF, \kk_x) \otimes \Ext(\kk_x, \kF) \ar[rr]^-{\mathbbm{1} \;\otimes\; \SS} & & \Hom(\kF, \kk_x)
\otimes \Hom(\kF, \kk_x)^* \\
\Hom(\kF, \kk_x) \otimes \Hom(\kk_x, \kF \otimes \kk_x) \ar[u]^{\mathbbm{1} \,\otimes\, \delta_x^\kF}
\ar[d]_\circ
\ar[rr]^-{\mathbbm{1} \;\otimes\; \tr} & & \Hom(\kF, \kk_x)
\otimes \Hom(\kF \otimes \kk_x, \kk_x)^* \ar[u]_{\mathbbm{1} \,\otimes\, \can} \ar[d]^{\ev}\\
\Lin(\kF\big|_x, \kF\big|_x) \ar[rr]^{Y_1} & & \End\bigl(\Lin(\kF\big|_x, \, \kk)\bigr).
}
$$
Here, $\SS$ is given by  (\ref{E:SerreMap}),
$\delta_x^\kF$ is the isomorphism from Proposition \ref{P:resandtrace}, $\circ$ is
$\mm_2$ composed with the induced  map in the fiber over $x$,
$Y_1$ is the canonical isomorphism of vector spaces from Lemma \ref{L:LA1}, $\ev$ and $\tr$
are   canonical
isomorphisms of vector spaces and $\can$ is the isomorphism induced by $\underline{\res}_x^\kF$.
\end{proposition}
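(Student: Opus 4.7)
Both squares of the diagram ultimately encode the identification of the categorical trace on $E$ (as it enters into the Serre pairing) with the fiberwise trace of induced linear maps; the essential input is Proposition \ref{P:resandtrace}, and the rest is pure linear algebra.

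The plan for the upper square is to chase a simple tensor $b \otimes \alpha \in \Hom(\kF, \kk_x) \otimes \Hom(\kk_x, \kF \otimes \kk_x)$ around both paths and show each reduces to the bilinear pairing $(b', \alpha) \mapsto b'_x(\alpha_x(1))$, where $b'_x: \kF|_x \to \kk$ and $\alpha_x: \kk \to \kF|_x$ are the induced fiber maps. The upward-then-rightward path composes $\delta_x^\kF$ with Serre duality, so by the definition of $\SS$ in (\ref{E:SerrePairing}) and the key identity (\ref{E:tracevstrace}) of Proposition \ref{P:resandtrace}, the functional $\SS(\delta_x^\kF(\alpha))$ is precisely $b' \mapsto t^w\bigl(\Tr_\kF(\delta_x^\kF(\alpha) \circ b')\bigr) = \tr(\alpha \circ b'_x) = b'_x(\alpha_x(1))$. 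The rightward-then-upward path factors through $\tr$ (the tautological pairing sending $\alpha, \beta$ to the scalar representing $\beta \circ \alpha : \kk_x \to \kk_x$) and then through $\can$; once one identifies $\can$ as the dual of the natural adjunction $\Hom(\kF, \kk_x) \to \Hom(\kF \otimes \kk_x, \kk_x)$, $b' \mapsto b' \otimes \mathbbm{1}_{\kk_x}$, this path produces exactly the same functional.

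For the lower square I continue with the same element $b \otimes \alpha$. The downward-then-rightward path gives the fiberwise composition $\alpha_x \circ b_x \in \End(\kF|_x)$ and then, by Lemma \ref{L:LA2}, its dual operator on $(\kF|_x)^*$, which acts as $\varphi \mapsto \varphi \circ \alpha_x \circ b_x = \varphi(\alpha_x(1)) \cdot b_x$. The rightward-then-downward path reuses the functional $b' \mapsto b'_x(\alpha_x(1))$ already identified in the upper square and feeds it, together with $b$, into $\ev$, producing the rank-one operator $b' \mapsto b'_x(\alpha_x(1)) \cdot b$ on $\Hom(\kF, \kk_x)$; under the identification $\Hom(\kF, \kk_x) \cong (\kF|_x)^*$, $b \leftrightarrow b_x$, this is manifestly the same operator.

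The principal obstacle is bookkeeping. The three vector spaces $\Hom(\kF, \kk_x)$, $\Hom(\kF \otimes \kk_x, \kk_x)$, and $\Lin(\kF|_x, \kk)$ are all canonically isomorphic to $(\kF|_x)^*$, and one must verify that $\tr$, $\can$, $\ev$, and fiber restriction are simultaneously compatible with these identifications, and that the signs coming from the definition of $\SS$ and from $Y_1$ agree. Once this is carefully laid out, both squares collapse to the tautology $b'_x(\alpha_x(1)) = b'_x(\alpha_x(1))$, with no further input beyond Proposition \ref{P:resandtrace}.
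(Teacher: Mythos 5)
Your proof is correct and follows the same underlying route as the paper: both squares reduce, after identifying all the relevant spaces with $\kF\big|_x$ and its dual, to the fiberwise trace identity (\ref{E:tracevstrace}) of Proposition \ref{P:resandtrace} plus elementary linear algebra. The only difference is one of presentation — the paper dispatches the top square by citing \cite[Lemma 4.21]{BK4} and declares the bottom square an easy diagram chase, whereas you carry out both chases explicitly and derive the top square directly from (\ref{E:tracevstrace}), which makes the argument self-contained within the paper's own stated preliminaries.
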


\begin{proof}
The commutativity of the top square is given  by \cite[Lemma 4.21]{BK4}. The commutativity
of the lower square can be easily verified by  diagram chasing.
\end{proof}


\begin{lemma}\label{L:factontraces}
The following diagram of vector spaces is commutative.
\begin{equation*}
\xymatrix{
\Hom(\kF, \kk_x) \otimes \Ext(\kk_x, \kF) \ar[rrr]^-{\overline\SS} \ar[ddd]_T & & & \End\bigl(\Hom(\kF, \kk_x)\bigr)
\ar[ddd] \\
 & K \ar[r]^-{\overline\SS} \ar[d]_-{\overline{T}} \ar@{_{(}->}[lu]& \slie\bigl(\Hom(\kF, \kk_x)\bigr) \ar@{^{(}->}[ru] \ar[d] & \\
 &\slie(\kF\big|_x) \ar[r]^-{Y_1} \ar@{^{(}->}[ld] & \slie\bigl(\Lin(\kF\big|_x, \kk)\bigr) \ar@{^{(}->}[rd]& \\
\End(\kF\big|_x) \ar[rrr]^-{Y} &  & & \End\bigl(\Lin(\kF\big|_x, \kk)\bigr)
}
\end{equation*}
In this diagram, $\overline\SS$ is the isomorphism induced by the Serre duality  (\ref{E:SerreMap}),
$Y$ and $Y_1$ are canonical isomorphisms from Lemma \ref{L:LA2}, $K$ is the subspace
of $\Hom(\kF, \otimes \kk_x) \otimes \Ext(\kk_x, \kF)$ defined in (\ref{E:linearspaceK}), $T$ is the composition
of $\mathbbm{1} \otimes (\delta_x^\kF)^{-1}$ from Proposition \ref{P:traces} and $\circ$, whereas
$\overline{T}$ is the restriction of $T$. The remaining arrows are canonical morphisms of vector spaces.
\end{lemma}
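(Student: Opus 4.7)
The plan is to obtain the diagram as a restriction of the already-established commutative diagram of Proposition \ref{P:traces}, absorbing the passage from $\End\bigl(\Lin(\kF\big|_x, \kk)\bigr)$ to $\End\bigl(\Hom(\kF, \kk_x)\bigr)$ into the canonical isomorphism $\can$ induced by $\underline{\res}_x^\kF$.

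I would first verify that the outer rectangle commutes. Factoring $\overline{\SS} = \ev \circ (\mathbbm{1} \otimes \SS)$ (as in Lemma \ref{L:SLandSerre}) and $T = (\circ) \circ \bigl(\mathbbm{1} \otimes (\delta_x^\kF)^{-1}\bigr)$, and interpreting the unlabeled right vertical arrow as the map induced by $\can$, the outer rectangle becomes precisely the stack of the two commutative squares of Proposition \ref{P:traces}. One only has to note that the Yoneda adjoint map $Y$ in the bottom row agrees with the map called $Y_1$ in Proposition \ref{P:traces}.

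Next I would verify the three preservation statements needed to descend to the inner square. The first, $\overline{\SS}(K) \subseteq \slie\bigl(\Hom(\kF, \kk_x)\bigr)$, is precisely Lemma \ref{L:SLandSerre}. The second, that $Y$ restricts to $Y_1$ on $\slie(\kF\big|_x)$, is the very definition of $Y_1$ in Lemma \ref{L:LA2}. The third and only substantive step is to show $\overline{T}(K) \subseteq \slie(\kF\big|_x)$. Given $t = \sum_i f_i \otimes \omega_i \in K$, use the isomorphism of Proposition \ref{P:resandtrace} to write $\omega_i = \delta_x^{\kF}(a_i)$ for unique $a_i \in \Hom(\kk_x, \kF \otimes \kk_x)$, so that $T(t) = \sum_i a_i \circ (f_i)_x$ in $\End(\kF\big|_x)$. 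The trace formula (\ref{E:tracevstrace}) then yields
\begin{equation*}
\tr\bigl(T(t)\bigr) \;=\; \sum_i \tr\bigl(a_i \circ (f_i)_x\bigr) \;=\; t^w\!\left(\Tr_\kF\!\left(\sum_i \omega_i \circ f_i\right)\right) \;=\; 0,
\end{equation*}
since $\sum_i \omega_i \circ f_i = 0$ in $\Ext(\kF, \kF)$ by the defining condition on $K$.

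The commutativity of the four diagonal inclusion triangles is then automatic, because both legs of the outer square descend to the corresponding subspaces by the preservation statements above. The main obstacle, modest as it is, lies in the bookkeeping of the canonical identifications (in particular via $\can$ and $\ev$, and matching $\Hom(\kF, \kk_x)$ with $\Lin(\kF\big|_x, \kk)$); once these are consistently set up, the lemma reduces to Proposition \ref{P:traces} together with the single trace computation above.
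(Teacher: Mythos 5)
Your proposal is correct and follows essentially the same route as the paper: the outer square is obtained by stacking the two commutative squares of Proposition \ref{P:traces}, the only substantive point is that $\overline{T}$ lands in $\slie(\kF\big|_x)$, which you deduce from the trace identity (\ref{E:tracevstrace}) together with $\sum_i \omega_i \circ f_i = 0$ for elements of $K$ — exactly the paper's argument for the left small square — and the remaining pieces are formal. Your write-up is merely more explicit about the trace computation and the canonical identifications.
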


\begin{proof}
Commutativity of the big square is given by Proposition \ref{P:traces}. For the left
small square it follows from the equality (\ref{E:tracevstrace}) whereas the commutativity of the remaining parts
of this  diagram
is obvious.
\end{proof}

\subsection{Geometric description of the triple Massey products}
Let $E$, $\kP$, $x$ and $y$ be as at the beginning of this section. In what
follows, we shall frequently use the notation $\kA:= \Ad(\kP)$ and
$\kE := {\mathcal End}(\kP)$.

\begin{lemma}\label{L:basicsonresandev}
We have a canonical  isomorphism of vector spaces
\begin{equation}\label{E:residue}
\res_x: = H^0(\underline{\res}_x^\kA):  \quad H^0\bigl(\kA(x)\bigr) \lar \kA\big|_x
\end{equation}
induced by the short exact sequence (\ref{E:resVectBundle}). Moreover, we have
the canonical morphism
\begin{equation}\label{E:evaluation}
\ev_y: = H^0(\underline{\ev}_y^\kA): \quad  H^0\bigl(\kA(x)\bigr) \lar \kA\big|_y
\end{equation}
obtained by composing the induced   map in the fibers with  the canonical isomorphism
$\kA(x)\big|_y \lar \kA\big|_y $. When $E$ is a reduced plane cubic curve, $\ev_y$
is an isomorphism if and only if $n\cdot\bigl([x] - [y]\bigr) \ne 0$ in
$J(E)$, where $n = \rk(\kP)$.
\end{lemma}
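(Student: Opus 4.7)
The plan is to establish the three assertions in succession; each reduces to a direct application of results already recorded in the paper, with the bulk of the work being to identify the kernel of $\ev_y$.

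For the first part, I would apply $H^0$ to the short exact sequence \eqref{E:resVectBundle} with $\kF = \kA$, which produces the long exact sequence
\[
0 \to H^0(\kA) \to H^0\bigl(\kA(x)\bigr) \xrightarrow{\res_x} \kA\big|_x \to H^1(\kA) \to H^1\bigl(\kA(x)\bigr) \to 0.
\]
Since $H^0(\kA) = 0 = H^1(\kA)$ by Proposition~\ref{P:AdonCY}, the map $\res_x$ is an isomorphism. As a consequence one also reads off $\dim H^0\bigl(\kA(x)\bigr) = \dim \kA\big|_x = n^2 - 1$, which will be needed later.

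To define $\ev_y$, I would observe that the canonical inclusion $\kA \hookrightarrow \kA(x)$ is an isomorphism of stalks at every point distinct from $x$, so for $y \ne x$ it induces a canonical identification $\kA(x)\big|_y \cong \kA\big|_y$. The map $\ev_y$ is then the composition of the fiber-restriction $H^0\bigl(\kA(x)\bigr) \to \kA(x)\big|_y$ with this identification. Tensoring the structure-sheaf sequence $0 \to \kO(-y) \to \kO \to \kk_y \to 0$ with $\kA(x)$ yields the short exact sequence
\[
0 \to \kA(x - y) \to \kA(x) \to \kA\big|_y \to 0,
\]
from which $\ker(\ev_y) = H^0\bigl(\kA(x-y)\bigr)$. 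Since the source and target of $\ev_y$ both have dimension $n^2 - 1$, it is an isomorphism precisely when this kernel vanishes.

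For the third part, I would set $\kL := \kO([x] - [y])$. As $x$ and $y$ lie on the same irreducible component, $\kL$ has multi-degree zero and so $\kL \in J(E)$, while $\kA(x-y) \cong \kA \otimes \kL$ by Proposition~\ref{P:basiconAd}. Since $x \neq y$ are distinct smooth points of the reduced plane cubic $E$, we have $\kL \not\cong \kO$, so the relevant clause of Proposition~\ref{P:AdonCY} applies and gives $H^0(\kA \otimes \kL) \neq 0$ if and only if $\kL^{\otimes n} \cong \kO$, i.e.\ $n \cdot ([x] - [y]) = 0$ in $J(E)$. This is exactly the criterion claimed. The main technical inputs are the cohomological vanishing and torsion statements for $\kA \otimes \kL$ collected in Proposition~\ref{P:AdonCY}; once those are invoked I do not anticipate any genuine obstacle, only the bookkeeping of identifying $\ker(\ev_y)$ with a twist of $\kA$ by a degree-zero line bundle.
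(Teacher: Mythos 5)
Your proof is correct and follows essentially the same route as the paper: the vanishing $H^0(\kA)=0=H^1(\kA)$ from Proposition~\ref{P:AdonCY} for the residue map, the twisted sequence $0 \to \kA(x-y) \to \kA(x) \to \kA(x)\otimes\kk_y \to 0$ plus a dimension count for $\ev_y$, and the third clause of Proposition~\ref{P:AdonCY} for the criterion. The only difference is that you explicitly verify $\kO([x]-[y])\not\cong\kO$ before invoking that clause, a point the paper leaves implicit.
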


\begin{proof}
The short exact sequence
$$
0 \lar \kA \stackrel{\imath}\lar \kA(x) \stackrel{\underline{\res}_x^\kA}\lar \kA \otimes \kk_x \lar 0
$$
yields the long exact sequence
$$
0 \lar H^0(\kA) \lar H^0\bigl(\kA(x)\bigr) \xrightarrow{\res_x} \kA\big|_x \lar H^1(\kA).
$$
Thus, the first part of the statement follows from the vanishing $H^0(\kA) = 0 = H^1(\kA)$ given
by Proposition \ref{P:AdonCY}.

\vspace{2mm}
\noindent
In order to show the second part note that we have the canonical short exact sequence
$$
0 \lar \kO(-y) \lar \kO \stackrel{\underline{\ev}_y}\lar \kk_y \lar 0
$$
yielding the short exact sequence
$$
0 \lar \kA(x-y) \lar \kA(x) \lar \kA(x) \otimes \kk_y \lar 0.
$$
Hence, we get the long exact sequence
$$
0 \lar H^0\bigl(\kA(x-y)\bigr) \lar H^0\bigl(\kA(x)\bigr) \xrightarrow{\ev_y} \kA\big|_y \lar
H^1\bigl(\kA(x-y)\bigr).
$$
Since the dimensions of $H^0\bigl(\kA(x)\bigr)$ and $\kA\big|_y$ are the same,
$\ev_y$ is an isomorphism if and only if $H^0\bigl(\kA(x-y)\bigr) = 0$.
By Proposition \ref{P:AdonCY},  this vanishing  is equivalent to the condition $n\cdot\bigl([x] -
[y]\bigr) \ne 0$ in $J(E)$.
\end{proof}

\medskip
\noindent
The  following key result was  stated for the first time
in \cite[Theorem 4]{Polishchuk1}.

\begin{theorem}\label{T:main1}
In the notation as at the beginning of this section, the following diagram of vector spaces
is commutative:
\begin{equation}\label{E:maindiagram1}
\begin{array}{c}
\xymatrix{
\kA\big|_x \ar[rr]^-{Y_1}  & & \slie\bigl(\Hom(\kP, \kk_x)\bigr) \ar[dd]^{\overline{\mm}_{x, y}} \\
H^0\bigl(\kA(x)\bigr) \ar[u]^{\res_x} \ar[d]_{\ev_y} & &  \\
\kA\big|_y \ar[rr]^-{Y_2} & & \mathfrak{pgl}\bigl(\Hom(\kP, \kk_y)\bigr). 
}
\end{array}
\end{equation}
In this diagram, $\overline{\mm}_{x, y}$ is the linear map (\ref{E:mapm_xy}) induced by
the triple $A_\infty$--product in $\Perf(E)$,
$\res_x$ and $\ev_y$ are the linear maps (\ref{E:residue}) and (\ref{E:evaluation}), whereas
$\overline{Y}_1$ and $\overline{Y_2}$ are obtained by composing the canonical isomorphisms
$Y_1$ and $Y_2$ from Lemma \ref{L:LA2} with the canonical isomorphisms
induced by $\Hom(\kP, \, \kk_z) \lar \Lin\bigl(\kP\big|_z, \, \kk\bigr)$ for $z \in \{x, y\}$.
\end{theorem}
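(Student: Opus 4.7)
The plan is to convert the definition of $\overline{\mm}_{x,y}$ in terms of the triple Massey product (Definition \ref{D:MasseyProd}, via the compatibility diagram (\ref{E:compatMasseyAinfty})) into a concrete geometric picture on $E$, and then to compute both routes around (\ref{E:maindiagram1}) within that picture. Given $a \in H^{0}(\kA(x))$, I first view $a$ as a morphism $\tilde a: \kP \lar \kP(x)$, using the trace splitting $\mathcal{H}om(\kP, \kP) \cong \kA \oplus \kO$ (valid in characteristic zero) and the canonical identifications $H^{0}(\kA(x)) \hookrightarrow H^{0}(\mathcal{H}om(\kP, \kP(x))) = \Hom(\kP, \kP(x))$. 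A direct fiberwise computation in a local uniformizer at $x$, respectively at $y$, then yields: $\tilde a|_{y} = \ev_y(a) \in \End(\kP|_y)$ under the canonical isomorphism $\kP(x)|_y \cong \kP|_y$; and $\underline{\res}_x^\kP \circ \tilde a: \kP \lar \kP|_x$ equals $\res_x(a)$ precomposed with the canonical fiber evaluation $\kP \twoheadrightarrow \kP|_x$.

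Next, I realize the Massey product using the distinguished triangle
\[
\kP \xrightarrow{\imath} \kP(x) \xrightarrow{\underline{\res}_x^\kP} \kP|_x \xrightarrow{\omega^{0}} \kP[1]
\]
associated to the short exact sequence (\ref{E:resVectBundle}). Choosing a basis of the fiber $\kP|_x$ identifies $\kP|_x \cong \kk_x^{n}$ as a sheaf, splits $\omega^{0} = (\omega^{0}_1, \dots, \omega^{0}_n)$ with $\omega^{0}_i \in \Ext(\kk_x, \kP)$, and decomposes $f := \underline{\res}_x^\kP \circ \tilde a$ as $(f_1, \dots, f_n)^{T}$ with $f_i \in \Hom(\kP, \kk_x)$. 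Setting $t := \sum_{i=1}^{n} f_i \otimes \omega^{0}_i$, the element $t$ lies in $K$ because $\omega^{0} \circ \underline{\res}_x^\kP = 0$ (consecutive morphisms in a distinguished triangle). Applying the definition of the map $T$ from Lemma \ref{L:factontraces} and Proposition \ref{P:resandtrace} gives $\overline{T}(t) = \res_x(a) \in \slie(\kP|_x)$, whence Lemma \ref{L:factontraces} yields
\[
\overline{\SS}(t) = Y_1\bigl(\res_x(a)\bigr) \in \slie\bigl(\Hom(\kP, \kk_x)\bigr).
\]

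With this explicit form of $t$, the morphism $\tilde a$ itself serves as a lift of $f$ through $\underline{\res}_x^\kP$, so one may take $\tilde f := \tilde a$ in the construction of the Massey product. For any $h \in \Hom(\kP, \kk_y)$, the vanishings $\Hom(\kk_x, \kk_y) = 0 = \Ext(\kk_x, \kk_y)$ (valid since $x \ne y$) produce a unique extension $\tilde h: \kP(x) \lar \kk_y$ with $\tilde h \circ \imath = h$; concretely, $\tilde h$ is determined by $\tilde h|_y = h|_y$ under the canonical isomorphism $\kP(x)|_y \cong \kP|_y$. A fiberwise computation at $y$ then gives $\tilde h \circ \tilde a = h \circ \ev_y(a) = \ev_y(a)^{*}(h)$ in $\Hom(\kP|_y, \kk) \cong \Hom(\kP, \kk_y)$. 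Combining this with the compatibility (\ref{E:compatMasseyAinfty}) and the third part of Lemma \ref{L:LA3}, the element $\overline{\mm}_{x,y}\bigl(Y_1(\res_x(a))\bigr) \in \pgl(\Hom(\kP, \kk_y))$ is forced to equal the class of $\ev_y(a)^{*}$, which is precisely $Y_2(\ev_y(a))$; this proves commutativity of (\ref{E:maindiagram1}).

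The step requiring the most care is the identification $\overline{\SS}(t) = Y_1(\res_x(a))$: although Lemma \ref{L:factontraces} already encapsulates the interplay between Serre duality, the trace pairing and the residue map, invoking it here requires tracking the canonical isomorphisms $\Hom(\kP, \kk_x) \cong \Lin(\kP|_x, \kk)$ and $\Ext(\kk_x, \kP) \cong \Hom(\kk_x, \kP|_x)$ of Proposition \ref{P:resandtrace}, and matching the ``geometric'' presentation of $t$ coming from $0 \to \kP \to \kP(x) \to \kP|_x \to 0$ with the ``algebraic'' one arising from $\overline{\SS}^{-1}$. Once this bookkeeping is carried out, the remainder of the argument is formal.
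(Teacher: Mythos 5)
Your proof is correct and follows essentially the same route as the paper's: both identify $H^0\bigl(\kA(x)\bigr)$ with $\Hom\bigl(\kP,\kP(x)\bigr)/\langle\imath\rangle$, realize the triple Massey product via the residue triangle $\kP\to\kP(x)\to\kP\big|_x\to\kP[1]$, match $\overline{\SS}(t)$ with $Y_1(\res_x(a))$ through Lemma \ref{L:factontraces}, and conclude with the compatibility (\ref{E:compatMasseyAinfty}) together with the last part of Lemma \ref{L:LA3}. The only difference is organizational: you chase a single element $a$ through the canonical triangle decomposed by a basis of the fiber $\kP\big|_x$, whereas the paper runs the same computation in the opposite direction (Proposition \ref{P:keyprop}), starting from a general simple tensor $f\otimes\omega$ and mapping its extension into the residue sequence.
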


\subsection{A proof of the Comparison Theorem}
\noindent
We split  the proof of Theorem \ref{T:main1} into three smaller  logical  steps.

\medskip
\noindent
\underline{Step 1}. First note that we have a well-defined linear map
$$
\imath^{!}: \Hom\bigl(\kP, \kP(x)\bigr) \lar \End\bigl(\Hom(\kP, \kk_y)\bigr)
$$
defined as follows. Let $g \in \Hom\bigl(\kP, \kP(x)\bigr)$ and $h \in \Hom(\kP, \kk_y)$ be arbitrary
morphisms. Then there exists a unique morphism $\tilde{h} \in \Hom(\kP, \kk_y)$ such
that $\imath \circ \tilde{h} = h$, where $\imath: \kP \lar \kP(x)$ is the canonical inclusion.
Then we set:
$
\imath^{!}(g)(h) = \tilde{h} \circ g.
$
It follows from the definition that $\imath^{!}(\imath) = \mathbbm{1}_{\Hom(\kP, \kk_y)}$. This yields
the following result.

\begin{lemma}
We have a well-defined linear map
\begin{equation*}
\bar{\imath}^{!}: \quad \frac{\Hom\bigl(\kP, \kP(x)\bigr)}{\langle\imath\rangle} \lar
\pgl\bigl(\Hom(\kP, \kk_y)\bigr)
\end{equation*}
given by the rule: $\bar{\imath}^{!}(\bar{g}) = \overline{h \mapsto g \circ \tilde{h}}$.
\end{lemma}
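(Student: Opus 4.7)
The plan is first to verify that $\imath^{!}$ is a well-defined linear map into $\End\bigl(\Hom(\kP, \kk_y)\bigr)$, and then to observe that $\imath^{!}(\imath) = \mathbbm{1}$ forces the composition with the canonical projection $\End \twoheadrightarrow \pgl$ to factor through the quotient by $\langle \imath\rangle$.

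For the existence and uniqueness of the lift $\tilde{h}$, I will apply the functor $\Hom(-, \kk_y)$ to the short exact sequence
\begin{equation*}
0 \lar \kP \stackrel{\imath}\lar \kP(x) \lar \kP \otimes \kk_x \lar 0,
\end{equation*}
which is (\ref{E:resVectBundle}) for $\kF = \kP$. Since $x, y \in \breve{E}$ are distinct, the torsion sheaf $\kP \otimes \kk_x$ is supported away from $y$, so
\begin{equation*}
\Hom(\kP \otimes \kk_x, \kk_y) = 0 = \Ext(\kP \otimes \kk_x, \kk_y).
\end{equation*}
The resulting long exact sequence degenerates to an isomorphism $\imath^{*}: \Hom\bigl(\kP(x), \kk_y\bigr) \xrightarrow{\sim} \Hom(\kP, \kk_y)$, so for every $h$ there is a unique $\tilde{h}$ with $\tilde{h} \circ \imath = h$, and the assignment $h \mapsto \tilde{h}$ is $\kk$-linear. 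Combined with the obvious linearity of $g \mapsto \tilde{h} \circ g$, this yields a well-defined linear map $\imath^{!}: \Hom\bigl(\kP, \kP(x)\bigr) \lar \End\bigl(\Hom(\kP, \kk_y)\bigr)$.

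For the descent to the quotient, I will use the identity $\imath^{!}(\imath)(h) = \tilde{h} \circ \imath = h$, i.e.\ $\imath^{!}(\imath) = \mathbbm{1}_{\Hom(\kP, \kk_y)}$. By linearity, $\imath^{!}(g + \lambda \imath) = \imath^{!}(g) + \lambda \mathbbm{1}$ for any $\lambda \in \kk$ and $g \in \Hom\bigl(\kP, \kP(x)\bigr)$. Hence $\imath^{!}$ sends the subspace $\langle \imath\rangle$ into $\langle \mathbbm{1}\rangle$, which is precisely the kernel of $\End \twoheadrightarrow \pgl$. The composition therefore factors uniquely through $\Hom\bigl(\kP, \kP(x)\bigr)/\langle \imath\rangle$, producing the claimed map $\bar{\imath}^{!}$ with the asserted formula.

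There is no substantive obstacle here: the only non-formal ingredient is the vanishing of $\Hom$ and $\Ext$ from the skyscraper at $x$ into $\kk_y$, which holds because the supports $\{x\}$ and $\{y\}$ are disjoint closed points; everything else is a formal diagram chase. The real work of the section lies not in this lemma but in the forthcoming verification that $\bar{\imath}^{!}$ matches the triple Massey product $\overline{\mm}_{x, y}$ under the residue and evaluation identifications of diagram (\ref{E:maindiagram1}).
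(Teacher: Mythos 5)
Your argument is correct and follows essentially the same route as the paper, which derives the lemma directly from the unique lifting property of $\tilde{h}$ and the identity $\imath^{!}(\imath) = \mathbbm{1}_{\Hom(\kP, \kk_y)}$. Your justification of the unique lift via the long exact sequence obtained by applying $\Hom(-,\kk_y)$ to the sequence (\ref{E:resVectBundle}) for $\kF = \kP$, using the disjointness of the supports of $\kP\otimes\kk_x$ and $\kk_y$, merely makes explicit a step the paper leaves implicit.
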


\begin{lemma}
The canonical morphism of vector spaces
\begin{equation}\label{E:morphismj}
\jmath: H^0\bigl(\kA(x)\bigr) \lar \frac{\Hom\bigl(\kP, \kP(x)\bigr)}{\langle \imath \rangle}
\end{equation}
given by the composition
$$
H^0\bigl(\Ad(\kP)(x)\bigr) \hookrightarrow
H^0\bigl({\mathcal End}(\kP)(x)\bigr) \lar \Hom\bigl(\kP, \kP(x)\bigr)
\lar \frac{\Hom\bigl(\kP, \kP(x)\bigr)}{\langle \imath \rangle}
$$
is an isomorphism.
\end{lemma}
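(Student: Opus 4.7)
The plan is to derive the statement directly from the defining short exact sequence (\ref{E:shortexactAd}) of $\kA = \Ad(\kP)$. First I would twist (\ref{E:shortexactAd}) by $\kO(x)$ to obtain the short exact sequence
$$0 \lar \kA(x) \lar {\mathcal End}(\kP)(x) \xrightarrow{\Tr_\kP} \kO(x) \lar 0,$$
and pass to the long exact sequence of cohomology, whose initial segment reads
$$0 \lar H^0\bigl(\kA(x)\bigr) \lar \Hom\bigl(\kP, \kP(x)\bigr) \xrightarrow{H^0(\Tr_\kP)} H^0\bigl(\kO(x)\bigr) \lar H^1\bigl(\kA(x)\bigr).$$

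Next I would pin down $H^0(\kO(x))$. Since $E$ is Gorenstein of arithmetic genus one with trivial dualizing sheaf and $x$ is a smooth point of $E$, Serre duality gives $H^1(\kO(x)) \cong H^0(\kO(-x))^{\vee} = 0$: the line bundle $\kO(-x)$ has negative degree on the irreducible component containing $x$, hence admits no nonzero global section on the connected curve $E$. Riemann--Roch then gives $\dim_{\kk} H^0(\kO(x)) = 1$, and this line is spanned by the image $s$ of $1 \in H^0(\kO)$ under the canonical inclusion $\kO \hookrightarrow \kO(x)$.

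The key observation is then that the canonical inclusion $\imath \in \Hom(\kP, \kP(x))$ corresponds under the identification $\Hom(\kP, \kP(x)) \cong H^0\bigl({\mathcal End}(\kP)(x)\bigr)$ to the global section $\mathbbm{1}_{\kP}$ pushed into ${\mathcal End}(\kP)(x)$. Its trace is therefore $H^0(\Tr_\kP)(\imath) = n \cdot s$ with $n = \rk(\kP) \geq 2$. Because $\kk$ has characteristic zero, this image is nonzero, so $H^0(\Tr_\kP)$ is surjective, the connecting homomorphism to $H^1(\kA(x))$ vanishes, and the assignment $s \mapsto \frac{1}{n}\imath$ splits the resulting short exact sequence. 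This produces the direct sum decomposition
$$\Hom\bigl(\kP, \kP(x)\bigr) = H^0\bigl(\kA(x)\bigr) \oplus \kk \cdot \imath,$$
which is exactly the claim that $\jmath$ is an isomorphism.

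I do not anticipate a genuine obstacle. The only point requiring some care is the uniform identification $H^0(\kO(x)) = \kk$ across all Calabi--Yau curves relevant to the paper, but this follows at once from Serre duality and Riemann--Roch for Gorenstein curves with trivial dualizing sheaf, with no need for case analysis.
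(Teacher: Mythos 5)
Your argument is correct. It differs from the paper's in organization rather than in substance: the paper assembles a $3\times 4$ commutative diagram whose rows are the cohomology sequences of the residue sequences $0 \to \kF \to \kF(x) \to \kF\otimes\kk_x \to 0$ for $\kF = \kO, \kE, \kA$ (linked vertically by the trace), and leaves the conclusion to a diagram chase; you instead take the single column of that grid obtained by twisting the trace sequence (\ref{E:shortexactAd}) by $\kO(x)$, and make everything explicit. Both proofs ultimately rest on the same two facts — that $h^0\bigl(\kO(x)\bigr) = 1$ with the line spanned by the image of $1 \in H^0(\kO)$ (in the paper this appears as the vanishing of the map $H^0(\kO(x)) \to \kk$ in the top row), and that $H^0(\Tr_\kP)(\imath) = \rk(\kP)\cdot s \ne 0$ in characteristic zero. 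Your version buys an explicit splitting $\Hom\bigl(\kP, \kP(x)\bigr) = H^0\bigl(\kA(x)\bigr) \oplus \kk\cdot\imath$ without invoking the residue maps or the fibers $\kE\big|_x$, $\kA\big|_x$ at all, which is arguably cleaner for this isolated statement; the paper's larger diagram earns its keep because the same residue rows are reused immediately afterwards to relate $\jmath$ to $\res_x$ via the map $\overline{R}$. One small point of care which you handle adequately: the vanishing of $H^0\bigl(\kO(-x)\bigr)$ on a possibly reducible Calabi--Yau curve is best justified by noting that a global section of $\kO(-x) \subseteq \kO$ is a global regular function vanishing at $x$, hence a constant equal to zero on the connected reduced projective curve $E$.
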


\begin{proof}
The short exact sequences (\ref{E:shortexactAd}) and  (\ref{E:inducedRes})  together with
the vanishing $H^0(\kA) = 0 = H^1(\kA)$ imply that we have the following commutative diagram
\begin{equation*}
\xymatrix{
0 \ar[r] & H^0(\kO) \ar[r] & H^0\bigl(\kO(x)\bigr) \ar[r]^-0 & \kk \ar[r] & H^1(\kO) \\
0 \ar[r] & H^0(\kE) \ar[r] \ar[u] & H^0\bigl(\kE(x)\bigr) \ar[u] \ar[r] & \kE\big|_x \ar[u]_{\tr} \ar[r] &
H^1(\kE) \ar[u] \\
0 \ar[r] & 0 \ar[u] \ar[r] & H^0\bigl(\kA(x)\bigr) \ar[u] \ar[r]^-{\res_x} & \kA\big|_x \ar[u]
\ar[r] &
0. \ar[u]
}
\end{equation*}
The  fact that $\jmath$ is an isomorphism  follows from   a straightforward diagram chase.
\end{proof}

\begin{lemma}\label{L:commdiag1}
The following diagram  is commutative.
\begin{equation*}
\xymatrix{
\Hom\bigl(\kP, \kP(x)\bigr) \ar[rr]^{\ev_y} \ar[d]_{\imath^!} & & \End(\kP\big|_y) \ar[d]^{Y} \\
\End\bigl(\Hom(\kP, \kk_y)\bigr) \ar[rr]^{\can} & & \End\bigl(\Lin(\kP\big|_y, \kk)\bigr).
}
\end{equation*}
\end{lemma}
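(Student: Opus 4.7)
The plan is to evaluate both compositions in the square on an arbitrary $g\in\Hom\bigl(\kP,\kP(x)\bigr)$, and then test the resulting endomorphisms of $\Lin(\kP\big|_y,\kk)$ against an arbitrary functional $\varphi$. Under the canonical isomorphism $\Hom(\kP,\kk_y)\xrightarrow{\sim}\Lin(\kP\big|_y,\kk)$, $h\mapsto h_y$, such a $\varphi$ comes from some $h\in\Hom(\kP,\kk_y)$, and I would verify that both routes send $g$ to the endomorphism $h_y\mapsto h_y\circ\ev_y(g)$.

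Going down-then-right: unwinding the definition, $\imath^{!}(g)(h)=\tilde h\circ g$, where $\tilde h\in\Hom\bigl(\kP(x),\kk_y\bigr)$ is the unique lift of $h$ along $\imath\colon\kP\hookrightarrow\kP(x)$. Existence and uniqueness follow from the fact that $\coker(\imath)\cong\kP\otimes\kk_x$ is a skyscraper at $x$, combined with the vanishings $\Hom(\kk_x,\kk_y)=0=\Ext(\kk_x,\kk_y)$ (valid since $y\ne x$). Applying $\can$, i.e.\ restricting to the fiber at $y$, produces the functional $(\tilde h\circ g)_y=\tilde h_y\circ g_y$ on $\kP\big|_y$.

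Going right-then-down: by construction $\ev_y(g)=\imath_y^{-1}\circ g_y\in\End(\kP\big|_y)$, where the canonical identification $\kP(x)\big|_y\cong\kP\big|_y$ is provided by $\imath_y$, the fiber restriction of $\imath$, which is an isomorphism because $y\ne x$. Then $Y\bigl(\ev_y(g)\bigr)$ is the adjoint endomorphism of $\Lin(\kP\big|_y,\kk)$, sending $\varphi$ to $\varphi\circ\imath_y^{-1}\circ g_y$.

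To match the two routes, I would observe that the defining relation $\tilde h\circ\imath=h$ restricts to the fiber at $y$ as $\tilde h_y\circ\imath_y=h_y$, so $\tilde h_y=h_y\circ\imath_y^{-1}$. Substituting yields $\tilde h_y\circ g_y=h_y\circ\imath_y^{-1}\circ g_y=h_y\circ\ev_y(g)$, which is exactly the output of the right-then-down composition. I expect the only point requiring real care to be pinning down the canonical trivialization of $\kO(x)$ near $y$ underlying $\ev_y$ and identifying it with $\imath_y^{-1}$; both are induced by the same section $1\in H^0(\kO)\hookrightarrow H^0\bigl(\kO(x)\bigr)$, so this identification is tautological and the commutativity of the square then reduces to a one-line diagram chase.
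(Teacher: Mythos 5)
Your proof is correct and is exactly the "straightforward diagram chase" that the paper invokes without writing out: both routes send $g$ to the endomorphism $h_y\mapsto h_y\circ\imath_y^{-1}\circ g_y$, using the unique lift $\tilde h$ of $h$ along $\imath$ and the identification of the canonical isomorphism $\kP(x)\big|_y\cong\kP\big|_y$ with $\imath_y^{-1}$. Nothing further is needed.
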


\begin{proof}
The result follows from  a straightforward diagram chase.
\end{proof}

\begin{proposition}\label{P:diagwithresandev}
The following diagram is commutative.
\begin{equation}\label{E:diagwithresandev}
\begin{array}{c}
\xymatrix{
H^0\bigl(\kA(x)\bigr) \ar[rr]^{\ev_y} \ar[d]_-{\jmath} & & \slie(\kP\big|_y) \ar[d]^-{Y_2} \\
 \frac{\displaystyle \Hom\bigl(\kP, \kP(x)\bigr)}{\displaystyle \langle \imath \rangle}
 \ar[r]^-{\bar{\imath}^{!}} & \pgl\bigl(\Hom(\kP, \kk_y)\bigr) \ar[r] &
 \pgl\bigl(\Lin(\kP\big|_y, \kk)\bigr).
}
\end{array}
\end{equation}
In particular, if $E$ is a reduced plane cubic curve then $\bar{\imath}^{!}$ is an isomorphism
if and only if $n\cdot\bigl([x]-[y]\bigr) \ne 0$ in $J(E)$.
\end{proposition}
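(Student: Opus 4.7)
The plan is to deduce the proposition from Lemma \ref{L:commdiag1}, which is the same statement at the level of full endomorphism algebras, together with the explicit description of the isomorphism $\jmath$.

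First I would lift everything to the level of full morphism spaces. Given $g \in H^0\bigl(\kA(x)\bigr)$, the definition of $\jmath$ factors through the natural map $H^0\bigl(\kE(x)\bigr) \to \Hom\bigl(\kP, \kP(x)\bigr)$, so a choice of representative $\tilde g \in \Hom\bigl(\kP, \kP(x)\bigr)$ of $\jmath(g)$ is obtained simply by viewing $g$ as a section of $\kE(x)$. Likewise, $\ev_y$ sends $H^0\bigl(\kA(x)\bigr)$ into $\slie(\kP|_y)$: indeed, the composition $H^0\bigl(\kA(x)\bigr) \hookrightarrow H^0\bigl(\kE(x)\bigr) \xrightarrow{\ev_y} \End(\kP|_y)$ agrees with $\ev_y \circ \jmath$ (followed by the identification $\kP(x)|_y \cong \kP|_y$), and tracelessness is preserved by evaluation since $\kA$ sits inside $\kE$ as the kernel of the trace morphism.

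Next, I would chase the diagram. Applying Lemma \ref{L:commdiag1} to $\tilde g \in \Hom\bigl(\kP, \kP(x)\bigr)$ yields
\[
\can\bigl(\imath^{!}(\tilde g)\bigr) = Y\bigl(\ev_y(\tilde g)\bigr)
\]
in $\End\bigl(\Lin(\kP|_y, \kk)\bigr)$. Now pass to $\pgl$ on both sides. On the left, by construction $\imath^{!}(\imath) = \mathbbm{1}$, so the subspace $\langle\imath\rangle$ on which $\jmath$ is quotiented maps exactly to $\langle\mathbbm{1}\rangle$; hence $\bar{\imath}^{!}$ is well-defined and the left vertical composition $\bar{\imath}^{!}\circ\jmath$ followed by the canonical map to $\pgl\bigl(\Lin(\kP|_y,\kk)\bigr)$ coincides with the image of $\can\circ\imath^{!}\circ\tilde g$ in that quotient. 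On the right, the definition of $Y_2$ from Lemma \ref{L:LA2} is precisely the composition of $Y$ with the projection to $\pgl$, and the tracelessness established above means we may restrict from $\End(\kP|_y)$ to $\slie(\kP|_y)$ without losing anything. Combining these two observations, the identity of Lemma \ref{L:commdiag1} descends to the commutativity asserted in \eqref{E:diagwithresandev}.

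For the final claim, observe that $\jmath$ is an isomorphism by the previous lemma and that the canonical morphism $Y_2$ (together with the canonical identification $\Hom(\kP,\kk_y)\cong\Lin(\kP|_y,\kk)$) is an isomorphism. Hence, reading off the commutative square, $\bar{\imath}^{!}$ is an isomorphism if and only if $\ev_y\colon H^0\bigl(\kA(x)\bigr) \to \kA|_y$ is; by Lemma \ref{L:basicsonresandev}, for a reduced plane cubic curve this is equivalent to $n\cdot\bigl([x]-[y]\bigr)\ne 0$ in $J(E)$.

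The only real subtlety, which I would be careful about, is the compatibility of the quotients on both sides: one has to check that $\imath^{!}$ carries $\langle\imath\rangle$ isomorphically onto $\langle\mathbbm{1}_{\Hom(\kP,\kk_y)}\rangle$, and that the reduction from $\End(\kP|_y)$ to $\slie(\kP|_y)$ on the top row of \eqref{E:diagwithresandev} matches the passage to $\pgl$ on the bottom row. Both reductions are controlled by the trace, and their compatibility is precisely what is encoded by Lemma \ref{L:LA2}; everything else is a formal consequence of Lemma \ref{L:commdiag1}.
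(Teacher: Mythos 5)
Your proposal is correct and follows essentially the same route as the paper: both arguments reduce the statement to Lemma \ref{L:commdiag1} via the inclusion $\kA \subset \kE = {\mathcal End}(\kP)$ and a diagram chase through the quotients by $\langle\imath\rangle$ and $\langle\mathbbm{1}\rangle$, and both obtain the second assertion by observing that all remaining maps in the square are isomorphisms and invoking Lemma \ref{L:basicsonresandev}. The only difference is presentational — the paper records the chase as one large commutative diagram, while you spell it out elementwise.
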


\begin{proof} Note that the following diagram is commutative:
\begin{equation*}
\xymatrix{
\slie\bigl(\kP\big|_y\bigr) \ar@{^{(}->}[r] & \End(\kP\big|_y) \ar[rrd]^-{Y} & & \\
H^0\bigl(\kA(x)\bigr) \ar[u]^-{\ev_y} \ar@{^{(}->}[r] \ar[rd]_-\jmath &
\Hom\bigl(\kP, \kP(x)\bigr) \ar[r]^-{\imath^{!}} \ar[d] \ar[u]_{\ev_y} &
\End\bigl(\Hom(\kP, \kk_y)\bigr) \ar[r] \ar[d] & \End\bigl(\Lin(\kP\big|_y, \kk)\bigr) \ar[d] \\
& \frac{\displaystyle \Hom\bigl(\kP, \kP(x)\bigr)}{\displaystyle \langle \imath \rangle}
 \ar[r]^{\bar{\imath}^{!}} & \pgl\bigl(\Hom(\kP, \kk_y)\bigr) \ar[r] &
\pgl\bigl(\Lin(\kP\big|_y, \kk)\bigr).
}
\end{equation*}
Indeed, the right top square is commutative by Lemma \ref{L:commdiag1}, the commutativity
of the remaining parts is straightforward. Hence, the diagram (\ref{E:diagwithresandev})
is commutative, too.

Next, observe that all maps in the diagram   (\ref{E:diagwithresandev}) but
$\bar{\imath}^{!}$ and $\ev_y$ are isomorphisms. By Lemma \ref{L:basicsonresandev},
the map $\ev_y$ is an isomorphism if and only if $n \cdot \bigl([x]-[y]\bigr) \ne 0$ in $J(E)$.
This proves the second part of this Proposition.
\end{proof}

\noindent
Note that from the exact sequence (\ref{E:resVectBundle}) we get
the induced map
$$
R:= H^0\bigl(\underline{\res}_x^{{\mathcal End}(\kP)}\bigr):  \quad \Hom\bigl(\kP, \kP(x)\bigr) \lar \End\bigl(\kP\big|_x\bigr)
$$
sending an element $g \in \Hom\bigl(\kP, \kP(x)\bigr)$ to $\bigl(\underline{\res}_x^\kP \circ g)_x
\in \End\bigl(\kP\big|_x\bigr)$. Clearly, $R(\imath) = 0$, thus we obtain the induced map
\begin{equation}\label{E:mapRbar}
\overline{R}: \quad \frac{\displaystyle \Hom\bigl(\kP, \kP(x)\bigr)}{\displaystyle \langle \imath \rangle}
\lar \End\bigl(\kP\big|_x\bigr).
\end{equation}
\begin{lemma}
In the above notation, the following statements are true.
\begin{enumerate}
\item $\mathsf{Im}(\overline{R}) = \slie\bigl(\kP\big|_x\bigr)$.
\item Moreover, the  map $\overline{R}: \quad \frac{\displaystyle \Hom\bigl(\kP, \kP(x)\bigr)}{\displaystyle \langle \imath \rangle}
\lar \slie\bigl(\kP\big|_x\bigr)$ is an isomorphism.
\end{enumerate}
\end{lemma}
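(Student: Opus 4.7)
The plan is to reduce both claims to a straightforward diagram chase that combines the isomorphism $\jmath$ of (\ref{E:morphismj}) with the isomorphism $\res_x$ of Lemma \ref{L:basicsonresandev}. All the essential ingredients are already in hand, so there is no real obstacle — the content is purely bookkeeping.

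First I would note that the short exact sequence (\ref{E:shortexactAd}), twisted by $\kO(x)$, fits into a commutative diagram of sheaves with exact rows
\begin{equation*}
\xymatrix{
0 \ar[r] & \kA(x) \ar[r] \ar[d]_{\underline{\res}_x^\kA} & \kE(x) \ar[r]^-{\Tr_\kP} \ar[d]^{\underline{\res}_x^\kE} & \kO(x) \ar[r] \ar[d] & 0 \\
0 \ar[r] & \kA\big|_x \ar[r] & \kE\big|_x \ar[r]^-{\tr} & \kk_x \ar[r] & 0.
}
\end{equation*}
The bottom row identifies $\kA\big|_x$ with $\slie(\kP\big|_x) \subset \End(\kP\big|_x)$, while the middle vertical arrow yields $R$ after taking global sections and using $H^0(\kE(x)) = \Hom(\kP,\kP(x))$.

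Next I would check that $R$ descends to the quotient by $\langle\imath\rangle$: applying (\ref{E:resVectBundle}) with $\kF = \kP$ gives $\underline{\res}_x^\kP \circ \imath = 0$, hence $R(\imath) = 0$. Since $\jmath$ is by construction the composition of the inclusion $H^0(\kA(x)) \hookrightarrow H^0(\kE(x)) = \Hom(\kP,\kP(x))$ with the projection to the quotient, the naturality of the sheaf diagram above produces the commutative square
\begin{equation*}
\xymatrix{
H^0\bigl(\kA(x)\bigr) \ar[r]^-{\res_x} \ar[d]_-{\jmath} & \kA\big|_x \ar[d] \\
\Hom\bigl(\kP,\kP(x)\bigr)/\langle\imath\rangle \ar[r]^-{\overline{R}} & \End\bigl(\kP\big|_x\bigr),
}
\end{equation*}
in which $\jmath$ and $\res_x$ are isomorphisms and the right vertical arrow is the canonical injection with image $\slie(\kP\big|_x)$.

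Both claims then follow at once: the image of $\overline{R}$ coincides with the image of the right vertical injection, namely $\slie(\kP\big|_x)$, which proves (1); and, corestricted to $\slie(\kP\big|_x)$, the map $\overline{R}$ is a composition of three isomorphisms, proving (2). The only nontrivial inputs — the isomorphism $\jmath$, the isomorphism $\res_x$ (which requires the vanishings $H^0(\kA) = H^1(\kA) = 0$), and the fiber identification $\kA\big|_x = \slie(\kP\big|_x)$ coming from (\ref{E:shortexactAd}) — have all been established earlier, so there is nothing substantive left to overcome.
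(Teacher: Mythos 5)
Your proof is correct and follows essentially the same route as the paper: the paper's argument is precisely the commutative square relating $\res_x$, $\jmath$, $\overline{R}$, and the inclusion $\slie\bigl(\kP\big|_x\bigr) \hookrightarrow \End\bigl(\kP\big|_x\bigr)$, with both claims read off from the fact that $\res_x$ and $\jmath$ are isomorphisms. You merely supply the (routine) verification of the square's commutativity in more detail than the paper does.
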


\begin{proof}
The result follows from the commutativity of the diagram
\begin{equation*}
\xymatrix{
H^0\bigl(\Ad(\kP)(x)\bigr) \ar[r]^-{\res_x} \ar[d]_-{\jmath} & \slie\bigl(\kP\big|_x\bigr)
\ar@{_{(}->}[d] \\
\frac{\displaystyle \Hom\bigl(\kP, \kP(x)\bigr)}{\displaystyle \langle \imath \rangle}
\ar[r]^-{\overline{R}} & \End\bigl(\kP\big|_x\bigr)
}
\end{equation*}
and the fact that the morphisms $\res_x$ and $\jmath$ are isomorphisms.
\end{proof}

\noindent
\underline{Step 2}. The next  result is the key part of the proof of  Theorem \ref{T:main1}.

\begin{proposition}\label{P:keyprop}
The following diagram is commutative.
\begin{equation*}
\begin{array}{c}
\xymatrix{
\slie\bigl(\kP\big|_x\bigr) \ar[r]^-{T} & K \ar[rd]^-{M_H} & \\
\frac{\displaystyle \Hom\bigl(\kP, \kP(x)\bigr)}{\displaystyle \langle \imath \rangle}
\ar[u]^-{\overline{R}} \ar[r]^-{\imath^{!}} & \pgl\bigl(\Hom(\kP, \kk_y)\bigr) \ar[r]^-{\bar{r}_H} &
\Lin\bigl(H, \Hom(\kP, \kk_y)/H\bigr).
}
\end{array}
\end{equation*}
\end{proposition}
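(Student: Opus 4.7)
The strategy is to make the Massey product $M_{H}$ on the top path of the pentagon as explicit as possible by choosing the distinguished triangle in Definition \ref{D:MasseyProd} to be the one coming from the standard short exact sequence (\ref{E:resVectBundle}) for $\kF = \kP$. The heuristic is simple: since the data feeding the pentagon comes from a morphism $g \colon \kP \to \kP(x)$, I would realize the auxiliary object $\kA$ of Definition \ref{D:MasseyProd} as $\kP(x)$ itself and take the Massey lift of $f$ to be $g$. With that identification, the unique lift $\tilde h$ of $h$ along the cone inclusion becomes, by construction, the very map that defines $\bar{\imath}^{!}(\bar g)(h)$, so both paths of the pentagon collapse to the same element.

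To implement this, I first fix a basis $e_{1},\ldots,e_{n}$ of $V := \kP\big|_{x}$. This identifies $\kP\big|_{x} \cong \kk_{x}^{n}$ and turns (\ref{E:resVectBundle}) into a distinguished triangle
$$
\kP \xrightarrow{\imath} \kP(x) \xrightarrow{p} \kk_{x}^{n} \xrightarrow{(\omega_{1},\ldots,\omega_{n})} \kP[1],
$$
where $p = \underline{\res}_{x}^{\kP} = (p_{1},\ldots,p_{n})$ and $\omega_{i} := \delta_{x}^{\kP}(e_{i}) \in \Ext(\kk_{x},\kP)$. Setting $f_{i} := p_{i} \circ g \in \Hom(\kP,\kk_{x})$, the first key task is to verify the identity
$$
\overline{R}(\bar g) \;=\; \overline{T}\Bigl(\sum_{i=1}^{n} f_{i} \otimes \omega_{i}\Bigr) \quad \text{in } \slie(V),
$$
so that $\sum_{i} f_{i} \otimes \omega_{i}$ is the element of $K$ corresponding to $\overline{R}(\bar g)$ under the isomorphism $T$ of the Proposition. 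By Lemma \ref{L:factontraces} one computes $\overline{T}(f_{i} \otimes \omega_{i}) = e_{i} \otimes (f_{i})_{x}$ in $V \otimes V^{*} \cong \End(V)$, while expanding $\underline{\res}_{x}^{\kP} \circ g$ in the chosen basis yields $\overline{R}(\bar g) = \sum_{i} e_{i} \otimes (f_{i})_{x}$; the identity follows. The fact that $\sum_{i} f_{i} \otimes \omega_{i}$ lies in $K$ is automatic, since $\sum_{i} \omega_{i} \circ f_{i}$ factors through $\delta \circ p = 0$.

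With this identity in hand, the Massey product becomes a tautology. A lift $\tilde f \colon \kP \to \kP(x)$ of $(f_{1},\ldots,f_{n})$ along $p$ is given simply by $g$ itself, since $p \circ g = (f_{1},\ldots,f_{n})$ by construction. The unique lift $\tilde h \colon \kP(x) \to \kk_{y}$ of $h$ along $\imath$---unique because $\Hom(\kk_{x},\kk_{y}) = 0 = \Ext(\kk_{x},\kk_{y})$---is precisely the morphism whose postcomposition with $g$ defines $\bar{\imath}^{!}(\bar g)(h)$. Consequently
$$
M_{H}\bigl(\textstyle\sum_{i} f_{i} \otimes \omega_{i}\bigr)(h) \;=\; \overline{\tilde h \circ \tilde f} \;=\; \overline{\tilde h \circ g} \;=\; \bigl(\bar r_{H} \circ \bar{\imath}^{!}\bigr)(\bar g)(h),
$$
which is the required commutativity of the pentagon. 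The only genuinely non-formal step is the verification of the identity $\overline{R}(\bar g) = \overline{T}\bigl(\sum_{i} f_{i} \otimes \omega_{i}\bigr)$: it amounts to careful bookkeeping, chasing the boundary isomorphism $\delta_{x}^{\kP}$, the Serre pairing, and the fiberwise composition through the two squares of Lemma \ref{L:factontraces}, and matching the outcome with the explicit fiberwise description of $R(g)$. Once this identification is in place, the remainder of the argument is a direct unpacking of Definition \ref{D:MasseyProd}.
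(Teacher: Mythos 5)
Your proof is correct, and the underlying mechanism is the one the paper uses — the unique extension of $h$ across the cone inclusion and a lift of $f$ across the projection — but you run the chase in the opposite direction and with a different distinguished triangle. The paper starts from an arbitrary simple tensor $f\otimes\omega\in K$, takes the extension $0\to\kP\xrightarrow{\kappa}\kQ\xrightarrow{p}\kk_x\to 0$ classified by $\omega$, and produces the class $\overline{t\circ\tilde f}\in\Hom\bigl(\kP,\kP(x)\bigr)/\langle\imath\rangle$ via the comparison map $t:\kQ\to\kP(x)$ induced by $a=\delta_x^{-1}(\omega)$; it then verifies $\overline{R}\bigl(\overline{t\tilde f}\bigr)=a\circ f_x$ and concludes because $K$ is spanned by simple tensors and $\overline{R}$, $T$ are isomorphisms. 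You instead start from $\bar g$, use the single canonical triangle $\kP\xrightarrow{\imath}\kP(x)\to\kk_x^{\oplus n}\to\kP[1]$ for every input, and present $T\bigl(\overline{R}(\bar g)\bigr)$ as $\sum_i f_i\otimes\omega_i$ with $f_i=p_i\circ g$ and $\omega_i=\delta_x^{\kP}(e_i)$, so that $g$ itself serves as the Massey lift. This eliminates the auxiliary object $\kQ$ and the transfer map $t$, at the price of a choice of basis of $\kP\big|_x$ and an explicit appeal to the well-definedness of $M_H$ with respect to the presentation of an element of $K$ and the choice of the distinguished triangle — which the paper records right after Definition \ref{D:MasseyProd}, so this is legitimate. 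Your two computational checks are both sound: $\overline{R}(\bar g)=\sum_i e_i\otimes(f_i)_x=\overline{T}\bigl(\sum_i f_i\otimes\omega_i\bigr)$ follows from expanding $\underline{\res}_x^{\kP}\circ g$ in the chosen basis and from Lemma \ref{L:factontraces}, and $\sum_i\omega_i\circ f_i=(\omega_1,\dots,\omega_n)\circ p\circ g=0$ because consecutive morphisms in a distinguished triangle compose to zero.
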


\begin{proof}
We show this result by diagram chasing. Recall that the vector space $K$ is the  linear span of
 the simple tensors $f \otimes \omega \in \Hom(\kP, \kk_x) \otimes \Ext(\kk_x, \kP)$ such that
$\omega \circ f = 0$.
Let
$
0 \lar \kP \stackrel{\kappa}\lar \kQ \stackrel{p}\lar \kk_x \lar 0
$
be a short exact sequence corresponding to an element
$\omega \in \Ext(\kk_x, \kP)$. Recall that by Proposition \ref{P:resandtrace}  there exists a unique
$a \in \Hom(\kk_x, \kP \otimes \kk_x)$ such that $\omega = \delta_x(a)$.

Since $\Hom(\kk_x, \kk_y) = 0 = \Ext(\kk_x, \kk_y)$, for any
$h \in \Hom(\kP, \kk_y)$ there exist unique elements $\tilde{h} \in
\Hom(\kQ, \kk_y)$ and $\tilde{h}' \in \Hom\bigl(\kP(x), \kk_y\bigr)$ such that
the following diagram is commutative:
\begin{equation*}
\begin{array}{c}
\xymatrix{
 & & & & & & \kP \ar[dd]^-{f}  \ar[lldd]_-{\tilde{f}} & &\\
 & & & & & & &   & \\
0 \ar[rr] & & \kP \ar[rr]^-{\kappa} \ar[rd]_-{h} \ar[dd]_{\mathbbm{1}_\kP} & &  \kQ \ar[rr]^-{p} \ar[dd]^-{t} \ar[ld]^-{\tilde{h}}
& & \kk_x \ar[rr] \ar[dd]^a & &  0 \\
& & & \kk_y & & & & \\
0 \ar[rr] & & \kP \ar[rr]^{\imath} \ar[ru]^-{h} & &  \kP(x) \ar[lu]_-{\tilde{h}'}
\ar[rr]^-{\underline{\res}_x^\kP}  & &\kP \otimes \kk_x \ar[rr]
& & 0.
}
\end{array}
\end{equation*}
Although a  lift $\tilde{f} \in \Hom(\kP, \kQ)$ is only defined up
to a translation $\tilde{f} \mapsto \tilde{f} + \lambda \kappa$ for some
$\lambda \in \kk$, we have a well-defined element
$\overline{t \circ \tilde{f}} \in \frac{\displaystyle \Hom\bigl(\kP, \kP(x)\bigr)}{\displaystyle \langle \imath \rangle}$ such that $\overline{R}\bigl(\overline{t \circ \tilde{f}}\bigr) = a \circ f_x$.
By definition, $T(a \circ f_x) = f \otimes \omega$.  It remains to observe that
$$
\bigl(\bar{r}_H \circ \bar{\imath}^{!}([t \tilde{f}])\bigr)(h)
= [\tilde{h}' t \tilde{f}] = [\tilde{h} f] =
\bigl(M_H(f \otimes \omega)\bigr)(h).
$$
Since $\overline{R}$ and $T$ are isomorphisms  and the vector space $K$ is generated by simple tensors, this
concludes the proof.
\end{proof}

\medskip
\noindent
\underline{Step 3}. Now we are ready to proceed with the  proof of Theorem \ref{T:main1}. Note
that the following diagram is commutative.

\begin{equation*}
\xymatrix{
 & \slie\bigl(\Hom(\kP, \kk_x)\bigr) \ar[r]^-{\can_1} & \slie\bigl(\Lin(\kP\big|_x, \kk)\bigr) & \\
 K \ar[ru]^-{\overline{\SS}} \ar[ddd]_-{M_H} \ar[rdd]^{\widetilde{\mm}_{x,y}} & & &  \slie(\kP\big|_x) \ar[ul]_-{Y_1} \ar[lll]_-T \\
 & & \frac{\displaystyle \Hom\bigl(\kP, \kP(x)\bigr)}{\displaystyle \langle \imath \rangle}
 \ar[ru]^-{\overline{R}} \ar[ld]_-{\bar{\imath}^{!}} &
 H^0\bigl(\kA(x)\bigr) \ar[l]_-{\jmath} \ar[u]_-{\res_x} \ar[dd]^-{\ev_y} \\
 & \pgl\bigl(\Hom(\kP, \kk_y)\bigr) \ar[ld]^{\bar{r}_H} \ar[rd]_{\can_2} &  & \\
 \Lin\bigl(H, \Hom(\kP, \kk_y)/H\bigr) & & \pgl\bigl(\Lin(\kP\big|_y, \kk)\bigr) &
 \slie\bigl(\kP\big|_y\bigr) \ar[l]_-{Y_2}
 }
\end{equation*}
where $\widetilde{\mm}_{x, y} = \mm^\kP_{\kk_x, \kk_y}$ from (\ref{E:MasseyMap}).
Indeed, by Lemma \ref{L:factontraces} we have the equality $Y_1 \circ T = \can_1 \circ \SS$, which
gives commutativity of the top square. Next, the equality
$\bar{r}_H \circ \widetilde{\mm}_{x,y} = M_H$  just expresses the
commutativity of the  diagram (\ref{E:compatMasseyAinfty}). The equality
$\overline{R} \circ \jmath = \res_x$  follows from the definition of the map
$\overline{R}$, see (\ref{E:mapRbar}).

The equality
$Y_2 \circ \ev_y = \can_2 \circ \bar{\imath}^{!} \circ \jmath$ is given by Proposition
\ref{P:diagwithresandev}, yielding the commutativity of the right lower part.
Finally, by Proposition \ref{P:keyprop} we have the equality
$
\bar{r}_H \circ \bar{\imath}^{!} = M_H \circ T \circ \overline{R}.
$
Since this equality is true for any one-dimensional subspace
$H \subseteq \Hom(\kP, \kk_y)$, Lemma \ref{L:LA3} implies  that
$\widetilde{\mm}_{x,y} \circ T \circ \overline{R} = \bar{\imath}^{!}$. This finishes
the  proof of commutativity of the above diagram.
It remains to conclude  that the commutativity of the diagram (\ref{E:maindiagram1}) follows as well
and Theorem \ref{T:main1} is proven.
\qed

\begin{corollary}\label{C:keycorol}
Let $E$ be an elliptic curve  over $\kk$, $\kP$ a simple vector bundle on $E$, $\kA = \Ad(\kP)$ and
$x, y \in R$ two distinct points. Let
$
r_{x, y} \in \kA\big|_x \otimes \kA\big|_y
$
be  the image of the linear map $\ev_y \circ \res_x^{-1} \in \Lin(\kA\big|_x, \kA\big|_y)$
under the  linear isomorphism $\Lin(\kA\big|_x, \kA\big|_y) \lar \kA\big|_x \otimes \kA\big|_y$
induced by the Killing form $\kA\big|_x \times  \kA\big|_x \lar  \kk,
(a, b) \mapsto \tr(a \circ b)$. Then $r_{x, y}$ is a solution of the
classical Yang--Baxter equation: for any pairwise distinct points $x_1, x_2, x_3 \in E$
we have:
\begin{equation}\label{E:fromGeomToCYBE}
\bigl[r^{12}_{x_1, x_2}, r^{13}_{x_1, x_3}\bigr] + \bigl[r^{12}_{x_1, x_2}, r^{23}_{x_2, x_3}\bigr]
+ \bigl[r^{12}_{x_1, x_2}, r^{13}_{x_1, x_3}\bigr] = 0,
\end{equation}
where both sides of the above identity are
viewed as elements of $\kA\big|_{x_1} \otimes \kA\big|_{x_2} \otimes \kA\big|_{x_3}$.
 Moreover, the tensor $r_{x_1, x_2}$ is unitary:
\begin{equation}\label{E:unitary1}
r_{x_2, x_1} = - \tau\bigl(r_{x_1, x_2}\bigr),
\end{equation}
where $\tau: \kA\big|_{x_1} \otimes \kA\big|_{x_2} \lar \kA\big|_{x_2} \otimes \kA\big|_{x_1}$
is the map permuting both factors.
\end{corollary}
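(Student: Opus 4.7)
The plan is to deduce both identities directly from the analogous properties of the tensor $\mm_{x, y}$ (established earlier in this section for elliptic curves via the cyclic $A_\infty$--structure on $\Perf(E)$) by transporting the CYBE and the unitarity through the Lie-algebra anti-isomorphisms $Y_1$ and $Y_2$ supplied by Theorem \ref{T:main1}.

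First I would rewrite the tensor $r_{x, y}$ in operator language: by its very definition, the Killing form identifies $f_{x, y} := \ev_y \circ \res_x^{-1} \in \Lin\bigl(\kA\big|_x, \kA\big|_y\bigr)$ with $r_{x, y} \in \kA\big|_x \otimes \kA\big|_y$. The commutative diagram (\ref{E:maindiagram1}) of Theorem \ref{T:main1} then reads
\[
\overline{\mm}_{x, y} \circ Y_1 \;=\; Y_2 \circ f_{x, y}.
\]
Combining this with Lemma \ref{L:LA2}(iii), which asserts that $Y_1 \times Y_2$ intertwines the trace form on $\slie(\kP\big|_z) \times \slie(\kP\big|_z)$ with the pairing $\slie(\Hom(\kP, \kk_z)) \times \pgl(\Hom(\kP, \kk_z)) \to \kk$ used in Lemma \ref{L:LA1}, one concludes that the element $r_{x, y}$ is carried by $Y_1 \otimes Y_2$ to the element $\mm_{x, y}$.

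Next comes a short formal computation: if $\phi : \mathfrak{a} \to \mathfrak{b}$ is an anti-isomorphism of Lie algebras and $\rho \in \mathfrak{a} \otimes \mathfrak{a}$ satisfies the CYBE, then so does $(\phi \otimes \phi)(\rho) \in \mathfrak{b} \otimes \mathfrak{b}$. Indeed, each of the three terms $[\rho^{ij}, \rho^{ik}]$ of the CYBE involves exactly one Lie bracket, located in a single tensor slot; under $\phi^{\otimes 3}$ each such bracket acquires the same sign $(-1)$, and the equation is preserved. Unitarity $\tau(\rho) = -\rho$ is also preserved, since $\tau$ commutes with $\phi \otimes \phi$. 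Applying this to the three-point configuration $(x_1, x_2, x_3)$, to the anti-isomorphisms $Y_1 \otimes Y_2$ at each pair of points, and to the identities (\ref{E:fromAinftyToCYBE}) and (\ref{E:unitary}) for $\mm_{x_i, x_j}$, I obtain (\ref{E:fromGeomToCYBE}) and (\ref{E:unitary1}) for $r_{x_i, x_j}$.

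The main obstacle, and the only place where a nontrivial verification is required, is the careful bookkeeping between the two Lie-algebraic models: on the geometric side $r_{x, y}$ lives in $\slie(\kP\big|_x) \otimes \slie(\kP\big|_y)$ and is paired against itself via the trace form on each factor, whereas on the categorical side $\mm_{x, y}$ lives in $\pgl(\Hom(\kP, \kk_x)) \otimes \pgl(\Hom(\kP, \kk_y))$ and is paired against $\slie(\Hom(\kP, \kk_z))$ via the asymmetric trace pairing of Lemma \ref{L:LA1}. The compatibility of these two pairings is precisely the content of Lemma \ref{L:LA2}(iii); once this is granted, the transfer of CYBE and unitarity from $\mm_{x, y}$ to $r_{x, y}$ is purely formal.
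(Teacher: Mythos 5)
Your argument is correct and is essentially the paper's own proof: the paper likewise uses Theorem \ref{T:main1} to identify $r_{x,y}$ with the image of $\mm_{x,y}$ under a tensor square of Lie-algebra anti-isomorphisms and then observes that anti-isomorphisms preserve both the CYBE and unitarity. The one bookkeeping difference is that the paper transports $\mm_{x,y}$ by $\overline{Y}_2\otimes\overline{Y}_2$ rather than your $Y_1\otimes Y_2$; this is only a matter of the canonical identification $\slie(V^*)\cong\pgl(V^*)$, but using $Y_2$ in \emph{both} slots is the cleaner choice, since in the three-point identity the middle tensor slot receives the first factor of one $r$-tensor and the second factor of another, and these must be compared inside the same model of the Lie algebra.
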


\begin{proof}
By Theorem \ref{T:main1}, the tensor $r_{x, y}$ is the image of the tensor $m_{x, y}$ from
(\ref{E:mapm_xy}) under the isomorphism
$$
\pgl\bigl(\Hom(\kP, \kk_x)\bigr) \otimes \pgl\bigl(\Hom(\kP, \kk_y)\bigr)
\xrightarrow{\overline{Y}_2 \otimes \overline{Y}_2} \kA\big|_{x} \otimes \kA\big|_{y}.
$$
Since $\overline{Y}_2$ is an anti-isomorphism of Lie algebras, the equality (\ref{E:fromGeomToCYBE}) is a corollary
of (\ref{E:fromAinftyToCYBE}). In the same way, the equality (\ref{E:unitary1}) is a consequence of
 (\ref{E:unitary}).
\end{proof}

\noindent
Now we  generalize Corollary \ref{C:keycorol} to the case of the singular Weierstra\ss{} curves.

\section{Genus one fibrations and CYBE}\label{S:GenusOneandCYBE}

\noindent
We start with  the following geometric data.

\begin{itemize}
\item  Let $E \stackrel{p}\lar T$ be a flat \emph{projective}
  morphism of relative dimension one between algebraic varieties. We
  denote by $\breve{E}$ the regular locus  of $p$.
\item   We assume there exists a section   $\imath: T \rightarrow \breve{E}$ of
  $p$.
\item  Moreover, we assume that for all points
  $t \in T$ the fiber $E_t$ is an \emph{irreducible} Calabi--Yau curve.
\item  The fibration $E \stackrel{p}\lar T$ is embeddable  into a smooth
  fibration of projective surfaces over $T$ and $\Omega_{E/T} \cong \kO_E$.
\end{itemize}

\begin{example}
Let $E_{T} \subset  {\mathbb P}^2 \times \mathbb{A}^2 \lar \mathbb{A}^2 =:T$
be the elliptic  fibration given by the equation
$wv^2 = 4 u^3 +  g_2 uw^2 + g_3 w^3$ and let
$\Delta(g_2, g_3)  = g_2^3 + 27 g_3^2$ be the discriminant of this family.
This fibration has a section
$
(g_2, g_3) \mapsto \bigl((0:1:0), (g_2, g_3)\bigr)
$
and satisfies the condition $\Omega_{E/T} \cong \kO_E$.
\end{example}

\noindent
The following result is well-known.

\begin{lemma}\label{L:relativfamily}
Consider  $(n, d) \in \mathbb{N} \times \mathbb{Z}$ such that $\gcd(n, d) = 1$. Then there exists
$\kP \in \VB(E)$ such that for any $t \in T$ its restriction
$\kP\big|_{E_t}$ is simple of rank $n$ and degree $d$.
\end{lemma}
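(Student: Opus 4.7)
Fix coprime integers $(n, d)$ with $n > 0$. My plan is to realise $\kP$ as the image of a suitable line bundle under a relative Fourier--Mukai functor, choosing the kernel so that the induced fibrewise operation on $(\rk, \deg) \in \ZZ^2$ is given by an element of $\SL_2(\ZZ)$ sending $(1, 0)$ to $(n, d)$. Existence on each individual fibre is already provided by Theorem \ref{T:simplebundles}; the issue is to do it coherently in families.

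First I would construct a relative Poincaré sheaf $\kV$ on $E \times_T E$. The section $\imath: T \to \breve{E}$ gives a Cartier divisor $\Sigma = \imath(T) \subset E$ whose restriction to each fibre is a smooth point $o_t$. Starting from the ideal sheaf $\kI_{\Delta}$ of the diagonal $\Delta \subset E \times_T E$ and normalising by tensoring with $\pi_1^{*}\kO_E(-\Sigma) \otimes \pi_2^{*}\kO_E(-\Sigma)$, one obtains $\kV$. The hypothesis that $E \to T$ embeds into a smooth fibration of projective surfaces, together with the triviality of $\Omega_{E/T}$, is exactly what ensures that $\Delta$ is a Cartier divisor and that $\kV$ is coherent, $T$--flat, and flat along both projections. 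The resulting integral functor $\Phi = \Phi_\kV: \Dbcoh(E) \to \Dbcoh(E)$ then satisfies flat base change: for each $t \in T$ and each $\kF \in \Dbcoh(E)$,
\[
\Phi(\kF)\big|_{E_t} \;\cong\; \Phi_{\kV_t}\bigl(\kF\big|_{E_t}\bigr),
\]
where $\Phi_{\kV_t}$ is the classical Fourier--Mukai transform on the Calabi--Yau curve $E_t$ studied in \cite{BK4}.

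Next I would use Bezout to pick integers $a, b$ with $na - bd = 1$, producing a matrix $M = \left(\begin{smallmatrix} n & b \\ d & a \end{smallmatrix}\right) \in \SL_2(\ZZ)$. On each fibre $E_t$, the fibrewise transform $\Phi_{\kV_t}$ and the tensor product with $\kO_{E_t}(o_t)$ act on $(\rk, \deg)$ by two matrices which generate $\SL_2(\ZZ)$; hence $M$ is realised fibrewise as a finite word $\Psi$ in these operations. Globalising letter by letter, replacing $\Phi_{\kV_t}$ by $\Phi_\kV$ and the twist by $\kO_{E_t}(o_t)$ by the twist by $\kO_E(\Sigma)$, yields a relative functor $\Psi^{T}: \Dbcoh(E) \to \Dbcoh(E)$ whose fibrewise restriction is $\Psi$. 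Applying $\Psi^T$ to a suitable line bundle $\kL = \kO_E(k\Sigma)$ on $E$, fibrewise stability of $\kL\big|_{E_t}$ and the fact that the autoequivalences $\Phi_{\kV_t}$ and $-\otimes\kO_{E_t}(o_t)$ preserve simplicity give that $\Psi^T(\kL)\big|_{E_t}$ is simple of rank $n$ and degree $d$, as desired.

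The main obstacle is the last step: verifying that $\Psi^T(\kL)$ is concentrated in a single cohomological degree and is $T$--flat, so that it is an honest vector bundle on $E$. For this, I would choose $k$ large enough that on every fibre $E_t$ the intermediate cohomology of each letter of the word $\Psi$ applied to $\kL\big|_{E_t}$ vanishes in all but one degree (a fibrewise WIT condition). Flatness of $E$ over $T$ together with cohomology-and-base-change then implies that the same single-degree concentration holds relatively, and the fibrewise dimensions of the surviving cohomology sheaf are constant in $t$. Combined with simplicity of each fibre and constancy of rank, this forces $\Psi^T(\kL)$ to be locally free on $E$, finishing the construction. The delicate point is that $E \to T$ may have singular fibres (of cuspidal, nodal or even reducible type if $T$ is larger than $\AA^2$), so the WIT verification must be carried out using the general theory of Fourier--Mukai transforms on Calabi--Yau curves developed in \cite{BK4}, rather than just on smooth elliptic fibres.
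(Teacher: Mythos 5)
Your construction is essentially the one the paper uses: a relative Fourier--Mukai functor with kernel built from the ideal sheaf $\kI_\Delta$ of the diagonal, composed with twists by $\kO(\Sigma)$, so that the induced action on $(\rk,\deg)$ realises the class $(n,d)$. The paper's own argument is likewise only a sketch, delegating the autoequivalence property to \cite[Theorem 2.12]{BK2} and the existence of the right composition to \cite[Proposition 4.13(iv)]{BK3}; the only structural difference is that the paper applies the composed equivalence to the torsion sheaf $\kO_\Sigma$, of fibrewise class $(0,1)$, rather than to a line bundle. This brings out a bookkeeping slip in your version: you choose the word $\Psi$ so that its matrix $M$ sends $(1,0)$ to $(n,d)$, but you then apply $\Psi^T$ to $\kL=\kO_E(k\Sigma)$, whose fibrewise class is $(1,k)$; for $k\neq 0$ the output has class $(n+bk,\, d+ak)\neq (n,d)$. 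Either fix the word after fixing $k$ (possible, since the $\SL_2(\ZZ)$--orbit of any primitive vector consists of all primitive vectors), or follow the paper and start from $\kO_\Sigma$.

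The more substantive gap is your final step, where ``simplicity of each fibre and constancy of rank'' is claimed to force local freeness. On a singular irreducible Calabi--Yau curve, a simple (even stable) torsion-free sheaf of coprime rank and degree need not be locally free: already in rank one, the ideal sheaf of the node on a nodal cubic is simple and torsion-free, but its fibre dimension jumps to $2$ at the node. So constancy of the fibre dimension of $\Psi^T(\kL)\big|_{E_t}$ \emph{along the curve $E_t$}, in particular at its singular point, is exactly what must be proved, and it does not follow from cohomology-and-base-change over $T$, which only controls variation in $t$. This is precisely the content of the results of \cite{BK3} and the WIT analysis of \cite{BK4} that you invoke in passing at the end; that citation is carrying the real weight of the argument and should be made explicit rather than presented as a routine verification. (A minor point: reducible fibres cannot occur here, since the standing hypotheses of the section require every $E_t$ to be irreducible.)
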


\noindent
\emph{Sketch of the   proof}. Let $\Sigma := \imath(T) \subset E$ and
$\kI_\Delta$ be the structure sheaf of the diagonal $\Delta \subset E \times_T E$. Let
 $\FM^{\kI_\Delta}$  be the Fourier--Mukai transform with the kernel $\kI_{\Delta}$.
By \cite[Theorem 2.12]{BK2}, $\FM^{\kI_\Delta}$ is an auto-equivalence of the derived
category $\FM^{\kI_\Delta}$.
By \cite[Proposition 4.13(iv)]{BK3}  there exists an auto-equivalence $\FF$ of the derived category $\Dbcoh(E)$,
which is a certain composition of the functors $\FM^{\kI_\Delta}$ and
$\,-\,\otimes \kO(\Sigma)$  such that $\FF(\kO_\Sigma) \cong \kP[0]$, where $\kP$ is a vector bundle
on $E$ having the required  properties. \qed

\medskip
Now we fix the following notation.
Let $\kP$ be as in Lemma \ref{L:relativfamily} and $\kA = \Ad(\kP)$.
We set
 $\overline{X} := E \times_T \breve{E} \times_T \breve{E}$ and
  $\overline{B} := \breve{E} \times_T \breve{E}$. Let
 $q: \overline{X} \lar \overline{B}$ be the canonical projection,
$\Delta \subset \breve{E} \times_T \breve{E}$  the diagonal,
$B : = \overline{B} \setminus \Delta$ and $X := q^{-1}(B)$. The elliptic fibration
$q: \overline{X} \lar \overline{B}$ has two canonical sections $h_i$, $i = 1, 2$, given by
$h_i(y_1, y_2) = (y_i, y_1, y_2)$. Let
$\Sigma_i := h_i(\overline{B})$ and $\overline\kA$ be the pull-back of $\kA$
on $\overline{X}$.
Note that the relative dualizing sheaf $\Omega = \Omega_{\overline{X}/\overline{B}}$ is trivial.
 Similarly to (\ref{E:ResSequence}) one has the following canonical short exact sequence
\begin{equation}\label{E:resrelative}
0 \lar \Omega \lar \Omega(\Sigma_1) \stackrel{\underline{\res}_{\Sigma_1}}\lar \kO_{\Sigma_1}
\lar 0,
\end{equation}
see \cite[Subsection 3.1.2]{BK4} for a precise construction.
By the assumptions from the beginning of this section,  there exists an isomorphism $\kO_{\overline{X}}
\lar \Omega_{\overline{X}/\overline{B}}$ induced by a nowhere vanishing section
$w \in H^0(\Omega_{E/T})$. It gives   the  following short exact  sequence
\begin{equation}\label{E:resrelativeVB}
0 \lar \overline{\kA} \lar \overline{\kA}(\Sigma_1) \stackrel{\underline{\res}_{\Sigma_1}^\kA}\lar
\overline{\kA}\big|_{\Sigma_1} \lar 0.
\end{equation}
In a similar way, we have another canonical sequence
\begin{equation}\label{E:evrelativeVB}
0 \lar \overline{\kA}(\Sigma_1 - \Sigma_2) \lar \overline{\kA}(\Sigma_1) \lar
\overline{\kA}(\Sigma_1)\big|_{\Sigma_2} \lar 0.
\end{equation}

\begin{proposition}\label{P:ongeomCYBE}
In the above notation, the following results are true.
\begin{itemize}
\item We have the vanishing $q_*(\overline{\kA}) = 0 = \mathbb{R}^1 q_*(\overline{\kA})$.
\item The coherent sheaf $q_*\bigl(\overline{\kA}(\Sigma_1)\bigr)$ is locally free.
\item Moreover, we have the  morphism of locally free sheaves on $B$ given by the composition
$
q_*\bigl(\overline{\kA}(\Sigma_1)\bigr) \lar q_*\bigl(\overline{\kA}(\Sigma_1)\big|_{\Sigma_2}\bigr)
\lar q_*\bigl(\overline{\kA}\big|_{\Sigma_2}\bigr),
$
which is an isomorphism outside of the closed subset
$
\Delta_n:= \bigl\{(t, x, y)  \; \big| \;  n \cdot \bigl([x] - [y]\bigr) = 0 \in J(E_t) \bigr\} \subset B.
$
\end{itemize}
\end{proposition}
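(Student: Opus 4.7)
My plan is to deduce all three statements as consequences of the pointwise results established in Section~2, specifically Proposition \ref{P:AdonCY} and Lemma \ref{L:basicsonresandev}, combined with cohomology and base change for the proper flat morphism $q$.

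First, for the vanishing $q_*(\overline{\kA}) = 0 = R^1 q_*(\overline{\kA})$, I would argue fiberwise. For any point $b = (t, x, y) \in \overline{B}$ the fiber $q^{-1}(b)$ is the irreducible Calabi--Yau curve $E_t$, and the restriction of $\overline{\kA}$ to this fiber is the sheaf $\Ad(\kP|_{E_t})$. By Proposition \ref{P:AdonCY}, both $H^0(E_t, \overline{\kA}|_{E_t})$ and $H^1(E_t, \overline{\kA}|_{E_t})$ vanish. Since $q$ has relative dimension one, cohomology and base change (together with Nakayama's lemma applied to the coherent sheaves $R^i q_*(\overline{\kA})$) then force both $q_*(\overline{\kA})$ and $R^1 q_*(\overline{\kA})$ to be zero.

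Next, I would push forward the short exact sequence (\ref{E:resrelativeVB}) along $q$. Using the vanishing just established, the resulting long exact sequence collapses to an isomorphism
\begin{equation*}
q_*(\overline{\kA}(\Sigma_1)) \xrightarrow{\;\sim\;} q_*(\overline{\kA}\big|_{\Sigma_1}).
\end{equation*}
Since $q$ restricts to an isomorphism $\Sigma_1 \xrightarrow{\sim} \overline{B}$ with inverse $h_1$, the right-hand side is simply $h_1^{*}(\overline{\kA})$, which is locally free of rank $n^2 - 1$. The same reasoning applied to $\Sigma_2$ yields that $q_*(\overline{\kA}\big|_{\Sigma_2}) \cong h_2^{*}(\overline{\kA})$ is also locally free of rank $n^2-1$, a fact needed in the third step.

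For the last assertion, I would restrict to $B = \overline{B} \setminus \Delta$, where $\Sigma_1$ and $\Sigma_2$ are disjoint, so the canonical isomorphism $\overline{\kA}(\Sigma_1)\big|_{\Sigma_2} \cong \overline{\kA}\big|_{\Sigma_2}$ holds. Pushing the sequence (\ref{E:evrelativeVB}) forward and combining with the previous identifications produces the composition morphism in the statement. The key observation is that its fiber at $(t, x, y) \in B$ is precisely the linear map $\ev_y \circ \res_x^{-1}: \kA_t\big|_x \to \kA_t\big|_y$ of Lemma \ref{L:basicsonresandev}, which is an isomorphism exactly when $n \cdot \bigl([x]-[y]\bigr) \ne 0$ in $J(E_t)$, i.e.\ off $\Delta_n$. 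Since source and target are locally free of the same rank $n^2-1$, a morphism that is a fiberwise isomorphism on an open set is an isomorphism of sheaves there, concluding the argument. I expect the main technical obstacle to be the careful verification that the global composition indeed specializes fiberwise to $\ev_y \circ \res_x^{-1}$; this reduces to checking that the sequences (\ref{E:resrelativeVB}) and (\ref{E:evrelativeVB}) restrict fiberwise to (\ref{E:resVectBundle}) and its evaluation analogue, which is a standard base-change property of the divisor-twisting construction.
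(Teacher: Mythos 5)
Your proposal is correct and follows essentially the same route as the paper: fiberwise vanishing from Proposition \ref{P:AdonCY} plus base change for the first bullet, pushing forward the residue sequence (\ref{E:resrelativeVB}) and identifying $q_*(\overline{\kA}\big|_{\Sigma_i})$ with $p_i^*(\kA)$ for the second, and pushing forward the evaluation sequence (\ref{E:evrelativeVB}) for the third. The only cosmetic difference is that the paper detects $\Delta_n$ as the support of $\mathbb{R}q_*\bigl(\overline{\kA}(\Sigma_1-\Sigma_2)\bigr)$ in the relative setting, whereas you invoke the pointwise Lemma \ref{L:basicsonresandev}; these are the same computation.
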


\begin{proof}
Let $z = (t, x, y) \in \overline{B}$ be an arbitrary point. By the base-change formula we have:
$\mathbb{L} \imath^*_z\bigl(\mathbb{R} q_*(\overline{\kA})\bigr) \cong \mathbb{R} \Gamma(\kA\big|_{E_{t}}) = 0$, where
the last vanishing is true by Proposition \ref{P:AdonCY}. This proves the first part of the theorem.

\vspace{1mm}
\noindent
Thus, applying $q_*$ to the short
exact sequence (\ref{E:resrelativeVB}), we get an isomorphism
\begin{equation*}
\res_1 := q_*\bigl(\underline{\res}_{\Sigma_1}^{\overline{\kA}}\bigr): \quad
q_*\bigl(\overline{\kA}(\Sigma_1)\bigr) \lar q_*\bigl(\overline{\kA}\big|_{\Sigma_1}\bigr).
\end{equation*}
For $i = 1, 2$,  let $p_i: \overline{B} := \breve{E} \times \breve{E} \lar E$
be the composition of $i$-th canonical projection with the canonical inclusion $\breve{E} \subseteq E$. It is easy  to see that we have a canonical isomorphism
$\gamma: q_*\bigl(\overline{\kA}\big|_{\Sigma_i}\bigr) \lar  p_i^*(\kA)$.
Hence, the coherent sheaf $q_*\bigl(\overline{\kA}(\Sigma_1)\bigr)$ is locally free on
$\overline{B}$.

\medskip
\noindent
To prove the last part, first note that the canonical morphism
 $q_*\bigl(\overline{\kA}\big|_{\Sigma_2}\bigr) \lar q_*\bigl(\overline{\kA}(\Sigma_1)\big|_{\Sigma_2}\bigr)$ is an isomorphism on
 $B$. Moreover, by Proposition \ref{P:AdonCY}, the subset $\Delta_n$ is precisely the
 support of the complex
 $\mathbb{R} q_* \bigl(\kA(\Sigma_1 - \Sigma_2)\bigr)$. In particular, this shows that
  $\Delta_n$ is a proper closed subset of $B$.  Finally, applying $q_*$ to the short exact sequence (\ref{E:evrelativeVB}), we get
 a morphism of locally free sheaves
 $
 \ev_2: \,  q_*\bigl(\overline{\kA}(\Sigma_1)\bigr) \lar p_2^*(\kA),
 $
 which is an isomorphism on the complement  of  $\Delta_n$. This proves the proposition.
\end{proof}

\begin{theorem}\label{T:main2}
In the above notation, let
 $r \in \Gamma\bigl(\overline{B}, p_1^*(\kA) \otimes p_2^*(\kA)\bigr)$ be the meromorphic
section which is the image of  $\ev_2 \circ \res_1^{-1}$ under the canonical
 isomorphism $$\Hom\bigl(p_1^*(\kA),
p_2^*(\kA)\bigr) \lar H^0\bigl(p_1^*(\kA)^\vee \otimes p_2^*(\kA)\bigr)
\lar H^0\bigl(p_1^*(\kA) \otimes p_2^*(\kA)\bigr).
$$
 The last isomorphism above is induced by the canonical
isomorphism $\kA \lar \kA^\vee$ from Proposition \ref{P:basiconAd}.
Then the following statements are true.
\begin{itemize}
\item The poles of $r$ lie  on the divisor $\Delta$. In particular, $r$ is holomorphic on $B$.
\item Moreover,  $r$ is non-degenerate on the complement  of the set $\Delta_n$.
\item The section $r$ satisfies a version of  the classical Yang--Baxter equation:
\begin{equation*}
\bigl[r^{12}, r^{13}\bigr] + \bigl[r^{12}, r^{23}\bigr] + \bigl[r^{13}, r^{23}\bigr] = 0,
\end{equation*}
where both sides are viewed as meromorphic sections  of $p_1^*(\kA) \otimes p_2^*(\kA) \otimes p_3^*(\kA)$.
\item Moreover,  the section  $r$ is unitary. This means that
\begin{equation}
\sigma^*(r) = - \tilde{r} \in H^0\bigl(p_2^*(\kA) \otimes p_1^*(\kA)\bigr),
\end{equation}
where $\sigma$ is the canonical involution of $\overline{B} =  \breve{E} \times_T \breve{E}$
and $\tilde{r}$ is the section corresponding to the morphism $\ev_1 \circ \res_2^{-1}$.
\item In particular, Corollary \ref{C:keycorol} is also true for singular
Weierstra\ss{} cubic curves.
\end{itemize}
\end{theorem}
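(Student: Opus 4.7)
The plan is to split the statement into two independent parts: the information about the poles and the non-degeneracy, which follows almost directly from Proposition~\ref{P:ongeomCYBE}, and the classical Yang--Baxter equation together with the unitarity, for which I would argue by continuity from the elliptic case already handled in Corollary~\ref{C:keycorol}.

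First I would establish the two geometric bullets. By Proposition~\ref{P:ongeomCYBE}, the map $\res_1 = q_*\bigl(\underline{\res}_{\Sigma_1}^{\overline{\kA}}\bigr) : q_*\bigl(\overline{\kA}(\Sigma_1)\bigr) \lar q_*\bigl(\overline{\kA}\big|_{\Sigma_1}\bigr) \cong p_1^*\kA$ is an isomorphism of locally free sheaves over all of $\overline{B}$. The composite defining $\ev_2 : q_*\bigl(\overline{\kA}(\Sigma_1)\bigr) \lar p_2^*\kA$ only becomes a well-defined morphism of vector bundles on $B = \overline{B}\setminus\Delta$, and fails to be an isomorphism precisely along $\Delta_n$. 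Consequently $\ev_2 \circ \res_1^{-1}$ is a meromorphic section of $\Hom\bigl(p_1^*\kA, p_2^*\kA\bigr)$ with poles concentrated on $\Delta$ and an isomorphism off $\Delta_n$. Transporting this statement across the self-duality $\kA \cong \kA^\vee$ from Proposition~\ref{P:basiconAd} gives the asserted statements about the poles and non-degeneracy of $r$.

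For the Yang--Baxter equation and the unitarity I would first observe that the construction of $r$ is compatible with arbitrary base change: for each geometric point $t \in T$ the restriction of $r$ to the slice $\{t\}\times\breve{E}_t\times\breve{E}_t$ is obtained by exactly the same residue--evaluation procedure from the pair $(E_t,\kP\big|_{E_t})$. On the universal Weierstrass family $\mathcal{E} \subset \PP^2\times\mathbb{A}^2 \lar \mathbb{A}^2$ the smooth elliptic fibers form a Zariski open dense subset (the complement of the discriminant curve $g_2^3 + 27g_3^2 = 0$). Over such a fiber the identities are already known: CYBE and unitarity hold by Corollary~\ref{C:keycorol}, since there the fiberwise tensor coincides, via Theorem~\ref{T:main1}, with the $A_\infty$--theoretic tensor $\mm_{x,y}$. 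Because both the CYBE and the identity $\sigma^*(r) + \tilde r = 0$ are equalities between global holomorphic sections of explicit tensor bundles over Zariski open subsets, and because these sections depend analytically on $(g_2,g_3)$, vanishing on the dense locus of smooth fibers forces vanishing everywhere. Specializing to a singular fiber yields the last bullet, i.e.\ Corollary~\ref{C:keycorol} for all Weierstrass cubic curves; for a general base $T$ as in the hypotheses the result is then obtained by pull-back from the universal family, using the simple bundle supplied by Lemma~\ref{L:relativfamily}.

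The main obstacle I anticipate is the careful verification of the base-change compatibility of $\res_1$ and $\ev_2$, together with the claim that on an elliptic fiber the induced fiberwise morphism of vector spaces is exactly the composition $\ev_y \circ \res_x^{-1}$ entering Corollary~\ref{C:keycorol}. Given the vanishing $q_*\overline{\kA} = 0 = \mathbb{R}^1 q_*\overline{\kA}$ from Proposition~\ref{P:ongeomCYBE}, cohomology and base change apply in their strongest form, so that $\imath_t^*\bigl(q_*\overline{\kA}(\Sigma_1)\bigr) \cong H^0\bigl(\kA_t(x)\bigr)$ and both $\res_1$ and $\ev_2$ specialize to their fiberwise counterparts; once this is spelled out, the continuity argument becomes purely formal and everything else reduces to the elliptic case.
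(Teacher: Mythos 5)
Your proposal is correct and follows essentially the same route as the paper: the pole and non-degeneracy statements are read off from Proposition~\ref{P:ongeomCYBE} exactly as in the text, and the CYBE and unitarity are deduced by restricting to the dense locus of smooth fibers (where Corollary~\ref{C:keycorol} applies via Theorem~\ref{T:main1}) and extending to singular fibers by continuity of $r$ on $B$, with the base-change compatibility guaranteed by the vanishing $q_*\overline{\kA} = 0 = \mathbb{R}^1 q_*\overline{\kA}$. The only difference is cosmetic: you route the general base $T$ through the universal Weierstra\ss{} family, whereas the paper first treats families with smooth generic fiber and then observes that the fiberwise statement for singular cubics follows, which gives the general case.
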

\begin{proof} By Proposition \ref{P:ongeomCYBE}, we have the following morphisms in
$\VB(\overline{B})$:
$$
p_1^*(\kA)  \stackrel{\res_1}\longleftarrow q_*\bigl(\overline{\kA}(\Sigma_1)\bigr) \lar
q_*\bigl(\overline{\kA}(\Sigma_1)\big|_{\Sigma_2}\bigr) \stackrel{\imath}\longleftarrow
q_*\bigl(\overline{\kA}\big|_{\Sigma_2}\bigr) \stackrel{\gamma}\lar p_2^*(\kA).
$$
Moreover, $\gamma$ is an isomorphism, whereas $\res_1$ and $\imath$ become isomorphisms
after restricting  on $B$. This shows  that the section $r\in \Gamma\bigl(\overline{B}, p_1^*(\kA) \otimes p_2^*(\kA)\bigr)$ is indeed \emph{meromorphic} with poles lying on the diagonal $\Delta$.
Since $\ev_2 \circ \res_1^{-1}$ is an isomorphism on $B \setminus \Delta_n$, the section
$r$ is non-degenerate on  $B \setminus \Delta_n$.

To prove  the last two parts of the theorem, assume first that the generic fiber of $E$ is smooth. Let $t \in T$ be such
that $E_t$ is an elliptic curve. Then in the notation of Corollary \ref{C:keycorol},
for any $z = (t, x, y) \in B$ have have:
$$
\imath_z^*(r) = r_{x, y} \in \bigl(\kA\big|_{E_t}\bigr)\Big|_{x} \otimes \bigl(\kA\big|_{E_t}\bigr)\Big|_{y},
$$
where we use the canonical isomorphism
$$\imath^*_z \bigl(p_1^*(\overline{\kA}) \otimes  p_2^*(\overline{\kA})\bigr) \lar \bigl(\kA\big|_{E_t}\bigr)\Big|_{x} \otimes
\bigl(\kA\big|_{E_t}\bigr)\Big|_{y}.$$ Let $x_1$, $x_2$ and $x_3$ be three pairwise
 distinct points of $E_t$ and $\bar{x} = (t, x_1, x_2, x_3) \in \breve{E} \times_T \breve{E} \times_T \breve{E}$. By Corollary \ref{C:keycorol} we have:
\begin{equation}\label{E:fiberCYBE}
\imath_{\bar{x}}^*\Bigl(\bigl[r^{12}, r^{13}\bigr] + \bigl[r^{12}, r^{23}\bigr] + \bigl[r^{13},
r^{23}\bigr]\Bigr) = 0.
\end{equation}
In a similar way, we have the equality:
\begin{equation}\label{E:fiberUnitary}
\imath_z^*\bigl(\sigma^*(r) + \tilde{r}\bigr) = 0.
\end{equation}
Since the section $r$ is continuous on $B$,  the equalities (\ref{E:fiberCYBE}) and
(\ref{E:fiberUnitary}) are true for the singular fibers of $E$ as well. In particular, the
statement of Corollary \ref{C:keycorol} is also true for singular Weierstra\ss{} cubic curves.
This  implies that Theorem \ref{T:main2} is true for an arbitrary genus one fibrations satisfying the
conditions from the beginning of this section.
\end{proof}

\noindent
\textbf{Summary}.  Let $E \stackrel{p}\lar T$, $\imath: T \lar E$ and
$w \in H^0\bigl(\Omega_{E/T}\bigr)$ be as at the beginning of the section,  $\kP$ be a relatively stable vector bundle
on $E$ of rank $n$ and degree $d$ (recall that we automatically have
$\gcd(n, d) = 1$) and $\kA = \Ad(\kP)$.
$$
{\xy 0;/r0.22pc/:
\POS(40,0);
{(25,0)\ellipse(30,10){-}},
%
{(12,20)\ellipse(1.5,2.5)_,=:a(180){-}},
{(12,20)\ellipse(2,2.5)^,=:a(180){-}},
\POS(3,30);
@(,
\POS(3,25)@+,  \POS(8,22)@+, \POS(9,20)@+,
\POS(8,18)@+, \POS(3,15)@+, \POS(3,10)@+,
**\crvs{-}
,@i @);
{\ar@{-}(0,8);(0,29)},
{\ar@{-}(0,8);(14,15)},
{\ar@{-}(0,29);(14,36)},
{\ar@{-}(14,15);(14,36)},
\POS(20,33);@-
{
(20,30)@+,  (30,20)@+, (32,25)@+
,(30,30)@+, (20,20)@+, \POS(20,17)@-,
,**\qspline{}};  
{\ar@{-}(19,14);(19,34)},
{\ar@{-}(19,14);(32,19)},
{\ar@{-}(19,34);(32,39)},
{\ar@{-}(32,19);(32,39)},
\POS(40,28); \POS(46,20);
**\crv{(40,28);
(30,25); (42,22); (45,20) };
\POS(40,12); \POS(46,20);
**\crv{(40,18);
(45,5);   (42,8); (45,20); (46,20)};
{\ar@{-}(39,10);(39,30)},
{\ar@{-}(39,10);(47,14)},
{\ar@{-}(39,30);(47,34)},
{\ar@{-}(47,14);(47,34)},
\POS(15,8); \POS(45,0)*{\bul} ;
**\crv{(16,0)};
\POS(15,-8); \POS(45,0);
**\crv{(16,0)};
\POS(15,8);
{\ar@{.}(45,13); (45,0) }
\POS(25,1.5)*{\bul};
{\ar@{.}(25,2); (25,16) }
\POS(10,-2)*{\bul};
{\ar@{.}(10,-2); (10,13) }
\endxy}
$$
 For any closed point of the base $t \in T$ let $U$ be
a small neighborhood of the point $\imath(t) \in E_{t_0}$,  $V$ be a small neighborhood
of $\bigl(t, \imath(t), \imath(t)\bigr) \in E \times_T E$, $O = \Gamma(U, \kO)$  and
$M = \Gamma(V, \, \kM)$, where $\kM$ is the sheaf of meromorphic functions
on $E \times_T E$.
Taking  an isomorphism of Lie algebras
$\xi: \kA({U}) \lar \slie_n(O)$, we get the tensor-valued meromorphic function
\begin{equation*}
r^\xi  = r^{\xi}_{(E, (n, d))} \in \slie_n(M) \otimes_M \slie_n(M),
\end{equation*}
which is the image of the \emph{canonical} meromorphic section
$r \in \Gamma\bigl(E \times_T E, p_1^*(\kA)
\otimes p_2^*(\kA)\bigr)$ from Theorem \ref{T:main2}. Then the following statements are true.
\begin{itemize}
\item The poles of $r^\xi$ lie on the diagonal $\Delta \subset E \times_T E$.
\item
Moreover, for a fixed $t \in T$
this function
 is a \emph{unitary} solution
of the classical Yang--Baxter equation (\ref{eq:CYBE})
in variables $(y_1, y_2) \in \{t\} \times
(U \cap E_t)  \times (U \cap E_t)  \subset
V \subset E \times_T E$. In other words, we get a family of solutions $r^{\xi}_t(y_1, y_2)$
of the
classical Yang--Baxter equation,  which is \emph{analytic} as the function
of the parameter $t \in T$.
\item
Let $\xi': \kA({U}) \lar \slie_n(O)$ be another isomorphism of Lie algebras
and $\rho := \xi' \circ \xi^{-1}$.  Then
we have the following commutative diagram:
$$
\xymatrix{
 & \kA(U) \ar[ld]_{\xi} \ar[rd]^{\xi'}  & \\
\slie_n(O)  \ar[rr]^{\rho} & & \slie_n(O).
}
$$
Moreover,  for any $(t, y_1, y_2) \in V \setminus \Delta$ we have:
\begin{equation*}
r^{\xi'}(y_1, y_2) = \bigl(\rho(y_1) \otimes \rho(y_2)\bigr) \cdot r^{\xi}(y_1, y_2) \cdot
\bigl(\rho^{-1}(y_1) \otimes \rho^{-1}(y_2)\bigr).
\end{equation*}
In other words, the solutions $r^{\xi}$ and $r^{\xi'}$ are \emph{gauge equivalent}.
\end{itemize}

\begin{remark}
One possibility to generalize Theorem \ref{T:main2} and  for an arbitrary
Calabi--Yau curve $E$  can be achieved by showing that any simple vector bundle on
$E$ can be obtained from the structure sheaf $\kO$ by applying an appropriate auto-equivalence
of the triangulated category $\Perf(E)$ (some progress in this direction has been recently achieved
by  Hern\'andez Ruip\'erez, L\'opez Mart\'in,
  S\'anchez G\'omez and Tejero Prieto in \cite{Salamanca}). Once it is done,
  going along the same lines as  in Lemma \ref{L:relativfamily},  one can construct a sheaf of Lie algebras
  $\kA$ on a genus one fibration $E \stackrel{p}\lar T$ such that for the smooth fibers
  $\kA\big|_{E_t} \cong \kA_{n,d}$ and for the singular ones $\kA\big|_{E_t} \cong \kA_{n, \mathbbm{d}}$
  for $n$, $d$  and $\mathbbm{d}$ as in Proposition
  \ref{P:AdonCY}.
\end{remark}

At this moment one can pose the following natural question: How constructive is the suggested method
of finding of solutions of the classical Yang--Baxter equation (\ref{eq:CYBE})?
Actually, one can work out a  completely explicit recipe to compute the tensor $r^\xi_{(E, (n, d))}$
for all types of Weierstra\ss{} cubic curves, see for example \cite{BK4},  where an analogous approach
to  the associative Yang--Baxter equation has been elaborated. The following result can be found
in \cite[Chapter 6]{BK4} and also in \cite{Polishchuk1}.
\noindent
\begin{example}\label{Ex:rmatrices}{\rm
Fix the following basis
$$
h =
\left(
\begin{array}{cc}
1 & 0 \\
0 & -1
\end{array}
\right),  \quad
e =
\left(
\begin{array}{cc}
0 & 1 \\
0 & 0
\end{array}
\right),  \quad
f =
\left(
\begin{array}{cc}
0 & 0 \\
1 & 0
\end{array}
\right)
$$
of the Lie algebra  $\mathfrak{g} = \mathfrak{sl}_2(\mathbb{C})$. For the pair
$(n, d) = (2, 1)$ we get the following solutions $r_{(E, (2, 1))}$ of the classical Yang--Baxter equation
(\ref{E:CYBE1}).

\begin{itemize}
\item In the case $E$ is elliptic, we get the  {elliptic} solution of Baxter:
\begin{equation}\label{E:Baxter}
r_{\mathrm{ell}}(z) =   \frac{\mathrm{cn}(z)}{\mathrm{sn}(z)} h \otimes h +
\frac{1+ \mathrm{dn}(z)}{\mathrm{sn}(z)}(e \otimes f +
f \otimes e)  +
\frac{1 -  \mathrm{dn}(z)}
{\mathrm{sn}(z)}(e \otimes e + f \otimes f),
\end{equation}
\item In the case $E$ is nodal,  we get the trigonometric solution  of Cherednik
\begin{equation}\label{E:Cherednik}
r_{\mathrm{trg}}(z) = \frac{1}{2} \cot(z)h\otimes h +
\frac{1}{\sin(z)}(e\otimes f + f \otimes e) + \sin(z) e \otimes e.
\end{equation}
\item  In the case $E$ is cuspidal, we get the rational solution
\begin{equation}\label{E:Stolin}
r_{\mathrm{rat}}(z) = \dfrac{1}{z}\left(
\dfrac{1}{2} h \otimes h + e_{12}
\otimes e_{21} + e_{21} \otimes e_{12}\right)
 + z(f \otimes h + h \otimes f) - z^3 f \otimes f.
\end{equation}
\end{itemize}
}
\end{example}
\begin{remark}
It is a non-trivial analytic consequence of Theorem \ref{T:main2}  that
up a certain (unknown) gauge transformation and a change of variables,
the rational solution (\ref{E:Stolin}) is a degeneration of the elliptic solution  (\ref{E:Baxter}) and
the trigonometric solution (\ref{E:Cherednik}).
\end{remark}
In the second part of this article, we
describe solutions of (\ref{eq:CYBE}) corresponding  to  the smooth respectively cuspidal Weierstra\ss{} curves.
All of them
turn out to be elliptic respectively rational. We shall recover all elliptic solutions
respectively certain distinguished rational solutions.  Note that  rational solutions of (\ref{eq:CYBE}) are most complicated
and less understood from the point of view
of the Belavin--Drinfeld classification \cite{BelavinDrinfeld}.

\section{Vector bundles on elliptic curves and elliptic solutions of the classical Yang--Baxter equation}
Let $\tau \in \CC$ be such that $\mathrm{Im}(\tau) > 0$ and
 $E = \CC/\langle 1, \tau\rangle$  the corresponding complex torus. Let  $0 < d < n$  be two
 coprime integers and $\kA = \kA_{n, d}$ be the sheaf of Lie algebras from Proposition
 \ref{P:AdonCY}.
\begin{proposition} The sheaf $\kA$ has the following complex--analytic description:
\begin{equation}
\kA \cong  \CC \times \lieg/\sim, \quad \mbox{with} \quad (z, G) \sim (z+1, X G X^{-1}) \sim
(z + \tau, Y G Y^{-1}),
\end{equation}
where $X$ and $Y$ are the matrices (\ref{E:matricesXY}).
\end{proposition}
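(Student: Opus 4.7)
The plan is to exhibit an explicit simple vector bundle $\kP$ on $E$ of rank $n$ and degree $d$ via a factor of automorphy, compute $\Ad(\kP)$ directly from that cocycle, and match the result with the right-hand side. Proposition~\ref{P:AdonCY} will then identify $\Ad(\kP)$ with $\kA = \kA_{n,d}$ canonically, independently of the particular $\kP$ chosen.

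First I would set up the cocycle. A holomorphic rank-$n$ vector bundle on $E = \CC/\langle 1, \tau\rangle$ is specified by holomorphic maps $a_1, a_\tau : \CC \to \GL_n(\CC)$ satisfying $a_1(z+\tau)\, a_\tau(z) = a_\tau(z+1)\, a_1(z)$, the bundle being $\CC \times \CC^n /\!\sim$ with $(z, v) \sim (z+1, a_1(z)v) \sim (z+\tau, a_\tau(z) v)$. I take
$$
a_1(z) = X, \qquad a_\tau(z) = \exp\bigl(-2\pi i (d/n)\,z\bigr)\,Y.
$$
The commutation relation $YX = \varepsilon XY$ together with the ratio $a_\tau(z+1)/a_\tau(z) = \exp(-2\pi i d/n) = \varepsilon^{-1}$ yields the cocycle identity on the nose, since the scalar defect $\varepsilon^{-1}$ cancels the commutator defect of $X$ and $Y$ in $\GL_n$.

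Next I would verify the numerical invariants and simplicity of $\kP$. Rank $n$ is tautological. The line bundle $\det \kP$ has factor of automorphy $(\det X,\, \exp(-2\pi i d z)\det Y)$, and the linear exponent $-2\pi i d z$ identifies $\det \kP$ up to translation with $\kO_E(d\cdot [o])$, so $\deg \kP = d$. For simplicity, a global endomorphism $\phi \in \End(\kP)$ corresponds to an entire map $\phi : \CC \to \Mat_{n\times n}(\CC)$ with $\phi(z+1) = X\phi(z)X^{-1}$ and $\phi(z+\tau) = Y\phi(z)Y^{-1}$; writing $\phi_{ij}(z) = f_{ij}(z)\exp(2\pi i (i-j)(d/n) z)$ with $f_{ij}$ 1-periodic, and using the second relation to shift indices cyclically, a Fourier-mode count combined with $\gcd(n,d)=1$ forces all but the scalar mode to vanish. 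Thus $\End(\kP) = \CC$ and $\kP$ is simple. (Alternatively, one may observe indecomposability by the same mode analysis and invoke Atiyah's theorem.)

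Finally, $\Ad(\kP)$ is the bundle associated to the $\GL_n$-cocycle via the adjoint representation. The scalar factor $\exp(-2\pi i (d/n)z)$ is central and therefore acts trivially under conjugation, so the cocycle for $\Ad(\kP)$ reduces exactly to $(G \mapsto XGX^{-1},\; G \mapsto YGY^{-1})$. This presents $\Ad(\kP)$ as the quotient stated in the proposition, and Proposition~\ref{P:AdonCY} identifies it with $\kA_{n,d}$. The one step that is not entirely formal is the simplicity verification; this is precisely where the hypothesis $\gcd(n,d)=1$ enters, and once granted, the identification of the $\Ad$-cocycle with the one in the statement is immediate.
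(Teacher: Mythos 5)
Your argument is correct, and it lives in the same general framework as the paper's proof (factors of automorphy on $\CC/\langle 1,\tau\rangle$, followed by the observation that the central scalar in the cocycle dies under $\mathsf{ad}$). The two routes differ in where they start and in how simplicity is established. The paper begins with Oda's normal form: the companion-type automorphy factor $\Psi$ built from $\psi(z)=\exp(-\pi i d\tau - \tfrac{2\pi i d}{n}z)$, for which simplicity, rank and degree are quoted from Oda's classification (via \cite[Proposition 4.1.6]{BK4}); it then must perform an explicit gauge transformation by $H=\mathsf{diag}(\psi^{n-1},\dots,\psi,1)$, plus a constant diagonal conjugation and rescaling, to bring the cocycle to the pair $(X,\,\psi\cdot Y)$ before passing to the adjoint. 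You instead write down the cocycle $\bigl(X,\ \exp(-\tfrac{2\pi i d}{n}z)Y\bigr)$ directly — which is exactly the gauge-transformed form the paper arrives at — so no normalization step is needed, but you then owe a direct proof that the resulting bundle is simple of degree $d$; your Liouville/Fourier-mode argument (quasi-periodicity in the direction $1$ with multiplier $\varepsilon^{i-j}$, cyclic shift of matrix entries in the direction $\tau$, and $\gcd(n,d)=1$ killing all off-diagonal modes) does this correctly and makes the proof self-contained where the paper relies on a citation. The trade-off is clear: the paper's version outsources the hard analytic content to Oda at the cost of an extra conjugation computation; yours avoids the conjugation entirely but re-proves a piece of Oda's theorem by hand. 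The only place where you are slightly terser than ideal is the degree computation ("the linear exponent $-2\pi i dz$ identifies $\det\kP$ up to translation with $\kO_E(d\cdot[o])$"); this is a standard consequence of the theta-function description of line bundles on a torus and matches the level of detail the paper itself uses, so I would not count it as a gap.
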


\begin{proof}
We first recall some well-known technique to work with holomorphic vector bundles on complex tori, see for example
\cite{BirkenhageLange, MumfordTheta}.

\medskip
\noindent
$\bullet$ Let $\CC \supset \Lambda = \Lambda_\tau :=  \langle 1, \, \tau\rangle \cong \ZZ^2$. An automorphy factor is a pair
$(A, V)$, where $V$ is a finite dimensional vector space over $\CC$ and
$
A: \Lambda \times \CC \lar \GL(V)
$
is a holomorphic function such that $A(\lambda + \mu, z) = A(\lambda, z + \mu) A(\mu, z)$ for all
$\lambda, \mu \in \Lambda$ and $z \in \CC$. Such a pair defines the following  holomorphic vector bundle on
the torus $E$:
\begin{equation*}
\kE(A, V) := \CC \times V/\sim, \;
\mbox{where} \quad (z, v) \sim \bigl(z + \lambda, A(\lambda, z) v\bigr) \quad
\forall (\lambda, z, v) \in \Lambda \times \CC \times V.
\end{equation*}
Two such vector bundles $\kE(A, V)$ and $\kE(B, V)$ are isomorphic if and only if
there exists a holomorphic function $H: \CC \rightarrow \GL(V)$ such that
$$
B(\lambda, z) = H(z + \lambda) A(\lambda, z) H(z)^{-1} \quad
\mbox{for all} \quad (\lambda, z) \in \Lambda \times \CC.
$$
Assume that $\kE = \kE(\CC^n, A)$. Then $\Ad(\kE) \cong \kE\bigl(\lieg, \mathsf{ad}(A)\bigr)$,
where $\mathsf{ad}(A)(\lambda, z)(G) := A(\lambda, z) \cdot G \cdot A(\lambda, z)^{-1}$ for $G \in \lieg$.

\medskip
\noindent
$\bullet$ Quite frequently, it is  convenient to restrict ourselves  on  the following setting. Let $\Phi: \CC \longrightarrow
\GL_n(\CC)$ be a holomorphic function such that $\Phi(z+1) = \Phi(z)$ for all $z \in \CC$.
In other words, we assume that  $\Phi$ factors through the covering map
$\CC \xrightarrow{\exp(2 \pi i (-))} \CC^*$. Then one can define the automorphy factor
$(A, \CC^n)$ in the following way.

\medskip
\noindent
$-$
$A(0, z) = I_n$ is the identity matrix.

\medskip
\noindent
$-$ For any $a \in \ZZ_{>0}$ we set:
$$
A(a\tau, z) = \Phi\bigl(z + (a-1)\tau\bigr) \cdot \dots \cdot \Phi(z) \;
\mbox{and} \; A(-a\tau, z) = A(a \tau, z-a \tau)^{-1}.
$$

\noindent
$-$ For any $a,b \in \ZZ$ we set: $A(a \tau + b, z) = A(a \tau, z)$.

\medskip
\noindent
Let  $\kE(\Phi) := \kE(A, \CC^n)$ be the corresponding vector bundle on $E$.

\medskip
\noindent
$\bullet$ Consider the holomorphic function $\psi(z) =
\exp\bigl(- \pi i d \tau - \frac{2 \pi i d}{n} z\bigr)$ and the matrix
\begin{equation*}
\Psi = \left(
\begin{array}{cccc}
0 & 1 & \dots & 0 \\
\vdots & \vdots & \ddots & \vdots \\
0 & 0 & \dots & 1 \\
\psi^n & 0 & \dots & 0
\end{array}
\right).
\end{equation*}
It follows from  Oda's description of simple vector bundles on elliptic curves \cite{Oda},
that the vector bundle $\kE(\Psi)$ is simple of rank $n$ and degree $d$.
See  also \cite[Proposition 4.1.6]{BK4} for a proof of this result.

\medskip
\noindent
$\bullet$ Denote $\varepsilon = \exp(\frac{2 \pi i d}{n})$, $\eta = \varepsilon^{-1}$
 and $\rho = \exp(-\frac{2 \pi i d}{n} \tau)$.
Consider the function $H = \mathsf{diag}\bigl(\psi^{n-1}, \dots, \psi, 1\bigr): \CC \lar \GL_n(\CC)$ and
the matrices $X' = \mathsf{diag}(\eta^{n-1}, \dots, \eta, 1)$,
$Z' = \mathsf{diag}(\rho^{n-1}, \dots, \rho, 1)$,  and
\begin{equation*}
Y' = \left(
\begin{array}{cccc}
0 & \rho^{n-1} & \dots & 0 \\
\vdots & \vdots & \ddots & \vdots \\
0 & 0 & \dots & \rho \\
1 & 0 & \dots & 0
\end{array}
\right).
\end{equation*}
Let $B(\lambda, z) = H(z + \lambda) A(\lambda, z) H(z)^{-1}$, where $A(\lambda, z)$ is the automorphy factor defined by the function $\Phi$. Then we have:
$
B(1, z) = X' \, \mbox{and} \,
B(\tau, z) = \psi \cdot Y'.
$

\medskip
\noindent
$\bullet$ Note that $\mathsf{ad}(B) = \mathsf{ad}(\varphi \cdot  B) \in \End(\lieg)$
for an arbitrary  holomorphic function $\varphi$. Hence,
after the conjugation of  $X'$ and $Y'$ with an appropriate constant diagonal matrix and a subsequent rescaling,  we
get:  $\kA \cong \kE\bigl(\mathsf{ad}(C), \lieg\bigr)$, where
$
C(1, z) = X \, \mbox{and} \, C(\tau, z) = Y.
$
This concludes the proof.
\end{proof}

\medskip
\noindent
Let
$I := \bigl\{(p, q) \in \ZZ^2 \big| 0 \le p \le n-1, 0 \le q \le n-1, (p, q) \ne (0, 0)\bigr\}$.
For any $(k, l) \in I$ denote
$
Z_{k, l} = Y^k X^{-l} \quad \mbox{and} \quad
Z_{k, l}^\vee  = \frac{1}{n} X^{l} Y^{-k}.
$
Recall the following standard result.
\begin{lemma}\label{L:HeisenbergBasis} The following is true.
\begin{itemize}
\item The operators $\mathsf{ad}(X), \mathsf{ad}(Y) \in \End(\lieg)$ commute.
\item The set $\bigl\{Z_{k, l}\bigr\}_{(k, l) \in I}$ is a basis of $\lieg$.
\item Moreover, for any $(k, l) \in I$ we have:
$$
\mathsf{ad}(X)(Z_{k, l}) = \varepsilon^{k} Z_{k, l} \quad
\mbox{and} \quad
\mathsf{ad}(Y)(Z_{k, l}) = \varepsilon^{l} Z_{k, l}.
$$
\item Let $\can: \lieg \otimes \lieg \lar \End(\lieg)$ be the canonical isomorphism
sending a simple tensor $G' \otimes G''$ to the linear map $G \mapsto \tr(G' \cdot G) \cdot G''$.
Then we have:
$$
\can(Z_{k,l}^\vee \otimes Z_{k, l})(Z_{k', l'}) = \left\{
\begin{array}{cl}
Z_{k, l} & \mbox{if} \; (k', l') = (k, l) \\
0 & \mbox{otherwise}.
\end{array}
\right.
$$
\end{itemize}
\end{lemma}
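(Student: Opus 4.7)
The plan is to reduce every assertion to the fundamental Heisenberg-type commutation
\[
XY = \varepsilon\, YX,
\]
which I would first verify by direct computation on the standard basis $\{e_{0}, \dots, e_{n-1}\}$ of $\CC^{n}$: both sides send $e_{i}$ to $\varepsilon^{i+1} e_{i+1}$, indices read modulo $n$. An immediate induction then yields the general rule $X^{a} Y^{b} = \varepsilon^{ab}\, Y^{b} X^{a}$ for all $a, b \in \ZZ$. The special case $XYX^{-1}Y^{-1} = \varepsilon\, \mathbbm{1}$ shows that the group-theoretic commutator of $X$ and $Y$ is a scalar matrix; since conjugation by a scalar is trivial, the operators $\mathsf{ad}(X)$ and $\mathsf{ad}(Y)$ commute, giving the first bullet. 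For the third bullet the same relation yields
$X Z_{k,l} X^{-1} = XY^{k} X^{-l-1} = \varepsilon^{k} Y^{k} X^{-l} = \varepsilon^{k} Z_{k,l}$,
and analogously $Y Z_{k,l} Y^{-1} = \varepsilon^{l} Z_{k,l}$ via $Y X^{-l} = \varepsilon^{l} X^{-l} Y$.

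For the basis claim I would first check that $Z_{k,l} \in \lieg$ for every $(k,l) \in I$. A short calculation using $\sum_{i=0}^{n-1} \varepsilon^{ai} = n\, \delta_{a \equiv 0\,(\mathrm{mod}\,n)}$ together with the explicit shape of $Y^{k} X^{-l}$ shows that $\tr(Y^{k} X^{-l})$ vanishes unless $k \equiv 0 \equiv l \pmod{n}$, which is excluded by the definition of $I$. Since $\gcd(n,d) = 1$, the number $\varepsilon = \exp(2\pi i d/n)$ is a primitive $n$-th root of unity, so the $n^{2}$ pairs $(\varepsilon^{k}, \varepsilon^{l})$ with $0 \le k, l \le n-1$ are pairwise distinct. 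Consequently the $Z_{k,l}$, being joint eigenvectors of the commuting operators $\mathsf{ad}(X), \mathsf{ad}(Y)$ with pairwise distinct eigenvalue pairs, are linearly independent; together with $|I| = n^{2} - 1 = \dim \lieg$ this forces them to form a basis.

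For the last bullet it suffices to establish that $\{Z_{k,l}\}$ and $\{Z_{k,l}^{\vee}\}$ are dual bases with respect to the trace form, i.e.~$\tr(Z_{k,l}^{\vee} Z_{k',l'}) = \delta_{(k,l),(k',l')}$. Moving $X^{l}$ past $Y^{k'-k}$ by the Heisenberg rule rewrites the left-hand side as
\[
\tr(Z_{k,l}^{\vee} Z_{k',l'}) = \tfrac{1}{n}\, \varepsilon^{l(k'-k)}\, \tr(Y^{k'-k} X^{l-l'}),
\]
and the trace criterion from the previous paragraph shows this is nonzero precisely when $k = k'$ and $l = l'$, on which diagonal the scalar prefactor equals $\varepsilon^{0} = 1$. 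The stated identity for $\can(Z_{k,l}^{\vee} \otimes Z_{k,l})$ then follows at once from the definition of $\can$. No step is a real obstacle; the only substantive input is the primitivity of $\varepsilon$, which is exactly where the coprimality hypothesis $\gcd(n,d) = 1$ enters.
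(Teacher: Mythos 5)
Your proof is correct and complete. The paper itself offers no argument here --- it introduces the lemma with ``Recall the following standard result'' and gives no proof --- so there is nothing to compare against; your route (reduce everything to the Heisenberg relation $XY = \varepsilon YX$, deduce that $\mathsf{ad}(X)$ and $\mathsf{ad}(Y)$ commute because their group commutator is scalar, identify the $Z_{k,l}$ as joint eigenvectors with pairwise distinct eigenvalue pairs using primitivity of $\varepsilon$, and verify trace--duality of $\{Z_{k,l}\}$ and $\{Z_{k,l}^\vee\}$) is exactly the standard one. The only point worth double-checking is the orientation of the cyclic shift: with the displayed matrix $Y$ acting on column vectors one has $Ye_{i} = e_{i-1}$ rather than $e_{i+1}$, which flips the eigenvalues to $\varepsilon^{-k}$, $\varepsilon^{-l}$; this is a harmless relabeling ($\varepsilon \leftrightarrow \varepsilon^{-1}$) and your convention is the one matching the eigenvalues as stated in the lemma.
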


\noindent
Next, recall the definition of the first and  third Jacobian theta-functions \cite{MumfordTheta}.
\begin{equation}\label{E:Theta}
\left\{\begin{array}{l}
\bar{\theta}(z) = \theta_1(z|\tau) = 2 q^{\frac{1}{4}} \sum\limits_{n=0}^\infty (-1)^n q^{n(n+1)}
\sin\bigl((2n+1)\pi z\bigr), \\
\theta(z) = \theta_3(z|\tau) = 1+  2 \sum\limits_{n=1}^\infty  q^{n^2}
\cos(2\pi nz),
\end{array}
\right.
\end{equation}
where $q = \exp(\pi i \tau)$. They are related by the following identity:
\begin{equation}\label{E:RelBetwTheta}
\theta\Bigl(z + \frac{1 + \tau}{2}\Bigr) = i \exp\Bigl(-\pi i \bigl(z + \frac{\tau}{4}\bigr)\Bigr) \bar{\theta}(z).
\end{equation}
For any $x \in \CC$ consider the function $\varphi_x(z) = - \exp\bigl(- 2\pi i (z + \tau - x)\bigr)$.
The next  result is well-known, see
\cite{MumfordTheta} or \cite[Section 4.1]{BK4}.

\begin{lemma}\label{L:AceofBase} The following results are true.
\begin{itemize}
\item The vector space
$$
\left\{\CC \stackrel{f}\lar \CC
\left|
\begin{array}{l}
f \, \mbox{ is holomorphic} \\
f(z+1) = f(z) \\
f(z+\tau) = \varphi_x(z) f(z)
\end{array}
\right.
  \right\}
  $$
  is one-dimensional and generated by the theta-function $\theta_x(z) := \theta\bigl(z + \frac{1 + \tau}{2} - x\bigr)$.
  \item
We have:
$
\kE(\varphi_x) \cong \kO_E\bigl([x]\bigr).
$
\end{itemize}
\end{lemma}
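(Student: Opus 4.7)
The proof plan splits into two parts corresponding to the two bullet points of the lemma.

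For the first part, my plan is to first verify by direct computation that $\theta_x$ lies in the stated function space. Starting from the Fourier expansion (\ref{E:Theta}), the standard theta function $\theta = \theta_3$ can be rewritten as $\theta(z) = \sum_{n \in \ZZ} \exp(\pi i n^2 \tau + 2\pi i n z)$, from which one immediately reads off the quasi-periodicity relations $\theta(z+1) = \theta(z)$ and $\theta(z+\tau) = \exp(-\pi i \tau - 2 \pi i z)\theta(z)$, the latter by reindexing $n \mapsto n-1$. Substituting $z \mapsto z + \frac{1+\tau}{2} - x$ into these relations and carefully collecting the exponential prefactors, with the key observation that $\exp(-\pi i) = -1$ supplies exactly the sign appearing in $\varphi_x$, yields $\theta_x(z+1) = \theta_x(z)$ and $\theta_x(z+\tau) = \varphi_x(z)\theta_x(z)$ as required.

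For the one-dimensionality claim, the plan is the standard elliptic-function argument. Any $f$ in the space and $\theta_x$ are both global holomorphic sections of the same line bundle on $E$, namely the line bundle $\kE(\varphi_x)$ associated with the automorphy factor generated by $\varphi_x$ as in the discussion of automorphy factors preceding the lemma. Their quotient $f/\theta_x$ therefore descends to a meromorphic function on $E$ whose only potential poles lie at the zeros of $\theta_x$. Since $\theta_3$ is known to have a single simple zero modulo $\Lambda$, located at $\frac{1+\tau}{2}$, the function $\theta_x$ has a unique simple zero at $z \equiv x \pmod \Lambda$. A non-constant meromorphic function on an elliptic curve must have at least two poles counted with multiplicity, so $f/\theta_x$ is constant, giving $f \in \CC \cdot \theta_x$.

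The second part is then essentially a corollary of the zero computation above. The line bundle $\kE(\varphi_x)$ admits $\theta_x$ as a global holomorphic section whose zero divisor on $E$ is exactly $[x]$, and as every line bundle on $E$ is determined up to isomorphism by the divisor class of any of its non-zero sections, this yields the isomorphism $\kE(\varphi_x) \cong \kO_E([x])$. The main obstacle throughout is purely computational bookkeeping: tracking the exponential factors when shifting by $\tau$ so that the sign $-1$ in $\varphi_x$ emerges cleanly from $\exp(-\pi i)$, and correctly invoking the classical location of the unique zero of $\theta_3$.
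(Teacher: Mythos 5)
Your proof is correct, and the computations (the quasi-periodicity $\theta(z+\tau)=\exp(-\pi i\tau-2\pi iz)\theta(z)$, the sign from $\exp(-\pi i)=-1$, the location of the unique zero at $z\equiv x$, and the "at most one simple pole forces constancy" argument) all check out. The paper itself gives no proof of this lemma, only a citation to Mumford's Tata lectures and to \cite[Section 4.1]{BK4}; your argument is exactly the standard one those references supply, so there is nothing to compare beyond noting that you have filled in the omitted details correctly.
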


\noindent
 Let $U \subset \CC$ be a small open neighborhood of $0$ and
$O = \Gamma(U, \kO_\CC)$ be the ring of holomorphic functions on $U$. Let $z$ be a coordinate on $U$,
$\CC \stackrel{\pi} \lar E$ the canonical covering map, $w = dz \in H^0(E, \Omega)$
 a no-where vanishing differential form on $E$,
$\Gamma(U, \kA) \stackrel{\xi}\lar \slie_n(O)$ the canonical isomorphism induced by the automorphy
data $(X, Y)$ and
$x, y \in U$ a pair of  distinct points. Consider the following vector space
$$
\Sol\bigl((n, d), x\bigr) =
\left\{\CC \stackrel{F}\lar \lieg
\left|
\begin{array}{l}
F \mbox{\textrm{\quad is holomorphic}} \\
F(z+1) = X F(z) X^{-1}\\
F(z+\tau) = \varphi_x(z) Y  F(z) Y^{-1}
\end{array}
\right.
  \right\}.
  $$
\begin{proposition}\label{P:SolutionsElliptiques} The following diagram
\begin{equation*}
\begin{array}{c}
\xymatrix{
\kA\big|_x \ar[d]_{\jmath_x} & & H^0\bigl(\kA(x)\bigr) \ar[ll]_-{\res^\kA_x(w)} \ar[rr]^-{\ev^\kA_y} \ar[d]^{\jmath} & & \kA\big|_y \ar[d]^{\jmath_y} \\
\lieg & & \Sol\bigl((n, d), x\bigr) \ar[ll]_-{\overline{\res}_x} \ar[rr]^-{\overline{\ev}_y} &  & \lieg
}
\end{array}
\end{equation*}
is commutative, where for $F \in \Sol\bigl((n, d), x\bigr)$ we have:
\begin{equation*}
\overline{\res}_x(F) = \frac{F(x)}{\theta'\bigl(\frac{1 + \tau}{2}\bigr)}
\quad \mbox{and} \quad
\overline{\ev}_y(F) = \frac{F(y)}{\theta\bigl(y - x + \frac{1 + \tau}{2}\bigr)}.
\end{equation*}
The linear isomorphism $\jmath$ is induced by the pull-back map $\pi^*$.
\end{proposition}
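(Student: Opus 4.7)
My strategy is to translate the sheaf-theoretic maps $\res_x^\kA(w)$ and $\ev_y^\kA$ into concrete operations on the automorphy description, and then carry out a single Laurent expansion at $x$.

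First I would make the isomorphism $\jmath$ completely explicit. Using the identification $\kA \cong \kE\bigl(\mathsf{ad}(C), \lieg\bigr)$ with $C(1, z) = X$, $C(\tau, z) = Y$ established just above, tensored with $\kO_E([x]) \cong \kE(\varphi_x)$ from Lemma \ref{L:AceofBase}, sections of $\kA(x)$ correspond exactly to holomorphic $F : \CC \lar \lieg$ satisfying the transformation laws defining $\Sol\bigl((n,d), x\bigr)$. Concretely, the pull-back $\tilde F := \pi^{*}(s)$ of $s \in H^{0}\bigl(\kA(x)\bigr)$ is a meromorphic $\lieg$-valued function on $\CC$ with the $(X,Y)$-automorphy and simple poles on $\Lambda + x$, and $F := \tilde F \cdot \theta_x$ is holomorphic with the twisted automorphy (the $\varphi_x$ factor appearing precisely because $\theta_x(z+\tau) = \varphi_x(z)\theta_x(z)$); this is $\jmath$.

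Next I would fix $U$ small enough that $\pi \colon U \lar \pi(U)$ is a biholomorphism containing both $x$ and $y$, so that $\xi$ identifies $\Gamma(U, \kA)$ with holomorphic $\lieg$-valued functions on $U$, and $s$ is given on $U$ by $\tilde F|_U = (F/\theta_x)|_U$. By Lemma \ref{L:AceofBase} the theta-function $\theta_x(z) = \theta\bigl(z - x + \tfrac{1+\tau}{2}\bigr)$ has a simple zero at $z = x$ with derivative $\theta_x'(x) = \theta'\bigl(\tfrac{1+\tau}{2}\bigr) \ne 0$. The residue of $\tilde F$ at $x$ with respect to the trivialization $w = dz$ is therefore
\[
\res_x^\kA(w)(s) \;=\; \lim_{z \to x}\, (z-x)\,\tilde F(z) \;=\; \frac{F(x)}{\theta'\bigl(\tfrac{1+\tau}{2}\bigr)},
\]
which, transported via $\jmath_x$ to $\lieg$, coincides with $\overline{\res}_x(F)$. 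For $y \ne x$ the evaluation gives
\[
\ev_y^\kA(s) \;=\; \tilde F(y) \;=\; \frac{F(y)}{\theta_x(y)} \;=\; \frac{F(y)}{\theta\bigl(y - x + \tfrac{1+\tau}{2}\bigr)},
\]
which under $\jmath_y$ equals $\overline{\ev}_y(F)$. Commutativity of both squares follows at once.

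The argument is essentially bookkeeping; the only point deserving attention is the identification of $\theta_x$ as the generator of $H^{0}\bigl(\kE(\varphi_x)\bigr)$ and the corresponding normalization of the iso $\kO_E([x]) \cong \kE(\varphi_x)$, both supplied by Lemma \ref{L:AceofBase}. Once that is in hand the two explicit formulae fall out of a single Taylor expansion, so no genuine obstacle remains.
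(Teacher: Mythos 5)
Your argument is correct and is essentially the proof the paper has in mind: the paper itself only refers to \cite[Section 4.2, Corollary 4.2.1]{BK4}, and the intended computation there is exactly your automorphy-factor translation, with $\jmath$ realized as multiplication by $\theta_x$, the residue with respect to $w = dz$ becoming $F(x)/\theta_x'(x) = F(x)/\theta'\bigl(\tfrac{1+\tau}{2}\bigr)$, and evaluation at $y$ becoming $F(y)/\theta_x(y)$. No gaps; you correctly isolate the one substantive input, namely that $\theta_x$ generates $H^0\bigl(\kE(\varphi_x)\bigr)$ and has a simple zero precisely at $x$.
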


\noindent
\emph{Comment on the proof}. This result can be proven along the same lines as in \cite[Section 4.2]{BK4},
see in particular \cite[Corollary 4.2.1]{BK4}, hence we omit the details here.  \qed

\medskip
\noindent
Now we are ready to prove the main result of this section.

\begin{theorem}\label{T:main3}
The solution  $r_{(E, (n, d))}(x, y)$ of the classical Yang--Baxter equation (\ref{eq:CYBE}) constructed in Section \ref{S:GenusOneandCYBE},  is given by the following expression:
\begin{equation}\label{E:SolutionsElliptiques}
r_{(E, (n, d))}(x, y) = \sum\limits_{(k, l) \in I}
\exp\Bigl(-\frac{2 \pi i d}{n} k v\Bigr)
\sigma\Bigl(\frac{d}{n}\bigl(l - k \tau\bigr),  v\Bigr)
Z_{k, l}^\vee \otimes Z_{k, l},
\end{equation}
where $v = x-y$ and
 $\sigma(u, z)$ is the Kronecker elliptic function (\ref{E:KroneckerEllFunct}).
\end{theorem}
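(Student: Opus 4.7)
The strategy is to perform an explicit fiberwise computation via the commutative diagram of Proposition~\ref{P:SolutionsElliptiques}. By Theorem~\ref{T:main2}, the tensor $r_{(E,(n,d))}(x,y) \in \lieg \otimes \lieg$ is, via the trace pairing, the image of the linear map
\[
\kA\big|_x \xrightarrow{\res_x^{-1}} H^0(\kA(x)) \xrightarrow{\ev_y} \kA\big|_y,
\]
transported to coordinates. The commutative diagram in Proposition~\ref{P:SolutionsElliptiques} reduces this to computing $\overline{\ev}_y \circ \overline{\res}_x^{-1}\colon \lieg \to \lieg$ on an explicit basis of $\Sol\bigl((n,d),x\bigr)$.

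The next step, and the key structural observation, is that Lemma~\ref{L:HeisenbergBasis} diagonalizes the problem. For $F \in \Sol\bigl((n,d),x\bigr)$, write $F(z) = \sum_{(k,l)\in I} f_{k,l}(z)\, Z_{k,l}$; the two quasi-periodicity conditions defining $\Sol\bigl((n,d),x\bigr)$ decouple into scalar conditions
\[
f_{k,l}(z+1) = \varepsilon^k f_{k,l}(z), \qquad f_{k,l}(z+\tau) = \varepsilon^l\, \varphi_x(z)\, f_{k,l}(z).
\]
Each such scalar problem has a one-dimensional solution space (as in Lemma~\ref{L:AceofBase}), so $\Sol\bigl((n,d),x\bigr) = \bigoplus_{(k,l)\in I} \Sol_{k,l}$ with $\dim \Sol_{k,l} = 1$. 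The maps $\overline{\res}_x$ and $\overline{\ev}_y$ respect this decomposition in the sense that each $\Sol_{k,l}$ maps into $\CC \cdot Z_{k,l}$. Consequently the linear map $\overline{\ev}_y \circ \overline{\res}_x^{-1}$ is diagonal in the basis $\{Z_{k,l}\}$, and by Lemma~\ref{L:HeisenbergBasis} the corresponding tensor takes the form
\[
\sum_{(k,l)\in I} \lambda_{k,l}(x,y)\, Z_{k,l}^\vee \otimes Z_{k,l},
\]
for scalars $\lambda_{k,l}(x,y)$ to be identified.

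To compute $\lambda_{k,l}(x,y)$ explicitly, I would construct a generator $F_{k,l}$ of $\Sol_{k,l}$ by the ansatz $f_{k,l}(z) = \theta_x(z)\, h_{k,l}(z-x)$, reducing to finding a meromorphic function $h_{k,l}$ with at worst a simple pole at the lattice and quasi-periodicity $h_{k,l}(u+1) = \varepsilon^k h_{k,l}(u)$, $h_{k,l}(u+\tau) = \varepsilon^l h_{k,l}(u)$. By the defining series (\ref{E:KroneckerEllFunct}), after the substitution $u \mapsto -u$ and the shift absorbed by $\exp(-\tfrac{2\pi i d}{n} k u)$, the function
\[
h_{k,l}(u) \;=\; \exp\!\Bigl(-\tfrac{2\pi i d}{n} k\, u\Bigr)\, \sigma\!\Bigl(\tfrac{d}{n}(l-k\tau),\, u\Bigr)
\]
has exactly the required quasi-periodicities (and a simple pole at the origin, matching the zero of $\theta_x$). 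One then substitutes into the formulas
\[
\overline{\res}_x(F_{k,l}) = \frac{F_{k,l}(x)}{\theta'\!\left(\tfrac{1+\tau}{2}\right)}\, Z_{k,l}, \qquad
\overline{\ev}_y(F_{k,l}) = \frac{F_{k,l}(y)}{\theta\!\left(y-x+\tfrac{1+\tau}{2}\right)}\, Z_{k,l},
\]
and takes the ratio. The factor $\theta_x$ cancels against $\theta\bigl(y-x+\tfrac{1+\tau}{2}\bigr)$, the $\theta'\bigl(\tfrac{1+\tau}{2}\bigr)$ cancels against the residue of $h_{k,l}$ at $0$ (which is independent of $(k,l)$, thanks to the normalization of $\sigma$ by the factor $2\pi i$), and what remains is precisely $h_{k,l}(y-x)$ evaluated with $v = x-y$; this yields $\lambda_{k,l}(x,y) = \exp(-\tfrac{2\pi i d}{n} k v)\, \sigma(\tfrac{d}{n}(l-k\tau), v)$.

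The main obstacle is the last calculation: verifying that the various normalizations match so that the pole of $\sigma$ at $u=0$ exactly cancels $\theta'\bigl(\tfrac{1+\tau}{2}\bigr)$, and that the sign/phase arising from $\varphi_x$ and from passing between $\theta$ and $\bar\theta$ via (\ref{E:RelBetwTheta}) is correctly tracked. Once this bookkeeping is done, Lemma~\ref{L:HeisenbergBasis} yields formula (\ref{E:SolutionsElliptiques}) and the proof is complete.
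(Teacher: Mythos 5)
Your proposal is correct and follows essentially the same route as the paper: reduce via Proposition \ref{P:SolutionsElliptiques} to computing $\overline{\ev}_y \circ \overline{\res}_x^{-1}$, decouple the quasi-periodicity conditions in the $Z_{k,l}$-eigenbasis of $\ad(X)$, $\ad(Y)$ so that each component of $\Sol\bigl((n,d),x\bigr)$ is one-dimensional by Lemma \ref{L:AceofBase}, and read off the diagonal coefficients via Lemma \ref{L:HeisenbergBasis}. The only (cosmetic) difference is that you build the Kronecker function into the generator from the start as $\theta_x(z)\,h_{k,l}(z-x)$, whereas the paper writes the generator as a single shifted theta function and invokes the identity $\sigma(u,x) = \bar\theta'(0)\bar\theta(u+x)/\bigl(\bar\theta(u)\bar\theta(x)\bigr)$ only at the end; the normalization bookkeeping you defer is exactly the cancellation the paper carries out explicitly.
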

\begin{proof}
We first have to compute an explicit basis of the vector space $\Sol\bigl((n, d), x\bigr)$.
For this, we write:
$$
F(z) = \sum\limits_{(k, l) \in I} f_{k, l}(z)  Z_{k, l}.
$$
The condition $F \in \Sol\bigl((n, d), x\bigr)$ yields the following constraints on
 the coefficients $f_{k, l}$:
\begin{equation}\label{E:EllSystem}
\left\{
\begin{array}{lcl}
f_{k, l}(z+1) & = &  \varepsilon^k f_{k, l}(z) \\
f_{k, l}(z+\tau) & = &  \varepsilon^l \varphi_x(z) f_{k, l}(z).
\end{array}
\right.
\end{equation}
It follows from Lemma \ref{L:AceofBase} that the space of solutions of the system (\ref{E:EllSystem})
is one--dimensional and  generated by the holomorphic function
$$
f_{k, l}(z) = \exp\Bigl(- \frac{2 \pi i d}{n} kz\Bigr) \theta\Bigl(
z + \frac{1+ \tau}{2} - x - \frac{d}{n}(k \tau - l)\Bigr).
$$
From Proposition \ref{P:SolutionsElliptiques} and Lemma \ref{L:HeisenbergBasis} it follows that
the solution $r_{(E, (n, d))}(x, y)$
is given by the following formula:
\begin{equation*}
r_{(E, (n, d))}(x, y) = \sum\limits_{(k, l) \in I}
r_{k, l}(v)
Z_{k, l}^\vee \otimes Z_{k, l},
\end{equation*}
where $v = y - x$ and
$$
r_{k, l}(v) = \exp\Bigl(-\frac{2 \pi i d}{n} k v\Bigr)
\frac{\displaystyle \theta'\Bigl(\frac{1 + \tau}{2}\Bigr) \theta\Bigl(v + \frac{1 + \tau}{2}- \frac{d}{n}(k\tau - l)\Bigr)}{\displaystyle \theta\Bigl(- \frac{d}{n}(k\tau - l)\Bigr) \theta(v)}.
$$
Relation (\ref{E:RelBetwTheta}) implies that
$$
r_{k, l}(v) = \exp\Bigl(-\frac{2 \pi i d}{n} k v\Bigr)
\frac{\displaystyle \bar{\theta}'(0) \bar{\theta}\Bigl(v - \frac{d}{n}(k\tau - l)\Bigr)}{
\displaystyle \bar\theta\Bigl(-\frac{d}{n}(k\tau - l)\Bigr) \bar\theta(v)}
$$
Let $\sigma(u, z)$ be the Kronecker elliptic function (\ref{E:KroneckerEllFunct}).
It remains to observe that  formula (\ref{E:SolutionsElliptiques}) follows now from the  identity
\begin{equation*}
\sigma(u,  x)
 = \frac{\displaystyle \bar\theta'(0) \bar\theta_1(u+x)}{\displaystyle \bar\theta(u) \bar\theta(x)}.
\end{equation*}
\end{proof}

\section{Vector bundles on the  cuspidal Weierstra\ss{} curve and the classical Yang--Baxter equation}

The goal of this section is to derive  an explicit  algorithm to compute the solution
 $r_{(E, (n, d))}$ of (\ref{eq:CYBE}),
  corresponding to a pair of coprime integers $0 < d < n$
 and the cuspidal Weierstra\ss{} curve $E$, which has been  constructed in Section \ref{S:GenusOneandCYBE}.

\subsection{Some results on  vector bundles on singular curves}
\label{SS:catoftriples}

We first recall some general technique to describe vector bundles on singular projective curves,
see \cite{Survey, Thesis, DrozdGreuel} and especially
\cite[Section 5.1]{BK4}.

\medskip
\noindent
Let $X$ be a reduced singular (projective)  curve, $\pi:  \widetilde{X} \lar   X$
its normalisation, $\kI :=
{\mathcal Hom}_\kO\bigl(\pi_*(\kO_{\widetilde{X}}), \kO\bigr) =
{\mathcal A}nn_\kO\bigl(\pi_*(\kO_{\widetilde{X}})/\kO\bigr)$
the conductor ideal sheaf.
Denote  by $\eta: Z = V(\kI) \lar X$ the
closed Artinian subscheme   defined by $\kI$
(its topological support is precisely the singular locus of $X$)  and by
$\tilde\eta: \widetilde{Z} \lar \widetilde{X}$ its preimage in
$\widetilde{X}$, defined by the Cartesian  diagram
\begin{equation}\label{E:keydiag}
\begin{array}{c}
\xymatrix
{\widetilde{Z} \ar[r]^{\tilde{\eta}} \ar[d]_{\tilde{\pi}}
& \widetilde{X} \ar[d]^\pi \\
Z \ar[r]^\eta & X.
}
\end{array}
\end{equation}
In what follows we shall denote $\nu = \eta \tilde\pi = \pi \tilde\eta$.

\medskip
\noindent
In order to relate vector bundles on $X$ and $\widetilde{X}$ we need the
following construction.

\begin{definition}\label{D:Triples}
  The category $\Tri(X)$ is defined as follows.
  \begin{itemize}
  \item Its objects are triples
    $\bigl(\widetilde\kF, \kV, \theta\bigr)$, where
    $\widetilde\kF \in \VB(\widetilde{X})$, $\kV \in \VB(Z)$ and
    $$\theta:  \; \tilde{\pi}^*\kV \lar \tilde{\eta}^*\widetilde\kF$$ is an
    isomorphism of $\kO_{\widetilde{Z}}$--modules.
  \item The set of morphisms
    $\Hom_{\Tri(X)}\bigl((\widetilde\kF_1, \kV_1, \theta_1),
    (\widetilde\kF_2, \kV_2, \theta_2)\bigr)$ consists of all pairs
    $(f, g)$, where $f: \widetilde\kF_1 \lar  \widetilde\kF_2$ and
    $g: \kV_1 \lar \kV_2$
    are morphisms of vector bundles such that the following
    commutative
    $$
    \xymatrix
    {\tilde{\pi}^*\kV_1 \ar[rr]^-{\theta_1}\ar[d]_{\tilde{\pi}^*(g)} & &
      \tilde{\eta}^*\widetilde\kF_1
      \ar[d]^{\tilde{\eta}^*(f)} \\
      \tilde{\pi}^*\kV_2 \ar[rr]^-{\theta_2}
      & & \tilde{\eta}^*\widetilde\kF_2
    }
    $$
    is commutative.
  \end{itemize}
\end{definition}

\noindent
The importance of Definition \ref{D:Triples} is  explained  by the following theorem.

\begin{theorem}\label{T:keyonVB}
  Let $X$ be a reduced curve. Then the following results are true.
\begin{itemize}
\item
  Let  $\mathbb{F}: \VB(X) \lar \Tri(X)$ be the functor assigning  to a
    vector bundle $\kF$ the triple  $(\pi^*\kF, \eta^*\kF,
    \theta_{\kF})$, where $\theta_{\kF}:
    \tilde{\pi}^*(\eta^*\kF) \lar \tilde\eta^*(\pi^*\kF)$ is the canonical
    isomorphism. Then $\mathbb{F}$ is  an equivalence of categories.
\item
 Let $\mathbb{G}: \Tri(X) \lar \Coh(X)$ be the functor
    assigning to  a triple
    $(\widetilde\kF, \kV, \theta)$ the coherent sheaf
    $
    \kF := \ker\bigl(\pi_*\widetilde\kF \oplus  \eta_*\kV \xrightarrow{
      (\cc, \, \, -\bar\theta)
    }
    \nu_* \tilde\eta^*\widetilde\kF\bigr),
    $
    where $\cc = \cc^{\widetilde\kF}$ is the canonical morphism
    $
    \pi_*\widetilde\kF \lar \pi_* \tilde\eta_* \tilde\eta^*\tilde\kF
    = \nu_* \tilde\eta^*\widetilde\kF
    $
    and $\bar\theta$ is the composition
    $
    \eta_*\kV \stackrel{\can}
    \lar \eta_* \tilde\pi_* \tilde\pi^*\kV
    \stackrel{=}\lar \nu_* \tilde\pi^*\kV
    \xrightarrow{\nu_*(\theta)} \nu_* \tilde\eta^*\widetilde\kF.
    $
    Then the coherent sheaf $\kF$ is locally free. Moreover, the functor
    $\mathbb{G}$ is quasi-inverse to $\mathbb{F}$.
\end{itemize}
\end{theorem}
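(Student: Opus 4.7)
\medskip
\noindent
\textbf{Proof plan.} The cornerstone of the argument is the fact that diagram~\eqref{E:keydiag} is at the same time a pullback and a pushout of ringed spaces, because the conductor ideal $\kI$ is simultaneously an ideal in $\kO_X$ and in $\pi_*\kO_{\widetilde X}$, with $\kO_X/\kI \cong \eta_*\kO_Z$ and $\pi_*\kO_{\widetilde X}/\kI \cong \nu_*\kO_{\widetilde Z}$. This yields the fundamental Mayer--Vietoris short exact sequence
\begin{equation*}
0 \lar \kO_X \lar \pi_*\kO_{\widetilde X} \oplus \eta_*\kO_Z \xrightarrow{(\cc,\,-\bar{\mathbbm{1}})} \nu_*\kO_{\widetilde Z} \lar 0,
\end{equation*}
and, after tensoring with any $\kF \in \VB(X)$ and using that $\pi$, $\eta$, $\tilde\pi$, $\tilde\eta$ are finite (so pushforwards are exact on coherent sheaves) together with the flatness along the relevant directions, one obtains
\begin{equation*}
0 \lar \kF \lar \pi_*\pi^*\kF \oplus \eta_*\eta^*\kF \lar \nu_*\nu^*\kF \lar 0.
\end{equation*}
This is exactly the isomorphism $\mathbb{G}\circ\mathbb{F}(\kF) \cong \kF$ on the nose, functorially in $\kF$.

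\medskip
\noindent
The main technical point is to show that, conversely, for a triple $(\widetilde{\kF},\kV,\theta)$ the coherent sheaf
$$\kF := \ker\bigl(\pi_*\widetilde\kF \oplus \eta_*\kV \xrightarrow{(\cc,\,-\bar\theta)} \nu_*\tilde\eta^*\widetilde\kF\bigr)$$
is locally free on $X$. This is a local question at the singular points $x \in X$; away from the singular locus $\pi$ is an isomorphism, $\kV$ and $\eta_*\kV$ contribute trivially, and $\kF$ coincides with $\widetilde\kF$. At a singular point one passes to completions, so that $\widehat{\kO}_{X,x}$ sits inside $\widehat{\pi_*\kO_{\widetilde X}}_{,x} = \prod_{\tilde x \mapsto x} \widehat{\kO}_{\widetilde X,\tilde x}$ as the subring defined by the conductor condition. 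After trivialising $\widetilde\kF$ on $\widetilde{X}$-neighbourhoods of the points over $x$ and $\kV$ on a $Z$-neighbourhood of $x$, the gluing datum $\theta$ becomes an element of $\GL_n$ over the finite Artinian ring $\widetilde Z$, and the kernel in question is precisely the Milnor patching module, which is free over $\widehat{\kO}_{X,x}$ of rank equal to $\rk(\widetilde\kF)$. Hence $\kF$ is locally free of the expected rank.

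\medskip
\noindent
Once $\mathbb{G}$ is known to land in $\VB(X)$, the remaining isomorphism $\mathbb{F}\circ\mathbb{G} \cong \id_{\Tri(X)}$ is formal: applying $\pi^*$ (resp.\ $\eta^*$) to the defining short exact sequence of $\kF$ and using flat base change along the Cartesian square \eqref{E:keydiag} recovers $\widetilde\kF$ (resp.\ $\kV$), and the canonical gluing $\theta_\kF$ coincides with the given $\theta$ by construction. Fullness and faithfulness of $\mathbb{F}$ are then immediate from the kernel description: a pair $(f,g): (\pi^*\kF_1,\eta^*\kF_1,\theta_{\kF_1}) \to (\pi^*\kF_2,\eta^*\kF_2,\theta_{\kF_2})$ compatible with gluing induces a unique morphism between the corresponding kernels, i.e.\ between $\kF_1$ and $\kF_2$. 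Combined with essential surjectivity (just shown), this establishes that $\mathbb{F}$ and $\mathbb{G}$ are mutually quasi-inverse equivalences.

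\medskip
\noindent
The step I expect to be the main obstacle is the local freeness of $\mathbb{G}(\widetilde\kF,\kV,\theta)$: it requires reducing to the complete local picture, identifying $\widehat{\kO}_{X,x}$ as the explicit fibre product of $\prod \widehat{\kO}_{\widetilde X,\tilde x}$ and $\widehat{\kO}_{Z,x}$ over $\widehat{\kO}_{\widetilde Z, \tilde\eta^{-1}(x)}$, and then invoking the classical Milnor patching theorem for modules over such a fibre product. All remaining verifications are formal manipulations with the Cartesian square and flat base change.
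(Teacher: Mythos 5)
The paper itself gives no proof of this theorem --- it is quoted from \cite[Theorem 1.3]{Thesis} --- and your argument is precisely the standard one behind that reference: the conductor square \eqref{E:keydiag} is a Milnor square, the resulting sequence $0 \lar \kO_X \lar \pi_*\kO_{\widetilde X}\oplus\eta_*\kO_Z \lar \nu_*\kO_{\widetilde Z}\lar 0$ is exact, tensoring it with a locally free $\kF$ and using the projection formula gives $\GG\circ\FF(\kF)\cong\kF$, and local freeness of $\GG(\widetilde\kF,\kV,\theta)$ is Milnor patching over the fibre product of local rings at each singular point (where both maps to the corner ring are surjective, so the patching theorem applies). This is correct in outline. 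The one phrase I would not let stand is ``flat base change along the Cartesian square'': the morphisms $\pi$ and $\eta$ are finite but \emph{not} flat (at a singular point $\pi_*\kO_{\widetilde X}$ is not locally free over $\kO_X$), so the identifications $\pi^*\GG(\kT)\cong\widetilde\kF$ and $\eta^*\GG(\kT)\cong\kV$, and hence $\FF\circ\GG\cong\id$, are not formal base-change facts. They are, however, exactly the second half of the Milnor patching theorem you already invoke for local freeness (the natural maps $P\otimes_R R_i\lar P_i$ from the patched module are isomorphisms), so this is a defect of wording rather than of substance: state and use that part of the patching theorem instead of appealing to flatness, and the proof is complete.
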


\noindent
A proof of this Theorem can be found in  \cite[Theorem 1.3]{Thesis}. \qed

\medskip
\noindent
Let $\kT = \bigl(\widetilde\kF, \kV, \theta\bigr)$ be an object of $\Tri(X)$.
Consider the morphism
$$
\overline{\conj}(\theta): \quad {\mathcal End}_{\widetilde{Z}}(\tilde\pi^*\kV) \lar
{\mathcal End}_{\widetilde{Z}}(\tilde\eta^*\widetilde\kF),
$$
sending a local section $\varphi$ to $\theta \, \varphi \, \theta^{-1}$. Then we have the
following result.

\begin{proposition}\label{P:ComputingAd(P)}
Let
$\kF := \GG(\kT)$. Then we have:
\begin{equation*}
{\mathcal End}_X(\kF) \cong
\GG\bigl({\mathcal End}_{\widetilde{X}}(\widetilde{\kF}), {\mathcal End}_Z(\kV),
\conj(\theta)\bigr),
\end{equation*}
where $\conj(\theta)$ is the morphism making the following diagram
$$
\xymatrix{
\tilde{\pi}^*{\mathcal End}_Z(\kV) \ar[rr]^{\conj(\theta)} \ar[d]_{\can} & & \ar[d]^{\can}
\tilde\eta^*{\mathcal End}_{\widetilde{X}}(\widetilde{\kF}) \\
{\mathcal End}_{\widetilde{Z}}(\tilde\pi^*\kV) \ar[rr]^{\overline{\conj}(\theta)} & &
{\mathcal End}_{\widetilde{Z}}(\tilde\eta^*\widetilde\kF)
}
$$
commutative.
Similarly, we have:
$
\Ad(\kF) \cong
\GG\bigl(\Ad(\widetilde{\kF}), \Ad(\kV),
\conj(\theta)\bigr).
$
\end{proposition}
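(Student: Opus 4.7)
The plan is to exploit the equivalence of categories $\FF:\VB(X)\lar\Tri(X)$ from Theorem \ref{T:keyonVB} together with its compatibility with the formation of internal Hom. Since $\kF=\GG(\kT)$ is a vector bundle on $X$, so is ${\mathcal End}_X(\kF)$, and therefore ${\mathcal End}_X(\kF)$ is determined by its associated triple $\FF\bigl({\mathcal End}_X(\kF)\bigr) = \bigl(\pi^{*}{\mathcal End}_X(\kF),\,\eta^{*}{\mathcal End}_X(\kF),\,\theta_{{\mathcal End}(\kF)}\bigr)$. It will thus suffice to show that this triple is canonically isomorphic to $\bigl({\mathcal End}_{\widetilde{X}}(\widetilde{\kF}),\,{\mathcal End}_Z(\kV),\,\conj(\theta)\bigr)$ and then apply $\GG$ to both sides.

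The first step is to use the fact that for any morphism of schemes $f\colon Y\lar X$ and any vector bundle $\kG$ on $X$ one has a \emph{canonical} isomorphism of $\kO_Y$-algebras $f^{*}{\mathcal End}_X(\kG)\lar{\mathcal End}_Y(f^{*}\kG)$. Applied to $f=\pi$ and $f=\eta$ together with the identifications $\pi^{*}\kF=\widetilde{\kF}$ and $\eta^{*}\kF=\kV$ (which come from the definition of $\GG$), this yields the two vertical canonical isomorphisms in the diagram of the proposition. The content of the claim is then reduced to identifying the induced gluing data. The horizontal map $\theta_{{\mathcal End}(\kF)}\colon\tilde{\pi}^{*}\eta^{*}{\mathcal End}_X(\kF)\lar\tilde{\eta}^{*}\pi^{*}{\mathcal End}_X(\kF)$ is, by construction in Theorem \ref{T:keyonVB}, the canonical base-change isomorphism, so under the identifications above it becomes the unique $\kO_{\widetilde{Z}}$-algebra isomorphism ${\mathcal End}_{\widetilde{Z}}(\tilde{\pi}^{*}\kV)\lar{\mathcal End}_{\widetilde{Z}}(\tilde{\eta}^{*}\widetilde{\kF})$ induced by functoriality from the gluing isomorphism $\theta\colon\tilde{\pi}^{*}\kV\lar\tilde{\eta}^{*}\widetilde{\kF}$. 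But such a functorially induced map is precisely conjugation, $\varphi\mapsto\theta\varphi\theta^{-1}$, i.e.~the map $\overline{\conj}(\theta)$ from the definition preceding the proposition. By construction, $\conj(\theta)$ is uniquely characterised by the commutative square displayed in the statement, which proves the first isomorphism.

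For the assertion about $\Ad(\kF)$, the plan is to apply $\FF$ to the defining short exact sequence (\ref{E:shortexactAd}). Because $\kF$ is locally free, (\ref{E:shortexactAd}) is locally split and therefore remains exact after pulling back along $\pi$ and $\eta$; moreover the trace map is compatible with pullback, so $\pi^{*}\Tr_{\kF}=\Tr_{\widetilde{\kF}}$ and $\eta^{*}\Tr_{\kF}=\Tr_{\kV}$. Hence $\FF$ takes (\ref{E:shortexactAd}) to the termwise pullback sequence of triples; the induced gluing map on $\pi^{*}\Ad(\kF)\cong\Ad(\widetilde{\kF})$ is, by the argument just given applied to the restriction of $\conj(\theta)$ to traceless endomorphisms, again conjugation by $\theta$. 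Extracting the kernel then gives the claimed description of $\Ad(\kF)$.

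The main obstacle is the second paragraph: one has to verify, carefully and compatibly, that the base-change isomorphism $\theta_{{\mathcal End}(\kF)}$ really corresponds under the canonical identifications $f^{*}{\mathcal End}\cong{\mathcal End}(f^{*})$ to the conjugation map $\overline{\conj}(\theta)$. This is a standard but slightly tedious naturality check, most cleanly done by evaluating both sides on local generators and using that both are $\kO_{\widetilde{Z}}$-algebra isomorphisms sending $\id_{\tilde{\pi}^{*}\kV}$ to $\id_{\tilde{\eta}^{*}\widetilde{\kF}}$; once this is done, the rest of the argument is purely formal.
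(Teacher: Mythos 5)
Your argument is correct and follows exactly the route the paper indicates: the authors omit the proof, remarking only that it ``can be deduced from Theorem \ref{T:keyonVB} using the standard technique of sheaf theory,'' and your proposal carries out precisely that deduction (transport through the equivalence $\FF$, the canonical identification $f^{*}{\mathcal End}(\kG)\cong{\mathcal End}(f^{*}\kG)$ turning the base-change gluing map into $\overline{\conj}(\theta)$, and exactness of the trace sequence under pullback for the $\Ad$ statement). The naturality check you flag at the end is indeed the only nontrivial point, and your sketch of it is adequate.
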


\noindent
A proof of Proposition \ref{P:ComputingAd(P)} can be deduced from Theorem \ref{T:keyonVB} using the standard technique of sheaf theory
  and is  therefore omitted.

\subsection{Simple vector bundles on the cuspidal Weierstra\ss{}  curve}\label{SS:SimplVBcusp}
 Now we recall the description of the simple vector bundles on the cuspidal Weierstra\ss{} curve following the approach of Bodnarchuk and Drozd \cite{BodnarchukDrozd}, see also
\cite[Section 5.1.3]{BK4}.

\medskip
\noindent
1. Throughout this section, $E = V(wv^2 - u^3) \subseteq \mathbb{P}^2$ is the cuspidal Weierstra\ss{}  curve.

\medskip
\noindent
2. Let $\pi: \PP^1 \lar E$ be the normalization of $E$. We choose homogeneous coordinates
$(z_0: z_1)$ on $\PP^1$ in such a way that $\pi\bigl((0:1)\bigr)$ is the singular point of $E$.
In what follows, we denote $\infty = (0: 1)$ and $0 = (1: 0)$.
Abusing the notation, for any $x \in \kk$ we also denote by $x \in \breve{E}$ the image of the point
$\tilde{x} = (1:x) \in \PP^1$,  identifying in this  way $\breve{E}$ with
$\mathbb{A}^1 = \PP^1 \setminus \{\infty\} =:  U_\infty$. Let  $t = \frac{\displaystyle z_0}{
\displaystyle z_1}$,  then we have:
$\kk[U_\infty] = \kk[t]$. Let $R = \kk[\varepsilon]/\varepsilon^2$  and
$\kk[t] \rightarrow  R$  be the canonical projection.
Then in the notation of the previous subsection we have: $Z \cong  \Spec(\kk)$ and
$\widetilde{Z} \cong  \Spec(R)$.

\medskip
\noindent
3. By the theorem of Birkhoff--Grothendieck, for any $\kF \in \VB(E)$ we have:
\begin{equation*}
\pi^*\kF \cong  \bigoplus\limits_{c \in \ZZ} \kO_{\PP^1}(c)^{\oplus n_c}.
\end{equation*}
A choice of homogeneous coordinates on $\PP^1$ yields two distinguished sections
$z_0, z_1 \in H^0\bigl(\kO_{\PP^1}(1)\bigr)$. Hence, for any $e \in \mathbbm{N}$ we get
a distinguished basis of the vector space $\Hom_{\PP^1}\bigl(\kO_{\PP^1}, \kO_{\PP^1}(e)\bigr)$ given by
the monomials $z_0^e, z_0^{e-1} z_1, \dots, z_1^e$.
Next,  for any $c \in \ZZ$ we  fix the following isomorphism
\begin{equation*}
\zeta^{\kO_{\PP^1}(c)}: \quad \kO_{\PP^1}(c)\big|_{\widetilde{Z}} \lar \kO_{\widetilde{Z}}
\end{equation*}
sending a local section $p$ to $\frac{\displaystyle p}{\displaystyle z_1^c\big|_{\widetilde{Z}}}$. Thus, for any
vector bundle $\widetilde\kF = \bigoplus\limits_{c \in \ZZ} \kO_{\PP^1}(c)^{\oplus n_c}$
of rank $n$ on $\PP^1$ we have the induced isomorphism
$\zeta^{\widetilde\kF}:  \; \widetilde\kF\big|_{\widetilde{Z}} \lar \kO_{\widetilde{Z}}^{\oplus n}$.

\medskip
\noindent
4. Consider  the set
$\Sigma  := \bigl\{(a, b) \in \mathbb{N} \times \mathbb{N} \, \big| \, \gcd(a, b) = 1 \bigr\}$
 and for any  $(a, b) \in \Sigma \setminus \bigl\{(1, 1)\bigr\}$ denote:
\[ \epsilon(a,b)=
\left\{ \begin{array}{c c}
(a-b,b), & a>b\\
(a,b-a), & a<b.
\end{array}
\right.
\]
Now, starting with a pair $(e, d) \in \Sigma$,  we construct
a finite sequence of elements of $\Sigma$  ending with $(1,1)$, defined as follows.
We put $(a_{0},b_{0})=(e, d)$ and, as long as $(a_{i},b_{i})\neq(1,1)$,
we set $(a_{i+1},b_{i+1})=\epsilon(a_{i},b_{i})$. Let
\begin{equation}\label{E:inductiveDef}
J_{(1,1)}=\left(
{\begin{array}{c | c}
0 & 1 \\ \hline
0 & 0
\end{array}}
\right) \in \Mat_{2 \times 2}(\mathbb{C}).
\end{equation}
Assume that the matrix  \[  J_{(a,b)} =\left(
{\begin{array}{c | c}
A_{1} & A_{2} \\ \hline
0 & A_{3}
\end{array}}
\right)
\] with $A_{1}\in\mbox{Mat}_{a\times a}(\kk)$ and $A_{3}\in\mbox{Mat}_{b\times b}(\kk)$
has already been defined. Then for $(p, q) \in \Sigma$ such that
$\epsilon(p,q) = (a, b)$, we set
\begin{equation}\label{E:formcanonique}
J_{(p,q)} =
\left\{
\begin{array}{c c}
\left(
{\begin{array}{c | c c}
0 & \mathbbm{1} & 0 \\ \hline
0 & A_{1} & A_{2} \\
0 & 0 & A_{3}
\end{array}}
\right), & p=a\\
\\
\left(
{\begin{array}{c c | c}
A_{1} & A_{2} & 0 \\
0 & A_{3} & \mathbbm{1} \\ \hline
0 & 0 & 0
\end{array}}
\right), & q=b.
\end{array}
\right.
\end{equation}

\noindent Hence, to any tuple $(e,d) \in \Sigma$ we can assign a certain uniquely determined
  matrix
$J = J_{(e,d)}$  of size $(e +d) \times (e + d)$,
obtained by the above recursive procedure from  the sequence
$\bigl\{(e,d),...,(1,1)\bigr\}$.

\begin{example}
Let $(e,d) = (3,2)$. Then the corresponding  sequence  of elements of $\Sigma$ is $\bigl\{(3,2),(1,2),(1,1)\bigr\}$
and the matrix $J = J_{(3,2)}$ is constructed as follows
\[ \left( {\begin{array}{c | c}
0 & 1 \\ \hline
0 & 0
\end{array}}
\right)
\rightarrow
\left( {\begin{array}{c | c c}
0 & 1 & 0\\ \hline
0 & 0 & 1\\
0 & 0 & 0
\end{array}}
\right)
\rightarrow
\left( {\begin{array}{c c c | c c}
0 & 1 & 0 & 0 & 0 \\
0 & 0 & 1 & 1 & 0 \\
0 & 0 & 0 & 0 & 1 \\ \hline
0 & 0 & 0 & 0 & 0 \\
0 & 0 & 0 & 0 & 0 \\
\end{array}}
\right).
\]
\end{example}

\medskip
\noindent
5. Given $0 <  d < n$ mutually prime and $\lambda \in \kk$, we take the matrix
\begin{equation}\label{E:gluingmatrixFor(n,d)}
\Theta_\lambda = \Theta_{n, d, \lambda} = \mathbbm{1} + \varepsilon\bigl(\lambda \mathbbm{1} +
J_{(e, d)}\bigr) \in \GL_n(R), \; e = n-d.
\end{equation}
The matrix $\Theta_\lambda$ defines a morphism $\bar\theta_\lambda: \eta_* \kO_Z \lar \nu_* \kO_{\widetilde{Z}}$.
Let $\widetilde{\kP} = \widetilde{\kP}_{n, d} := \kO_{\PP^1}^{\oplus e} \oplus
\kO_{\PP^1}(1)^{\oplus d}$. Consider the following vector bundle $\kP_\lambda = \kP_{n, d, \lambda}$ on $E$:
\begin{equation}\label{E:univfamily}
0 \lar \kP_\lambda \stackrel{\left(\begin{smallmatrix} \imath \\ q \end{smallmatrix}\right)}\lar
\pi_* \widetilde\kP \oplus \eta_* \kO_{Z}^{\oplus n}
\xrightarrow{(\zeta^{\widetilde\kP},  \;  -\bar\theta_\lambda)} \nu_* \kO_{\widetilde{Z}}^{\oplus n}
\lar 0.
\end{equation}
Then $\kP_\lambda$ is simple with rank $n$ and degree $d$.
 Moreover, in an appropriate sense, $\bigl\{\kP_\lambda\bigr\}_{\lambda \in \kk^*}$ is  a
universal family of simple  vector bundles of rank $n$ and degree $d$ on the  curve $E$, see \cite[Theorem 5.1.40]{BK4}. The next  result follows from Proposition \ref{P:ComputingAd(P)}.

\begin{corollary}
Let $0 <  d < n$ be a  pair of coprime integers, $e = n-d$  and $J =
J_{(e, d)} \in \Mat_{n \times n}(\kk)$ be the matrix given by the recursion  (\ref{E:formcanonique}).
Consider the vector bundle $\kA$ given by the following short exact sequence
\begin{equation}\label{E:definitionAd(P)}
0 \lar \kA \stackrel{\left(\begin{smallmatrix} \jmath \\ r \end{smallmatrix}\right)}\lar \pi_* \widetilde\kA \oplus \eta_* \bigl(\Ad(\kO_Z^{\oplus n})\bigr)
\xrightarrow{\bigl(\zeta^{\Ad(\widetilde\kP)}, \;  -\conj(\Theta_0)\bigr)}
\eta_*\bigl(\Ad(\kO_{\widetilde{Z}}^{\oplus n})\bigr) \lar 0,
\end{equation}
where
$\widetilde\kA = \Ad(\widetilde\kP)$. Then $\kA \cong \Ad(\kP_0)$. Moreover,  for any trivialization
$\xi: \widetilde\kP\big|_{U_{\infty}} \lar \kO_{U_\infty}^{\oplus n}$ we get the following
isomorphisms  of sheaves
of Lie algebras
\begin{equation}\label{E:trivialAd(P)}
\kA\big|_{\breve{E}} \stackrel{\jmath}\lar \pi_*\bigl(\Ad(\widetilde\kP)\bigr)\big|_{\breve{E}}
\lar \pi_* \Ad\bigl(\kO_{U_\infty}^{\oplus n}\bigr) \stackrel{\can}\lar \Ad\bigl(\kO_{\breve{E}}^{\oplus n}\bigr),
\end{equation}
where the second morphism is induced by  $\xi$.
\end{corollary}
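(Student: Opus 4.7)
The plan is to apply Proposition~\ref{P:ComputingAd(P)} to the triple which, by construction (see equation~(\ref{E:univfamily})), represents the simple vector bundle $\kP_0$ under the equivalence $\mathbb{F}: \VB(E) \lar \Tri(E)$ of Theorem~\ref{T:keyonVB}. Explicitly, $\mathbb{F}(\kP_0) = \bigl(\widetilde\kP, \kO_Z^{\oplus n}, \theta_0\bigr)$, where $\theta_0: \tilde\pi^*(\kO_Z^{\oplus n}) \lar \tilde\eta^*(\widetilde\kP)$ is the isomorphism of $\kO_{\widetilde Z}$--modules encoded (via $\zeta^{\widetilde\kP}$) by the matrix $\Theta_0 = \mathbbm{1} + \varepsilon J \in \GL_n(R)$.

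First I would invoke Proposition~\ref{P:ComputingAd(P)} to obtain the canonical identification
\[
\Ad(\kP_0) \;\cong\; \mathbb{G}\bigl(\Ad(\widetilde\kP),\; \Ad(\kO_Z^{\oplus n}),\; \conj(\theta_0)\bigr),
\]
and then unfold the definition of $\mathbb{G}$ from Theorem~\ref{T:keyonVB}. This writes $\Ad(\kP_0)$ as the kernel of the map
\[
\pi_*\Ad(\widetilde\kP) \oplus \eta_*\Ad(\kO_Z^{\oplus n}) \xrightarrow{(\cc,\,-\overline{\conj(\theta_0)})} \nu_*\Ad(\kO_{\widetilde Z}^{\oplus n}),
\]
which coincides verbatim with the defining sequence~(\ref{E:definitionAd(P)}) once one identifies the canonical morphism $\cc$ with $\zeta^{\Ad(\widetilde\kP)}$ and observes that $\overline{\conj(\theta_0)}$ is precisely $\conj(\Theta_0)$ under the trivializations $\zeta^{\widetilde\kP}$ and the canonical isomorphism $\Ad(\tilde\pi^*\kV) \cong \tilde\pi^*\Ad(\kV)$. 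This yields $\kA \cong \Ad(\kP_0)$.

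For the trivialization assertion I would exploit that $\pi$ restricts to an isomorphism $\PP^1 \setminus \{\infty\} = U_\infty \lar \breve E$, since the unique preimage of the singular cusp is $\infty$. Consequently, restricting the defining exact sequence to $\breve E$ makes the term $\eta_*\Ad(\kO_Z^{\oplus n})$ vanish (its support is the singular point), so $\jmath|_{\breve E}$ becomes an isomorphism $\kA|_{\breve E} \xrightarrow{\sim} \pi_*\Ad(\widetilde\kP)|_{\breve E}$. Under the identification $\breve E = U_\infty$, the latter is $\Ad(\widetilde\kP)|_{U_\infty}$, and the chosen trivialization $\xi$ of $\widetilde\kP|_{U_\infty}$ induces the last isomorphism $\Ad(\widetilde\kP)|_{U_\infty} \xrightarrow{\sim} \Ad(\kO_{\breve E}^{\oplus n})$ functorially, and visibly respects the Lie bracket since $\Ad$ is defined via commutators of local endomorphisms.

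The only delicate point, which I would spell out carefully, is the compatibility between the sheaf-theoretic morphism $\conj(\theta_0)$ appearing in Proposition~\ref{P:ComputingAd(P)} and the matrix-level operation $\conj(\Theta_0)$ used in~(\ref{E:definitionAd(P)}): this reduces to the commutative square in Proposition~\ref{P:ComputingAd(P)} combined with the fact that $\zeta^{\widetilde\kP}$ intertwines $\theta_0$ with the matrix $\Theta_0$ by the very construction of $\Theta_{n,d,0}$ in~(\ref{E:gluingmatrixFor(n,d)}). Once this compatibility is in hand, both claims fall out immediately; no further computation is required.
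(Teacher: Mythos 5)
Your proposal is correct and coincides with the paper's own argument: the paper derives this corollary precisely by applying Proposition~\ref{P:ComputingAd(P)} to the triple $(\widetilde\kP, \kO_Z^{\oplus n}, \theta_0)$ representing $\kP_0$ via (\ref{E:univfamily}), and the trivialization statement follows, as you say, from the fact that $\pi$ is an isomorphism over $\breve{E}$ so that the $\eta_*$--terms vanish there. Your write-up merely makes explicit the compatibility checks (between $\conj(\theta_0)$ and $\conj(\Theta_0)$, and between $\cc$ and $\zeta^{\Ad(\widetilde\kP)}$) that the paper leaves implicit.
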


\noindent
6. In the above notation, for any $x \in \breve{E} \cong \mathbb{A}^1$ the corresponding  line bundle
$\kO_E\bigl([x]\bigr)$ is given by the triple $\bigl(\kO_{\PP^1}(1), \kk, 1 - x \cdot \varepsilon\bigr)$, see \cite[Lemma 5.1.27]{BK4}.

\subsection{From simple vector bundles on the cuspidal Weierstra\ss{}  curve to solutions of the
classical Yang--Baxter equation}

In this subsection we derive  the recipe to compute  the solution
of the classical Yang--Baxter equation corresponding to the triple $(E, (n, d))$, where
$E$ is the cuspidal Weierstra\ss{}  curve and $0 < d < n$ is a pair of coprime integers.
Keeping  the same notation as  in Subsection \ref{SS:SimplVBcusp}, we additionally introduce the
following one.

\medskip
\noindent
1.  We choose the following  regular differential one-form $w := dz$ on $E$,
where $z = \frac{\displaystyle z_1}{\displaystyle z_0}$ is a coordinate on the open chart $U_0$.

\medskip
\noindent
2. Let $\lieg[z] := \lieg \otimes \kk[z]$. Then for any $x \in \kk$ we have the $\kk$--linear
evaluation map
$\phi_x: \lieg[z] \rightarrow \lieg$, where $\lieg[z] \ni a z^p \mapsto x^p \cdot a \in \lieg$  for
 $a \in \lieg$.  For $x \ne y \in \kk$ consider the following $\kk$--linear maps:
\begin{equation}
\overline{\res}_x := \phi_x \quad \mbox{and} \quad
\overline{\ev}_y := \frac{1}{y-x}\phi_y.
\end{equation}
\medskip
\noindent
3. Let $(e, d)$ be a pair of coprime positive integers, $n = e + d$ and
$\liee := {\Mat}_{n\times n}(\kk)$.
For the block partition  of $\liee$
induced by the decomposition $n = e + d$, consider the following subspace
of $\lieg[z]$:
\begin{equation}
V_{e,d}= \left\{
F =
\left(
\begin{array}{ c | c}
W & X \\ \hline
Y & Z
\end{array}
\right)
+
\left(
\begin{array}{ c | c}
W' & 0 \\ \hline
Y' & Z'
\end{array}
\right)z
+
\left(
\begin{array}{ c | c}
$0$ & $0$ \\ \hline
Y'' & $0$
\end{array}
\right)z^2
\right\}.
\end{equation}
For a given  $F \in V_{e,d}$ denote
\begin{equation}\label{E:evalonZ}
F_{0}=
\left(
\begin{array}{c | c} W' & X \\ \hline Y'' & Z'
\end{array}
\right) \mbox{ and  } \,
F_{\epsilon}=\left({\begin{array}{c | c}
W & $0$ \\ \hline Y' & Z
\end{array}}\right).
\end{equation}

\medskip
\noindent
4. For $x \in \kk$  consider the following subspace of $V_{e,d}$:
\begin{equation}\label{E:DefSol(e,d,x)}
\Sol\bigl((e, d), x\bigr) := \Bigl\{
F \in V_{e,d} \,
\Bigl| \,
\left[F_{0},J \right]+ x F_{0}+F_{\epsilon}=0
\Bigr\}.
\end{equation}
The following theorem is the main result of this section.
\begin{theorem}\label{T:recipe}
Let $\kA$ be the sheaf of Lie algebras given by (\ref{E:definitionAd(P)}) and $x, y \in \breve{E}$ a pair of distinct points.  Then there exists
an isomorphism of Lie algebras
$\jmath^\kA: \Gamma(\breve{E}, \kA) \rightarrow \lieg[z]$ and a $\kk$--linear isomorphism
$\jmath: H^0\bigl(\kA(x)) \rightarrow \Sol\bigl((e, d), x\bigr)$ such that the following diagram
\begin{equation*}
\begin{array}{c}
\xymatrix{
\kA\big|_x \ar[d]_{\jmath^\kA_x} & & H^0\bigl(\kA(x)\bigr) \ar[ll]_-{\res^\kA_x(w)} \ar[rr]^-{\ev^\kA_y} \ar[d]^{\jmath} & & \kA\big|_y \ar[d]^{\jmath^\kA_y} \\
\lieg & & \Sol\bigl((e, d), x\bigr) \ar[ll]_-{\overline{\res}_x} \ar[rr]^-{\overline{\ev}_y} &  & \lieg
}
\end{array}
\end{equation*}
is commutative.
\end{theorem}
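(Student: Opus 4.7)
The plan is to combine the triple description of $\kA$ from (\ref{E:definitionAd(P)}) with the triple $(\kO_{\PP^1}(1), \kk, 1-x\varepsilon)$ for $\kO_E([x])$ (item 6 of Subsection \ref{SS:SimplVBcusp}) to obtain, via Theorem \ref{T:keyonVB} and Proposition \ref{P:ComputingAd(P)}, the triple $\bigl(\widetilde{\kA}(1),\, \lieg,\, (1-x\varepsilon)\cdot \conj(\Theta_0)\bigr)$ describing $\kA(x) = \kA \otimes \kO_E([x])$. A global section of $\kA(x)$ is thus a pair $(\tilde{A}, V)$ with $\tilde{A} \in H^0(\widetilde{\kA}(1))$ and $V \in \lieg$, subject to the gluing $\zeta(\tilde{A})\big|_{\widetilde{Z}} = (1-x\varepsilon)\,\Theta_0 V \Theta_0^{-1}$ in $\slie_n(R)$.

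I would next write $H^0(\widetilde{\kA}(1))$ explicitly in the $U_0$-trivialization (using $z_0^c$ to trivialize $\kO_{\PP^1}(c)$ on $U_0$). Starting from the Birkhoff--Grothendieck decomposition $\widetilde{\kP} = \kO^{\oplus e} \oplus \kO(1)^{\oplus d}$, the transition to the $U_\infty$-trivialization $\zeta$ for $\widetilde{\kA}(1)$ reads $\tilde{A}_\infty(t) = t \cdot D\,\tilde{A}_0(1/t)\,D^{-1}$ with $D = \mathrm{diag}(\mathbbm{1}_e, t\mathbbm{1}_d)$ and $t = 1/z$. Requiring polynomial behavior of $\tilde{A}_\infty$ at $t = 0$ (regularity at the cusp) forces exactly the block-degree constraints defining $V_{e,d}$: writing $\tilde{A}_0(z) = \left(\begin{smallmatrix} W(z) & X(z) \\ Y(z) & Z(z) \end{smallmatrix}\right)$, the upper-left $W$ and lower-right $Z$ are linear, the upper-right $X$ is constant, and the lower-left $Y$ is quadratic in $z$. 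Combined with tracelessness of $\tilde{A}$, this identifies $H^0(\widetilde{\kA}(1))$ with $V_{e,d}$ of dimension $2n^2 - 2$.

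A direct expansion yields $\tilde{A}_\infty(t) \equiv F_0 + t F_\varepsilon \pmod{t^2}$ for $F_0, F_\varepsilon$ precisely as in (\ref{E:evalonZ}). Using $\Theta_0 = \mathbbm{1} + \varepsilon J$ and $\varepsilon^2 = 0$, the gluing condition rewrites as $F_0 + t F_\varepsilon = V + \varepsilon([J,V] - xV)$, so matching the $\varepsilon^0$ and $\varepsilon^1$ parts gives $V = F_0$ and $F_\varepsilon = [J, F_0] - xF_0$; eliminating $V$ produces the equation $[F_0, J] + xF_0 + F_\varepsilon = 0$ defining $\Sol\bigl((e,d),x\bigr)$. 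The resulting linear bijection is the desired $\jmath$; a dimension check (both sides have dimension $n^2 - 1$ by Proposition \ref{P:AdonCY}) confirms it is an isomorphism.

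The map $\jmath^\kA$ arises from the isomorphism $\pi: U_0 \cong \breve{E}$ together with the $U_0$-trivialization $\widetilde{\kP}|_{U_0} \cong \kO_{U_0}^{\oplus n}$, giving $\Gamma(\breve{E}, \kA) \cong \Gamma(U_0, \widetilde{\kA}) \cong \slie_n(\kk[z]) = \lieg[z]$, a Lie algebra isomorphism. Commutativity of the diagram is then direct: a section of $\kA(x)$ with triple datum $(\tilde{A}, V)$, where $\tilde{A}_0 = F$, restricts to the meromorphic section $F(z)/(z-x)$ of $\kA$ on $\breve{E}$ in the $U_0$-trivialization, because the section $\sigma_x = z_1 - xz_0$ trivializing $\kO_E([x])$ reads $z - x$ in the $U_0$-trivialization of $\kO_{\PP^1}(1)$. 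Evaluating at $y \ne x$ gives $F(y)/(y-x) = \overline{\ev}_y(F)$, while the residue at $x$ with respect to $w = dz$ is $F(x) = \overline{\res}_x(F)$. The main obstacle will be the careful bookkeeping between the $U_0$-trivialization (used to produce $\lieg[z]$) and the $U_\infty$-trivialization $\zeta$ (used for the gluing on $\widetilde{Z}$) via the coordinate change $t = 1/z$ combined with the $\kO(1)$-twist; once these trivializations are correctly aligned, the identification is forced by the equivalence $\Tri(E) \simeq \VB(E)$.
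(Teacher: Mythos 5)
Your proposal is correct and follows essentially the same route as the paper's proof: both pass to the triple description of $\kA(x)$ over the normalization $\PP^1$, expand the gluing condition $F\big|_{\widetilde{Z}} = (1-x\varepsilon)\,\Theta_0 V \Theta_0^{-1}$ in $\varepsilon$ (using $\Theta_0^{-1}=\mathbbm{1}-\varepsilon J$) to obtain the constraint $[F_0,J]+xF_0+F_\epsilon=0$ cutting out $\Sol\bigl((e,d),x\bigr)$, and use the section $z_1-xz_0$ of $\kO_{\PP^1}(1)$ to reduce $\res_x$ and $\ev_y$ to $F\mapsto F(x)$ and $F\mapsto \frac{1}{y-x}F(y)$ in the $U_0$--trivialization. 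The only difference is organizational: the paper routes the computation through $H^0\bigl(\widetilde\kE(1)\bigr)$ and intersects with the kernel of the trace map at the very end, citing \cite{BK4} for the commutativity of the intermediate squares of its big diagram, whereas you work directly with $\widetilde\kA(1)$ and verify the residue and evaluation formulas by hand via the meromorphic section $F(z)/(z-x)$.
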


\begin{proof}
We first introduce  the following (final) portion of notations.

\medskip
\noindent
1. For $x \in \kk$ consider the section $\sigma = z_1 - x z_0 \in H^0\bigl(\kO_{\PP^1}(1)\bigr)$.
Using the identification $\kk \stackrel{\cong}\longrightarrow U_0$, $\kk \ni x \mapsto \tilde{x}: = (1: x) \in \PP^1$, the section $\sigma$ induces
an isomorphism of line bundles $t_\sigma:
\kO_{\PP^1}\bigl([\tilde{x}]\bigr) \longrightarrow \kO_{\PP^1}(1)$.

\medskip
\noindent
2. For any $c \in \ZZ$ fix the trivialization $\xi^{\kO_{\PP^1}(c)}: \kO_{\PP^1}(c)\big|_{U_0} \lar
\kO_{U_0}$ given on the level of local sections by the rule $p \mapsto \frac{\displaystyle p}{\displaystyle z_0^c\big|_{U_0}}$. Thus, for any
vector bundle $\widetilde\kF = \oplus \kO_{\PP^1}(c)^{\oplus n_c}$ of rank $n$ we get the induced  trivialization
$\xi^{\widetilde\kF}:  \widetilde{\kF}\big|_{U_0} \lar \kO_{U_0}^{\oplus n}$.

\medskip
\noindent
3. Let $\widetilde\kE = \left(
\begin{array}{cc}
\widetilde{\kE}_1 & \widetilde{\kE}_2 \\
\widetilde{\kE}_3 & \widetilde{\kE}_4
\end{array}
\right)$ be the sheaf of algebras on $\PP^1$ with
 $\widetilde{\kE}_1 = {\mathcal Mat}_{e \times e}(\kO_{\PP^1})$,
$\widetilde{\kE}_4 = {\mathcal Mat}_{d \times d}(\kO_{\PP^1})$,
$\widetilde{\kE}_2 = {\mathcal Mat}_{e \times d}(\kO_{\PP^1}(-1))$ and
$\widetilde{\kE}_3 = {\mathcal Mat}_{d \times e}(\kO_{\PP^1}(1))$.
The ring structure on $\widetilde\kE$ is induced by the canonical isomorphism
$\kO_{\PP^1}(-1) \otimes \kO_{\PP^1}(1) \stackrel{\can}\lar \kO_{\PP^1}$. Let
$\widetilde\kA = \mathsf{\ker}\bigl(\widetilde\kE \stackrel{\tr}\lar \kO_{\PP^1}\bigr)$,
where $\tr$ only involves the diagonal entries of $\widetilde\kE$ and is given
by the matrix $(1, 1, \dots, 1)$. Of coarse,
$\widetilde\kE \cong {\mathcal End}(\widetilde\kP)$ and
$\widetilde\kA \cong \Ad(\widetilde\kP)$ for
$\widetilde\kP = \kO_{\PP^1}^{\oplus e} \oplus \kO_{\PP^1}(1)^{\oplus d}$.

\medskip
\noindent
4. Consider the sheaf of algebras  $\kE$ on $E$ given by the short exact sequence
\begin{equation*}
0 \lar \kE \stackrel{\left(\begin{smallmatrix} \jmath \\ r \end{smallmatrix}\right)}\lar
\pi_* \widetilde\kE \oplus \eta_* \bigl({\mathcal M}_n(Z)\bigr)
\xrightarrow{\bigl(\zeta^{\widetilde\kE}, \;  -\conj(\Theta_0)\bigr)}
\eta_*\bigl({\mathcal M}_n(\widetilde{Z})\bigr) \lar 0,
\end{equation*}
where ${\mathcal M}_n(T) := {\mathcal End}_T(\kO_T^{\oplus n})$
for a scheme $T$. Of coarse $\kE \cong {\mathcal End}_E(\kP_0)$, where
$\kP_0$ is the simple vector bundle of rank $n$ and degree $d$ on $E$ given by
(\ref{E:univfamily}).

\medskip
\noindent
5. In the above notation we have:
\begin{equation}\label{E:descriptSections}
H^0\bigl(\widetilde\kE(1)\bigr) =
\left\{F =
\left(
\begin{array}{c|c}
z_0 W + z_1 W' & X \\
\hline
z_0^2 Y + z_0 z_1 Y' + z_1^2 Y'' & z_0 Z + z_1 Z'
\end{array}
\right)
\right\},
\end{equation}
where $W, W' \in \Mat_{e \times e}(\kk)$, $Z, Z' \in \Mat_{d \times d}(\kk)$, $Y, Y', Y'' \in
\Mat_{d \times e}(\kk)$ and $ X \in \Mat_{e \times d}(\kk)$.

\medskip
\noindent
6.
For any $F \in H^0\bigl(\widetilde{\kE}(1)\bigr)$ as in (\ref{E:descriptSections}) we denote:
\begin{equation}\label{E:resandevexplicit}
\overline{\res}_x(F) =  F(1, x) \quad \mbox{and} \quad
\overline{\ev}_y(F) = \frac{1}{y-x} F(1, y).
\end{equation}

\noindent
7. Finally, let $\jmath^\kE: \kE\big|_{\breve{E}} \lar
\kM_n(\breve{E})$ be the trivialization induced by the trivialization
$\xi^{\widetilde\kE}: \widetilde\kE\big|_{U_0} \lar \kM_n(U_0)$.  This trivialization
 actually induces
an  isomorphism of Lie algebras  $\jmath^\kA: \Gamma(\breve{E}, \kA) \lar \lieg[z]$ we are looking for.

\medskip
\noindent
Now observe  that the following diagram is commutative:
\begin{equation}\label{E:bigDiagram}
\begin{array}{c}
\xymatrix{
& \kA\big|_{x} \ar[d]_{\hat{\pi}^*_x} \ar@/_2pc/[lddd]_-{\jmath^\kA_x} &
H^0\bigl(\kA(x)\bigr) \ar[l]_-{\res^\kA_x(w)}
\ar[r]^-{\ev^\kA_y} \ar[d]_{\hat{\pi}^*}  & \kA\big|_{y} \ar[d]^{\hat{\pi}^*_y}
\ar@/^2pc/[rddd]^-{\jmath^\kA_y} & \\
& \widetilde{\kA}\big|_{\tilde{x}}
\ar@{_{(}->}[d] & H^0\bigl(\widetilde{\kA}(\tilde{x})\bigr) \ar[r]^-{\ev^{\widetilde{\kA}}_{\tilde{y}}} \ar[l]_-{\res^{\widetilde\kA}_{\tilde{x}}(w)} \ar@{^{(}->}[d] &
\widetilde{\kA}\big|_{\tilde{y}} \ar@{^{(}->}[d]  & \\
& \widetilde{\kE}\big|_{\tilde{x}} \ar[d]_-{\xi^{\widetilde{\kE}}_{\tilde{x}}} & H^0\bigl(\widetilde{\kE}(\tilde{x})\bigr)
\ar[l]_-{\res^{\widetilde\kE}_{\tilde{x}}(w)} \ar[r]^-{\ev^{\widetilde{\kE}}_{\tilde{y}}} \ar[d]^-{(t_\sigma)_*} &
\widetilde{\kE}\big|_{\tilde{y}} \ar[d]^-{\xi^{\widetilde{\kE}}_{\tilde{y}}} & \\
\lieg \ar@{^{(}->}[r] & \liee & H^0\bigl(\widetilde{\kE}(1)\bigr)
\ar[l]_{\overline{\res}_x} \ar[r]^{\overline{\ev}_y} & \liee &
\lieg. \ar@{_{(}->}[l]
}
\end{array}
\end{equation}
Following the notation of (\ref{E:definitionAd(P)}), the composition
$$\gamma_\kA: \quad  \pi^* \kA \stackrel{\pi^*(\imath)}\lar \pi^* \pi_* \widetilde{\kA} \stackrel{\can}\lar \widetilde{\kA}$$
is an isomorphism of vector bundles on $\PP^1$. The morphisms $\hat{\pi}^*_x$ and
and $\hat{\pi}^*_y$ are the  maps, obtained by composing
$\pi^*$ and $\gamma_\kA$ and then taking the induced map in the corresponding fibers.
Similarly, $\hat{\pi}^*$ is the induced map of global sections.
The commutativity of both  top squares of (\ref{E:bigDiagram}) follows from the ``locality'' of the morphisms
${\res}^\kA_x(w)$ and $\underline{\ev}^\kA_y$, see
\cite[Proposition 2.2.8 and Proposition 2.2.12]{BK4} as well as \cite[Section 5.2]{BK4} for a detailed proof.

 \medskip
 \noindent
 The commutativity of both
 middle squares of (\ref{E:bigDiagram}) is obvious.
 The commutativity of both  lower squares follows from  \cite[Corollary 5.2.1]{BK4} and
 \cite[Corollary 5.2.2]{BK4} respectively. In particular, the explicit formulae (\ref{E:resandevexplicit}) for the
 maps $\overline{\res}_x$ and $\overline{\ev}_y$ are given  there.
 Finally, see \cite[Subsection 5.2.2]{BK4} for the
 proof of commutativity of both  side diagrams.

\medskip
 \noindent
 Now we have to describe the image of the linear map
 $H^0\bigl(\kA(x)\bigr) \lar H^0\bigl(\widetilde\kE(1)\bigr)$ obtained by composing of the three
 middle vertical arrows in (\ref{E:bigDiagram}). It is convenient to describe first
 the image of the corresponding linear map $H^0\bigl(\kE(x)\bigr) \lar H^0\bigl(\widetilde\kE(1)\bigr)$.
 Recall that
 \begin{itemize}
 \item The sheaf $\kE$ is given by the triple
 $\bigl(\widetilde\kE, \Mat_n(\kk), \conj(\Theta_0)\bigr)$.
 \item The line bundle $\kO_E\bigl([x]\bigr)$ is  given by  the triple
 $\bigl(\kO_{\PP^1}(1), \kk, \mathbbm{1} - x \cdot \varepsilon\bigr)$.
 \item The tensor product in $\VB(E)$ corresponds to the tensor product in $\Tri(E)$.
 \end{itemize}
 These facts lead to  the following consequence. Let
 $F \in H^0\bigl(\widetilde\kE(1)\bigr)$ be written as in (\ref{E:descriptSections}).
 Then $F$ belongs to the image of the linear map $H^0\bigl(\kE(x)\bigr) \lar H^0\bigl(\widetilde\kE(1)\bigr)$ if any only if there exists some
 $A \in \liee$ such that the following equality in $\liee[\varepsilon]$ is true:
  \begin{equation}\label{E:SolTriples}
F\big|_{\widetilde{Z}} = (1 - x\cdot \varepsilon) \cdot \Theta_0 \cdot A \cdot \Theta_0^{-1},
 \end{equation}
 where $ F\big|_{\widetilde{Z}}
:= F_0 + \varepsilon F_\epsilon$ and  $F_0, F_\epsilon$ are given by (\ref{E:evalonZ}).
Since  $\Theta_0^{-1} = \mathbbm{1} - \varepsilon J_{(e, d)}$, the equation
(\ref{E:SolTriples}) is equivalent to  the following constraint:
$$
\left[F_{0},J_{(e,d)} \right]+ x F_{0} + F_{\epsilon}=0.
$$
See
also \cite[Subsection 5.2.5]{BK4} for a computation in a similar situation.

\medskip
\noindent
Finally, consider the following commutative diagram:
\begin{equation*}
\xymatrix{
0 \ar[r] & H^0\bigl(\kA(x)\bigr) \ar[r] \ar[d]_{\hat\pi} & H^0\bigl(\kE(x)\bigr) \ar[r] \ar[d]^{\hat\pi} &
H^0\bigl(\kO_E(x)\bigr) \ar[r] \ar[d]^{\hat\pi} & 0 \\
0 \ar[r] & H^0\bigl(\widetilde{\kA}(\tilde{x})\bigr) \ar[r] \ar[d]_{(t_\sigma)_*} & H^0\bigl(\widetilde{\kE}(\tilde{x})\bigr) \ar[r] \ar[d]^{(t_\sigma)_*} &
H^0\bigl(\kO_{\PP^1}(\tilde{x})\bigr) \ar[r] \ar[d]^{(t_\sigma)_*} & 0 \\
0 \ar[r] & H^0\bigl(\widetilde{\kA}(1)\bigr) \ar[r] & H^0\bigl(\widetilde{\kE}(1)\bigr) \ar[r]^{T} &
H^0\bigl(\kO_{\PP^1}(1)\bigr) \ar[r] & 0,
}
\end{equation*}
where $$T\Bigl(\Bigl(\begin{array}{cc} A_0 z_0 + A_1 z_1 & * \\ * & B_0 z_0 + B_1 z_1
\end{array}\Bigr)\Bigr) = \bigl(\tr(A_0) + \tr(B_0)\bigr) z_0 + \bigl(\tr(A_1) + \tr(B_1)\bigr) z_1.$$
Let $\Sol\bigl((e, d), x\bigr): = \mathsf{Im}\Bigl(H^0\bigl(\kA(x)\bigr)  \lar  H^0\bigl(\widetilde\kE(1)\bigr)\Bigr)$. Note that we have
$$
\Sol\bigl((e, d), x\bigr) =
\mathsf{Ker}(T) \cap \mathsf{Im}\Bigl(H^0\bigl(\kE(x)\bigr)  \lar  H^0\bigl(\widetilde\kE(1)\bigr)\Bigr)
$$
Let $J: H^0\bigl(\kA(x)\bigr) \lar \lieg[z]$ be the composition
of  $H^0\bigl(\kA(x)\bigr) \lar H^0\bigl(\widetilde\kE(1)\bigr)$ with
the embedding $H^0\bigl(\widetilde\kE(1)\bigr) \lar \liee[z]$ (sending $z_0$ to $1$ and $z_1$ to $z$).
Identifying  $\Sol\bigl((e, d), x\bigr)$ with the corresponding subspace of $\lieg[z]$, we conclude the proof of Theorem \ref{T:recipe}.
\end{proof}

\begin{algorithm}\label{Algorithm geometric} Let $E$ be the cuspidal Weierstra\ss{} curve,
$0 < d < n$ a pair of coprime integers and  $e = n - d$. The solution $r_{(E, (n, d))}$ of the classical
Yang--Baxter equation (\ref{eq:CYBE}) can be obtained along the following lines.
\begin{itemize}
\item First compute the matrix $J = J_{(e, d)}$ given by the recursion (\ref{E:formcanonique}).
\item For $x\in \kk$  determine  the $\kk$--linear subspace $\Sol\bigl((e, d), x\bigr) \subset
\lieg[z]$ introduced
in (\ref{E:DefSol(e,d,x)}).
\item Choose a basis of  $\lieg$ and compute the images of the basis vectors  under the linear  map
$$
\lieg \stackrel{{\overline{\res}}^{-1}_{x}}\lar \Sol\bigl((e, d), x\bigr)
\stackrel{{\overline\ev}_{y}}\lar \lieg.
$$
Here,  $\overline{\res}_{x}(F)=F(x)$ and $\overline{\ev}_{y}(F)=\frac{\displaystyle 1}{\displaystyle y-x}F(y)$.
\item  For fixed $x \ne y \in \kk^*$, set $r_{(E,(n,d))}(x,y)= {\can}^{-1}\bigl({\overline{\ev}}_{y}\circ{\overline{\res}}_{x}^{-1}\bigr) \in  \lieg \otimes \lieg$, where
$\can$ is the  canonical isomorphism of vector spaces \[
\lieg \otimes \lieg \rightarrow  {\End}(\lieg) ,\, X\otimes Y\mapsto\bigl(Z\mapsto {\tr}(XZ)Y\bigr).
\]
\item Then $r_{(E, (n, d))}$ is the solution of the classical Yang--Baxter equation (\ref{eq:CYBE}) corresponding to the triple $(E, (n, d))$.
\qed
\end{itemize}
\end{algorithm}

\noindent
It will be necessary to  have  a more concrete expression  for the coefficients of the tensor
$r_{(E, (n, d))}$. In what follows, we take  the standard basis $\{e_{i,j}\}_{1\leq i\neq j\leq n}\cup\{h_{l}\}_{1\leq l\leq n-1}$
of the Lie algebra $\lieg$. Since the linear map $\overline{\res}_{x}:  \Sol\bigl((e, d), x\bigr) \rightarrow \lieg$
given by $F \mapsto F(x)$ is an isomorphism, we have:
\[
\left\{
\begin{array}{lcll}
{\overline{\res}}_{x}^{-1}(e_{i,j}) & = & e_{i,j}+G_{i,j}^{x}(z) & 1\leq i\neq j\le n, \\
{\overline{\res}}_{x}^{-1}(h_{l}) & = & h_{l}+G_{l}^{x}(z) & 1\leq l\leq n-1,\end{array}
\right.
\]
where the elements $G_{i,j}^{x}(z),G_{l}^{x}(z)\in V_{e,d}$ are uniquely
determined by the properties
\begin{equation}\label{eq: def of Gij}
e_{i,j}+G_{i,j}^{x}(z),h_{l}+G_{l}^{x}(z)\in \Sol\bigl((e, d), x\bigr), \quad
G_{i,j}^{x}(x)=0=G_{l}^{x}(x).
\end{equation}

\begin{lemma}
\label{pro: explicit formula for c(u,v)}In the notations as above,
we have
\begin{equation*}
r_{(E,(n,d))}(x,y)=\frac{1}{y-x}\left[c+\left(\sum_{1\leq i\ne j\leq n}e_{j,i}\otimes G_{i,j}^{x}(y)\right)+\left(\sum_{1\leq l\leq n-1}\check{h}_{l}\otimes G_{l}^{x}(y)\right)\right],
\end{equation*}
where $\check{h}_{l}$ is  the dual of $h_{l}$ with respect to
the trace form and $c$ is  the Casimir element in $\lieg \otimes \lieg$.
In particular, $r_{(E,(n,d))}$ is a
rational solution of (\ref{eq:CYBE}) in the sense of  \cite{Stolin, Stolin2}.
\end{lemma}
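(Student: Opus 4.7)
The plan is to compute $r_{(E,(n,d))}(x,y) = \can^{-1}\bigl(\overline{\ev}_y \circ \overline{\res}_x^{-1}\bigr)$ directly on the basis $\{e_{i,j}\}_{i \ne j} \cup \{h_l\}_{l=1}^{n-1}$ of $\lieg$, using the normalization (\ref{eq: def of Gij}) of the preimages $G_{i,j}^x$ and $G_l^x$ under $\overline{\res}_x$, and then identify the resulting tensor as the claimed expression. The last step amounts to recognizing the Casimir $c \in \lieg \otimes \lieg$ inside the answer.

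First I would invert $\can$ explicitly. Given any linear map $\phi \in \End(\lieg)$, the element $\can^{-1}(\phi)$ can be written as $\sum_\alpha a_\alpha^\vee \otimes \phi(a_\alpha)$, where $\{a_\alpha\}$ is any basis of $\lieg$ and $\{a_\alpha^\vee\}$ is the basis dual to it with respect to the trace form $(a,b) \mapsto \tr(ab)$. Taking $\{a_\alpha\} = \{e_{i,j}\}_{i \ne j} \cup \{h_l\}_{l}$, the dual basis is $\{e_{j,i}\}_{i \ne j} \cup \{\check{h}_l\}_l$ because $\tr(e_{j,i} e_{k,l}) = \delta_{i,k}\delta_{j,l}$, $\tr(e_{j,i} h_l) = 0$, and $\check{h}_l$ is by definition dual to $h_l$. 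Applying this to $\phi = \overline{\ev}_y \circ \overline{\res}_x^{-1}$ and using (\ref{eq: def of Gij}) together with $\overline{\ev}_y(F) = \frac{1}{y-x}F(y)$, I obtain
\[
r_{(E,(n,d))}(x,y)=\frac{1}{y-x}\Bigl[\sum_{i \ne j} e_{j,i} \otimes \bigl(e_{i,j} + G_{i,j}^{x}(y)\bigr) + \sum_{l} \check{h}_l \otimes \bigl(h_l + G_l^{x}(y)\bigr)\Bigr].
\]

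Next I would extract the Casimir. Since $\{e_{j,i}\}_{i\ne j}\cup\{\check{h}_l\}_l$ and $\{e_{i,j}\}_{i\ne j}\cup\{h_l\}_l$ are mutually trace-dual bases of $\lieg$, the element
\[
c = \sum_{i \ne j} e_{j,i} \otimes e_{i,j} + \sum_l \check{h}_l \otimes h_l
\]
is exactly the Casimir of $\lieg$ with respect to the trace form. Splitting off this part of the sum yields the formula claimed in the lemma.

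Finally, for the rationality in the sense of Stolin, I would note that $\Sol\bigl((e,d), x\bigr) \subset \lieg[z]$ by construction, so $G_{i,j}^x(y)$ and $G_l^x(y)$ are polynomials in $y$, with coefficients depending polynomially on $x$ (the defining relation $[F_0, J] + xF_0 + F_\epsilon = 0$ is linear in $x$, and the normalization $G_{i,j}^x(x) = 0 = G_l^x(x)$ is polynomial in $x$). Consequently $r_{(E,(n,d))}(x,y)$ is a rational function in $(x,y)$ whose only pole is simple along $y = x$ with residue equal to the Casimir $c$, which by the Belavin–Drinfeld criterion (cf.\ \cite{Stolin,Stolin2}) identifies it as a rational solution of (\ref{eq:CYBE}). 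The main subtlety in the argument is purely bookkeeping: verifying that the dual basis formula for $\can^{-1}$ is applied with respect to the correct pairing so that the Casimir emerges with the right sign and normalization; everything else is a direct translation of Algorithm \ref{Algorithm geometric}.
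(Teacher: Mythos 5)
Your proposal is correct and follows essentially the same route as the paper: compute $\overline{\ev}_y\circ\overline{\res}_x^{-1}$ on the basis $\{e_{i,j}\}\cup\{h_l\}$ using the normalization (\ref{eq: def of Gij}), invert $\can$ via the trace-dual basis $\{e_{j,i}\}\cup\{\check{h}_l\}$, and split off the Casimir $c=\sum_{i\ne j}e_{j,i}\otimes e_{i,j}+\sum_l\check{h}_l\otimes h_l$. Your closing remark on rationality in Stolin's sense is a small addition the paper leaves implicit, and is fine.
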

\begin{proof}
It follows directly from the definitions that \[
\left\{
\begin{array}{llll}
{\overline{\ev}}_{y}\circ{\overline{\res}}_{x}^{-1}\left(e_{i,j}\right) & = & \frac{\displaystyle 1}{\displaystyle y-x}\left(e_{i,j}+G_{i,j}^{x}(y)\right)
& 1\leq i\neq j\leq n\\
\overline{\ev}_{y}\circ \, {\overline{\res}}_{x}^{-1}\left(h_{l}\right) & = & \frac{\displaystyle 1}{\displaystyle y-x}\left(h_{l}+G_{l}^{x}(y)\right)
& 1\leq l \leq n-1. \end{array}
\right.\]
Since $e_{j,i}$ respectively $\check{h}_{l}$ is the dual of $e_{i,j}$ respectively $h_{l}$ with respect to the trace form on $\lieg$, the linear map  ${\can}^{-1}$ acts as follows:
$$
\left\{
\begin{array}{lcl}
\End(\lieg) \ni \Bigl(e_{i,j}\mapsto\frac{\displaystyle 1}{\displaystyle y-x}\left(e_{i,j}+G_{i,j}^{x}(y)\right)\Bigr) & \mapsto & e_{j,i}\otimes\frac{\displaystyle 1}{\displaystyle y-x}\left(e_{i,j}+G_{i,j}^{x}(y)\right) \in \lieg \otimes \lieg \\
\End(\lieg) \ni \Bigl(h_{l}\mapsto\frac{\displaystyle 1}{\displaystyle y-x}\left(h_{l}+G_{l}^{x}(y)\right)\Bigr) & \mapsto & \check{h}_{l}
\otimes\frac{\displaystyle 1}{\displaystyle y-x}\left(h_{l}+G_{l}^{x}(y)\right) \in \lieg \otimes \lieg
\end{array}
\right.
$$
for $1\leq i\neq j\leq n$ and $ 1 \leq l\leq n-1$. It remains to recall that the Casimir element in
$\lieg \otimes \lieg$ is given by the formula
\begin{equation}\label{E:Casimir}
c =\sum_{1\leq i\neq j\leq n}e_{i,j}\otimes e_{j,i}+\sum_{1\leq l\leq n-1}\check{h}_{l}\otimes h_{l}.
\end{equation}
Lemma is proven.
\end{proof}

\section{Frobenius structure on parabolic subalgebras}

\begin{definition}[see \cite{Ooms}]
A finite dimensional Lie algebra $\lief$ over $\kk$ is \emph{Frobenius} if there exists
a functional $\hat{l} \in \lief^*$ such that the skew-symmetric bilinear form
\begin{equation}
\lief \times \lief \lar \kk \quad (a, b) \mapsto \hat{l}([a, b])
\end{equation}
is non-degenerate.
\end{definition}

\noindent
Let $(e, d)$ be a pair of coprime positive integers, $n = e + d$ and $\liep = \liep_e$
be the $e$-th parabolic subalgebra of  $\lieg = \slie_n(\kk)$, i.e.
\begin{equation}
\liep : =
\left\{\left(\begin{array}{c|c} A & B \\
\hline
 0 & C
\end{array}\right)
 \Big|  \begin{array}{c} A \in \Mat_{e \times e}(\kk), B \in \Mat_{e \times d}(\kk) \\
   C \in \Mat_{d \times d}(\kk) \end{array}
 \; \mbox{and} \;  \tr(A) + \tr(C) = 0 \right\}.
\end{equation}
The goal of this section is to prove the following result.
\begin{theorem}\label{T:FrobAlg}
Let $J = J_{(e, d)}$ be the matrix from   (\ref{E:formcanonique}). Then the pairing
\begin{equation}\label{E:PairingDeFrobenus}
\omega_J: \liep \times \liep \lar \kk, \quad
(a, b) \mapsto \tr\bigl(J^t \cdot [a, b]\bigr)
\end{equation}
is non-degenerate. In other words, $\liep$ is a Frobenius Lie algebra and
\begin{equation}\label{E:FrobFunct}
l_J: \liep \rightarrow \kk,  \quad a \mapsto \tr(J^t \cdot a)
\end{equation}
is a Frobenius functional on $\liep$.
\end{theorem}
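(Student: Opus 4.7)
My plan is to translate non-degeneracy of $\omega_J$ into an injectivity statement, and then induct along the Euclidean-like recursion $\epsilon$ that defines $J_{(e, d)}$. By cyclicity of the trace, $\omega_J(a, b) = \tr\bigl([J^t, a] \cdot b\bigr)$, so the radical of $\omega_J|_{\liep \times \liep}$ equals $\bigl\{a \in \liep \,\big|\, [J^t, a] \in \liep^\perp\bigr\}$, where $\perp$ is taken with respect to the trace form on $\lieg$. A short computation using the Levi decomposition $\liep = \liel_e \oplus \lien^+$ identifies $\liep^\perp$ with the nilradical $\lien^+$ of $\liep$, namely the strictly upper $e \times d$ block. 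The statement to verify is therefore: $a \in \liep$ and $[J^t, a] \in \lien^+$ imply $a = 0$.

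The base case $(e, d) = (1, 1)$ reduces to $\liep_1 = \langle h, e \rangle \subset \slie_2$ with $J = e_{12}$, and a direct calculation gives $\omega_J(h, e) = 2 \ne 0$. For the inductive step, suppose the claim at $(a, b)$ with $\gcd(a, b) = 1$, and let $(p, q)$ satisfy $\epsilon(p, q) = (a, b)$. By the formula (\ref{E:formcanonique}) there are two sub-cases: $(p, q) = (a + b, b)$ and $(p, q) = (a, a + b)$; in either, $J_{(p, q)}$ is a three-block matrix built from $J_{(a, b)}$ together with an identity block of size $b$ or $a$. I would write an arbitrary $a \in \liep_p$ in the refined three-block partition induced by $J_{(p, q)}$, expand $[J_{(p, q)}^t, a]$ blockwise, and impose the vanishing of every block entry outside the support of $\lien^+$. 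Three of the resulting equations, coming from the interaction with the identity block of $J_{(p, q)}$, collapse instantly: they force the sub-blocks of $a$ adjacent to this identity block either to vanish outright or to coincide pairwise. The remaining equations reorganise into the single statement $[J_{(a, b)}^t, \tilde a] \in \lien^+$ for a naturally constructed upper-block-triangular element $\tilde a \in \Mat_{a+b}(\kk)$ whose entries are the ``inner'' sub-blocks of $a$.

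The main obstacle, I expect, is that $\tilde a$ is not automatically traceless: the identification of diagonal sub-blocks forced by the identity block of $J_{(p, q)}$ makes the trace of $\tilde a$ relate to that of $a$ with a non-trivial multiplicity. I would resolve this by subtracting the scalar $\lambda \cdot I_{a + b}$ with $\lambda = \tr(\tilde a)/(a + b)$. Since scalars commute with $J_{(a, b)}^t$, the shifted element $\tilde a - \lambda I$ still satisfies $[J_{(a, b)}^t, \tilde a - \lambda I] \in \lien^+$, and it lies in $\liep_a \cap \slie_{a + b}$; the inductive hypothesis then forces $\tilde a = \lambda I$ to be scalar. Substituting back into the original trace-zero condition on $a \in \slie_n$ produces an equation of the form $\lambda \cdot (p + q) = 0$, hence $\lambda = 0$, so $\tilde a = 0$, and all sub-blocks of $a$ previously pinned by the boundary equations collapse to zero as well, giving $a = 0$. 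Granted the conceptual setup above, the whole remaining difficulty is the careful blockwise unpacking of $[J^t, a]$ in the two recursive cases, which I expect to be routine but notationally heavy.
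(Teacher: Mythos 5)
Your proposal is correct, and the block computations you defer do close up in both sub-cases of the recursion (I checked the case $(p,q)=(a,a+b)$: the boundary equations force $X_1=0$, $Z_{21}=0$, $Z_{11}=W$, and the remaining three block equations are literally the commutator condition for $J_{(a,b)}$ applied to $\tilde a = \left(\begin{smallmatrix} Z_{11} & Z_{12} \\ 0 & Z_{22}\end{smallmatrix}\right)$; the other case is symmetric). You share the paper's skeleton -- induction along the recursion $\epsilon$ with the same base case $(1,1)$ -- but the inductive step runs on a genuinely different mechanism. The paper first establishes the abstract criterion of Proposition \ref{P:Elashvili}: for $\lief=\liel\dotplus\lien$ with $\lien$ an abelian ideal and $\hat n\in\lien^*$ such that $l\mapsto \hat n([\,-\,,l])$ surjects onto $\lien^*$, the functional $\hat s+\hat n$ is Frobenius once $\hat s$ is Frobenius on the stabilizer $\lies_{\hat n}$; the step then computes $\lies$ as in (\ref{E:AlgebraS}) and invokes an isomorphism $\nu\colon\liep\to\lies$ matching the functionals. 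You instead unwind the radical condition $[J^t,a]\in\liep^\perp=\lien$ directly; your boundary equations carve out exactly the paper's $\lies$, and your scalar shift $\tilde a\mapsto\tilde a-\lambda\,\mathbbm{1}$ substitutes for the paper's isomorphism lemma by absorbing the twisted trace constraint $2\tr(A)+\tr(C)=0$ visible in (\ref{E:AlgebraS}). The paper's route buys a reusable structural principle; yours buys a self-contained elementary argument that bypasses the Frobenius-functional formalism and the unproved isomorphism lemma closing the paper's induction step.
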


\noindent
In this section, we shall use  the following notations and conventions.
For a finite dimensional vector space $\liew$ be denote by $\liew^*$ the  dual vector space.
If $\liew = \liew_1 \oplus \liew_2$ then we have a canonical isomorphism
$\liew^* \cong \liew_1^* \oplus \liew_2^*$. For a functional $\hat{w}_i \in \liew_i^*, i = 1, 2$
we denote by the same symbol its \emph{extension by zero} on the whole  $\liew$.

 \medskip
 \noindent
 Assume we have the following set-up.
\begin{itemize}
\item $\lief$ is a finite dimensional Lie algebra.
\item $\liel\subset \lief$ is a Lie subalgebra and $\lien \subset \lief$ is a commutative Lie ideal
such that
$
\lief = \liel \dotplus \lien,
$
i.e.~$\lief = \liel + \lien$ and $\liel \cap \lien = 0$.
\item There exists  $\hat{n} \in \lien^*$  such that for any $\hat{n}' \in \lien^*$ there
exists $l \in \liel$ such that $\hat{n}' = \hat{n}\bigl([-, l])$ in $\lief^*$. Note that
$\hat{n}\bigl([l', l]) = 0$ for any $l' \in \liel$, hence it is sufficient to check that
for any $m \in \lien$ we have: $\hat{n}'(m) = \hat{n}\bigl([m, l])$. The relation $\hat{n}' = \hat{n}\bigl([-, l])$
  is compatible with the above convention on zero
extension of  functionals from  $\liel$ to $\lief$.
\end{itemize}
First note  the following easy fact.
\begin{lemma}
Let $\hat{m} \in \lien^*$ be any functional and
$
\lies = \lies_{\hat{m}} :=
\Bigl\{l \in \liel \big| \lief^* \ni \hat{m}\bigl([\,-\,,\,l]\bigr) = 0
\Bigr\}.
$
Then $\lies$ is a Lie subalgebra of $\liel$.
\end{lemma}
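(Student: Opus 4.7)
The plan is short, since the statement is essentially a formal consequence of the Jacobi identity. Linearity in $l$ of the assignment $l \mapsto \hat m\bigl([-, l]\bigr) \in \lief^*$ is immediate, so $\lies$ is automatically a linear subspace of $\liel$; the only substantive claim is closure under the bracket.

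Given $l_1, l_2 \in \lies$ and arbitrary $x \in \lief$, I would expand
$$
[x, [l_1, l_2]] \;=\; [[x, l_1], l_2] + [l_1, [x, l_2]] \;=\; [[x, l_1], l_2] - [[x, l_2], l_1]
$$
using the Jacobi identity and antisymmetry, and then apply the functional $\hat m$. Because $l_1, l_2$ both belong to $\lies$, by definition $\hat m\bigl([y, l_i]\bigr) = 0$ for every $y \in \lief$ and $i = 1, 2$; specialising $y$ to $[x, l_1]$ in the first summand and to $[x, l_2]$ in the second shows that both terms vanish. Hence $\hat m\bigl([x, [l_1, l_2]]\bigr) = 0$ for all $x \in \lief$, which is precisely the assertion $[l_1, l_2] \in \lies$.

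There is no genuine obstacle: once the Jacobi identity is written in its derivation form, the conclusion falls out of the definition of $\lies$. The only minor point to keep in mind is that the condition ``$\hat m\bigl([-, l]\bigr) = 0$ in $\lief^*$'' refers to the extension by zero of $\hat m \in \lien^*$ to all of $\lief^*$, but this convention plays no role in the bracket manipulation above, since one is only evaluating $\hat m$ on elements of $\lief$ produced by iterated brackets of $x$ with $l_1, l_2$.
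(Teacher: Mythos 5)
Your argument is correct: the Jacobi identity in derivation form, combined with the defining property of $\lies$ applied to $y = [x, l_1]$ and $y = [x, l_2]$, gives exactly the closure under brackets, and linearity of $l \mapsto \hat m\bigl([-,l]\bigr)$ handles the subspace claim. The paper omits the proof of this lemma as an easy fact, and your argument is precisely the standard one intended.
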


\noindent
A version of the  following result is due  to Elashvili \cite{Elashvili1982}. It was  explained to  us by Stolin.
\begin{proposition}\label{P:Elashvili}
Let $\lief = \liel \dotplus  \lien$ and $\hat{n} \in \lien^*$ be as above.  Assume there exists
 $\hat{s}\in \liel^*$ such that
its restriction on $\lies = \lies_{\hat{n}}$ is Frobenius. Then
$\hat{s} + \hat{n}$ is a Frobenius functional on $\lief$.
\end{proposition}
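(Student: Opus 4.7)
My strategy is to verify non-degeneracy of the skew-symmetric form $\omega(x,y) := (\hat{s}+\hat{n})([x,y])$ on $\lief$ by a direct radical computation. I would take an arbitrary $x$ in the left kernel of $\omega$, decompose $x = l_0 + m_0$ with $l_0 \in \liel$ and $m_0 \in \lien$, and force $x = 0$ by testing $\omega(x,y) = 0$ against three nested classes of elements $y$: first $y \in \lien$, then $y \in \lies$, and finally $y \in \liel$. Throughout I use the extension-by-zero convention, so $\hat{s}$ vanishes on $\lien$ and $\hat{n}$ vanishes on $\liel$.

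First I would test against $y = m \in \lien$. Commutativity of $\lien$ and its being an ideal make $[x,m] = [l_0, m] \in \lien$, so the $\hat{s}$-contribution disappears and $\omega(x,m) = 0$ reduces to $\hat{n}([l_0, m]) = 0$ for every $m \in \lien$. Since $\hat{n}$ also vanishes on $[l_0, \liel] \subseteq \liel$, this precisely means $\hat{n}([-,l_0]) = 0$ in $\lief^*$, i.e.\ $l_0 \in \lies$. Next I would test against $y = l \in \lies$: writing $[x,l] = [l_0, l] + [m_0, l]$, the first summand lies in $\lies$ (because $\lies$ is a subalgebra containing $l_0$) and the second in $\lien$; the defining property of $\lies$ kills $\hat{n}([m_0, l])$, so the condition reduces to $\hat{s}([l_0, l]) = 0$ for every $l \in \lies$. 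By hypothesis $\hat{s}|_{\lies}$ is Frobenius, which together with $l_0 \in \lies$ yields $l_0 = 0$.

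Finally, having reduced to $x = m_0$, I would test against arbitrary $y = l \in \liel$: now $[x,l] = [m_0, l] \in \lien$, the $\hat{s}$-term contributes nothing, and the condition becomes $\hat{n}([m_0, l]) = 0$ for all $l \in \liel$. The hypothesis on $\hat{n}$ says exactly that the linear map $\liel \to \lien^*$, $l \mapsto \hat{n}([-,l])|_{\lien}$, is surjective; hence every functional in $\lien^*$ vanishes on $m_0$, forcing $m_0 = 0$. This completes the proof that $\hat{s}+\hat{n}$ is Frobenius on $\lief$. I do not anticipate a real obstacle here: once the extension-by-zero conventions are in place, the three test subspaces $\lien$, $\lies$, $\liel$ align exactly with the three structural hypotheses (abelian ideal, Frobenius restriction of $\hat{s}$ to $\lies$, surjectivity property of $\hat{n}$), and the argument is essentially bookkeeping of where each bracket lives. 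The only subtlety worth flagging in writing up is verifying that $\lies$ is indeed a subalgebra (already stated in the lemma preceding the proposition) so that $[l_0,l] \in \lies$ in step two — without this, one could not invoke the Frobenius property of $\hat{s}|_{\lies}$.
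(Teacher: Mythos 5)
Your proof is correct and follows essentially the same route as the paper's: the paper also takes an element $l_1+n_1$ of the radical, first pairs it against $\lien$ to conclude $l_1\in\lies$, then against $\lies$ to kill $l_1$ via the Frobenius hypothesis on $\hat{s}|_{\lies}$, and finally against $\liel$ to kill $n_1$ via the surjectivity hypothesis on $\hat{n}$. The only difference is presentational (direct radical computation versus argument by contradiction), and your flagged subtlety — that $\lies$ is a subalgebra — is indeed the point the paper isolates in the preceding lemma.
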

\begin{proof}
Assume $\hat{s} + \hat{n}$ is not Frobenius. Then there exist $l_1\in \liel$ and $n_1 \in \lien$ such that
$$
\lief^* \ni (\hat{s} + \hat{n})\bigl([l_1 + n_1, \, -\,]\bigr) = 0.
$$
It is equivalent to say  that for all $l_2\in \liel$ and $n_2 \in \lien$ we have:
\begin{equation}\label{E:intermed1}
\hat{n}\bigl([l_1, n_2] + [n_1, l_2]\bigr) + \hat{s}\bigl([l_1, l_2]\bigr) = 0.
\end{equation}
At the first step, take $l_2 = 0$. Then the equality (\ref{E:intermed1}) implies that
 for all $n_2 \in \lien$
we have: $\hat{n}\bigl([l_1, n_2]\bigr) = 0$. This means that
$\lief^* \ni \hat{n}\bigl([\,-\,,l_1]\bigr) = 0$ and hence, by definition of $\lies$,  $l_1 \in \lies$.
Assume $l_1 \ne 0$. By assumption,
 $\hat{s}\big|_{\lies}$ is a Frobenius functional. Hence, there exists
$s_1 \in \lies$ such that $\hat{s}\bigl([l_1, s_1]\bigr) \ne 0$.
Since $s_1 \in \lies$, we have: $\hat{n}\bigl([n_1, s_1]\bigr) = 0$. Altogether,
it implies:
$$
(\hat{s} + \hat{n})\bigl([l_1 + n_1, s_1]\bigr) =
\hat{s}\bigl([l_1, s_1]\bigr)\ne 0.
$$
Contradiction. Hence, $l_1 = 0$ and the equation (\ref{E:intermed1}) reads as follows:
$$
\hat{n}\bigl([n_1, l_2]\bigr) = 0 \quad \mbox{for  all} \quad l_2 \in \liel.
$$
Assume $n_1 \ne 0$. Then there exists a functional $\hat{n}_1 \in \lien^*$ such that
$\hat{n}_1(n_1) \ne 0$. However, by our assumptions, $\hat{n}_1 =
\hat{n}\bigl([\,-\,,l]\bigr)$ for some $l \in \liel$. But this implies that
$$
\hat{n}_1(n_1) = \hat{n}\bigl([n_1, l]\bigr)  \ne 0.
$$
We again obtain a contradiction. Thus, $n_1 = 0$ as well,  what finishes the proof.
\end{proof}

\medskip
\noindent
Consider the following nilpotent subalgebras of $\lieg$:
\begin{equation}\label{E:DeofOfN}
\lien = \Bigl\{N = \left(
\begin{array}{c|c}
0 & A \\
\hline
0 & 0
\end{array}\right) \Big| A \in \Mat_{e \times d}(\kk)\Bigr\} \quad
\bar{\lien} = \Bigl\{\bar{N} = \left(
\begin{array}{c|c}
0 & 0 \\
\hline
\bar{A} & 0
\end{array}\right) \Big| A \in \Mat_{d \times e}(\kk)\Bigr\}.
\end{equation}
Note the following easy fact.
\begin{lemma}\label{L:DualOfN}
The linear map $
\bar{\lien} \lar \lien^*, \; \bar{N} \mapsto \tr\bigl(\bar{N} \cdot \,-\,\bigr)
$
is an isomorphism.
\end{lemma}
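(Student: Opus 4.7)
The plan is to reduce this to the elementary statement that the trace pairing between rectangular matrix spaces of transposed shapes is non-degenerate. First, I would write a general element $N \in \lien$ as $N = \left(\begin{smallmatrix} 0 & A \\ 0 & 0 \end{smallmatrix}\right)$ with $A \in \Mat_{e \times d}(\kk)$ and a general element $\bar N \in \bar\lien$ as $\bar N = \left(\begin{smallmatrix} 0 & 0 \\ \bar A & 0 \end{smallmatrix}\right)$ with $\bar A \in \Mat_{d \times e}(\kk)$. A direct block-matrix multiplication gives
\[
\bar N \cdot N \;=\; \begin{pmatrix} 0 & 0 \\ 0 & \bar A \, A \end{pmatrix},
\]
so that $\tr(\bar N \cdot N) = \tr(\bar A\, A)$.

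Next, I would invoke the standard fact that the bilinear pairing
\[
\Mat_{d \times e}(\kk) \times \Mat_{e \times d}(\kk) \lar \kk, \qquad (\bar A, A) \mapsto \tr(\bar A\, A),
\]
is non-degenerate (indeed, in matrix entries it reads $\sum_{i,j} \bar A_{ij} A_{ji}$, which identifies $\Mat_{d \times e}(\kk)$ with the dual of $\Mat_{e \times d}(\kk)$). Consequently, the map $\bar N \mapsto \tr(\bar N \cdot \,-\,)$ from $\bar\lien$ to $\lien^*$ is injective. Since $\dim_{\kk}\bar\lien = ed = \dim_{\kk}\lien = \dim_{\kk}\lien^*$, injectivity together with equality of dimensions forces the map to be an isomorphism.

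There is no real obstacle here: the argument is a one-line block-matrix computation followed by the non-degeneracy of the standard trace pairing between $\Mat_{d \times e}(\kk)$ and $\Mat_{e \times d}(\kk)$. The statement is recorded separately mainly because it will be used in the sequel (together with Proposition \ref{P:Elashvili}) to identify $\lien$ with its dual via an explicit functional in order to establish the Frobenius property of $\liep$ in Theorem \ref{T:FrobAlg}.
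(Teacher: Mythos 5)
Your argument is correct and is exactly the elementary computation the paper has in mind: the paper states this lemma without proof as an "easy fact," and the intended justification is precisely the block-matrix identity $\tr(\bar N N)=\tr(\bar A A)$ together with the non-degeneracy of the trace pairing between $\Mat_{d\times e}(\kk)$ and $\Mat_{e\times d}(\kk)$ and the equality of dimensions.
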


\medskip
\noindent
Next, consider the following Lie algebra
\begin{equation}\label{E:DeofOfL}
\liel = \Bigl\{L =
\left(\begin{array}{c|c}
L_1 & 0 \\
\hline  0 & L_2
\end{array}\right) \Big| \begin{array}{l}L_1 \in \Mat_{e \times e}(\kk) \\
 L_2 \in \Mat_{e \times e}(\kk) \end{array} \;
\tr(L_1) + \tr(L_2) = 0\Bigr\}.
\end{equation}
Obviously,
$
\liep = \liel \dotplus  \lien,
$
$\liep$ is a Lie subalgebra of $\liep$ and  $\lien$ is a commutative Lie ideal in $\liep$.
\begin{lemma}\label{L:ConcreteCond}
Let $\bar{N} \in \bar\lien$ and $\hat{n}  = \tr(\bar{N} \cdot \,-\,) \in \lien^*$ be the
corresponding functional. Then the  condition that \textsl{for any $\hat{n}' \in \lien^*$ there
exists $L \in \liel$ such that $\hat{n}' = \hat{n}\bigl([L, -])$ in $\lief^*$} reads as follows: \textsl{for any $\bar{N}' \in \bar\lien$ there exists $L \in \liel$ such that
$\bar{N}' = [\bar{N}, L]$}.
\end{lemma}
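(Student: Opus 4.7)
The plan is to translate the condition on functionals into a matrix equation via the trace pairing. First I would use Lemma \ref{L:DualOfN} to identify $\lien^*$ with $\bar\lien$: every element $\hat{n}' \in \lien^*$ can be written uniquely as $\hat{n}' = \tr(\bar{N}' \cdot \,-\,)$ for some $\bar{N}' \in \bar\lien$. Under this identification, the question becomes how to express $\hat{n}([L, \,-\,]) \in \lien^*$ as a trace against an element of $\bar\lien$.

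Next I would carry out the short computation based on cyclicity of the trace. For any $L \in \liel$ and $N \in \lien$ we have
\[
\hat{n}\bigl([L, N]\bigr) = \tr\bigl(\bar{N} \cdot L N\bigr) - \tr\bigl(\bar{N} \cdot N L\bigr) = \tr\bigl(\bar{N} L N\bigr) - \tr\bigl(L \bar{N} N\bigr) = \tr\bigl([\bar{N}, L]\cdot N\bigr),
\]
where in the middle step I use $\tr(\bar N N L) = \tr(L \bar N N)$. Hence $\hat{n}([L, \,-\,]) = \tr\bigl([\bar{N}, L] \cdot \,-\,\bigr)$ as functionals on $\lien$.

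A direct block-matrix calculation with $\bar{N} = \left(\begin{smallmatrix} 0 & 0 \\ \bar{A} & 0 \end{smallmatrix}\right)$ and $L = \left(\begin{smallmatrix} L_1 & 0 \\ 0 & L_2 \end{smallmatrix}\right)$ shows that $[\bar{N}, L] = \left(\begin{smallmatrix} 0 & 0 \\ \bar{A} L_1 - L_2 \bar{A} & 0 \end{smallmatrix}\right) \in \bar\lien$, so the element $[\bar N, L]$ legitimately represents a functional on $\lien$ via Lemma \ref{L:DualOfN}. Combining these observations with the injectivity of the map $\bar\lien \to \lien^*$ of Lemma \ref{L:DualOfN}, the equality $\hat{n}' = \hat{n}([L, \,-\,])$ in $\lien^*$ is equivalent to $\bar{N}' = [\bar{N}, L]$ in $\bar\lien$. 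This is precisely the reformulation stated in the lemma.

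There is no real obstacle here: the proof is a direct unwinding of the definitions, with the only non-tautological ingredient being the cyclic-trace identity $\tr(\bar N [L, N]) = \tr([\bar N, L] N)$ and the observation that $\bar\lien$ is stable under the adjoint action of $\liel$. I would present it in exactly this order --- identification of $\lien^*$ with $\bar\lien$, the cyclic trace computation, verification that $[\bar N, L]$ lives in $\bar\lien$, then the conclusion.
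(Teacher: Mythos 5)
Your proof is correct and follows essentially the same route as the paper's: identify $\lien^*$ with $\bar\lien$ via Lemma \ref{L:DualOfN}, use the cyclic-trace identity $\tr\bigl(\bar{N}\cdot[L,\,-\,]\bigr) = \tr\bigl([\bar{N},L]\cdot\,-\,\bigr)$, and conclude from non-degeneracy of the pairing. You merely make explicit two points the paper leaves implicit (the derivation of the trace identity and the verification that $[\bar{N},L]\in\bar\lien$), which is fine.
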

\begin{proof} By Lemma \ref{L:DualOfN} there exists $\bar{N}' \in \bar\lien$
such that $\hat{u} = \tr(\bar{N}' \cdot \,-\,)$. Note that
$$
\tr\bigl(\bar{N}\cdot[L, \,-\,]\bigr) = \tr\bigl([\bar{N}, L] \cdot \,-\,\bigr).
$$
The equality of functionals
$
\tr(\bar{N}' \cdot \,-\,) = \tr\bigl([\bar{N}, L]\cdot \,-\,\bigr)
$
implies that $\bar{N}' = [\bar{N}, L]$.
\end{proof}

\medskip
\noindent
\emph{Proof of Theorem \ref{T:FrobAlg}}. We prove this result by induction on
$$(e, d) \in
\Sigma=\bigl\{(a, b) \in \mathbb{N} \times \mathbb{N} \, \big| \, \gcd(a, b) = 1 \bigr\}.$$

\noindent
\textsl{Basis of induction}. Let $(e, d) = (1, 1)$. Then we have:
$
J = J_{(1, 1)} =
\left(
\begin{array}{cc}
0 & 1 \\
0 & 0
\end{array}
\right).
$
Let  $a =
\left(
\begin{array}{cr}
\alpha_1 & \alpha_2 \\
0 & -\alpha_1
\end{array}
\right)
$
and $b =
\left(
\begin{array}{cr}
\beta_1 & \beta_2 \\
0 & -\beta_1
\end{array}
\right)
$
be two elements of $\liep$.
Then we have:
$$
\omega_J(a, b) = 2\cdot(\alpha_1 \beta_2 - \beta_1 \alpha_2).
$$
This form is obviously non-degenerate.

\medskip
\noindent
\textsl{Induction step}. Assume the result is proven for $(e, d) \in \Sigma$.
 Recall that for
\[  J_{(e, d)} =\left(
{\begin{array}{c | c}
A_{1} & A_{2} \\ \hline
0 & A_{3}
\end{array}}
\right)
\] with $A_{1}\in\mbox{Mat}_{e\times e}(\kk)$ and $A_{3}\in\mbox{Mat}_{d\times d}(\kk)$
we have:
$$
J_{(e, d + e)} =
\left(
{\begin{array}{c || c |c}
0 & \mathbbm{1} & 0 \\ \hline \hline
0 & A_{1} & A_{2} \\
\hline
0 & 0 & A_{3}
\end{array}}
\right)
\quad \mbox{and} \quad
J_{(d + e, d)} =
\left(
{\begin{array}{c | c || c}
A_{1} & A_{2} & 0 \\
\hline
0 & A_{3} & \mathbbm{1} \\ \hline \hline
0 & 0 & 0
\end{array}}
\right).
$$
For simplicity, we shall only treat the implication  $(e, d) \Longrightarrow (e, d + e)$. Consider the matrix
$$
\bar{N} =
\left(
\begin{array}{c||c|c}
0 & 0 & 0 \\
\hline
\hline
\mathbbm{1} & 0 & 0 \\
\hline
0 & 0 & 0
\end{array}
\right) \in \bar\lien.
$$
Then the following facts follows from a direct computation:
\begin{itemize}
\item $\bar{N}$ satisfies the condition of Lemma \ref{L:ConcreteCond}.
\item The Lie subalgebra $\lies = \lies_{\bar{N}}$ has the following description:
\begin{equation}\label{E:AlgebraS}
\lies = \left\{
\left(
\begin{array}{c||c|c}
A & 0 & 0 \\
\hline
\hline
0 & A & B \\
\hline
0 & 0 & C
\end{array}
\right) \Big|  \begin{array}{c} A \in \Mat_{e \times e}(\kk), B \in \Mat_{e \times d}(\kk), \\
 C \in \Mat_{d \times d}(\kk)
 \end{array}\; 2 \tr(A) + \tr(C) = 0
 \right\}.
\end{equation}
\end{itemize}
The implication $(e, d) \Longrightarrow (e, d + e)$ follows from
Proposition \ref{P:Elashvili} and the following  result.

\begin{lemma} Let $\hat{J} = \left(
\begin{array}{c||c|c}
0 & 0 & 0 \\
\hline
\hline
0 & A & B \\
\hline
0 & 0 & C
\end{array}
\right)$. Then there exists an isomorphism of Lie algebras $\nu: \liep \lar \lies$ such
that for any $P \in \liep$ we have: $\tr(J^t \cdot P) = \tr\bigl(\hat{J}^t \cdot \nu(P)\bigr)$.
\end{lemma}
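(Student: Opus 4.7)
\emph{Proof proposal.} The plan is to exhibit $\nu$ explicitly as a block-diagonal repetition of the ``upper-left'' block of $P$, corrected by a central scalar so as to land in the traceless algebra $\lies$, and then check the three required properties almost tautologically. Writing $J = J_{(e,d)} = \bigl(\begin{smallmatrix} A_1 & A_2 \\ 0 & A_3 \end{smallmatrix}\bigr)$ (with $A_1 = A$, $A_2 = B$, $A_3 = C$) and $P = \bigl(\begin{smallmatrix} A' & B' \\ 0 & C' \end{smallmatrix}\bigr) \in \liep$, I would first introduce the naive candidate
$$
\tilde{\nu}(P) := \begin{pmatrix} A' & 0 & 0 \\ 0 & A' & B' \\ 0 & 0 & C' \end{pmatrix} \in \Mat_{(2e+d) \times (2e+d)}(\kk).
$$
A short block-matrix calculation verifies simultaneously that $\tilde{\nu}$ is a Lie algebra homomorphism and that $\tr\bigl(\hat{J}^t \tilde{\nu}(P)\bigr) = \tr(A_1^t A') + \tr(A_2^t B') + \tr(A_3^t C') = \tr(J^t P)$. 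The only defect of $\tilde{\nu}$ is that $\tr\bigl(\tilde{\nu}(P)\bigr) = 2\tr(A') + \tr(C') = \tr(A')$, which is generically nonzero (the condition $P \in \liep$ only gives $\tr(A') + \tr(C') = 0$), so $\tilde{\nu}$ does not take values in $\lies \subseteq \slie_{2e+d}(\kk)$.

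I would then repair this by setting
$$
\nu(P) := \tilde{\nu}(P) - \lambda(P)\,\mathbbm{1}_{2e+d}, \qquad \lambda(P) := \tfrac{1}{2e+d}\tr(A').
$$
By construction $\tr\bigl(\nu(P)\bigr) = 0$, and $\nu(P)$ has the block shape of (\ref{E:AlgebraS}) with $(A'', B'', C'') = \bigl(A' - \lambda(P)\mathbbm{1}_e,\, B',\, C' - \lambda(P)\mathbbm{1}_d\bigr)$, so $\nu(P) \in \lies$. Scalar matrices are central and $\lambda\bigl([P_1, P_2]\bigr) = \tfrac{1}{2e+d}\tr\bigl([A_1', A_2']\bigr) = 0$, so
$$
\nu([P_1, P_2]) = \tilde{\nu}([P_1, P_2]) = [\tilde{\nu}(P_1), \tilde{\nu}(P_2)] = [\nu(P_1), \nu(P_2)],
$$
and $\nu$ is a Lie algebra homomorphism. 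If $\nu(P) = 0$ then $A' = \lambda(P)\mathbbm{1}_e$, whence $\tr(A') = e\,\lambda(P)$; combined with $(2e+d)\lambda(P) = \tr(A')$ this forces $(e+d)\lambda(P) = 0$, so $\lambda(P) = 0$ and $P = 0$. The dimension count $\dim \liep = \dim \lies = e^2 + ed + d^2 - 1$ then upgrades injectivity to a bijection.

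For the Frobenius compatibility, the central correction yields
$$
\tr\bigl(\hat{J}^t \nu(P)\bigr) = \tr\bigl(\hat{J}^t \tilde{\nu}(P)\bigr) - \lambda(P)\,\tr(\hat{J}^t) = \tr(J^t P) - \lambda(P)\,\tr(J_{(e,d)}),
$$
so it suffices to show $\tr(J_{(e,d)}) = 0$ for all $(e, d) \in \Sigma$. This I would prove by induction along the recursion (\ref{E:formcanonique}): the base case $\tr(J_{(1,1)}) = 0$ is clear, and each inductive step adjoins only zero diagonal blocks to $J$. The main (and essentially only) obstacle is identifying the correct scalar shift $\lambda(P)$: the naive embedding $\tilde{\nu}$ already delivers both the Lie-algebra compatibility and the Frobenius trace identity, and its sole failure lies in tracelessness; the shift by a central $\lambda(P)\mathbbm{1}$ cures this, while the recursion-driven vanishing $\tr(J_{(e,d)}) = 0$ is precisely what keeps the shift from spoiling the trace identity. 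The implication $(e,d) \Rightarrow (e+d, d)$ can be handled by an entirely analogous construction, with the scalar $\mathbbm{1}$-block of $J_{(e+d, d)}$ now located in the bottom-right instead of the top-left.
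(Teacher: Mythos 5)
Your proof is correct. The paper itself omits the argument entirely (``lengthy but completely elementary, therefore we leave it to an interested reader''), so there is no authorial proof to compare against; your construction supplies exactly what is missing. The two points where something could go wrong are both handled properly: (i) the central shift by $\lambda(P)\,\mathbbm{1}_{2e+d}$ preserves the defining shape of $\lies$ (the $(1,1)$ and $(2,2)$ blocks both become $A'-\lambda(P)\mathbbm{1}_e$, so they remain equal, and the constraint $2\tr(A'')+\tr(C'')=0$ is precisely what $\lambda(P)=\tfrac{1}{2e+d}\tr(A')$ enforces), and (ii) the shift does not disturb the trace identity because $\tr(J_{(e,d)})=0$, which your induction along the recursion for $J_{(e,d)}$ establishes, since each step only adjoins zero diagonal blocks. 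Together with $\lambda([P_1,P_2])=\tfrac{1}{2e+d}\tr([A_1',A_2'])=0$, centrality of scalars, injectivity, and the dimension count $\dim\liep=\dim\lies=e^2+ed+d^2-1$, this gives a complete and correct proof of the lemma as stated (the case $(e,d)\Rightarrow(e+d,d)$ being symmetric, as you note).
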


\medskip
\noindent
The proof of this lemma is lengthy but
 completely elementary, therefore we leave it  to an interested reader.
Theorem \ref{T:FrobAlg} is proven.\qed

\begin{lemma}\label{L:FrobeniusSplitting}
For any $G \in \lieg$ there exist uniquely determined  $P \in \liep$ and $N \in \lien$ such that
\begin{equation*}
G = \bigl[J^t, P\bigr] + N.
\end{equation*}
\end{lemma}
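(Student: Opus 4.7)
The plan is to show that the linear map
\[
\Phi: \liep \oplus \lien \lar \lieg, \quad (P, N) \mapsto \bigl[J^t, P\bigr] + N
\]
is a bijection; uniqueness and existence of the splitting then follow simultaneously. First I would check that $\Phi$ is well-defined: $[J^t, P]$ is a commutator of $n \times n$ matrices and hence is traceless, so it lies in $\lieg$, and of course $\lien \subset \lieg$. A quick dimension count gives $\dim(\liep) = e^2 + ed + d^2 - 1 = n^2 - ed - 1$ (writing $n = e+d$ and subtracting one for the trace constraint) and $\dim(\lien) = ed$, hence
\[
\dim(\liep \oplus \lien) = n^2 - 1 = \dim(\lieg).
\]
Therefore it suffices to establish injectivity of $\Phi$.

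To prove injectivity, suppose $[J^t, P] + N = 0$ with $P \in \liep$ and $N \in \lien$. I would pair this relation with an arbitrary $Q \in \liep$ via the trace form, obtaining
\[
\tr(Q N) + \tr\bigl(Q [J^t, P]\bigr) = 0.
\]
The first term vanishes: since $Q$ is block upper triangular with respect to the decomposition $n = e + d$ and $N$ has only a nonzero upper-right block, the product $QN$ is strictly upper triangular and hence has trace zero. For the second term, cyclicity of the trace yields
\[
\tr\bigl(Q[J^t, P]\bigr) = \tr\bigl(J^t [P, Q]\bigr) = \omega_J(P, Q),
\]
where $\omega_J$ is the pairing from (\ref{E:PairingDeFrobenus}). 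Thus $\omega_J(P, Q) = 0$ for every $Q \in \liep$, and the non-degeneracy of $\omega_J$ established in Theorem \ref{T:FrobAlg} forces $P = 0$. The original equation then collapses to $N = 0$, completing the proof of injectivity.

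The only genuine input is Theorem \ref{T:FrobAlg}; once the Frobenius structure on $\liep$ is available, the splitting is a formal consequence of the trace-pairing computation and the matching of dimensions. There is no real obstacle here, only a book-keeping check that the trace of $QN$ vanishes for $Q \in \liep$ and $N \in \lien$, which follows directly from the block shape of elements of $\liep$ and $\lien$.
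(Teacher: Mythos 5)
Your proof is correct: the map $\Phi$ is well defined, the dimension count $\dim(\liep)+\dim(\lien)=n^2-1$ is right, the vanishing of $\tr(QN)$ for $Q\in\liep$, $N\in\lien$ follows from the block shapes as you say, and the identity $\tr\bigl(Q[J^t,P]\bigr)=\tr\bigl(J^t[P,Q]\bigr)=\omega_J(P,Q)$ together with Theorem \ref{T:FrobAlg} forces $P=0$ and then $N=0$. The route is mildly different from the paper's, though it rests on exactly the same input. The paper argues in the dual: it first uses the Frobenius property to produce a unique $P\in\liep$ with $\tr(G\cdot\,-\,)=\tr\bigl([J^t,P]\cdot\,-\,\bigr)$ as functionals on $\liep$, and then invokes the short exact sequence $0 \lar \lien \lar \lieg^* \lar \liep^* \lar 0$ (the trace form identifies $\lien$ with the annihilator of $\liep$) to produce the unique $N$; existence and uniqueness come out of the construction step by step. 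You instead prove injectivity of $\Phi$ and conclude surjectivity by matching dimensions. The observation that makes your injectivity argument work --- $\tr(QN)=0$ for $Q\in\liep$, $N\in\lien$ --- is precisely the fact underlying the paper's exact sequence, so the two proofs are dual presentations of one computation. Your version is slightly more self-contained (no need to set up the sequence of dual spaces) at the cost of the explicit dimension count; the paper's version is constructive for the existence half and makes the roles of $P$ and $N$ more transparent.
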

\begin{proof} Consider the functional $\tr(G \cdot \,-\,) \in \liep^*$. Since the functional $l_J \in \liep^*$
from (\ref{E:FrobFunct}) is Frobenius, there exists a uniquely determined $P \in \liep$ such
that $\tr(G \cdot \,-\,) = \tr\bigl([J^t, P] \cdot \,-\, \bigr)$ viewed as elements of $\liep^*$.
Note that we have a short exact sequence of vector spaces
$$
0 \lar \lien \stackrel{\imath}\lar \lieg^* \stackrel{\rho}\lar \liep^* \lar 0,
$$
where $\rho$ maps a functional on $\lieg$ to its restriction on $\liep$ and
$\imath(N) = \tr(N \cdot \,-\,)$. Thus, for some uniquely determined $N \in \lien$, we get the following equality in $\lieg^*$:
$\tr(G \cdot \,-\,)
= \tr\bigl(([J^t, P] + N) \cdot \,-\,)$.
 Since the trace form is non-degenerate on $\lieg$, we get the  claim.
\end{proof}

\section{Review of Stolin's theory of rational solutions of the classical Yang--Baxter equation}

\noindent
In this section,  we  review Stolin's results on the classification of rational solutions of the classical
Yang--Baxter equation for the Lie algebra $\lieg = \slie_n(\CC)$, see \cite{Stolin, Stolin3, Stolin2}.

\begin{definition}
A solution $r: (\mathbb{C}^2, 0) \lar \lieg \otimes \lieg$ of  (\ref{eq:CYBE}) is called rational if it is non-degenerate, unitary and of  the form
\begin{equation}\label{E:AnsatzStolin}
r(x,y)=\frac{c}{y-x}+ s(x,y),
\end{equation}
where $c \in \lieg \otimes \lieg$ is the Casimir element and
$s(x, y) \in \mathfrak{g}[x]\otimes\mathfrak{g}[y]$.
\end{definition}

\subsection{Lagrangian orders}
Let $\widehat\lieg = \lieg((z^{-1}))$. Consider the following non-degenerate $\CC$--bilinear form
on $\widehat\lieg$:
\begin{equation}\label{E:KacMoodyPairing}
(\,-\,,\,-\,): \quad \widehat\lieg \times \widehat\lieg \lar \mathbbm{C}, \quad
(a, b) \mapsto \res_{z = 0}\bigl(\tr(a b)\bigr).
\end{equation}
\begin{definition}
A Lie subalgebra $\liew \subset \widehat\lieg$ is a  \emph{Lagrangian order} if the following
three conditions are satisfied.
\begin{itemize}
\item $\liew \dotplus \lieg[z] = \widehat\lieg$.
\item $\liew = \liew^\perp$ with respect to the pairing (\ref{E:KacMoodyPairing}).
\item There exists $p \ge 0$ such that
$z^{-p-2} \lieg\llbracket z^{-1}\rrbracket \subseteq \liew$.
\end{itemize}
\end{definition}
\noindent
Observe  that from this Definition automatically follows that
\begin{equation*}
\liew = \liew^\perp \subseteq \bigl(z^{-p-2} \lieg\llbracket z^{-1}\rrbracket\bigr)^\perp
= z^p \lieg\llbracket z^{-1} \rrbracket.
\end{equation*}
Moreover, the restricted pairing
\begin{equation}\label{E:RestrPairing}
(\,-\,,\,-\,): \; \liew \times \lieg[z] \lar \CC
\end{equation}
is non-degenerate, too.  Let $\bigl\{\alpha_l\bigr\}_{l = 1}^{n^2-1}$ be a basis of $\lieg$ and
$\alpha_{l, k} = \alpha_l z^k \in \lieg[z]$ for $1 \le l \le n^2-1$, $k\ge 0$. Let
$
\beta_{l, k} := \alpha_{l, k}^\vee  \in \liew
$
be the dual element of $\alpha_{l, k} \in \lieg[z]$
with respect to the pairing (\ref{E:RestrPairing}). Consider the following formal power series:
\begin{equation}\label{E:SolAttToOrder}
r_{\liew}(x,y) =
\sum_{k = 0}^\infty x^{k}\left(\sum_{l=1}^{n^{2}-1}\alpha_{l}\otimes {\beta}_{l, k}(y)\right).
\end{equation}

\begin{theorem}[see \cite{Stolin, Stolin3}] The following results are true.
\begin{itemize}
\item The formal power series (\ref{E:SolAttToOrder})  converges to a rational function.
\item Moreover, $r_{\liew}$  is a rational solution
of (\ref{eq:CYBE})
satisfying Ansatz (\ref{E:AnsatzStolin}).
\item A different choice of a basis
of $\lieg$ leads to a gauge-equivalent solution.
\item Other way around,  for any solution $r$ of (\ref{eq:CYBE}) satisfying (\ref{E:AnsatzStolin}),
there exists a Lagrangian order $\liew \subset \widehat\lieg$ such that $r = r_{\liew}$.
\item Let $\sigma$ be any $\CC[z]$--linear automorphism of $\lieg[z]$ and
$\lieu = \sigma(\liew) \subset \widehat\lieg$ be the transformed order. Then the solutions
$r_{\liew}$ and $r_{\lieu}$ are gauge-equivalent:
\begin{equation*}
r_{\lieu}(x, y) = \bigl(\sigma(x) \otimes \sigma(y)\bigr) r_{\liew}(x, y).
\end{equation*}
\item The described  correspondence $\liew \mapsto r_{\liew}$ provides a bijection between  the gauge equivalence
classes of rational solutions of (\ref{eq:CYBE}) satisfying (\ref{E:AnsatzStolin})  and the
orbits of Lagrangian orders in $\widehat\lieg$ with respect to the action of
of
$\Aut_{\CC[z]}\bigl(\lieg[z]\bigr)$.
\end{itemize}
\end{theorem}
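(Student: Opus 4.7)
The key structural observation is that the hypotheses make $(\widehat\lieg,\,\lieg[z],\,\liew)$ into a \emph{Manin triple}: the form (\ref{E:KacMoodyPairing}) is symmetric, invariant and non-degenerate; a one-line residue argument gives $\lieg[z]^{\perp}=\lieg[z]$, so $\lieg[z]$ is Lagrangian; $\liew=\liew^{\perp}$ is Lagrangian by hypothesis; and the transversality $\liew\dotplus\lieg[z]=\widehat\lieg$ completes the triple. Choosing dual bases $\{\alpha_{l,k}\}\subset\lieg[z]$ and $\{\beta_{l,k}\}\subset\liew$ with respect to the restricted non-degenerate pairing (\ref{E:RestrPairing}), the canonical Manin-triple tensor $r_{\mathrm{can}}=\sum_{l,k}\alpha_{l,k}\otimes\beta_{l,k}$ satisfies the abstract classical Yang--Baxter equation in $\widehat\lieg^{\otimes 3}$, and substituting spectral parameters $(x,y)$ into the two factors yields the series (\ref{E:SolAttToOrder}).

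To prove rationality and the Ansatz I would first pin down the tail of the dual basis: the inclusion $z^{-p-2}\lieg\llbracket z^{-1}\rrbracket\subseteq\liew$ forces $\beta_{l,k}=z^{-k-1}\alpha_{l}^{\vee}$ for every $k\ge p+1$, since this element lies in $\liew$ and pairs correctly with $\alpha_{l}z^{k}$. Summing the resulting geometric series gives
\[
\sum_{l}\alpha_{l}\otimes\alpha_{l}^{\vee}\sum_{k\ge p+1}\frac{x^{k}}{y^{k+1}}=\frac{c\,x^{p+1}}{y^{p+1}(y-x)},
\]
whose principal part at $y=x$ is exactly $c/(y-x)$; the finitely many remaining $k\le p$ terms combine into an element of $\lieg[x]\otimes\lieg[y]$ because each $\beta_{l,k}\in z^{p}\lieg\llbracket z^{-1}\rrbracket$ has its negative-degree-in-$y$ tail prescribed by duality and its positive part polynomial of bounded degree. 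Independence of the choice of basis of $\lieg$ is immediate, as each summand $\sum_{l}\alpha_{l}\otimes\beta_{l,k}$ is a basis-free representation of the pairing on the corresponding graded piece. Unitarity will follow from $\liew=\liew^{\perp}$ combined with the symmetry of the trace-residue pairing, and the classical Yang--Baxter equation (\ref{eq:CYBE}) is obtained by transporting the abstract Manin-triple CYBE along the evaluations $\phi_{x},\phi_{y}$.

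The hard part is the converse. Given a rational $r(x,y)=c/(y-x)+s(x,y)$ with $s\in\lieg[x]\otimes\lieg[y]$, I would expand the polar term as $\sum_{k\ge 0}x^{k}/y^{k+1}$, write $r(x,y)=\sum_{k\ge 0}x^{k}\Phi_{k}(y)$ with $\Phi_{k}(y)\in\lieg\otimes\lieg(\!(y^{-1})\!)$, and then set $\beta_{l,k}(y):=\bigl(\tr(\alpha_{l}^{\vee}\,\cdot\,)\otimes\mathbbm{1}\bigr)(\Phi_{k}(y))\in\widehat\lieg$ and $\liew:=\mathrm{span}_{\CC}\{\beta_{l,k}\}$. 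The four axioms of a Lagrangian order then translate respectively into (i) the polynomial-degree bound on $s$, giving $z^{-p-2}\lieg\llbracket z^{-1}\rrbracket\subseteq\liew$; (ii) the Casimir shape of the residue of $r$, yielding transversality $\liew\dotplus\lieg[z]=\widehat\lieg$; (iii) unitarity of $r$, equivalent to $\liew=\liew^{\perp}$; and (iv), the most delicate step, the classical Yang--Baxter equation, which must be unpacked by three-variable coefficient-matching into a structure-constant identity for the $\beta_{l,k}$'s that forces closure of $\liew$ under the Lie bracket. By construction $r_{\liew}=r$.

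Finally, for the equivariance statement I would observe that any $\sigma\in\Aut_{\CC[z]}(\lieg[z])$ extends uniquely to a $\CC(\!(z^{-1})\!)$-linear Lie automorphism $\hat\sigma$ of $\widehat\lieg=\lieg[z]\otimes_{\CC[z]}\CC(\!(z^{-1})\!)$, and that $\hat\sigma$ preserves the pairing (\ref{E:KacMoodyPairing}), since $\sigma$ preserves the trace form of $\lieg$ fiberwise (any Lie algebra automorphism of $\slie_{n}$ does) and the residue at $z=0$ is $\CC$-linear. Therefore $\hat\sigma(\liew)$ is again a Lagrangian order with dual basis $\{\hat\sigma(\beta_{l,k})\}$, and direct substitution in (\ref{E:SolAttToOrder}) gives $r_{\hat\sigma(\liew)}(x,y)=\bigl(\sigma(x)\otimes\sigma(y)\bigr)\,r_{\liew}(x,y)$. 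Combined with the converse above, this establishes the claimed bijection between gauge-equivalence classes of rational solutions of (\ref{eq:CYBE}) satisfying (\ref{E:AnsatzStolin}) and orbits of Lagrangian orders under $\Aut_{\CC[z]}(\lieg[z])$.
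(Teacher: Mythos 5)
The paper gives no proof of this theorem: it is imported verbatim from Stolin's papers \cite{Stolin, Stolin3} as background, so there is no internal argument to compare yours against. That said, your reconstruction via the Manin triple $(\widehat\lieg,\lieg[z],\liew)$ is the standard strategy underlying Stolin's proof, and your treatment of the first three bullets is essentially sound: uniqueness of the dual basis element (from $\liew\cap\lieg[z]^\perp=\liew\cap\lieg[z]=0$) gives $\beta_{l,k}=z^{-k-1}\alpha_l^\vee$ for $k\ge p+1$, and for $k\le p$ the decomposition $\beta_{l,k}=z^{-k-1}\alpha_l^\vee+w_{l,k}$ with $w_{l,k}\in\lieg[z]$ — exactly Lemma \ref{L:DecompInW} of the paper — yields rationality and the Ansatz (\ref{E:AnsatzStolin}) in one stroke. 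Summing only the tail $k\ge p+1$, as you do, produces spurious negative powers of $y$ that you must then argue cancel against the $k\le p$ terms; it is cleaner to sum the full geometric series $c\sum_{k\ge 0}x^k y^{-k-1}=c/(y-x)$ and collect the polynomial remainders $\sum_{k\le p}x^k\sum_l\alpha_l\otimes w_{l,k}(y)$ separately.

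The weight of the theorem, however, sits in the steps you only name. First, that the series (\ref{E:SolAttToOrder}) satisfies (\ref{eq:CYBE}) is, given the other axioms, equivalent to $[\liew,\liew]\subseteq\liew$; "transporting the abstract Manin-triple CYBE along $\phi_x,\phi_y$" hides both this equivalence and the fact that evaluation is not defined on all of $\widehat\lieg$ — this is the computational core of Stolin's argument and is not carried out. Second, in the converse direction $\liew$ must be taken as the \emph{closed} span of the $\beta_{l,k}$ (a plain linear span never contains $z^{-p-2}\lieg\llbracket z^{-1}\rrbracket$), and the "three-variable coefficient-matching" that converts (\ref{eq:CYBE}) into closure of $\liew$ under the bracket is again only announced. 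Third, the final bullet asserts a bijection, and you establish only one half of injectivity: that the $\Aut_{\CC[z]}(\lieg[z])$-action induces gauge equivalence of solutions. The converse — that two orders whose solutions are related by an arbitrary holomorphic gauge germ $\phi:(\CC,0)\rightarrow\Aut(\lieg)$ preserving the Ansatz must lie in the same $\Aut_{\CC[z]}(\lieg[z])$-orbit — is a genuinely nontrivial point of Stolin's classification and is missing from your argument.
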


\begin{example}
Let $\liew = z^{-1} \lieg\llbracket z^{-1}\rrbracket$. It is easy to see that $\liew$ is a Lagrangian order in $\widehat\lieg$. Let  $\bigl\{\alpha_l\bigr\}_{l = 1}^{n^2-1}$ be any  basis of $\lieg$. Then we have:
$
\beta_{l, k} :=  \bigl(\alpha_l z^k\bigr)^\vee = \alpha_l^\vee z^{-k-1}.
$
This implies:
\begin{equation}
r_{\liew}(x, y) =
\sum\limits_{k=0}^\infty x^k \sum\limits_{l = 1}^{n^2-1} \alpha_l \otimes \alpha_l^\vee y^{-k-1}
= \frac{c}{y-x},
\end{equation}
where $c \in \lieg \otimes \lieg$ is the Casimir element. The tensor-valued function
 $r_{\liew}$ is the celebrated Yang's solution
of the classical Yang--Baxter equation (\ref{eq:CYBE}).
\end{example}

\begin{lemma}\label{L:DecompInW}
For any $1 \le l \le n^2-1$ and $k \ge 0$ there exists a unique
$w_{l, k} \in \lieg[z]$ such that
\begin{equation*}
\beta_{l, k} = z^{-k-1}\alpha_l^\vee  + w_{l, k}.
\end{equation*}
\end{lemma}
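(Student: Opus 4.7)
The plan is to exploit the decomposition $\widehat{\lieg} = \liew \dotplus \lieg[z]$ together with a direct residue computation, showing that $z^{-k-1}\alpha_l^{\vee}$ differs from $\beta_{l,k}$ only by an element of $\lieg[z]$.

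First I would verify that $\lieg[z]$ is an isotropic subspace of $\widehat{\lieg}$ with respect to the pairing (\ref{E:KacMoodyPairing}). For any $\alpha_{l',k'}, \alpha_{m,j} \in \lieg[z]$ (with $k',j \geq 0$) we have
\[
(\alpha_{l',k'}, \alpha_{m,j}) = \res_{z=0}\bigl(\tr(\alpha_{l'}\alpha_m)\, z^{k'+j}\bigr) = 0,
\]
since $k'+j \geq 0$ and the residue picks out the coefficient of $z^{-1}$. Hence the pairing restricts to a non-degenerate pairing $\liew \times \lieg[z] \lar \CC$ (non-degeneracy on the $\liew$ side follows from $\liew \cap \lieg[z] = 0$, and on the $\lieg[z]$ side from $\liew = \liew^{\perp}$ together with the direct sum decomposition), which is exactly what is needed for the dual basis $\{\beta_{l,k}\}$ of $\{\alpha_{l,k}\}$ to be well defined.

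Next I would compute directly the pairing of the candidate element $z^{-k-1}\alpha_l^{\vee}$ with an arbitrary basis vector $\alpha_{l',k'} = \alpha_{l'} z^{k'}$ of $\lieg[z]$:
\[
(\alpha_{l'} z^{k'},\, z^{-k-1}\alpha_l^{\vee}) = \res_{z=0}\bigl(\tr(\alpha_{l'}\alpha_l^{\vee})\, z^{k'-k-1}\bigr) = \delta_{l',l}\,\delta_{k',k},
\]
using that $\{\alpha_l^{\vee}\}$ is dual to $\{\alpha_l\}$ under the trace form. This matches exactly the defining property $(\alpha_{l',k'}, \beta_{l,k}) = \delta_{l',l}\delta_{k',k}$ of $\beta_{l,k}$.

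Finally, using the direct sum decomposition I would write $z^{-k-1}\alpha_l^{\vee} = w + p$ uniquely with $w \in \liew$ and $p \in \lieg[z]$. Since $\lieg[z]$ is isotropic, $p$ pairs trivially with $\lieg[z]$, so the pairings of $w$ and of $z^{-k-1}\alpha_l^{\vee}$ with elements of $\lieg[z]$ agree. By the previous step and non-degeneracy of the restricted pairing, $w = \beta_{l,k}$, hence $w_{l,k} := -p \in \lieg[z]$ gives the desired decomposition. Uniqueness is immediate by cancellation of $z^{-k-1}\alpha_l^{\vee}$. There is really no serious obstacle here; the only point requiring care is keeping the bookkeeping of the residue pairing straight and invoking the correct isotropy statement for $\lieg[z]$.
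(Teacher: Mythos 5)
Your argument is correct and is essentially the paper's proof written out in full: the paper disposes of this lemma in one line as "an easy consequence of $\liew \dotplus \lieg[z] = \widehat\lieg$ and the non-degeneracy of the restricted pairing," and your residue computation together with the isotropy of $\lieg[z]$ is exactly the content behind that remark. The only cosmetic point is that your parenthetical justification of non-degeneracy on the $\liew$ side implicitly uses that $\lieg[z]$ is \emph{maximal} isotropic (i.e.\ $\lieg[z]^{\perp}=\lieg[z]$), not merely isotropic, but this is an equally easy residue computation and the paper asserts the non-degeneracy outright in any case.
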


\begin{proof}
It is an easy consequence of the assumption $\liew \dotplus \lieg[z] = \widehat\lieg$ and
the fact that the pairing (\ref{E:RestrPairing}) is non-degenerate.
\end{proof}

\subsection{Stolin triples} As we have seen in the previous subsection, the classification
of rational solutions of (\ref{eq:CYBE}) reduces to a description of Lagrangian orders.
This correspondence is actually valid for arbitrary simple complex Lie algebras \cite{Stolin3}.
In the special case $\lieg = \slie_n(\CC)$, there is an explicit parametrization of Lagrangian orders
in the following Lie--theoretic terms \cite{Stolin, Stolin2}.

\begin{definition}
A \emph{Stolin
triple} $(\mathfrak{l},k, \omega)$ consists of

\begin{itemize}
\item a Lie subalgebra $\mathfrak{l}\subseteq\mathfrak{g}$,
\item an integer $k$ such that $0 \leq k \leq n$,
\item a skew symmetric bilinear form $\omega:\mathfrak{l}\times\mathfrak{l}\rightarrow\mathbb{C}$
which is a 2-cocycle, i.e.
\[
\omega\bigl(\left[a,b\right],c\bigr)+\omega\bigl(\left[b,c\right],a\bigr)+
\omega\bigl(\left[c,a\right],b\bigr)=0
\]
for all $a, b, c \in \mathfrak{l}$,
\end{itemize}
such that for the $k$-th parabolic Lie subalgebra
 $\mathfrak{p}_{k}$ of $\mathfrak{g}$ (with $\liep_0 = \liep_n = \lieg$)
the following two conditions are fulfilled:
\begin{itemize}
\item $\mathfrak{l}+\mathfrak{p}_{k}=\mathfrak{g}$,
\item $\omega$ is non-degenerate on $\left(\mathfrak{l}\cap\mathfrak{p}_{k}\right)\times\left(\mathfrak{l}\cap\mathfrak{p}_{k}\right)$.
\end{itemize}
\end{definition}

\noindent
According to Stolin \cite{Stolin}, up to the action of $\Aut_{\CC[z]}\bigl(\lieg[z]\bigr)$,
any Lagrangian order in $\widehat\lieg$ is given by some triple $(\liel, k, \omega)$.
 In this article, we shall only need the case $\liel = \lieg$.

\begin{algorithm}\label{A:FromTripletoOrder} One can pass from a Stolin triple $(\lieg, k, \omega)$ to the corresponding Lagrangian
order  $\liew \subset \lieg((z^{-1}))$ in the following way.
\begin{itemize}
\item Consider the following linear subspace
\begin{equation}
\liev_{\omega} = \bigl\{z^{-1}a +b \, \big| \, \tr(a\cdot \,-\,) = \omega(b, \,-\,)  \in \liel^* \bigr\} \subset z^{-1} \lieg \dotplus \liel \subset
 z^{-1} \lieg \dotplus \lieg \subset \widehat\lieg.
\end{equation}
\item The subspace $\liev_\omega$ defines the following linear subspace
\begin{equation}\label{E:DefOfWPrime}
\liew' = z^{-2}\lieg\llbracket z^{-1}\rrbracket \dotplus \liev_\omega
 \subset \widehat\lieg.
\end{equation}
\item Consider the matrix
\begin{equation}
\eta = \left(
\begin{array}{c | c}
\mathbbm{1}_{k \times k} & 0 \\ \hline
0 & z \cdot \mathbbm{1}_{(n-k) \times (n-k)}
\end{array}
\right)
\in \GL_{n}\bigl(\CC[z, z^{-1}]\bigr).
\end{equation}
and put:
\begin{equation}
\liew = \liew_{(\liel, k, \omega)}:= \eta^{-1} \liew' \eta \subset \widehat\lieg.
\end{equation}
\end{itemize}
\end{algorithm}

\noindent
The next theorem  is due to Stolin \cite{Stolin, Stolin3}, see also
\cite[Section 3.2]{ChariPressley} for a more detailed account of  the  theory of rational
solutions of the classical Yang--Baxter equation (\ref{eq:CYBE}).

\begin{theorem}\label{T:Stolin} The following results are true.
\begin{itemize}
\item The linear subspace $\liew \subset \widehat\lieg$ is a Lagrangian order.
\item  For any Lagrangian order $\liew \subset \widehat\lieg$ there exists  $\alpha \in \Aut_{\CC[z]}\bigl(\lieg[z]\bigr)$
and a  Stolin triple   $(\liel, k, \omega)$ such that $\liew = \alpha\bigl(\liew_{(\liel, k, \omega)}\bigr)$.
\item
Two Stolin triples $(\liel, k, \omega)$ and $(\liel', k, \omega')$ define equivalent  Lagrangian
orders  in $\widehat\lieg$ with respect  to the $\Aut_{\CC[z]}(\lieg[z])$--action  if and
only if there exists a Lie algebra automorphism $\gamma$ of $\lieg$ such that
$\gamma(\liel) = \liel'$ and $\gamma^*\bigl([\omega]\bigr) = \omega' \in H^2(\liel)$.
\end{itemize}
\end{theorem}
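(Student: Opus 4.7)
The theorem splits into three claims, each of which I would treat in a separate logical step. Throughout, the key technical object is the pairing (\ref{E:KacMoodyPairing}) and the fact that it restricts to a non-degenerate duality between $\lieg[z]$ and any Lagrangian complement.

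For the first claim, the plan is to verify the three defining properties of a Lagrangian order for the intermediate object $\liew'$ defined in (\ref{E:DefOfWPrime}), and then transport them along conjugation by $\eta$ to obtain $\liew$. That $\liew'$ is a Lie subalgebra follows, once one notes that $z^{-2}\lieg\llbracket z^{-1}\rrbracket$ is an ideal in $z^{-1}\lieg\llbracket z^{-1}\rrbracket \dotplus \lieg$, from the closedness of $\liev_\omega$ modulo $z^{-2}\lieg\llbracket z^{-1}\rrbracket$; this is a direct translation of the 2-cocycle condition on $\omega$ through the identifications $b \mapsto \omega(b, \ARG)$ and $a \mapsto \tr(a\cdot \ARG)|_{\liel}$. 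The isotropy $\liew' \subseteq (\liew')^{\perp}$ reduces to a one-line residue computation: for $z^{-1}a + b$ and $z^{-1}a' + b'$ in $\liev_\omega$ one gets $\tr(ab') + \tr(a'b) = \omega(b,b') + \omega(b',b) = 0$ by skew-symmetry, while everything else vanishes for degree reasons. Complementarity $\liew' \dotplus \lieg[z] = \widehat\lieg$ follows from $\dim \liev_\omega = \dim \lieg$ combined with $z^{-2}\lieg\llbracket z^{-1}\rrbracket \dotplus (z^{-1}\lieg \dotplus \lieg[z]) = \widehat\lieg$, and together with isotropy this forces $\liew' = (\liew')^{\perp}$.

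The subtle point is passing from $\liew'$ to $\liew = \eta^{-1}\liew'\eta$: although $\eta \notin \SL_n(\CC[z,z^{-1}])$, conjugation by $\eta$ is a $\CC((z^{-1}))$-algebra automorphism of $\mathfrak{gl}_n((z^{-1}))$ preserving the trace, hence fixing $\widehat\lieg$ and the Kac--Moody pairing setwise. Checking that $\liew \dotplus \lieg[z] = \widehat\lieg$ is where the conditions $\liel + \liep_k = \lieg$ and non-degeneracy of $\omega$ on $\liel \cap \liep_k$ are genuinely used, since $\eta^{-1}\lieg[z]\eta$ and $\lieg[z]$ differ exactly in the block components where these hypotheses compensate. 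This is the main obstacle of part (1).

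For the second claim, I would argue by normalization. Given a Lagrangian order $\liew$, the integer $p$ from the definition is read off as the largest $p$ with $z^{-p-2}\lieg\llbracket z^{-1}\rrbracket \subseteq \liew$. Applying a suitable $\CC[z]$-automorphism built from conjugation by a matrix of the form $\eta$, I would reduce to $p = 0$; the block decomposition that achieves this provides the integer $k$. The quotient $\liew/z^{-2}\lieg\llbracket z^{-1}\rrbracket$ injects into $(z^{-1}\lieg \dotplus \lieg)/z^{-2}\lieg\llbracket z^{-1}\rrbracket$; projecting onto the $\lieg$-component defines $\liel$, which is a Lie subalgebra because $\liew$ is, while the Lagrangian condition pins down a skew bilinear form $\omega$ on $\liel$ via the trace. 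The 2-cocycle identity for $\omega$ is then a consequence of the Jacobi identity in $\widehat\lieg$ applied to three elements of $\liev_\omega$.

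For the third claim, the plan is to check both implications. If $\gamma$ is a Lie algebra automorphism of $\lieg$ sending $(\liel, \omega)$ to $(\liel', \omega')$ up to a coboundary, extending $\gamma$ $\CC[z]$-linearly yields $\alpha \in \Aut_{\CC[z]}(\lieg[z])$ sending $\liew_{(\liel, k, \omega)}$ to $\liew_{(\liel', k, \omega')}$, the coboundary ambiguity being absorbed in the freedom to shift $a$ by elements of $\liel^\perp$ in the definition of $\liev_\omega$. Conversely, any $\alpha$ effecting such an equivalence descends modulo $z$ to an automorphism $\gamma$ of $\lieg$ (using that $\Aut_{\CC[z]}(\lieg[z])$ acts through evaluation at $z = 0$ on $\lieg[z]/z\lieg[z]$); the image of $\liev_\omega$ under $\alpha$, reduced to the graph-like form of $\liev_{\omega'}$, then yields $\gamma(\liel) = \liel'$ and $\gamma^*([\omega']) = [\omega]$. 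The main obstacle of the whole theorem, apart from the normalization in (1), is the cohomological bookkeeping in this last step: tracking precisely which rearrangements of $\liev_\omega$ change $\omega$ by a coboundary versus producing a genuinely new class.
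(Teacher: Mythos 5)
First, a point of reference: the paper does not prove this theorem. It sits in the section reviewing Stolin's theory and is attributed to \cite{Stolin, Stolin3}, with a pointer to \cite[Section 3.2]{ChariPressley}; there is no in-paper argument to compare against, so your proposal must be judged as a reconstruction of Stolin's proof. For the first and third bullets your outline is broadly the right one (isotropy of $\liev_\omega$ from skew-symmetry of $\omega$, closure under bracket from the $2$-cocycle identity, invariance of the pairing (\ref{E:KacMoodyPairing}) under conjugation by $\eta$ because the trace form is conjugation-invariant). However, it contains one false intermediate claim: $\liew' \dotplus \lieg[z] = \widehat\lieg$ does \emph{not} follow from $\dim\liev_\omega = \dim\lieg$. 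The intersection $\liev_\omega \cap \lieg[z]$ is exactly the radical of $\omega$ on $\liel$ (any $b \in \liel$ with $\omega(b,\ARG) = 0$ gives the element $z^{-1}\cdot 0 + b$ of $\liev_\omega$ lying in $\liel \subset \lieg[z]$), and a Stolin triple only requires $\omega$ to be non-degenerate on $\liel \cap \liep_k$, not on all of $\liel$. So the subspace $\liew'$ of (\ref{E:DefOfWPrime}) is self-perpendicular but in general not transversal to $\lieg[z]$; transversality is restored only after conjugating by $\eta$, which, as you correctly note afterwards, is where $\liel + \liep_k = \lieg$ and the non-degeneracy on $\liel \cap \liep_k$ genuinely enter. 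You should drop the transversality claim for $\liew'$ and instead transport only the equality $\liew' = (\liew')^\perp$ through $\eta$, proving $\liew \dotplus \lieg[z] = \widehat\lieg$ separately from the two Stolin conditions.

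The genuine gap is in the second bullet. The sentence ``applying a suitable $\CC[z]$-automorphism built from conjugation by a matrix of the form $\eta$, I would reduce to $p=0$'' is not a normalization one can simply perform: it is the hard content of Stolin's theorem. One must first show that a Lagrangian order $\liew$ with $z^{-p-2}\lieg\llbracket z^{-1}\rrbracket \subseteq \liew \subseteq z^{p}\lieg\llbracket z^{-1}\rrbracket$ is contained in a maximal order of $\widehat\lieg$, and then invoke the classification of maximal orders in $\lieg((z^{-1}))$ (lattice-chain/Bruhat--Tits type arguments over the local field $\CC((z^{-1}))$) to conjugate that maximal order, by an element of $\Aut_{\CC[z]}(\lieg[z])$ composed with a diagonal $\eta$ as in Algorithm \ref{A:FromTripletoOrder}, into the standard position from which $k$, $\liel$ and $\omega$ can be read off as you describe. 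Nothing in your sketch supplies this step, and it is not routine linear algebra; without it the existence statement is unproved. For the purposes of this paper the citation suffices, but a self-contained proof would have to import that machinery.
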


\begin{remark}\label{R:Defect} Unfortunately,
the described  correspondence between Stolin triples and Lagrangian orders
has the following defect:  the parameter $k$ is not an invariant of $\liew$. This
leads to the fact that two completely different Stolin triples
$(\liel, k, \omega)$ and $(\liel', k', \omega')$ can define  the same Lagrangian order $\liew$.
\end{remark}

\begin{remark}\label{R:wild}
Consider an arbitrary even-dimensional abelian Lie subalgebra $\lieb \subset \lieg$ equipped with
an arbitrary non-degenerate skew-symmetric bilinear form $\omega: \lieb \times \lieb \lar \CC$.
Obviously, $\omega$ is a two-cocycle and we get a Stolin triple
$(\lieb, 0, \omega)$. Two such triples $(\lieb, 0, \omega)$ and $(\lieb', 0, \omega')$
define equivalent Lagrangian orders if and only if there exists $\alpha \in \Aut(\lieg)$ such that
$\alpha(\lieb) = \lieb'$. However, the classification of abelian subalgebras in $\lieg$ is essentially
equivalent to the classification of finite dimensional $\CC[u, v]$--modules. By a result of Drozd
 \cite{Drozd}, the last problem is  \emph{representation-wild}. Thus, as it was already pointed out by Belavin and Drinfeld in \cite[Section 7]{BelavinDrinfeld}, one can not hope
 to achieve  a full classification
 of all rational solutions of the classical Yang--Baxter equation  (\ref{eq:CYBE}).
\end{remark}

\begin{remark} In this article, we only deal with  those  Stolin triple $(\lieg, e, \omega)$
for which $\liel = \lieg$.  It leads to the following significant simplifications.
Consider the linear map
\begin{equation}\label{E:DefMapChi}
\chi: \;  \lieg \stackrel{l_\omega}\lar \lieg^* \stackrel{\tr}\lar \lieg,
\end{equation}
where $l_\omega(a) = \omega(a, \,-\,)$ and $\tr$ is the isomorphism
induced by the trace form.
Then
\begin{equation*}
\liev_\omega = \bigl\langle\alpha+ z^{-1}\chi(\alpha)\bigr\rangle _{\alpha \in \mathfrak{\lieg}}.
 \end{equation*}
 Next, by Whitehead's Theorem, we have the vanishing  $H^2(\lieg) = 0$. This means that for any two-cocycle
 $\omega: \lieg \times \lieg \lar \CC$ there exist a matrix $K \in \Mat_{n \times n}(\CC)$ such that
 for all $a, b \in \lieg$ we have: $
 \omega(a, b) = \omega_K(a, b) := \tr(K^t \cdot \bigl([a, b]\bigr).$
 Let $1 \le e \le n$ be such that
 $\gcd(n, e) = 1$. Then  the parabolic subalgebra $\liep_e$ is Frobenius.
 If $(\lieg, e, \omega)$ is a Stolin triple then $\omega_K$ has to define a Frobenius pairing
 on $\liep_e$. If $K' \in \Mat_{n \times n}(\CC)$ is any other matrix such that
 $\omega_{K'}$ is non-degenerate on $\liep_e \times \liep_e$  then  the triples
 $(\lieg, e, \omega_{K})$ and $(\lieg, e, \omega_{K'})$ define gauge equivalent solutions of the
 classical Yang--Baxter equation. This means that the gauge equivalence class of the solution
 $r_{(\lieg, e, \omega)}$ does not depend on a particular choice of $\omega$! However, in order
 to get nice closed formulae for  $r_{(\lieg, e, \omega)}$, we actually need the canonical
 matrix  $J_{(e, d)} \in \Mat_{n \times n}(\CC)$ constructed by recursion  (\ref{E:formcanonique}).
\end{remark}

\section{From vector bundles to the cuspidal Weierstra\ss{} curve to Stolin triples}

\noindent
For reader's convenience,  we recall   once again our notation.
\begin{itemize}
\item $E$ is  the cuspidal Weierstra\ss{} curve.
\item  $(e, d)$  is a pair of positive coprime e integers and  $n = e + d$.
\item  $\lieg = \slie_n(\CC)$, $\liee = \mathfrak{gl}_n(\CC)$,
$\liep = \liep_e \subset \lieg$ is the $e$-th parabolic subalgebra of $\lieg$.
We have a decomposition $\liep = \liel \dotplus \lien$, where $\lien$ (respectively $\liel$) is  defined by (\ref{E:DeofOfN}) (respectively (\ref{E:DeofOfL})), $\bar\lien$ is the transpose of $\lien$.
 \item $J = J_{(e, d)} \in
\liee$ is the matrix constructed by  recursion (\ref{E:formcanonique})
 and $\omega: \liep \times \liep \lar \CC$ is the corresponding
Frobenius pairing (\ref{E:PairingDeFrobenus}).
\item
For  $1 \le i, j \le n$,  let $e_{i, j} \in \liee$ be the corresponding matrix unit,
$h_l = e_{l, l} - e_{l+1, l+1}$ for $1 \le l \le n-1$ and $\check{h}_l$ be its dual with respect to
the trace form.  Let  $c \in \lieg \otimes \lieg$ be the Casimir element with respect to the trace form.
\item Finally,  the decomposition $n = e + d$ divides the set  $\bigl\{(i, j)  \in
\mathbb{Z}^2 \,|\, 1 \le i, j \le n\bigr\}$
in four parts, according to the following convention:
$
\left(
\begin{array}{c | c} \mathrm{\rom{4}} &  \mathrm{\rom{1}} \\ \hline \mathrm{\rom{3}} &
\mathrm{\rom{2}}
\end{array}
\right).
$
\end{itemize}
The main results of this section are the following:
\begin{itemize}
\item We derive an explicit formula for the rational solution $r_{(\lieg, e, \omega)}$
of the classical Yang--Baxter equation (\ref{eq:CYBE}) attached to  Stolin triple $(\lieg, e, \omega)$.
\item We prove that the solutions $r_{(E, (n, d))}$ and  $r_{(\lieg, e, \omega)}$ are gauge--equivalent.
\end{itemize}

\subsection{Description of the rational solution $r_{(\lieg, e, \omega)}$}

\begin{lemma}
The linear map $\chi: \lieg \lar \lieg$ from (\ref{E:DefMapChi})
 is given by the rule
$a \mapsto \bigl[J^t, a\bigr]$.
\end{lemma}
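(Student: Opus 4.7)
The plan is a direct computation using only the definitions of $\chi$ and $\omega$ together with cyclicity of the trace. Since we are in the Stolin triple with $\liel = \lieg$, the 2-cocycle $\omega$ on $\lieg$ is the extension of the Frobenius pairing on $\liep$ given by the same formula $\omega(a,b) = \tr\bigl(J^t \cdot [a,b]\bigr)$; note this is a coboundary, compatible with $H^2(\lieg) = 0$.

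First I would unwind $\chi$. By definition (\ref{E:DefMapChi}), $\chi(a) \in \lieg$ is the unique element characterized by
\begin{equation*}
\tr\bigl(\chi(a) \cdot b\bigr) = l_\omega(a)(b) = \omega(a, b) \quad \text{for all } b \in \lieg,
\end{equation*}
the uniqueness and existence following from non-degeneracy of the trace form on $\lieg$. (One should briefly verify that $l_\omega(a)$ annihilates the identity so that it indeed defines a functional on $\lieg$, not just on $\liee$; but $\omega(a, \mathbbm{1}) = \tr(J^t[a,\mathbbm{1}]) = 0$, so this is automatic.)

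Next I would rewrite $\omega(a,b)$ using the cyclic property of the trace:
\begin{equation*}
\tr\bigl(J^t[a,b]\bigr) = \tr(J^t a b) - \tr(J^t b a) = \tr(J^t a b) - \tr(a J^t b) = \tr\bigl([J^t,a] \cdot b\bigr).
\end{equation*}
Comparing this with the characterization of $\chi(a)$ and invoking non-degeneracy of the trace form one more time yields $\chi(a) = [J^t, a]$, which is the claim.

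The only thing to watch is that $[J^t, a] \in \lieg$, i.e.\ it is traceless; but $\tr([J^t,a]) = 0$ is automatic, so no obstacle arises. I do not expect any genuine difficulty here: the lemma is purely formal, serving to translate the definition of $\liev_\omega$ in Algorithm \ref{A:FromTripletoOrder} into a concrete expression $\liev_\omega = \bigl\langle a + z^{-1}[J^t,a]\bigr\rangle_{a \in \lieg}$ that can be used in the subsequent comparison with the geometric order arising from $r_{(E,(n,d))}$.
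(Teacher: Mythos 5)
Your proof is correct and coincides with the paper's own argument: both rewrite $\omega(a,b)=\tr\bigl(J^t\cdot[a,b]\bigr)=\tr\bigl([J^t,a]\cdot b\bigr)$ by cyclicity of the trace and then conclude via non-degeneracy of the trace form on $\lieg$. The extra checks you mention (tracelessness of $[J^t,a]$, vanishing on the identity) are harmless but not needed beyond what the paper records.
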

\begin{proof}
For $a, b \in \lieg$ we have:
$
\omega(a, b) = \tr\bigl(J^t \cdot [a, b]\bigr) = \tr\bigl([J^t, a] \cdot b\bigr).
$
Hence, the linear map $l_\omega: \lieg \lar \lieg^*$ is given by the formula
$a \mapsto \tr\bigl([J^t, a] \cdot \,-\,\bigr)$. This implies the claim.
\end{proof}

\begin{lemma}
Let $\liew \subset \widehat\lieg$ be the Lagrangian order constructed from   Stolin triple
$(\lieg, e, \omega)$ following  Algorithm \ref{A:FromTripletoOrder}.
Then we have the following inclusions:
\begin{equation*}
\liew_1 :=
z^{-3} \bar{n}\llbracket z^{-1}\rrbracket \oplus
z^{-2} \liel\llbracket z^{-1}\rrbracket \oplus
z^{-1} {n}\llbracket z^{-1}\rrbracket \subset
\liew
\subset
z^{-1} \bar{n}\llbracket z^{-1}\rrbracket \oplus
 \liel\llbracket z^{-1}\rrbracket \oplus
z \lien\llbracket z^{-1}\rrbracket := \liew_2.
\end{equation*}
\end{lemma}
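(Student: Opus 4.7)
The plan is to compute $\liew = \eta^{-1}\liew'\eta$ directly, exploiting the block decomposition $\lieg = \liel \oplus \lien \oplus \bar\lien$ and tracking how conjugation by $\eta = \mathrm{diag}(\mathbbm{1}_e, z\mathbbm{1}_d)$ shifts the $z$-grading on each summand. The elementary observation is that
\[
\eta^{-1}
\left(\begin{array}{c|c} L_1 & N \\ \hline \bar N & L_2 \end{array}\right)
\eta =
\left(\begin{array}{c|c} L_1 & zN \\ \hline z^{-1}\bar N & L_2 \end{array}\right),
\]
so conjugation acts as the identity on the $\liel$-part, shifts the $\lien$-part up by one power of $z$, and shifts the $\bar\lien$-part down by one power of $z$.

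For the lower inclusion $\liew_1 \subset \liew$, I would start from the summand $z^{-2}\lieg\llbracket z^{-1}\rrbracket \subset \liew'$ and split it along $\lieg = \liel \oplus \lien \oplus \bar\lien$. Applying the shift rule, the three pieces land respectively in $z^{-2}\liel\llbracket z^{-1}\rrbracket$, $z^{-1}\lien\llbracket z^{-1}\rrbracket$, and $z^{-3}\bar\lien\llbracket z^{-1}\rrbracket$, whose direct sum is exactly $\liew_1$.

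For the upper inclusion $\liew \subset \liew_2$, I would treat the two summands of $\liew' = z^{-2}\lieg\llbracket z^{-1}\rrbracket \dotplus \liev_\omega$ separately. The same shift computation shows that $\eta^{-1}\bigl(z^{-2}\lieg\llbracket z^{-1}\rrbracket\bigr)\eta \subset \liew_2$. For $\liev_\omega$, I would write a general element as $a + z^{-1}[J^t,a]$ with $a = \left(\begin{smallmatrix} a_1 & a_2 \\ a_3 & a_4 \end{smallmatrix}\right)$. Since $J$ is strictly block-upper-triangular by construction (\ref{E:formcanonique}), $J^t$ has a vanishing $(1,2)$-block; expanding the commutator blockwise then shows that $a + z^{-1}[J^t,a]$ has $(i,j)$-block in $\Mat_{e_i \times e_j}[z^{-1}]$, where $e_1 = e$ and $e_2 = d$. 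Applying the conjugation rule, the $(1,2)$- and $(2,1)$-blocks are shifted by $\pm 1$ in $z$-degree and thus land in $z\lien\llbracket z^{-1}\rrbracket$ and $z^{-1}\bar\lien\llbracket z^{-1}\rrbracket$ respectively, while the block-diagonal piece stays in $\liel\llbracket z^{-1}\rrbracket$ because $a \in \lieg$ has trace zero and $[J^t,a]$ is automatically traceless. All three contributions lie in $\liew_2$.

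The main technical step is this blockwise bookkeeping for $\liev_\omega$; the conceptual point is that strict upper-triangularity of $J$ keeps the conjugation-shifts compatible with the block structure of $\liew_2$, so no cross-block contribution with the wrong $z$-degree can appear.
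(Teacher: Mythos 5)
Your proof is correct and follows essentially the same route as the paper: the paper simply observes $z^{-2}\lieg\llbracket z^{-1}\rrbracket \subset \liew' \subset \lieg\llbracket z^{-1}\rrbracket$ and conjugates this sandwich by $\eta$, which is exactly your shift rule applied to the decomposition $\lieg = \liel \dotplus \lien \dotplus \bar\lien$. Your detailed blockwise analysis of $\liev_\omega$ (and the appeal to the triangular shape of $J$) is harmless but unnecessary, since the crude containment $\liev_\omega \subset z^{-1}\lieg \dotplus \lieg \subset \lieg\llbracket z^{-1}\rrbracket$ already gives the upper inclusion after conjugation.
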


\begin{proof}
This result is an immediate consequence of the inclusions
$
z^{-2} \lieg\llbracket z^{-1} \rrbracket \subset \liew' \subset
\lieg\llbracket z^{-1} \rrbracket,
$
and the fact that $\liew = \eta^{-1} \, \liew' \, \eta$.
\end{proof}

\begin{lemma}
For any $1 \le i \ne j \le n$, $1 \le l \le n-1$ and $k \ge 0$,  consider the elements
$u_{(i, j; k)}, u_{(l; k)} \in \lieg[z]$ such that
\begin{equation}
(z^k e_{i, j})^\vee = z^{-k-1} e_{j, i} + u_{(i, j; k)} \in \liew
\quad \mbox{and} \quad
(z^k \check{h}_l)^\vee = z^{-k-1} h_l  + u_{(l; k)} \in \liew.
\end{equation}
Then the following statements are true.
\begin{itemize}
\item For all $1 \le i \ne j \le n$ and $k \ge 2$ we have: $u_{(i, j; k)} = 0$.
\item For all $(i, j) \in \mathrm{\rom{2}} \cup \mathrm{\rom{4}}$, $i \ne j$,
we have: $u_{(i, j; 1)} = 0$.
\item Similarly, for
 all $1 \le l \le n-1$ and $k \ge 1$ we have: $u_{(l; k)} = 0$.
\item For all $(i, j) \in \mathrm{\rom{3}}$ and $k = 0, 1$ we have:
$u_{(i, j; k)} = 0$.
\item Finally, all non-zero elements $u_{(i, j;k)}$ and $u_{(l; k)}$ belong
to $\liep \dotplus z \lien$.
\end{itemize}
\end{lemma}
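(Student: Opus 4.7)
The plan is to pin down each correction $u_{(i,j;k)}$ and $u_{(l;k)}$ via the direct-sum decomposition $\widehat\lieg = \liew \dotplus \lieg[z]$ built into the notion of a Lagrangian order, then exploit the two-sided sandwich $\liew_1 \subseteq \liew \subseteq \liew_2$ furnished by the preceding lemma.

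First I would note that elements of $\lieg[z]$ pair trivially with one another under $(-,-) = \res_{z=0}\tr(\cdot)$, and a short residue computation shows that $z^{-k-1}e_{j,i}$ by itself already produces the correct Kronecker deltas against every element of the chosen basis of $\lieg[z]$, as does $z^{-k-1}h_l$. Combined with the uniqueness coming from $\liew \cap \lieg[z] = 0$, this characterizes $u_{(i,j;k)}$ (respectively $u_{(l;k)}$) as the unique element of $\lieg[z]$ such that $z^{-k-1}e_{j,i} + u_{(i,j;k)} \in \liew$ (respectively $z^{-k-1}h_l + u_{(l;k)} \in \liew$).

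The four vanishing statements then reduce to tracking which summand of $\liew_1$ absorbs the shifted element $z^{-k-1}\alpha$: the thresholds are $-k-1 \le -3$ for $\alpha \in \bar\lien$, $-k-1 \le -2$ for $\alpha \in \liel$, and $-k-1 \le -1$ for $\alpha \in \lien$. Since transposition sends region $\rom{1}$ to $\bar\lien$, each of $\rom{2}$ and $\rom{4}$ into $\liel$, and region $\rom{3}$ to $\lien$ (and $h_l \in \liel$ always), these thresholds match exactly the four cases listed: all $(i,j)$ when $k \ge 2$; $(i,j) \in \rom{2} \cup \rom{4}$ at $k = 1$; all $h_l$ when $k \ge 1$; and $(i,j) \in \rom{3}$ when $k \in \{0,1\}$. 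In each of these cases $z^{-k-1}e_{j,i}$ or $z^{-k-1}h_l$ already belongs to $\liew_1 \subseteq \liew$, forcing the correction to vanish.

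For the final bullet I would invoke the opposite inclusion $\liew \subseteq \liew_2$. Inspecting the three summands of $\liew_2$, the non-negative-degree part of any element is confined to $\liel$ at degree $0$, to $\lien$ at degree $0$, and to $\lien$ at degree $1$, since $z^{-1}\bar\lien\llbracket z^{-1}\rrbracket$ is purely negative; altogether this non-negative part lies in $\liel \dotplus \lien \dotplus z\lien = \liep \dotplus z\lien$. As $z^{-k-1}e_{j,i}$ and $z^{-k-1}h_l$ have purely negative $z$-degree for $k \ge 0$, the non-negative part of the lift of each element into $\liew$ coincides with the correction itself, which therefore lies in $\liep \dotplus z\lien$. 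The whole argument is essentially bookkeeping once the sandwich $\liew_1 \subseteq \liew \subseteq \liew_2$ is in hand; the only mildly delicate point is the opening step, where the direct-sum structure of $\widehat\lieg$ is used both to characterize $u$ and, subsequently, to identify it with the non-negative-degree part of an element of $\liew_2$.
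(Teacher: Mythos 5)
Your proposal is correct and follows essentially the same route as the paper: the uniqueness of the corrections via $\liew \dotplus \lieg[z] = \widehat\lieg$ (the paper's Lemma \ref{L:DecompInW}), the vanishing statements from $\liew_1 \subset \liew$, and the last bullet from $\liew \subset \liew_2$. You merely spell out the degree thresholds and the transposition bookkeeping that the paper leaves as "immediate."
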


\begin{proof}
According to Lemma \ref{L:DecompInW},
 the elements $u_{(i, j; k)}$ (respectively  $u_{(l; k)}$) are uniquely  determined by the
property that $z^{-k-1} e_{j, i} + u_{(i, j; k)} \in \liew$ (respectively,
$z^{-k-1} h_l  + u_{(l; k)} \in \liew$). Hence,  the first four  statements
are immediate corollaries of the inclusion $\liew_1 \subset \liew$.  On the other hand, the last result follows
from the inclusion $\liew \subset \liew_2$.
\end{proof}

\noindent
In order to get a more concrete description of non-zero elements $u_{(i, j; k)}$ and $u_{(l; k)}$,
 note the following result.

\begin{lemma} Let $K \in \Mat_{n \times n}(\CC)$ be any matrix defining
a non-degenerate pairing $\omega_K$ on $\liep \times \liep$.
The following statements  are  true.
\begin{itemize}
\item For any $(i, j) \in \mathrm{\rom{2}} \, \cup \, \mathrm{\rom{4}}$, $i \ne j$,  there exist
uniquely determined
$\left(\begin{array}{c|c} A_{(i, j)}^{(0)} & B_{(i, j)}^{(0)} \\ \hline  0 & D_{(i, j)}^{(0)}\end{array}\right) \in \liep$ and
$\left(\begin{array}{c|c} 0 & \widetilde{B}_{(i, j)}^{(0)} \\ \hline 0 & 0\end{array}\right) \in \lien$ such that
\begin{equation}\label{E:dec1}
e_{j, i} - \left[
K^t, \left(\begin{array}{c|c} A_{(i, j)}^{(0)} & \widetilde{B}_{(i, j)}^{(0)} \\ \hline 0 & D_{(i, j)}^{(0)}\end{array}\right)
\right]
+ \left(\begin{array}{c|c} 0 & B_{(i, j)}^{(0)} \\ \hline  0 & 0
\end{array}\right) = 0.
\end{equation}
\item Similarly, for any $1 \le l \le n-1$, there exist
uniquely determined
$\left(\begin{array}{c|c} A_{(l)} & B_{(l)} \\ \hline  0 & D_{(l)}\end{array}\right) \in \liep$ and
$\left(\begin{array}{c|c} 0 & \widetilde{B}_{(l)} \\ \hline 0 & 0\end{array}\right) \in \lien$ such that
\begin{equation}\label{E:dec2}
h_l - \left[
K^t, \left(\begin{array}{c|c} A_{(l)} & \widetilde{B}_{(l)} \\ \hline 0 & D_{(l)}\end{array}\right)
\right]
+ \left(\begin{array}{c|c} 0 & B_{(l)} \\ \hline  0 & 0
\end{array}\right) = 0.
\end{equation}
\item Finally, for any $(i, j) \in \mathrm{\rom{1}}$ and $k = 0, 1$, there exist
uniquely determined matrices
$\left(\begin{array}{c|c} A_{(i, j)}^{(k)} & B_{(i, j)}^{(k)} \\ \hline  0 & D_{(i, j)}^{(k)}
\end{array}\right) \in \liep$ and
$\left(\begin{array}{c|c} 0 & \widetilde{B}_{(i, j)}^{(k)} \\ \hline 0 & 0\end{array}\right) \in \lien$ such that
\begin{equation}\label{E:dec3}
\left[
K^t,  e_{j, i} + \left(\begin{array}{c|c} A_{(i, j)}^{(0)} & \widetilde{B}_{(i, j)}^{(0)} \\ \hline  0 & D_{(i, j)}^{(0)}
\end{array}\right)
\right] = \left(\begin{array}{c|c} 0 & {B}_{(i, j)}^{(0)} \\ \hline 0 & 0\end{array}\right)
\end{equation}
and
\begin{equation}\label{E:dec4}
e_{j, i} - \left[
K^t, \left(\begin{array}{c|c} A_{(i, j)}^{(1)} & \widetilde{B}_{(i, j)}^{(1)} \\ \hline  0 & D_{(i, j)}^{(1)}
\end{array}\right)
\right] + \left(\begin{array}{c|c} 0 & {B}_{(i, j)}^{(1)} \\ \hline 0 & 0\end{array}\right) = 0.
\end{equation}
\end{itemize}
\end{lemma}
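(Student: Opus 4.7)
The entire lemma reduces to a single generalization of Lemma \ref{L:FrobeniusSplitting}. I would first observe that the proof of Lemma \ref{L:FrobeniusSplitting} never used anything about $J$ beyond the fact that the functional $l_J: P \mapsto \tr(J^t\cdot P)$ is Frobenius on $\liep$. Replacing $J$ by any $K \in \Mat_{n\times n}(\CC)$ for which $\omega_K$ is non-degenerate on $\liep \times \liep$, the same argument—exploiting the short exact sequence $0 \to \lien \to \lieg^* \to \liep^* \to 0$ and the non-degeneracy of $\omega_K$—yields the \emph{universal $K$-splitting}: for every $G \in \lieg$ there exist uniquely determined $P \in \liep$ and $N \in \lien$ such that
\[
G = [K^t, P] + N.
\]
All three parts of the lemma are now just this splitting applied to specific $G$.

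\textbf{Parts 1 and 2.} For $(i,j) \in \mathrm{\rom{2}} \cup \mathrm{\rom{4}}$ with $i \neq j$, apply the $K$-splitting to $G = e_{j,i}$, obtaining unique $P \in \liep$ and $N \in \lien$ with $e_{j,i} = [K^t, P] + N$. Writing $P = \left(\begin{smallmatrix} A_{(i,j)}^{(0)} & \widetilde{B}_{(i,j)}^{(0)} \\ 0 & D_{(i,j)}^{(0)}\end{smallmatrix}\right)$ according to the block decomposition of $\liep$ and $-N = \left(\begin{smallmatrix} 0 & B_{(i,j)}^{(0)} \\ 0 & 0\end{smallmatrix}\right)$ yields exactly (\ref{E:dec1}). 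Applying the same splitting to $G = h_l$ and renaming the blocks of $P$ and $-N$ gives (\ref{E:dec2}).

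\textbf{Part 3, case $k=1$.} For $(i,j) \in \mathrm{\rom{1}}$, the matrix $e_{j,i}$ lies in block $\mathrm{\rom{3}}$, but this is irrelevant: the $K$-splitting applies to every element of $\lieg$. Applied to $G = e_{j,i}$ it yields unique $P \in \liep$ and $N \in \lien$ with $e_{j,i} = [K^t, P] + N$; renaming exactly as in Part 1 produces (\ref{E:dec4}).

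\textbf{Part 3, case $k=0$.} This is the only case with a small twist: the equation (\ref{E:dec3}) places $e_{j,i}$ \emph{inside} the commutator. I would handle it by applying the $K$-splitting to $-[K^t, e_{j,i}] \in \lieg$: there exist unique $P^{(0)} \in \liep$ and $N \in \lien$ with $-[K^t, e_{j,i}] = [K^t, P^{(0)}] + N$, equivalently
\[
[K^t,\, e_{j,i} + P^{(0)}] = -N \in \lien.
\]
Renaming $-N = \left(\begin{smallmatrix} 0 & B_{(i,j)}^{(0)} \\ 0 & 0\end{smallmatrix}\right)$ and writing out the blocks of $P^{(0)}$ gives (\ref{E:dec3}); uniqueness of $(P^{(0)}, N)$ is inherited from the uniqueness in the splitting.

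\textbf{Main point of care.} There is no serious obstacle—the entire lemma is bookkeeping around the universal $K$-splitting. The one thing to verify carefully is that the proof of Lemma \ref{L:FrobeniusSplitting} indeed goes through verbatim for an arbitrary Frobenius matrix $K$ (it does, since it uses only the non-degeneracy of $\omega_K$ on $\liep$ and the inclusion $\lien \subset \liep$), and then to check the sign and block-labelling conventions so that the four displayed identities (\ref{E:dec1})--(\ref{E:dec4}) match the output of the splitting after rearrangement.
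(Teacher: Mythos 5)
Your proposal is correct and follows essentially the same route as the paper, whose entire proof is the single sentence that all three parts follow directly from Lemma \ref{L:FrobeniusSplitting}; you have merely spelled out the bookkeeping (including the harmless generalization from $J$ to an arbitrary $K$ with $\omega_K$ non-degenerate, and the rearrangement needed for the $k=0$ case of (\ref{E:dec3})), all of which checks out.
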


\begin{proof}
All these results follow directly from Lemma \ref{L:FrobeniusSplitting}.
\end{proof}

\begin{definition}\label{D:coeffTensorR}
Consider the following elements in the Lie algebra $\lieg[z]$:
\begin{itemize}
\item For $(i, j) \in \mathrm{\rom{3}}$,  we put : ${w}_{(i, j; 0)} =  0 = {w}_{(i, j; 1)}$.
\item For $(i, j) \in \mathrm{\rom{2}} \, \cup \,  \mathrm{\rom{4}}$ such that  $ i \ne j$, we set:
\begin{equation}\label{E:def1}
{w}_{(i, j; 0)} = \left(\begin{array}{c|c} A_{(i, j)}^{(0)} & B_{(i, j)}^{(0)} \\ \hline  0 & D_{(i, j)}^{(0)} \end{array}\right) +
z \left(\begin{array}{c|c} 0 & \widetilde{B}_{(i, j)}^{(0)} \\ \hline 0 & 0
\end{array}\right),
\end{equation}
where $\left(\begin{array}{c|c} A_{(i, j)}^{(0)} & B_{(i, j)}^{(0)} \\ \hline  0 & D_{(i, j)}^{(0)} \end{array}\right)$ and
$\left(\begin{array}{c|c} 0 & \widetilde{B}_{(i, j)}^{(0)} \\ \hline 0 & 0
\end{array}\right)$  are given by (\ref{E:dec1}). Moreover, we set ${w}_{(i, j; 1)} = 0$.
\item Similarly, for $1 \le l \le n-1$, following (\ref{E:dec2}),  we put
\begin{equation}\label{E:def2}
{w}_{(l; 0)} = \left(\begin{array}{c|c} A_{(l)} & B_{(l)} \\ \hline  0 &
D_{(l)}\end{array}\right) +
z \left(\begin{array}{c|c} 0 & \widetilde{B}_{(l)} \\ \hline 0 & 0
\end{array}\right),
\end{equation}
whereas $w_{(l; 1)} = 0$.
\item Finally, for $(i, j) \in \mathrm{\rom{1}}$  and $k = 0, 1$,  following
(\ref{E:dec3}) and (\ref{E:dec4}), we write
\begin{equation}\label{E:def3}
{w}_{(i, j; k)} = \left(\begin{array}{c|c} A_{(i, j)}^{(k)} & B_{(i, j)}^{(k)} \\ \hline  0 &
D_{(i, j)}^{(k)}\end{array}\right) +
z \left(\begin{array}{c|c} 0 & \widetilde{B}_{(i, j)}^{(k)} \\ \hline 0 & 0
\end{array}\right).
\end{equation}
\end{itemize}
\end{definition}

\medskip
\noindent
Now we are ready to prove the main result of this subsection.
\begin{theorem} Stolin triple $(\lieg, e, \omega_K)$ defines  the following solution of (\ref{eq:CYBE}):
{\scriptsize \begin{equation*}
r_{(\lieg, e, \omega_K)}(x, y)  =
\frac{c}{y-x} +
\sum\limits_{1 \le i \ne j \le n} e_{i, j} \otimes {w}_{(i, j; 0)}(y)
+ \sum\limits_{1 \le l \le n-1} \check{h}_l \otimes {w}_{(l; 0)}(y)
+ x \sum\limits_{1 \le i \ne j \le n} e_{i, j} \otimes {w}_{(i, j; 1)}(y).
\end{equation*}}
\end{theorem}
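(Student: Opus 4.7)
The plan is to extract the formula directly from Stolin's expression~(\ref{E:SolAttToOrder}) applied to the Lagrangian order $\liew = \liew_{(\lieg,e,\omega_K)}$ constructed by Algorithm~\ref{A:FromTripletoOrder}. I use the basis $\{e_{i,j}\}_{i\neq j}\cup\{\check{h}_l\}$ of $\lieg$, whose trace-duals are $\{e_{j,i}\}$ and $\{h_l\}$. By Lemma~\ref{L:DecompInW}, each dual basis vector of $\liew$ has the form $(z^{k}e_{i,j})^\vee = z^{-k-1}e_{j,i} + u_{(i,j;k)}$ and $(z^{k}\check{h}_l)^\vee = z^{-k-1}h_l + u_{(l;k)}$ with uniquely determined correction terms in $\lieg[z]$. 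Splitting (\ref{E:SolAttToOrder}) into principal and correction parts, the geometric series $\sum_{k\ge 0}(x/y)^k$ collapses the principal contribution to $c/(y-x)$, where $c$ is the Casimir tensor (\ref{E:Casimir}). What remains is to identify the finite list of non-vanishing corrections.

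By the Lemma preceding Definition~\ref{D:coeffTensorR}, the only potentially non-zero corrections are $u_{(i,j;0)}$ for $(i,j)\in\mathrm{I}\cup\mathrm{II}\cup\mathrm{IV}$ with $i\neq j$, $u_{(l;0)}$ for $1\le l\le n-1$, and $u_{(i,j;1)}$ for $(i,j)\in\mathrm{I}$; moreover, each such element lies in $\liep \dotplus z\lien$. By the uniqueness assertion of Lemma~\ref{L:DecompInW}, proving the theorem reduces to verifying that for each of these cases the element $w_{(i,j;k)}$ or $w_{(l;0)}$ introduced in Definition~\ref{D:coeffTensorR} satisfies $z^{-k-1}e_{j,i} + w_{(i,j;k)} \in \liew$ (respectively $z^{-1}h_l + w_{(l;0)} \in \liew$). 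The Frobenius decomposition equations (\ref{E:dec1})--(\ref{E:dec4}) defining these candidates are immediate consequences of Lemma~\ref{L:FrobeniusSplitting} applied to the matrix $K$ in place of $J$, since only the Frobenius property of $l_K$ on $\liep$ is used there.

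The verification proceeds by conjugation with $\eta$ from Algorithm~\ref{A:FromTripletoOrder}: for the block decomposition $\lieg = \liel \oplus \lien \oplus \bar{\lien}$ attached to $n=e+d$, one checks directly that $\eta\cdot a_{\liel}\cdot\eta^{-1} = a_{\liel}$, $\eta\cdot a_{\lien}\cdot\eta^{-1} = z^{-1} a_{\lien}$, and $\eta\cdot a_{\bar{\lien}}\cdot\eta^{-1} = z\cdot a_{\bar{\lien}}$, so that membership in $\liew$ reduces to membership of the $\eta$-conjugate in $\liew' = z^{-2}\lieg\llbracket z^{-1}\rrbracket \dotplus \liev_\omega$. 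For example, for $(i,j)\in\mathrm{I}$ and $k=1$, writing $w_{(i,j;1)} = Q_{\liel} + N + z Q_{\lien}$ with $Q = Q_{\liel} + Q_{\lien}\in\liep$ and $N \in \lien$ as in (\ref{E:dec4}), a direct computation gives
\[
\eta\bigl(z^{-2}e_{j,i} + w_{(i,j;1)}\bigr)\eta^{-1} = Q + z^{-1}(e_{j,i} + N) = Q + z^{-1}[K^t,Q] \in \liev_\omega.
\]
The cases $(i,j)\in\mathrm{II}\cup\mathrm{IV}$ and $(i,j)\in\mathrm{I}$ with $k=0$, as well as the $h_l$ family, are entirely analogous: the decomposition equations (\ref{E:dec1})--(\ref{E:dec4}) are engineered precisely so that the conjugate of $z^{-k-1}e_{j,i} + w_{(i,j;k)}$ (or of $z^{-1}h_l + w_{(l;0)}$) reassembles into the distinguished shape $Q + z^{-1}[K^t,Q]$ of $\liev_\omega$. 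For $(i,j)\in\mathrm{III}$ the assertion $u_{(i,j;k)}=0$ already follows from the inclusion $\liew_1\subset\liew$, since $e_{j,i}\in\lien$ gives $\eta(z^{-k-1}e_{j,i})\eta^{-1} = z^{-k-2}e_{j,i}\in z^{-2}\lieg\llbracket z^{-1}\rrbracket$ for all $k\ge 0$.

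The main technical point is the precise matching between the $z^{\pm 1}$ shifts produced by block-wise $\eta$-conjugation and the $\liep+z\lien$ ansatz for $w_{(i,j;k)}$ dictated by the preceding Lemma: once it is noted that these two shifts are compatible, the identification with the Frobenius splittings $[K^t,Q] = e_{j,i} + N$ or $[K^t,Q]=h_l+N$ is automatic, and the theorem reduces to a finite case analysis (the three blocks $\mathrm{I},\mathrm{II},\mathrm{IV}$ for $k=0$, block $\mathrm{I}$ for $k=1$, and the diagonal family $\{h_l\}$) followed by substitution of the values $w_{(i,j;0)}(y)$, $w_{(i,j;1)}(y)$ and $w_{(l;0)}(y)$ into the reduced form of (\ref{E:SolAttToOrder}).
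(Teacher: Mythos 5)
Your argument is correct and takes essentially the same route as the paper's proof: both reduce the theorem to the identities $u_{(i,j;k)}=w_{(i,j;k)}$ and $u_{(l;0)}=w_{(l;0)}$ (after collapsing the principal parts of (\ref{E:SolAttToOrder}) to $c/(y-x)$ and discarding the vanishing corrections), and both verify these by conjugating with $\eta$ so that membership in $\liew$ becomes the condition $Q+z^{-1}[K^t,Q]\in\liev_\omega$, which is precisely the system (\ref{E:dec1})--(\ref{E:dec4}). Your block-wise conjugation rules and the worked case $(i,j)\in\mathrm{I}$, $k=1$ simply make explicit what the paper dismisses as a ``straightforward case-by-case analysis.''
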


\begin{proof}
It is sufficient to show that for any  $1 \le i \ne j \le n$, $1 \le l \le n-1$
 and $k = 0, 1$,  we have the following equalities:
\begin{equation}\label{E:comparison}
u_{(i, j; k)} = w_{(i, j; k)}
\quad
\mbox{and}
\quad
u_{(l; k)} = w_{(l; k)}.
\end{equation}
Recall that $\liew = \liew_1 \dotplus  \eta^{-1} \, \bigl\langle\alpha + z^{-1} \chi(\alpha)
\bigr\rangle_{\alpha \in \lieg} \, \eta \subset \widehat\lieg$. It implies that
\begin{itemize}
\item For any $(i, j) \in \mathrm{\rom{2}} \, \cup \,  \mathrm{\rom{4}}$, $ i \ne j$,
there exists $\mu_{i, j} \in \lieg$ such that
\begin{equation}\label{E:eqn1}
z^{-1} e_{j, i} + u_{(i, j; 0)} = \eta^{-1}\bigl(\mu_{i, j} + z^{-1}
\bigl[K^t, \mu_{i, j}\bigr]\bigr) \eta.
\end{equation}
\item Similarly, for any $1 \le l \le n-1$, there exists $\nu_l \in \lieg$ such that
\begin{equation}\label{E:eqn2}
z^{-1} h_l + u_{(l; 0)} = \eta^{-1}\bigl(\nu_{l} + z^{-1}
\bigl[K^t, \nu_{l}\bigr]\bigr) \eta.
\end{equation}
\item Finally, for any $(i, j) \in \mathrm{\rom{1}}$ and $k = 0, 1$,  there exists
$\kappa_{i, j}^{(k)} \in \lieg$ such that
\begin{equation}\label{E:eqn3}
z^{-k-1} e_{j, i} + u_{(i, j; k)} = \eta^{-1}\bigl(\kappa_{i, j}^{(k)} + z^{-1}
\bigl[K^t, \kappa_{i, j}^{(k)}\bigr]\bigr) \eta.
\end{equation}
\end{itemize}
A straightforward case-by-case analysis shows that equation
(\ref{E:eqn1}) (respectively,  (\ref{E:eqn2}) and (\ref{E:eqn3})) is equivalent to
equation (\ref{E:dec1}) (respectively, (\ref{E:dec2}) and (\ref{E:dec3}), (\ref{E:dec4})).
Thus, equalities (\ref{E:comparison}) are true and theorem is proven.
\end{proof}

\begin{example}\label{E:explicitSolut}{\rm  Let  $e = n - 1$. We take the matrix
$$
K = J_{(n-1, 1)} =
\left(
\begin{array}{cccc|c}
0 & 1 & 0 & \dots & 0 \\
0 & 0 & 1 & \dots & 0 \\
\vdots & \vdots & \ddots & \ddots & \vdots \\
0 & 0 & \dots & 0  & 1 \\
\hline
0 & 0 & \dots & 0  & 0
\end{array}
\right).
$$
Solving the equations (\ref{E:dec1})--(\ref{E:dec4}) yields  the following closed formula:
 {\scriptsize{\[
r_{(\lieg, e, \omega_K)} =
\frac{c}{y-x} +
\]
\[
 + x \left[e_{{1}, {2}}\otimes\check{h}_{1}-\sum_{j=3}^{n}e_{1,j}\otimes\left(\sum_{k=1}^{n-j+1}
e_{j+k-1,k+1}\right)\right]
- y \left[\check{h}_{1}\otimes e_{1,2}-
\sum_{j=3}^{n}\left(\sum_{k=1}^{n-j+1}e_{j+k-1,k+1}\right)\otimes e_{1,j}\right]
\]
\[
+\sum_{j=2}^{n-1}e_{1,j}\otimes\left(\sum_{k=1}^{n-j}e_{j+k,k+1}\right)+
\sum_{i=2}^{n-1}e_{i,i+1}\otimes\check{h}_{i}
-\sum_{j=2}^{n-1}\left(\sum_{k=1}^{n-j}e_{j+k,k+1}\right)\otimes e_{1,j}-\sum_{i=2}^{n-1}\check{h}_{i}\otimes e_{i,i+1} \]
\[
+\sum_{i=2}^{n-2}\left(\sum_{k=2}^{n-i}\left(\sum_{l=1}^{n-i-k+1}e_{i+k+l-1,l+i}\right)\otimes e_{i,i+k}\right)
-\sum_{i=2}^{n-2}\left(\sum_{k=2}^{n-i}e_{i,i+k}\otimes\left(\sum_{l=1}^{n-i-k+1}
e_{i+k+l-1,l+i}\right)\right).
\]
}
}

\noindent
In particular, for $n = 2$,  we get   the following rational solution
$$
{r}(x, y) = \frac{1}{y - x}\left(
\frac{1}{2} h \otimes h + e_{12}
\otimes e_{21} + e_{21} \otimes e_{12}\right) +
\frac{x}{2} e_{12} \otimes h - \frac{y}{2} h \otimes e_{21}.
$$
This solution was first time discovered by Stolin in \cite{Stolin}. It is gauge equivalent to
the solution (\ref{E:Stolin}).
}
\end{example}

\subsection{Comparison Theorem} Now  we prove the third  main result of this
article.

\begin{theorem}\label{T:ThmB} Consider the involutive  Lie algebra automorphism
 $\tilde\varphi: \lieg \rightarrow \lieg, A \mapsto - A^t$. Then
we have:
$
(\tilde{\varphi} \otimes \tilde{\varphi}) r_{(E, (n, d))} = r_{(\lieg, e, \omega_{K})},
$
where
$K = -J_{(e, d)}$.
\end{theorem}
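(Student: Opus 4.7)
My plan is to verify Theorem~\ref{T:ThmB} by directly matching the two explicit formulas. Since $\tilde{\varphi}$ is an involutive Lie algebra automorphism of $\lieg$ preserving the trace form, the Casimir $c$ is fixed by $\tilde{\varphi}\otimes\tilde{\varphi}$. Applying $\tilde{\varphi}\otimes\tilde{\varphi}$ to the formula in Lemma~\ref{pro: explicit formula for c(u,v)}, and using $\tilde{\varphi}(e_{j,i}) = -e_{i,j}$ together with $\tilde{\varphi}(\check{h}_l) = -\check{h}_l$ (the two minus signs cancel in each tensor), one obtains
\[
(\tilde{\varphi}\otimes\tilde{\varphi})\,r_{(E,(n,d))}(x,y) = \frac{c}{y-x} + \sum_{i\ne j} e_{i,j}\otimes \frac{G^x_{i,j}(y)^t}{y-x} + \sum_l \check{h}_l\otimes \frac{G^x_l(y)^t}{y-x}.
\]
Since $G^x_{i,j}(x)=0=G^x_l(x)$ and the elements $G^x_{i,j}, G^x_l \in V_{e,d}$ have $y$-degree at most $2$, the quotients $G^x_{i,j}(y)^t/(y-x)$ and $G^x_l(y)^t/(y-x)$ are polynomials in $y$ of degree $\le 1$ with coefficients polynomial in $x$ of degree $\le 1$.

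Comparing with the explicit formula for $r_{(\lieg,e,\omega_K)}$ derived just before Example~\ref{E:explicitSolut} reduces the theorem to the polynomial identities
\[
G^x_{i,j}(y)^t = (y-x)\bigl[w_{(i,j;0)}(y) + \delta_{(i,j)\in\mathrm{\rom{1}}}\,x\,w_{(i,j;1)}(y)\bigr], \qquad G^x_l(y)^t = (y-x)\,w_{(l;0)}(y).
\]
The algebraic bridge is the transpose identity $[A,B]^t = -[A^t,B^t]$, which converts the defining condition $[F_0, J]+xF_0+F_\epsilon = 0$ of $\Sol((e,d),x)$ into $[K^t, F_0^t] = xF_0^t + F_\epsilon^t$ for $K = -J$. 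This matches the structure of Stolin's Frobenius decompositions (\ref{E:dec1})--(\ref{E:dec4}). Accordingly, if $F_\alpha := \overline{\res}_x^{-1}(\alpha) = \alpha + G^x_\alpha$, then $F_\alpha(y)^t - \alpha^t$, viewed in $\liep \dotplus z\lien$, satisfies the transposed Stolin equations; by the uniqueness of the Frobenius decomposition (guaranteed by Theorem~\ref{T:FrobAlg}, i.e.~the non-degeneracy of $\omega_J$ on $\liep \times \liep$) it must coincide with the right-hand side.

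The main obstacle is the case-by-case verification according to the block partition of $\{(i,j) \mid 1\le i,j \le n\}$. The blocks $\mathrm{\rom{1}}, \mathrm{\rom{2}}, \mathrm{\rom{3}}, \mathrm{\rom{4}}$ of $V_{e,d}$ carry polynomial degrees $0,1,2,1$ respectively in $z$, and each contributes differently to $F_0^t$ and $F_\epsilon^t$ once transposed. For $(i,j)\in\mathrm{\rom{3}}$, a direct check shows that the constant $F = e_{i,j}$ already lies in $\Sol((e,d),x)$ (both $F_0$ and $F_\epsilon$ vanish), so $G^x_{i,j}=0$, matching $w_{(i,j;0)} = w_{(i,j;1)} = 0$ on the Stolin side. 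For $(i,j)\in\mathrm{\rom{2}}\cup\mathrm{\rom{4}}$ with $i\ne j$, the transposed quotient is a polynomial of degree $0$ in $x$, matching a single $w_{(i,j;0)}(y)$. For $(i,j)\in\mathrm{\rom{1}}$, the degree-$2$ component in block~$\mathrm{\rom{3}}$ of $F_\alpha$ produces, after transposition and division by $(y-x)$, a part linear in $x$, accounting for the extra term $x\cdot w_{(i,j;1)}(y)$; the Cartan case $\alpha = h_l$ proceeds like the $\mathrm{\rom{2}}\cup\mathrm{\rom{4}}$ case. The bookkeeping is routine, but the conceptual content lies in the transpose identity combined with the uniqueness of the two decomposition systems.
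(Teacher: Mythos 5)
Your proposal is correct and follows essentially the same route as the paper: both transpose the defining equation of $\Sol\bigl((e,d),x\bigr)$ via $[A,B]^t=-[A^t,B^t]$ and match it block by block against the Frobenius decomposition equations (\ref{E:dec1})--(\ref{E:dec4}). The only cosmetic difference is that you invoke uniqueness of the Frobenius splitting (Lemma \ref{L:FrobeniusSplitting}) to pin down $G^{x}_{i,j}(y)^t/(y-x)$, whereas the paper runs the identical block computation in the opposite direction, checking that the elements $U^{(x)}_{(i,j)}$ built from the $w$'s satisfy the two properties (\ref{eq: def of Gij}) that uniquely determine $\varphi\bigl(G^{x}_{i,j}\bigr)$.
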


\begin{proof}
For $x \in \CC$, $1 \le i \ne j \le n$ and $1 \le l \le n-1$ consider the following elements
of $\lieg[z]$:
\begin{equation*}
U_{(i, j)}^{(x)} = (z-x)\bigl(w_{(i, j; 0)} + x w_{(i, j; 1)}\bigr)
\quad \mbox{and} \quad
U_{(l)}^{(x)} = (z - x) w_{(l; 0)},
\end{equation*}
where $w_{(i, j; k)}$ and $w_{(l; 0)}$ are element introduced in Definition \ref{D:coeffTensorR}.
Then we have:
\begin{equation*}
r_{(\lieg, e, \omega_K)}
= \frac{1}{y-x}\Bigl[
c +
\sum\limits_{1 \le i \ne j \le n} e_{i, j} \otimes {U}_{(i, j)}^{(x)}(y)
+ \sum\limits_{1 \le l \le n-1} \check{h}_l \otimes {U}_{(l)}^{(x)}(y)
\Bigr].
\end{equation*}
Note that  for $(i, j) \in \mathrm{\rom{3}}$ we have: $U_{(i, j)}^{(x)} = 0$.

\noindent
In what follows,  instead of $\tilde\varphi$
we shall  use the anti-isomorphism of Lie algebras $\varphi = - \tilde\varphi$.
We have: $\varphi(e_{i, j}) = e_{j, i}, \varphi(h_l) = h_l, \varphi(\check{h}_l) = \check{h}_l$
and $\varphi \otimes \varphi = \tilde\varphi \otimes \tilde\varphi \in \End(\lieg \otimes \lieg)$.
Hence, we need to show that for all $1 \le i \ne j \le n$ and $1 \le l \le n-1$ we have:
\begin{equation*}
\varphi\bigl(G^{x}_{(i, j)}\bigr) = U_{(i, j)}^{(x)} \quad
\mbox{and}
\quad
\varphi\bigl(G_{(l)}^{(x)}\bigr) = U_l^{(x)}.
\end{equation*}
From the definition of elements $G_{(i, j)}^{(x)}$ and $G_{(l)}^{(x)}$ it follows that these equalities
are equivalent to the following statements.
\begin{itemize}
\item $U_{(i, j)}^{(x)}(x) = 0 = U_{(l)}^{(x)}$ and
\item $e_{j, i} + U_{(i, j)}^{(x)},
h_l + U_{(l)}^{(x)} \in \overline{\Sol\bigl((e, d), x\bigr)} := \varphi\Bigl(\Sol\bigl((e, d), x\bigr)\Bigr)$.
\end{itemize}
The first equality is obviously fulfilled. To show the second, observe that
\begin{equation*}
\overline{\Sol\bigl((e, d), x\bigr)} := \Bigl\{
P \in \overline{V}_{e,d} \,
\Bigl| \,
\left[J^t, P_0 \right]+ x P_{0}+P_{\epsilon}=0
\Bigr\} \subset \overline{V}_{e,d},
\end{equation*}
where
\begin{equation*}
\overline{V}_{e,d} = \left\{
P =
\left(
\begin{array}{ c | c}
W & Y \\ \hline
X & Z
\end{array}
\right)
+
\left(
\begin{array}{ c | c}
W' & Y' \\ \hline
0 & Z'
\end{array}
\right)z
+
\left(
\begin{array}{ c | c}
0  & Y'' \\ \hline
0  & 0
\end{array}
\right)z^2
\right\} \subset \lieg[z]
\end{equation*}
and  for a given $P \in \overline{V}_{e, d}$ we denote:
\begin{equation}\label{E:defoCompon}
P_{0}=
\left(
\begin{array}{c | c} W' & Y'' \\ \hline X & Z'
\end{array}
\right) \mbox{ and  } \,
P_{\epsilon}=\left({\begin{array}{c | c}
W & Y' \\ \hline 0 & Z
\end{array}}\right).
\end{equation}
Observe that in the above notations, there are no constraints on  the matrix $Y$.

\medskip
\noindent
For any $1 \le i \ne j \le n$ denote:
$A_{(i, j)} = A_{(i, j)}^{(0)} + x A_{(i, j)}^{(1)}$. Similarly, we set
$B_{(i, j)} = B_{(i, j)}^{(0)} + x B_{(i, j)}^{(1)}$,
$\widetilde{B}_{(i, j)} = \widetilde{B}_{(i, j)}^{(0)} +
x \widetilde{B}_{(i, j)}^{(1)}$ and
$D_{(i, j)} = D_{(i, j)}^{(0)} + x D_{(i, j)}^{(1)}$. Then we have:
\begin{equation*}
{U}_{(i, j)}^{(x)} =
-x \left(\begin{array}{c|c}  A_{(i, j)} & B_{(i, j)} \\ \hline  0 & D_{(i, j)}\end{array}\right) +
z \left(\begin{array}{c|c} A_{(i, j)}  & B_{(i, j)} - x \widetilde{B}_{(i, j)} \\ \hline 0 & D_{(i, j)}
\end{array}\right) +
z^2 \left(\begin{array}{c|c} 0  &  \widetilde{B}_{(i, j)} \\ \hline 0 & 0
\end{array}\right).
\end{equation*}
Similarly,
\begin{equation*}
{U}_{(i, j)}^{(x)} =
-x \left(\begin{array}{c|c}  A_{(l)} & B_{(l)} \\ \hline  0 & D_{(l)}\end{array}\right) +
z \left(\begin{array}{c|c} A_{(l)}  & B_{(l)} - x \widetilde{B}_{(l)} \\ \hline 0 & D_{(l)}
\end{array}\right) +
z^2 \left(\begin{array}{c|c} 0  &  \widetilde{B}_{(l)} \\ \hline 0 & 0
\end{array}\right).
\end{equation*}
 First observe that  for $(i, j) \in \mathrm{\rom{3}}$ we have:
$U_{(i, j)}^{(x)} =  0$. Since $e_{j, i} \in \overline{\Sol\bigl((e, d), x\bigr)}$, we are done with this
case. Now we assume that $(i, j) \in \mathrm{\rom{2}} \cup \mathrm{\rom{3}} \cup \mathrm{\rom{4}}$
and $i \ne j$. Then in the notations of (\ref{E:defoCompon}), for
$e_{j, i} + U_{(i, j)}^{(x)} \in \overline{V}_{e, d}$  we have:
\begin{equation*}
P_0^{(i, j)} = \left(\begin{array}{c|c}  A_{(i, j)} & \widetilde{B}_{(i, j)} \\ \hline  0 & D_{(i, j)}\end{array}\right) + \delta_{\mathrm{\rom{1}}}(i, j) e_{j, i}
\end{equation*}
and
\begin{equation*}
P_{\epsilon}^{(i, j)} = \left(\begin{array}{c|c}  - x A_{(i, j)} & B_{(i, j)} - x \widetilde{B}_{(i, j)} \\ \hline  0 & - x D_{(i, j)}\end{array}\right) + \bigl(\delta_{\mathrm{\rom{2}}} + \delta_{\mathrm{\rom{4}}}\bigr)(i, j)
e_{j, i}.
\end{equation*}
Here,  $\delta_{\mathrm{\rom{1}}}(i, j) = 1 $ if $(i, j) \in  \mathrm{\rom{1}}$ and zero otherwise,
whereas
$\delta_{\mathrm{\rom{2}}}$ and $\delta_{\mathrm{\rom{4}}}$
 have a  similar meaning. The condition $e_{j, i} + U_{(i, j)}^{(x)} \in \overline{\Sol\bigl((e, d), x\bigr)}$ is equivalent to the equality
 \begin{equation*}
 \Bigl[J^t,
 \left(\begin{array}{c|c} A_{(i, j)}  &  \widetilde{B}_{(i, j)} \\ \hline 0 & D_{(i, j)}
\end{array}\right) + \delta_{\mathrm{\rom{1}}}(i, j) e_{j, i}
 \Bigr] +
 x \delta_{\mathrm{\rom{1}}}(i, j)
e_{j, i} + \bigl(\delta_{\mathrm{\rom{2}}} + \delta_{\mathrm{\rom{4}}}\bigr)(i, j)
e_{j, i} + \left(\begin{array}{c|c}  0  &  \widetilde{B}_{(i, j)} \\ \hline  0 &
0\end{array}\right) = 0.
 \end{equation*}
Considering separately the case  $(i, j) \in  \mathrm{\rom{1}}$ and
$(i, j) \in  \mathrm{\rom{2}} \cup \mathrm{\rom{4}}$,
one can verify  that this equation follows from the equations (\ref{E:dec1}),
  (\ref{E:dec3}) and (\ref{E:dec4}). A similar argument shows that the condition
  $h_l + U_{(l)}^{(x)} \in  \overline{\Sol\bigl((e, d), x\bigr)}$ is equivalent to
   (\ref{E:dec2}). Theorem is proven.
 \end{proof}

\begin{remark}
Since the solutions $r_{(\lieg, e, \omega_K)}$ and  $r_{(\lieg, e, \omega_J)}$ are gauge equivalent,
we obtain a gauge equivalence of $r_{(\lieg, e, \omega_J)}$ and $r_{(E, (n, d))}$.
\end{remark}

\begin{corollary}\label{C:AnalyticConseq}
{\rm 
It follows now from Theorem \ref{T:main1} that up to a (not explicitly known)  gauge transformation and 
a change of variables, the rational solution from Example \ref{E:explicitSolut} is a degeneration
of the Belavin's elliptic $r$--matrix (\ref{E:SolutionsElliptiques}) for 
$\varepsilon = \exp\bigl(\frac{2 \pi i }{n}\bigr)$. It seems to be quite difficult to prove this result
using just direct analytic methods. } 
\end{corollary}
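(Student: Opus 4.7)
The plan is to combine the analytic dependence on Weierstrass parameters (established in the Summary following Theorem \ref{T:main2}, which is in turn a consequence of Theorem \ref{T:main1}) with the explicit identifications of the fibers at the endpoints of the degeneration. Concretely, I would work with the relative Weierstrass family $E_T \to T = \mathbb{A}^2$ parameterized by $(g_2, g_3)$, where the cuspidal curve sits over the origin $0 \in T$ and the open set $T \setminus \Delta$ (with $\Delta = V(g_2^3 + 27 g_3^2)$) parameterizes smooth elliptic curves. Lemma \ref{L:relativfamily} provides a relative simple vector bundle $\kP$ of rank $n$ and degree $d$ on $E_T$, and Theorem \ref{T:main2} yields a canonical meromorphic section $r \in \Gamma(\overline B, p_1^*\kA \otimes p_2^* \kA)$. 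A choice of trivialization $\xi$ near the identity section produces a tensor-valued meromorphic function $r^\xi_t(x,y)$ that depends analytically on $t = (g_2, g_3) \in T$.

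Next I would identify the solution at both the generic and the special point. For $t \in T \setminus \Delta$, the fiber $E_t$ is an elliptic curve and Theorem \ref{T:main3} identifies $r^\xi_t$ (up to the gauge transformation encoded in the change of trivialization) with the Belavin elliptic $r$-matrix $r_{(E_t, (n, d))}$ of formula (\ref{E:SolutionElliptic}); for $d = 1$ this is precisely the matrix with $\varepsilon = \exp(2\pi i/n)$. For $t = 0$ the fiber is the cuspidal curve, and Theorem \ref{T:ThmB} combined with Example \ref{E:explicitSolut} identifies $r^\xi_0$ (again up to gauge) with the explicit rational tensor displayed there. Since $r^\xi_t$ is analytic in $t$ and its meromorphic behavior in $(x,y)$ is uniform on compact subsets avoiding $\Delta_n$, taking the limit $t \to 0$ within the family realizes the rational solution as the limit of Belavin's elliptic solutions.

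Two sources of extra non-canonicity must be accounted for, which is why the statement allows for an unknown gauge and change of variables. First, the trivialization $\xi$ of $\kA$ near the identity section varies with $t$; by the third bullet of the Summary, any two such trivializations differ by conjugation by some $\rho: U \to \mathrm{Aut}(\lieg)$, which manifests as a gauge transformation $\phi(y_1) \otimes \phi(y_2)$. Second, the spectral parameter on the elliptic curve is the natural coordinate on the universal cover $\mathbb{C}/\Lambda$, while on the cuspidal curve it is the affine coordinate on $\breve{E} \cong \mathbb{A}^1$; the relation between these via the Weierstra\ss{} $\wp$-function (degenerating as $g_2, g_3 \to 0$) supplies the required change of variables.

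The main obstacle, and the reason this statement is genuinely difficult from a direct analytic standpoint, is precisely to track the composite gauge transformation and coordinate change as $t$ approaches the cuspidal locus: the Belavin formula (\ref{E:SolutionElliptic}) is written in a basis $\{Z_{k,l}\}$ adapted to the Heisenberg action on the elliptic curve, whereas the rational solution of Example \ref{E:explicitSolut} is expressed in the standard $\{e_{i,j}, \check h_l\}$ basis arising from the Bodnarchuk--Drozd description of the cuspidal curve. Matching these two presentations explicitly would require the full intertwiner between the two automorphy data, which is not extractible from our arguments; the geometric proof bypasses this by appealing only to continuity of $r^\xi_t$ in $t$, which is the precise content furnished by Theorem \ref{T:main1} via the Summary.
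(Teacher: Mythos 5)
Your proposal is correct and follows exactly the route the paper intends: the corollary is a direct consequence of the analyticity in $(g_2,g_3)$ of the family $r^{\xi}_t$ furnished by Theorem \ref{T:main1} and Theorem \ref{T:main2}, combined with the identification of the generic fiber with Belavin's elliptic $r$--matrix (Theorem \ref{T:main3}) and of the special fiber with the rational solution of Example \ref{E:explicitSolut} (via Theorem \ref{T:recipe} and Theorem \ref{T:ThmB}). Your accounting of the residual gauge freedom (choice of trivialization $\xi$) and of the coordinate change between the universal cover of the torus and the affine chart of the cuspidal curve matches what the paper leaves implicit.
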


\noindent
We conclude this paper by the following result, which has been pointed out to us by Alexander Stolin.
\begin{proposition}
The solutions $r_{(E, (n, d))}$ and $r_{(E, (n, e))}$ are gauge equivalent.
\end{proposition}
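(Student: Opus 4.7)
The plan is to produce the gauge equivalence directly from the duality $\kP \leftrightsquigarrow \kP^\vee$ on simple vector bundles. If $\kP$ is simple of rank $n$ and degree $d$, and $\kL \in \Pic^1(E)$ is any line bundle of degree one, then $\kQ := \kP^\vee \otimes \kL$ is simple of rank $n$ and degree $-d + n = e$. The transpose induces a canonical anti-isomorphism of sheaves of associative algebras $\tau: {\mathcal End}(\kP) \lar {\mathcal End}(\kP^\vee)$, which sends $\Ad(\kP)$ to $\Ad(\kP^\vee)$. Composing with negation, $\psi := -\tau$ is an isomorphism of sheaves of Lie algebras $\Ad(\kP) \lar \Ad(\kP^\vee)$. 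Combined with the canonical identification $\Ad(\kP^\vee) \cong \Ad(\kQ)$ from Proposition \ref{P:basiconAd}, we obtain a canonical isomorphism
\begin{equation*}
\Psi: \kA_{n,d} \lar \kA_{n,e}
\end{equation*}
of sheaves of Lie algebras, which by Proposition \ref{P:AdonCY} is independent of the choices made.

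The key compatibility is that $\Psi$ preserves the trace form: locally $\tr\bigl(\tau(A) \tau(B)\bigr) = \tr\bigl((BA)^t\bigr) = \tr(AB)$, so the two extra signs in $\psi \otimes \psi$ cancel. Consequently, $\Psi$ intertwines the symmetric pairing $\kA_{n,d} \otimes \kA_{n,d} \lar \kO$ with the analogous pairing on $\kA_{n,e}$, and in particular  the identifications $\kA_{n,d} \cong \kA_{n,d}^\vee$ and $\kA_{n,e} \cong \kA_{n,e}^\vee$ from Proposition \ref{P:basiconAd} are compatible with $\Psi$.

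Next I would verify that $\Psi$ intertwines the morphisms $\res_x$ and $\ev_y$ of Lemma \ref{L:basicsonresandev} applied to $\kA_{n,d}$ and to $\kA_{n,e}$. Both maps are obtained from the canonical short exact sequences (\ref{E:resVectBundle}), which are natural in the underlying vector bundle; since $\Psi$ is obtained from the functorial operations $\kF \mapsto {\mathcal E}nd(\kF) \supset \Ad(\kF)$, $\kF \mapsto \kF^\vee$ and tensoring with $\kL$, all of which commute with restriction to a point and with passing from $\kA$ to $\kA(x)$, the diagrams
\begin{equation*}
\xymatrix{
H^0(\kA_{n,d}(x)) \ar[r]^-{\res_x} \ar[d]_{H^0(\Psi(x))} & \kA_{n,d}\big|_x \ar[d]^{\Psi_x} \\
H^0(\kA_{n,e}(x)) \ar[r]^-{\res_x} & \kA_{n,e}\big|_x
}
\end{equation*}
and its evaluation analogue at $y$ commute. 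Passing to the tensor $r$ via Theorem \ref{T:main2} and using the compatibility of $\Psi$ with the trace form from the previous step, we conclude that $(\Psi \otimes \Psi)$ sends the canonical section $r^{(n,d)} \in \Gamma(\overline B, p_1^*\kA_{n,d} \otimes p_2^*\kA_{n,d})$ to the canonical section $r^{(n,e)}$. Fixing a trivialization $\xi$ of $\kA_{n,d}$ over a neighborhood $U$ of $o$, and letting $\zeta$ be the trivialization of $\kA_{n,e}$ obtained by composing $\Psi^{-1}$ with $\xi$, the equality $r^{\xi}_{(E,(n,d))} = r^{\zeta}_{(E,(n,e))}$ holds in $\slie_n(M) \otimes_M \slie_n(M)$; since the gauge class of $r_{(E,(n,e))}$ does not depend on the choice of trivialization (Theorem A), the gauge equivalence follows.

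The main technical point is the sign bookkeeping: the transpose is an \emph{anti}-isomorphism of associative algebras, so it reverses the order in products, and the $A_\infty$-interpretation of $\mm_{x,y}$ used in Theorem \ref{T:main1} is sensitive to this. Introducing the negation to turn $\tau$ into a Lie algebra isomorphism is essential, and the verification that this sign flip combined with the transpose restores full compatibility with the trace form, the Serre pairing, and the diagram (\ref{E:maindiagram1}) is the most delicate step. Once this is checked, naturality of all remaining constructions yields the claim.
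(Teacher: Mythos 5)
Your argument is correct, but it proceeds by a genuinely different route than the paper. The paper proves this proposition entirely inside its explicit combinatorial model for the cuspidal curve: it takes the automorphism $\psi$ of $\lieg$ given by $e_{i,j} \mapsto e_{n+1-i,\,n+1-j}$ (conjugation by the antidiagonal permutation matrix), observes that $\psi\bigl(J_{(e,d)}\bigr) = J_{(d,e)}$, and checks that $\psi$ carries $\Sol\bigl((e,d),x\bigr)$ to $\Sol\bigl((d,e),x\bigr)$ compatibly with $\overline{\res}_x$ and $\overline{\ev}_y$, whence $(\psi\otimes\psi)\, r_{(E,(n,d))} = r_{(E,(n,e))}$ with a completely explicit constant gauge transformation. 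You instead work intrinsically on the sheaf level: the assignment $\kP \mapsto \kP^\vee \otimes \kL$ sends simple bundles of degree $d$ to simple bundles of degree $e$, and $-(\,\cdot\,)^t$ turns the transpose anti-isomorphism into a trace-form-preserving isomorphism of sheaves of Lie algebras $\kA_{n,d} \to \kA_{n,e}$ commuting by naturality with $\res_x$ and $\ev_y$; your sign bookkeeping ($\tr(\psi(A)\psi(B)) = \tr(AB)$, and the compatibility of $\kA \cong \kA^\vee$) is exactly what is needed, and the identification of tensors then follows because a trace-form-preserving isomorphism intertwines the passage from $\ev_y\circ\res_x^{-1}$ to the corresponding element of $\kA\big|_x \otimes \kA\big|_y$. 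What your approach buys is uniformity: it applies verbatim to every Weierstra\ss{} curve (elliptic, nodal, cuspidal) and to the relative setting of Theorem \ref{T:main2}, since it never invokes the matrix $J$ or the spaces $\Sol\bigl((e,d),x\bigr)$; what the paper's approach buys is an explicit constant matrix realizing the gauge equivalence in the coordinates in which the solution was computed. One small remark: your worry about the $A_\infty$-interpretation of $\mm_{x,y}$ in Theorem \ref{T:main1} is not actually needed here --- the section $r$ of Theorem \ref{T:main2} is defined purely from $\res$, $\ev$ and the trace pairing, so the comparison of the two solutions never has to pass through the triple products; the $A_\infty$-structure only enters in proving that either tensor satisfies the classical Yang--Baxter equation, which you are not re-proving.
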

\begin{proof}
Consider the Lie algebra automorphism $\liee \stackrel{\psi} \liee,
e_{i, j} \mapsto e_{n+1-i, n+1-j}$. Obviously, $\psi$ is an automorphism of $\lieg$, too. Moreover,
it is not difficult to see that $\psi(J_{(e, d)}) = J_{(d, e)}$.
The automorphism $\psi$ extends to an automorphism of $\lieg[z]$. Moreover, the following diagram is commutative:
\begin{equation*}
\xymatrix{
\lieg \ar[d]_{\psi} & & \Sol\bigl((e, d), x\bigr) \ar[d]^\psi \ar[ll]_-{\overline{\res}_x}  \ar[rr]^-{\overline{\ev}_y} & &\lieg \ar[d]^{\psi}\\
\lieg & & \Sol\bigl((d, e), x\bigr)  \ar[ll]_-{\overline{\res}_x}  \ar[rr]^-{\overline{\ev}_y} & &  \lieg
}
\end{equation*}
Hence, $(\psi \otimes \psi) r_{(E, (n, d))}  = r_{(E, (n, e))}$. Proposition is proven.
\end{proof}

\end{document}